\documentclass[11pt,leqno]{article}
\usepackage[margin=1.0in]{geometry}
\usepackage{lmodern}
\usepackage{amsmath,amsthm,amssymb,mathtools,mathrsfs,tikz,graphicx}
\usepackage{aliascnt}
\usepackage{xcolor}
\usepackage{enumitem}
\usepackage{microtype}
\usepackage{tocloft}
\usepackage{hyperref}
\usepackage[nameinlink,capitalise,noabbrev]{cleveref}
\usetikzlibrary{arrows.meta}
\hypersetup{hidelinks}
\allowdisplaybreaks
\newtheorem{theorem}{Theorem}[section]
\newaliascnt{lemma}{theorem}
\newtheorem{lemma}[lemma]{Lemma}
\aliascntresetthe{lemma}
\newaliascnt{proposition}{theorem}
\newtheorem{proposition}[proposition]{Proposition}
\aliascntresetthe{proposition}
\newaliascnt{corollary}{theorem}
\newtheorem{corollary}[corollary]{Corollary}
\aliascntresetthe{corollary}
\theoremstyle{definition}
\newaliascnt{definition}{theorem}
\newtheorem{definition}[definition]{Definition}
\aliascntresetthe{definition}
\newtheorem*{example}{Example}
\newtheorem*{terminology}{Definition}
\theoremstyle{remark}
\newaliascnt{remark}{theorem}
\newtheorem{remark}[remark]{Remark}
\aliascntresetthe{remark}

\newcommand{\wh}[1]{\widehat{#1}}
\newcommand{\cl}[1]{\overline{#1}}
\newcommand{\Z}{\mathbb Z}
\newcommand{\Q}{\mathbb Q}
\newcommand{\R}{\mathbb R}
\newcommand{\F}{\mathbb F}
\newcommand{\HH}{\mathbb H}
\newcommand{\PSL}{\operatorname{PSL}}
\newcommand{\Out}{\operatorname{Out}}
\newcommand{\Aut}{\operatorname{Aut}}
\newcommand{\Inn}{\operatorname{Inn}}
\newcommand{\Stab}{\operatorname{Stab}}
\newcommand{\Fix}{\operatorname{Fix}}
\newcommand{\ind}{\operatorname{ind}}
\newcommand{\cd}{\operatorname{cd}}
\newcommand{\HNN}{\operatorname{HNN}}
\newcommand{\im}{\operatorname{im}}
\newcommand{\Hom}{\operatorname{Hom}}
\newcommand{\Ncl}[2]{\langle\!\langle #1\rangle\!\rangle_{#2}}
\newcommand{\conj}[1]{\operatorname{conj}_{#1}}
\newcommand{\acts}{\curvearrowright}
\newcommand{\normal}{\trianglelefteq}
\newcommand{\normalf}{\trianglelefteq_{\!f}}
\newcommand{\normalo}{\trianglelefteq_{\!o}}
\newcommand{\cross}{\pitchfork}
\newcommand{\normalcore}[2]{\operatorname{core}_{#1}(#2)}
\newcommand{\Icell}{\mathcal I}
\newcommand{\Dgraph}{\mathcal D}
\newcommand{\Daction}{\operatorname{Diag}}
\newcommand{\id}{\operatorname{id}}
\newcommand{\C}{\mathbb C}
\newcommand{\hZ}{\widehat{\Z}}
\newcommand{\tors}{\operatorname{tors}}
\newcommand{\tr}{\operatorname{tr}}
\newcommand{\lcm}{\operatorname{lcm}}
\newcommand{\GL}{\operatorname{GL}}
\newcommand{\Int}{\operatorname{int}}
\numberwithin{equation}{section}

\title{Cubical residual complexes and profinite rigidity of real hyperbolic lattices}

\author{Yong Hou}
\date{}
\begin{document}
\maketitle

\begin{abstract}
We study profinite rigidity of real hyperbolic lattices, with the main application in dimension
three. The basic tool is a profinite version of the dual cube complex of a pair of filling curves on
a surface, which we call a profinite cubical residual complex. We prove that an equivariant
isomorphism between the completions of two such complexes is induced, after conjugation by a single
element, by an isomorphism of the discrete groups and complexes, and that the same holds for free
cocompact cellular actions of finitely generated residually finite groups on connected, locally
finite, finite dimensional regular CW complexes. On a closed orientable surface of genus at least
two, a theory of divisible fillings determines the completed cut tree of a nonseparating simple
closed curve. Combined with the homological intersection criterion of Boggi and Zalesskii, this
determines the completed dual square complex of a nonseparating filling pair, and hence realizes
every isomorphism of profinite surface groups which preserves the two marked cyclic subgroups. In
dimension three we combine these results with virtual fibering, profinite invariants of
hyperbolic $3$-manifolds and the goodness of $3$-manifold groups. Massey products and virtual
domination give cohomological integrality, and pseudo-Anosov dynamics, periodic orbit traces and
cross sections of suspension flows give a correspondence of prime periodic orbits together with the
fiber curves needed for the realization. Profinite rigidity of closed and of cusped lattices
follows, the cusped case through cyclic orbifold fillings, and we obtain
$\Out(\Gamma)\cong\Out(\wh\Gamma)$ for every lattice of $\PSL_2(\C)$.
\end{abstract}

\tableofcontents


\section{Introduction}
For $G$ a finitely generated residually finite group, its profinite completion $\wh G$ determines
all finite quotients of $G$ \cite{RibesZalesskii2010}. However, it does not in general determine the
dense copy of $G$ inside $\wh G$; in fact, a completed action does not necessarily determine a
discrete action of $G$, and one may ask when it does. The purpose of this paper is to answer this
question. We show how to determine discrete actions from equivariant isomorphisms of profinite
completions, and we apply this to the profinite rigidity of hyperbolic lattices.

Profinite rigidity of real hyperbolic lattices is a well known problem with both algebraic and
geometric origins \cite{Grothendieck1970,WiltonZalesskii2017}. Our approach is to build spaces on
which a profinite action can be followed. We use Sageev's cube complexes \cite{Sageev1995}, and we
generalize the construction of Guirardel's core \cite{Guirardel2005}, the core of a product of two
trees, which Guirardel studied in connection with irreducible outer automorphisms of free groups
acting on $\R$-trees, to the dual square complex of a divisible filling pair of curves. With the
resulting profinite cubical residual complexes
(\cref{def:cubical-occurrence,def:cubical-action-completion}) one can track a profinite action
through the completion (\cref{thm:core-main}) and then determine the original G-action from it
(\cref{thm:discrete-star}). This last step works in every dimension
(\cref{thm:all-dimensional-occurrence}), and we expect it to be useful for profinite rigidity in
higher dimensions.

In dimension two we work with nonseparating filling pairs of curves satisfying divisibility
conditions. Let $S$ be a closed orientable surface of genus at least two, let $\Pi=\pi_1(S)$, and
let $A=\langle a\rangle$ be the cyclic subgroup represented by a nonseparating simple closed curve
$\alpha$. Cutting $S$ along $\alpha$ gives an efficient HNN splitting of $\Pi$ whose vertex group
$H$ is the fundamental group of the cut surface \cite{SerreTrees2003,Ribes2017}. Imposing the
relation $a^{\ell_n}=1$, where $\ell_n=\lcm(1,\ldots,n)$, produces an HNN extension with a finite
edge group and a cocompact Fuchsian vertex group; we call these quotients divisible fillings. A
fixed point theorem for profinite Poincar\'e duality groups \cite[Theorem~1.10 and
Corollary~1.11]{WiltonZalesskii2019} identifies the completed vertex group of each filling, and if
$P_{\alpha,\ell_n}$ denotes its inverse image in $\wh\Pi$, then
$\cl H=\bigcap_{n\geq3}P_{\alpha,\ell_n}$. The conjugators appearing at the different levels can be
chosen compatibly, because the relevant sets form a nested family of nonempty compact sets, and
profinite malnormality \cite[Theorem~3.3]{WiltonZalesskii2017} detects adjacency in the profinite
Bass--Serre tree. In this way the divisible fillings determine the full profinite cut tree of
$\alpha$ (\cref{thm:finite-cone-main}). For an ordered nonseparating filling pair $(\alpha,\beta)$,
the dual square complex $C(\alpha,\beta)$ is the surface version of the Guirardel core in
$T_\alpha\times T_\beta$. We complete its vertex, edge, square, endpoint occurrence and side
occurrence spaces, together with all attaching maps. Crossing of edges is detected in finite covers
by the homological criterion of Boggi and Zalesskii \cite{BoggiZalesskii2017}, and since this
criterion is insensitive to replacing the two cyclic generators by independent profinite unit
powers, the two marked procyclic conjugacy classes determine an equivariant isomorphism of the
completed cores (\cref{thm:core-main}).

The original G-action is then determined from the completed occurrence structure. The point is that
once the profinite action sends some original vertex to an original vertex, the attaching maps
determine its original star; by connectedness this propagates through the whole complex and also
determines the dense acting group (\cref{thm:discrete-star}). By induction over the skeleta the same
conclusion holds for free cocompact actions on connected, locally finite, finite dimensional regular
CW complexes (\cref{thm:all-dimensional-occurrence}).

$3$-dimensional hyperbolic geometry is one of the most developed theories in geometry, and a
tremendous amount of work has been done on the profinite rigidity question in dimension $3$
\cite{WiltonZalesskii2017,WiltonZalesskii2019,Liu2023}. What is so special about dimension $3$ is
the fact that all geometric structures are determined by surfaces, as is already evident from the
basic example of a mapping torus. It is this special property that distinguishes dimension $3$ from
higher dimensions, and the theory of hyperbolic $3$-manifolds is especially successful because of
it. In fact, we have a pretty much complete understanding of $3$-manifold groups through
Perelman--Thurston geometrization \cite{AschenbrennerFriedlWilton2015}, Agol's virtual fibration
theorem \cite{Agol2008,Agol2013,Wise2021}, JSJ decompositions \cite{JacoShalen1979} and the surface
subgroup theorem \cite{KahnMarkovic2012}. It is safe to say that all the ingredients exist to
establish profinite rigidity in this dimension, except the realization part. Hence, by combining
these high-powered results with our realization theorems, we are able to complete the proof of
profinite rigidity of lattices in $\PSL_2(\C)$ (\cref{thm:lattice-rigidity}). Given an isomorphism
between the profinite completions of two finite volume hyperbolic $3$-manifold groups, we first
produce surface curves to which the realization theorems apply. Liu's regularity theorem
\cite[Theorems~1.2--1.3]{Liu2023}, a triple Massey product evaluated on an explicit integral
Heisenberg cocycle, and the virtual domination theorem of Liu and Sun \cite{LiuSun2018} give
cohomological integrality (\cref{thm:closed-integral}): the cubic and quartic evaluations involve
the same orientation unit but different powers of the scalar $\mu$, which forces $\mu=\pm1$.
Comparing twisted torsion with fixed point traces \cite{Liu2023,Jiang1996} and using separation in
finite quotients \cite{HamiltonWiltonZalesskii2013} together with compactness, we obtain a
correspondence between the prime periodic orbits of the two suspension flows. Two independent
fibered classes place a matched orbit in corresponding surface fibers: we perturb a class in the
closure of a fibered cone \cite{Agol2008,Thurston1986}, choose an orbit on which it vanishes
\cite{Fried1982,FathiShub1979}, and pass to characteristic simple elevations \cite{Scott1978}. Large
pseudo-Anosov iterates then give two nonseparating filling pairs \cite{MasurMinsky1999}, and the
realization theorems apply (\cref{cor:closed-rigidity}).

In the cusped case, each peripheral map is a profinite unit times an integral map, so the closed
normal subgroups defining divisible peripheral fillings are preserved, because
$\cl{\langle e^{k\mu}\rangle}=\cl{\langle e^k\rangle}$ for $\mu\in\hZ^\times$. The closed case,
Mostow--Prasad rigidity and simultaneous drilling \cite{BoileauPorti2001,Mostow1968,Prasad1973} give
homeomorphisms of the unfilled compact cores. Every fixed finite quotient factors through the
fillings of all sufficiently large index, only finitely many discrete outer classes occur, and the
conjugator sets are compact, so we determined the original cusped isomorphism
(\cref{cor:cusped-rigidity}). Passing to finite extensions proves \cref{thm:lattice-rigidity},
including $\Out(\Gamma)\cong\Out(\wh\Gamma)$ for Kleinian lattices, and cusp integrality follows.

Background on profinite groups, graphs of groups and their trees can be found in
\cite{SerreTrees2003,RibesZalesskii2010,Ribes2017}; for regular CW complexes, PL maps and hyperbolic
geometry see \cite{Hatcher2002,RourkeSanderson1982,Thurston1986}.

The main results are collected in \cref{sec:main-results}.
Sections~\ref{sec:preliminaries}--\ref{sec:graph-occurrence} contain the profinite preliminaries and
the realization theorems for cubical and graph residual complexes.
Sections~\ref{sec:closed-sign}--\ref{sec:cusps} prove the cohomological, dynamical and filling
results used for lattices. In \cref{sec:nonlattice} we treat marked nonlattice groups and compact
cores.

\subsection*{Definitions and notations}


\begin{terminology}
\emph{original} $G$-object:
Let $G$ be a residually finite group and let $Z$ be a left $G$-set with finitely many $G$-orbits.
Its profinite completion is $\wh Z=\varprojlim_{N\normalf G}N\backslash Z$, with the natural map
$\iota_Z:Z\longrightarrow\wh Z$, $z\longmapsto(Nz)_{N\normalf G}$. A point $\wh z\in\wh Z$ is called
\emph{original} if $\wh z\in\iota_Z(Z)$; equivalently, there is $z\in Z$ whose image $Nz$ is the
$N$-th coordinate of $\wh z$ for every $N\normalf G$. This terminology is always relative to the
$G$-set $Z$ in question. Whenever $\iota_Z$ is injective, we identify $Z$ with its original image.
Taking $Z=G$ with the left multiplication action, the original elements of $\wh G$ are the elements
of its canonical subgroup $G$. For a completed complex, the definition applies separately to every
cell and occurrence space defined later, and the \emph{original complex} with its \emph{original
$G$-action} is the discrete complex with its discrete action. The same definition applies to
completions formed over any cofinal family of finite index normal subgroups.\end{terminology}
We assume all profinite homomorphisms are continuous. If $A\subseteq G$, we write $\cl A$ for the
closure of its image in $\wh G$. For $f:G\to H$, we write $\wh f:\wh G\to\wh H$ for the induced
continuous homomorphism. For $X\subseteq G$, we set
$\Ncl{X}{G}=\bigcap_{\substack{N\normal G\\X\subseteq N}}N$, the normal closure of $X$ in $G$.

\begin{terminology}
We say that an isomorphism $\Phi:\wh G\to\wh H$ is \emph{discretely induced up to profinite inner
automorphism} if
\[
\Phi=\Inn(u)\circ\wh f
\]
for an isomorphism $f:G\to H$ of the discrete groups and some $u\in\wh H$.
\end{terminology}


\begin{definition}\label{def:cut-tree}
A simple closed curve is the image of an embedding of a circle. It is nonseparating if its
complement in the surface is connected. Let $\alpha$ be such a curve on $S$. We write
\[
\Pi=\langle H,t\mid tA_-t^{-1}=A_+\rangle,
\qquad H=\pi_1(S_\alpha),
\]
where $A_\pm$ are the two cyclic boundary subgroups, $A=A_-$, and $t$ is the stable letter. We set
$S_\alpha=S\setminus\Int N(\alpha)$, where $N(\alpha)$ is a closed annular neighborhood of $\alpha$.
This compact cut surface is homotopy equivalent to $S\setminus\alpha$. Based paths identify its
fundamental group with the subgroup $H\leq\Pi$. We denote the original Bass--Serre tree by
$T_\alpha$. Efficiency of the splitting, defined below, identifies $\wh H$ and $\wh A$ with their
closures in $\wh\Pi$, so the standard profinite tree, again denoted $\wh T_\alpha$, has vertex and
edge spaces
\[
V(\wh T_\alpha)=\wh\Pi/\cl H,
\qquad E(\wh T_\alpha)=\wh\Pi/\cl A.
\]
The tree is the one associated to the completed HNN splitting before imposing any finite cyclic
curve power relation. We use \emph{full profinite cut tree} only in this sense. See
\cite{SerreTrees2003} and \cite{Ribes2017}.
\end{definition}

\begin{definition}
Let $(\mathcal G,Y)$ be a finite graph of groups with residually finite fundamental group $\Gamma$.
The splitting is \emph{efficient} when every vertex and edge group is closed in the profinite
topology of $\Gamma$ and the topology induced on each of them is its full profinite topology. A
graph of profinite groups is \emph{proper} if every vertex group maps injectively to its profinite
fundamental group. See \cite{Ribes2017}.
\end{definition}

\begin{definition}
A cyclic subgroup is maximal cyclic if it is contained in no larger cyclic subgroup. Given a
nonseparating simple closed curve, let $a$ be a based representative that traverses it once, and set
$A=\langle a\rangle\leq\Pi$. This subgroup is maximal cyclic. The marked pair is denoted by
$(\wh\Pi,[\cl A])$. An isomorphism of marked pairs is an isomorphism $\theta:\wh\Pi_1\to\wh\Pi_2$
such that $\theta(\cl A_1)$ is conjugate to $\cl A_2$.
\end{definition}

\begin{definition}\label{def:cubical-occurrence}
Let $C$ be a regular square ($2$-dimensional cube) complex. We choose for each cell a characteristic
map that is a homeomorphism onto its closure. We write $C^k$ for the set of open $k$-cells. For an
open edge $e$ and an open square $q$, we fix the characteristic maps given by the cubical structure,
\[
\chi_e:[-1,1]\longrightarrow\cl e,
\qquad
\chi_q:[-1,1]^2\longrightarrow\cl q.
\]
For $\epsilon\in\{-1,+1\}$, the pair $(e,\epsilon)$ is an endpoint occurrence of $e$. For
$j\in\{1,2\}$ and $\epsilon\in\{-1,+1\}$, set
\[
F_{j,\epsilon}=\{(x_1,x_2)\in[-1,1]^2:x_j=\epsilon\}.
\]
The triple $(q,j,\epsilon)$ is the corresponding side occurrence of $q$. We define
\[
I_{10}(C)=\{(e,\epsilon):e\in C^1,\ \epsilon\in\{-1,+1\}\},
\]
\[
I_{21}(C)=\{(q,j,\epsilon):q\in C^2,\ j\in\{1,2\},\
\epsilon\in\{-1,+1\}\}.
\]
The structure maps are
\[
\begin{aligned}
p_{10}(e,\epsilon)&=e,
&q_{10}(e,\epsilon)&=\chi_e(\epsilon),\\
p_{21}(q,j,\epsilon)&=q,
&q_{21}(q,j,\epsilon)&=e,
\end{aligned}
\]
where $e$ is the unique open edge satisfying
\[
\chi_q\bigl(F^o_{j,\epsilon}\bigr)=e,
\]
where $F^o_{1,\epsilon}=\{\epsilon\}\times(-1,1),F^o_{2,\epsilon}=(-1,1)\times\{\epsilon\}.$
The \emph{cubical residual complex} of $C$ is
\[
\Icell(C)=
\bigl(C^0,C^1,C^2,I_{10}(C),I_{21}(C),
p_{10},q_{10},p_{21},q_{21}\bigr).
\]
Distinct occurrence orbits remain distinct even when their incident cell orbits coincide.

Let $C'$ be another regular square complex. A \emph{morphism} $F:\Icell(C)\to\Icell(C')$ is a tuple
of five maps
\[
F=(F_0,F_1,F_2,F_{10},F_{21})
\]
commuting with the four structure maps:
\[
\begin{aligned}
F_1p_{10}&=p'_{10}F_{10},
&F_0q_{10}&=q'_{10}F_{10},\\
F_2p_{21}&=p'_{21}F_{21},
&F_1q_{21}&=q'_{21}F_{21}.
\end{aligned}
\]
It is an isomorphism when its five component maps are bijective. In the profinite setting the
component maps are required to be continuous, and a continuous component-wise bijection is a
homeomorphism because the component spaces are compact and Hausdorff.
\end{definition}
A \emph{finite cubical residual diagram} is a diagram of finite sets
\[
\mathrm I=
\bigl(
C^0,C^1,C^2,I_{10},I_{21},
p_{10},q_{10},p_{21},q_{21}
\bigr),
\]
with structure maps
\[
p_{10}:I_{10}\longrightarrow C^1,
\qquad
q_{10}:I_{10}\longrightarrow C^0,
\]
\[
p_{21}:I_{21}\longrightarrow C^2,
\qquad
q_{21}:I_{21}\longrightarrow C^1,
\]
which is isomorphic to a component-wise quotient
$H\backslash\mathcal I(C)$, where $C$ is a regular square
complex and $H$ acts cellularly on $C$ with finitely many
orbits on each cell and occurrence set.
The quotient structure maps are induced by those of
$\mathcal I(C)$.
An isomorphism here consists of bijections on the five sets
commuting with the four structure maps.
Taking $H=\{1\}$ includes the cubical residual complex of
every finite regular square complex.
\begin{definition}
\label{def:profinite-cubical-occurrence}
Let $\Lambda$ be a nonempty directed partially ordered set. Let $\lambda\in\Lambda$, let
\[
\mathrm I_\lambda
=
\bigl(
C^0_\lambda,C^1_\lambda,C^2_\lambda,
I_{10,\lambda},I_{21,\lambda},
p_{10,\lambda},q_{10,\lambda},
p_{21,\lambda},q_{21,\lambda}
\bigr)
\]
be a finite cubical residual diagram. We allow both the residual complexes of
\cref{def:cubical-occurrence} and the finite diagrams obtained by taking component-wise quotients of
the cell and occurrence spaces of a regular square complex by a cellular group action. Such a
quotient diagram is not necessarily the residual complex of a regular quotient square complex. A
morphism of these diagrams is a tuple of five maps commuting with the four structure maps, as in
\cref{def:cubical-occurrence}. The structure maps have domains and ranges
\[
\begin{aligned}
p_{10,\lambda}&:I_{10,\lambda}\longrightarrow C^1_\lambda,
&q_{10,\lambda}&:I_{10,\lambda}\longrightarrow C^0_\lambda,\\
p_{21,\lambda}&:I_{21,\lambda}\longrightarrow C^2_\lambda,
&q_{21,\lambda}&:I_{21,\lambda}\longrightarrow C^1_\lambda.
\end{aligned}
\]

For $\lambda\leq\mu$, let $r_{\mu\lambda}:\mathrm I_\mu\to\mathrm I_\lambda$ be a morphism as
defined above. For each component $X\in\{C^0,C^1,C^2,I_{10},I_{21}\}$, write
\[
r_{\mu\lambda}^{X}:X_\mu\longrightarrow X_\lambda
\]
for its corresponding component map. We require
\[
r_{\lambda\lambda}^{X}=\id_{X_\lambda},
\qquad
r_{\nu\lambda}^{X}
=r_{\mu\lambda}^{X}\circ r_{\nu\mu}^{X}
\quad(\lambda\leq\mu\leq\nu).
\]
The requirement that $r_{\mu\lambda}$ be a morphism means that
\[
\begin{aligned}
r_{\mu\lambda}^{C^1}\circ p_{10,\mu}
&=p_{10,\lambda}\circ r_{\mu\lambda}^{I_{10}},\\
r_{\mu\lambda}^{C^0}\circ q_{10,\mu}
&=q_{10,\lambda}\circ r_{\mu\lambda}^{I_{10}},\\
r_{\mu\lambda}^{C^2}\circ p_{21,\mu}
&=p_{21,\lambda}\circ r_{\mu\lambda}^{I_{21}},\\
r_{\mu\lambda}^{C^1}\circ q_{21,\mu}
&=q_{21,\lambda}\circ r_{\mu\lambda}^{I_{21}}.
\end{aligned}
\]

We give each finite component set the discrete topology. Then for each of the five components we
define:
\[
\wh X
=
\varprojlim_{\lambda\in\Lambda}X_\lambda
=
\left\{
(x_\lambda)_\lambda\in\prod_{\lambda\in\Lambda}X_\lambda:
r_{\mu\lambda}^{X}(x_\mu)=x_\lambda
\text{ whenever }\lambda\leq\mu
\right\},
\]
with the subspace topology inherited from the product. This is a closed subspace of a product of
finite discrete spaces, hence a profinite space. Denote its coordinate projections by
\[
\pi_\lambda^{X}:\wh X\longrightarrow X_\lambda,
\qquad
(x_\eta)_\eta\longmapsto x_\lambda.
\]
The nonempty sets
\[
(\pi_\lambda^{X})^{-1}(\{x\}),
\qquad \lambda\in\Lambda,\quad x\in X_\lambda,
\]
form a basis of both closed and open sets. Define the four structure maps by
\[
\begin{aligned}
\wh p_{10}((\iota_\lambda)_\lambda)
&=(p_{10,\lambda}(\iota_\lambda))_\lambda,\\
\wh q_{10}((\iota_\lambda)_\lambda)
&=(q_{10,\lambda}(\iota_\lambda))_\lambda,\\
\wh p_{21}((\sigma_\lambda)_\lambda)
&=(p_{21,\lambda}(\sigma_\lambda))_\lambda,\\
\wh q_{21}((\sigma_\lambda)_\lambda)
&=(q_{21,\lambda}(\sigma_\lambda))_\lambda,
\end{aligned}
\]
where $(\iota_\lambda)_\lambda\in\wh I_{10}$ and $(\sigma_\lambda)_\lambda\in\wh I_{21}$. The
commutation identities above ensure that these sequences belong to the inverse limits in question.
For example,
\[
\begin{aligned}
r_{\mu\lambda}^{C^1}
\bigl(p_{10,\mu}(\iota_\mu)\bigr)
&=p_{10,\lambda}
\bigl(r_{\mu\lambda}^{I_{10}}(\iota_\mu)\bigr)\\
&=p_{10,\lambda}(\iota_\lambda).
\end{aligned}
\]
In addition, each resulting map $\wh s:\wh X\to\wh Y$ is continuous and satisfies
\[
\pi_\lambda^{Y}\circ\wh s
=s_\lambda\circ\pi_\lambda^{X}
\qquad(\lambda\in\Lambda).
\]

Then the resulting diagram of profinite spaces
\[
\wh{\mathrm I}
=
\bigl(
\wh C^0,\wh C^1,\wh C^2,
\wh I_{10},\wh I_{21},
\wh p_{10},\wh q_{10},
\wh p_{21},\wh q_{21}
\bigr)
=
\varprojlim_{\lambda\in\Lambda}\mathrm I_\lambda
\]
is called the \emph{profinite cubical residual complex} of this inverse system.
\end{definition}
\begin{definition}
\label{def:cubical-action-completion}
Let a residually finite group $\Gamma$ act cellularly on a regular square ($2$-cube) complex $C$, with finitely
many $\Gamma$-orbits in each cell and occurrence space. Let $\mathscr N$ be a cofinal family of
finite index normal subgroups, ordered by reverse inclusion. Then for $N'\leq N$ and any component
space $Z$, the \emph{attaching map} is given by
\[
r^Z_{N',N}:N'\backslash Z\longrightarrow N\backslash Z,
\qquad N'z\longmapsto Nz.
\]
This is well defined and surjective. Equivariance of the four structure maps shows that the five
attaching maps form a morphism $N'\backslash\Icell(C)\longrightarrow N\backslash\Icell(C)$, and we
obtain the inverse system
\[
\wh{\Icell}_{\mathscr N}(C)
=\varprojlim_{N\in\mathscr N}N\backslash\Icell(C).
\]
If the action is free and cocompact, then every component space $Z$ is a finite disjoint union of
free $\Gamma$-orbits and
\[
Z=\bigsqcup_{j=1}^{r_Z}\Gamma z_j,
\qquad
\wh Z=\bigsqcup_{j=1}^{r_Z}\wh\Gamma z_j.
\]
In addition, if an orbit representative has finite stabilizer $H_j$, its completed orbit is
$\wh\Gamma/H_j$.

Completion is carried out separately for the occurrence spaces and for the cell spaces, so in
general there are no canonical identifications
\[
N\backslash I_{10}(C)\cong(N\backslash C^1)\times\{-1,+1\},
\]
or
\[
N\backslash I_{21}(C)\cong(N\backslash C^2)\times
\{1,2\}\times\{-1,+1\},
\]
because an element of $N$ may reverse endpoint positions or permute side positions.
\end{definition}

\begin{example}
Let $C$ be the standard unit square tiling of $\R^2$. We write
\[
v_{m,n}=(m,n),\qquad
a_{m,n}=[m,m+1]\times\{n\},\qquad
b_{m,n}=\{m\}\times[n,n+1],
\]
\[
q_{m,n}=[m,m+1]\times[n,n+1].
\]
The translation group $\Gamma=\Z^2$ acts freely and cocompactly. The quotient cell sets contain one
vertex $v$, two edges $a,b$, and one square $q$, but
\[
\Gamma\backslash I_{10}(C)=\{a^-,a^+,b^-,b^+\},
\]
where
\[
p_{10}(a^\pm)=a,\qquad p_{10}(b^\pm)=b,
\qquad q_{10}(a^\pm)=q_{10}(b^\pm)=v.
\]
For the square side occurrences,
\[
\Gamma\backslash I_{21}(C)
=\{q_{1,-},q_{1,+},q_{2,-},q_{2,+}\},
\]
with
\[
p_{21}(q_{j,\epsilon})=q,
\qquad
q_{21}(q_{1,-})=q_{21}(q_{1,+})=b,
\qquad
q_{21}(q_{2,-})=q_{21}(q_{2,+})=a.
\]
The two vertical side positions are distinct in the quotient occurrence set although both map to
$b$. Likewise, the two horizontal positions are distinct although both map to $a$. Replacing the
occurrence sets by the relations
\[
\{(a,v),(b,v)\},\qquad \{(q,a),(q,b)\}
\]
would remove these multiplicities. We therefore take every inverse limit on the occurrence sets as
well as on the cell sets.
\end{example}

\begin{definition}\label{def:core}
Let $S$ be a connected closed orientable surface of genus $\ge 2$ and let $\Pi=\pi_1(S)$. Let
$(\alpha,\beta)$ be an ordered pair of nontrivial simple closed curves such that both
$S\setminus\alpha$ and $S\setminus\beta$ are connected. We choose representatives such that
\[
|\alpha\cap\beta|=i(\alpha,\beta),
\]
where $i(\alpha,\beta)$ is the minimum number of transverse intersections among representatives of
their free homotopy classes, called their geometric intersection number. Assume further that every
component of $S\setminus(\alpha\cup\beta)$ is an open disk. Such an ordered pair is called a
\emph{nonseparating filling pair}.

We choose a hyperbolic metric on $S$ and replace $\alpha$ and $\beta$ by their geodesic
representatives. Let $\mathscr A$ and $\mathscr B$ be the $\Pi$-invariant families of their geodesic
lifts in $\widetilde S\cong\HH^2$. Elements of the same family are disjoint, an element of
$\mathscr A$ meets an element of $\mathscr B$ in at most one point, and $\mathscr A\cup\mathscr B$
is locally finite.

Following Sageev's construction of the cube complex dual to a wallspace
\cite{Sageev1995,HruskaWise2014}, we define the \emph{dual square complex} $C(\alpha,\beta)$ by the
following cells and attaching maps:
\begin{enumerate}[label=\textup{(\alph*)}]
\item For every component $R$ of
$\widetilde S\setminus\bigcup_{L\in\mathscr A\cup\mathscr B}L$, introduce one vertex
$v_R$.
\item A \emph{wall segment} is a connected component of
\[
L\setminus\bigcup_{M\in(\mathscr A\cup\mathscr B)\setminus\{L\}}M,
\qquad L\in\mathscr A\cup\mathscr B.
\]
For every wall segment $s$, introduce an abstract open edge $e_s$. Its two endpoint
positions are attached to the vertices corresponding to the two complementary components
adjacent to $s$.
\item Let $x=L_\alpha\cap L_\beta$ be a crossing, with $L_\alpha\in\mathscr A$ and
$L_\beta\in\mathscr B$. The four half branches at $x$ lie in four wall segments
$s_1,s_2,s_3,s_4$, listed in their cyclic order about $x$. Introduce a model square
$[-1,1]^2$ and attach its four side positions, in cyclic order, to
$e_{s_1},e_{s_2},e_{s_3},e_{s_4}$. Its four vertices are attached to the four
complementary components incident to $x$.
\end{enumerate}


\begin{figure}[!ht]
\centering
\begin{tikzpicture}[x=1cm,y=1cm,font=\small,
awall/.style={red!75!black,line width=1.05pt},
bwall/.style={green!50!black,line width=1.05pt},
dot/.style={circle,fill=black,inner sep=1.6pt}]


\node at (0,2.35) {Walls near $x$};
\draw[awall] (0,-1.55)--(0,1.55);
\draw[awall,dashed] (0,1.55)--(0,1.9);
\draw[awall,dashed] (0,-1.55)--(0,-1.9);
\draw[bwall] (-1.55,0)--(1.55,0);
\draw[bwall,dashed] (1.55,0)--(1.9,0);
\draw[bwall,dashed] (-1.55,0)--(-1.9,0);
\node[dot] at (0,0) {};
\node at (0.22,-0.23) {$x$};
\node[red!75!black] at (0.27,0.72) {$s_1$};
\node[green!50!black] at (0.72,-0.26) {$s_2$};
\node[red!75!black] at (-0.27,-0.72) {$s_3$};
\node[green!50!black] at (-0.72,0.26) {$s_4$};
\node[red!75!black] at (0.39,1.76) {$L_\alpha$};
\node[green!50!black] at (1.78,0.31) {$L_\beta$};
\node at (1.03,1.03) {$R_1$};
\node at (1.03,-1.03) {$R_2$};
\node at (-1.03,-1.03) {$R_3$};
\node at (-1.03,1.03) {$R_4$};
\node at (0,-2.25) {$L_\alpha\in\mathscr A,\quad L_\beta\in\mathscr B$};
\node at (3.2,0) {$\longrightarrow$};


\begin{scope}[shift={(6.4,0)}]
\node at (0,2.35) {Model square for $x$};
\fill[black!3] (-1.25,-1.25) rectangle (1.25,1.25);
\draw[awall,line width=1.15pt] (-1.25,1.25)--(1.25,1.25);
\draw[awall,line width=1.15pt] (-1.25,-1.25)--(1.25,-1.25);
\draw[bwall,line width=1.15pt] (1.25,-1.25)--(1.25,1.25);
\draw[bwall,line width=1.15pt] (-1.25,-1.25)--(-1.25,1.25);
\foreach \p in {(-1.25,1.25),(1.25,1.25),(1.25,-1.25),(-1.25,-1.25)}
\node[dot] at \p {};
\node at (0,0) {$[-1,1]^2$};
\node[red!75!black] at (0,1.54) {$e_{s_1}$};
\node[green!50!black] at (1.65,0) {$e_{s_2}$};
\node[red!75!black] at (0,-1.54) {$e_{s_3}$};
\node[green!50!black] at (-1.65,0) {$e_{s_4}$};
\node at (1.48,1.61) {$v_{R_1}$};
\node at (1.48,-1.61) {$v_{R_2}$};
\node at (-1.48,-1.61) {$v_{R_3}$};
\node at (-1.48,1.61) {$v_{R_4}$};
\node at (0,-2.25) {$R_i\longleftrightarrow v_{R_i},\quad s_i\longleftrightarrow e_{s_i}$};
\end{scope}
\end{tikzpicture}
\caption{A crossing $x$ and its dual square. On the left, the four wall segments
$s_1,s_2,s_3,s_4$ at $x$ in cyclic order, and the four adjacent complementary components
$R_1,\ldots,R_4$ (with $R_0=R_4$). On the right, the model square attached at $x$: its
$i$th side is glued to $e_{s_i}$, whose endpoints are $v_{R_{i-1}}$ and $v_{R_i}$.}
\label{fig:dual-square-construction}
\end{figure}


The graph $\alpha\cup\beta$ has finitely many vertices and edges, and each complementary component
is a disk bounded by finitely many curve segments. Every component of the lifted complement is a
lift of one of these disks, so only finitely many wall segments meet it. A wall segment has two
crossing endpoints and belongs to two dual squares (\cref{fig:dual-square-construction}), so
$C(\alpha,\beta)$ is a locally finite regular square complex. The deck action preserves regions,
wall segments, crossings, and face positions, so it is cellular. The construction is the two
dimensional case of Sageev's cubulation of a wallspace \cite{Sageev1995,HruskaWise2014}, and for
surfaces its image in the product of the two dual trees is the Guirardel core \cite{Guirardel2005}.


\begin{figure}[htbp]
\centering
\resizebox{\linewidth}{!}{
\begin{tikzpicture}[
x=1cm,y=1cm,
font=\small,
alphaedge/.style={
draw=red!75!black,line width=1.3pt},
betaedge/.style={
draw=green!50!black,line width=1.3pt},
vertex/.style={
circle,fill=black,inner sep=1.5pt},
maparrow/.style={
-{Stealth[length=2.6mm]},line width=.8pt},
paneltitle/.style={
font=\normalsize,align=center}]
\node[paneltitle] at (2.1,5.0) {Local wall picture};
\fill[black!2] (.35,.6) rectangle (3.85,4.05);

\draw[alphaedge] (2.1,.6)--(2.1,4.05);
\draw[betaedge] (.35,2.3)--(3.85,2.3);
\node[vertex] at (2.1,2.3) {};
\node[anchor=north east,fill=white,inner sep=1pt]
at (2.02,2.22) {$x$};

\node[red!75!black,anchor=south west]
at (2.15,3.7) {$L_\alpha$};
\node[green!50!black,anchor=south east]
at (3.83,2.36) {$L_\beta$};

\node at (1.16,3.15) {$R_{-+}$};
\node at (3.04,3.15) {$R_{++}$};
\node at (1.16,1.45) {$R_{--}$};
\node at (3.04,1.45) {$R_{+-}$};
\node at (2.1,.05) {$x=L_\alpha\cap L_\beta$};

\draw[maparrow] (4.25,2.3)--(5.65,2.3)
node[midway,above=5pt] {dual};

\node[paneltitle] at (7.9,5.0)
{Dual square in $C(\alpha,\beta)$};

\fill[black!2] (6.35,.75) rectangle (9.45,3.85);
\draw[alphaedge] (6.35,.75)--(9.45,.75);
\draw[alphaedge] (6.35,3.85)--(9.45,3.85);
\draw[betaedge] (6.35,.75)--(6.35,3.85);
\draw[betaedge] (9.45,.75)--(9.45,3.85);

\node[vertex,
label={[label distance=3pt]below:$v_{R_{--}}$}]
at (6.35,.75) {};
\node[vertex,
label={[label distance=3pt]below:$v_{R_{+-}}$}]
at (9.45,.75) {};
\node[vertex,
label={[label distance=3pt]above:$v_{R_{-+}}$}]
at (6.35,3.85) {};
\node[vertex,
label={[label distance=3pt]above:$v_{R_{++}}$}]
at (9.45,3.85) {};

\node[font=\large] at (7.9,2.3) {$\cl{q_x}$};

\draw[maparrow] (10.15,2.3)--(12.2,2.3)
node[midway,above=5pt,font=\large] {$j$};

\node[paneltitle] at (15.65,5.0)
{Product cell in $T_\alpha\times T_\beta$};

\fill[black!2] (13.2,.75) rectangle (18.1,3.85);
\draw[alphaedge] (13.2,.75)--(18.1,.75);
\draw[alphaedge] (13.2,3.85)--(18.1,3.85);
\draw[betaedge] (13.2,.75)--(13.2,3.85);
\draw[betaedge] (18.1,.75)--(18.1,3.85);

\node[vertex,
label={[label distance=4pt]below:
$(U_\alpha^-,U_\beta^-)$}]
at (13.2,.75) {};
\node[vertex,
label={[label distance=4pt]below:
$(U_\alpha^+,U_\beta^-)$}]
at (18.1,.75) {};
\node[vertex,
label={[label distance=4pt]above:
$(U_\alpha^-,U_\beta^+)$}]
at (13.2,3.85) {};
\node[vertex,
label={[label distance=4pt]above:
$(U_\alpha^+,U_\beta^+)$}]
at (18.1,3.85) {};

\node at (15.65,2.3)
{$E_\alpha(L_\alpha)\times E_\beta(L_\beta)$};

\node[red!75!black,fill=white,inner sep=2pt]
at (15.65,.75)
{$E_\alpha(L_\alpha)\times\{U_\beta^-\}$};

\node[green!50!black,rotate=90,fill=white,inner sep=2pt]
at (18.1,2.3)
{$\{U_\alpha^+\}\times E_\beta(L_\beta)$};

\node[font=\normalsize] at (9.4,-.6)
{$j(v_{R_{\varepsilon\delta}})=(U_\alpha^\varepsilon,U_\beta^\delta),
\qquad \varepsilon,\delta\in\{-,+\}$};

\draw[alphaedge] (3.15,-1.35)--(3.8,-1.35);
\node[anchor=west] at (3.95,-1.35)
{$\alpha$-wall / edge dual to an $\alpha$-wall};

\draw[betaedge] (10.85,-1.35)--(11.5,-1.35);
\node[anchor=west] at (11.65,-1.35)
{$\beta$-wall / edge dual to a $\beta$-wall};

\end{tikzpicture}
}
\caption{The map $j$ on the closed dual square $\cl{q_x}$ at $x=L_\alpha\cap L_\beta$,
where $E_\alpha(L_\alpha)$ and $E_\beta(L_\beta)$ are the closed tree edges dual to the
two walls, with endpoints $U_\alpha^\pm$ and $U_\beta^\pm$. The signs in
$R_{\varepsilon\delta}$ retain its $\alpha$- and $\beta$-components.
Horizontal dual edges vary the $T_\alpha$-coordinate, and vertical dual edges vary the
$T_\beta$-coordinate. Red indicates the $\alpha$-family and green the $\beta$-family;
the left panel is only a local topological picture; the walls do not in general meet at
right angles.}
\label{fig:map-j}
\end{figure}


Let $T_\alpha$ and $T_\beta$ be the trees dual to $\mathscr A$ and $\mathscr B$. For a complementary
component $R$, let $U_\alpha(R)$ and $U_\beta(R)$ be the components of
$\widetilde S\setminus\mathscr A$ and $\widetilde S\setminus\mathscr B$ containing $R$. For
$L\in\mathscr A$, let $E_\alpha(L)\subset T_\alpha$ denote the closed edge dual to $L$. Define
$E_\beta(L)\subset T_\beta$ similarly for $L\in\mathscr B$. We first set
\[
j(v_R)=\bigl(U_\alpha(R),U_\beta(R)\bigr).
\]

Suppose that $s\subset L\in\mathscr A$ is an open wall segment, and let $R_-,R_+$ be its two
adjacent regions. The segment $s$ is connected and avoids $\mathscr B$, so it lies in a unique
component $U_\beta(s)$ of $\widetilde S\setminus\mathscr B$. We have
\[
U_\beta(R_-)=U_\beta(R_+)=U_\beta(s),
\]
while $U_\alpha(R_-)$ and $U_\alpha(R_+)$ are the two endpoints of $E_\alpha(L)$. Extend $j$ over
the closed dual edge $\cl{e_s}$ by the unique affine homeomorphism
\[
j|_{\cl{e_s}}:
\cl{e_s}\xrightarrow{\cong}
E_\alpha(L)\times\{U_\beta(s)\}
\]
whose endpoint values are the given images of $v_{R_-}$ and $v_{R_+}$.

Similarly, if $s\subset L\in\mathscr B$, let $U_\alpha(s)$ be the component of
$\widetilde S\setminus\mathscr A$ containing $s$, and define
\[
j|_{\cl{e_s}}:
\cl{e_s}\xrightarrow{\cong}
\{U_\alpha(s)\}\times E_\beta(L)
\]
affinely, with the given endpoint values.

Now let $q_x$ be the open dual square corresponding to a crossing $x=L_\alpha\cap L_\beta$. The four
regions adjacent to $x$ have coordinate pairs equal to the four vertices of
\[
E_\alpha(L_\alpha)\times E_\beta(L_\beta).
\]
Their cyclic order agrees with the boundary order of this product square, and the vertex assignment
extends to a unique cubical isomorphism
\[
j|_{\cl{q_x}}:
\cl{q_x}\xrightarrow{\cong}
E_\alpha(L_\alpha)\times E_\beta(L_\beta).
\]
On each side, this is the affine map already defined on the corresponding dual edge, so the cell
maps agree on their common faces and define a continuous cellular map
\[
j:C(\alpha,\beta)\longrightarrow T_\alpha\times T_\beta.
\]

The action of $\Pi$ preserves both wall families and their complementary components, hence for every
$g\in\Pi$ we have
\[
\begin{aligned}
j(gv_R)
&=j(v_{gR})\\
&=\bigl(U_\alpha(gR),U_\beta(gR)\bigr)\\
&=\bigl(gU_\alpha(R),gU_\beta(R)\bigr)\\
&=g\,j(v_R).
\end{aligned}
\]
The action on the model cells is cubical, so uniqueness of the edge and square maps above yields
\[
j(gz)=g\,j(z)
\qquad
(g\in\Pi,\ z\in C(\alpha,\beta)).
\]
Therefore $j$ is $\Pi$-equivariant, and the square at $L_\alpha\cap L_\beta$ maps to the product of
the two dual tree edges.

Let $\mathscr N$ be a downward directed family of finite index normal subgroups of $\Pi$: for
$N_1,N_2\in\mathscr N$ there is $N_3\in\mathscr N$ with $N_3\leq N_1\cap N_2$. Order $\mathscr N$ by
reverse inclusion. It is \emph{cofinal} if
\[
\forall K\normalf\Pi\quad\exists N\in\mathscr N\quad N\leq K.
\]
We assume henceforth that $\mathscr N$ is cofinal among the finite index normal subgroups of $\Pi$.
For $N'\leq N$, the covering projection $N'\backslash C\to N\backslash C$ induces a morphism
$N'\backslash\Icell(C)\longrightarrow N\backslash\Icell(C)$ on all five component spaces. The
\emph{profinite cubical residual completion with respect to $\mathscr N$} is the component-wise
inverse limit
\[
\wh{\Icell}_{\mathscr N}(C)
=\varprojlim_{N\in\mathscr N}N\backslash\Icell(C).
\]
Thus, for example:
\[
\wh C_{\mathscr N}^{\,2}=\varprojlim_{N\in\mathscr N}N\backslash C^2,
\qquad
\wh I_{21,\mathscr N}(C)=\varprojlim_{N\in\mathscr N}N\backslash I_{21}(C),
\]
and the four structure maps are the inverse limits of the corresponding quotient maps. Let
$\mathscr K$ be the directed family of all finite index normal subgroups of $\Pi$. Restriction of
coordinates defines an isomorphism
\[
\varprojlim_{K\in\mathscr K}K\backslash\Icell(C)
\longrightarrow
\varprojlim_{N\in\mathscr N}N\backslash\Icell(C).
\]
Now for the inverse map, take $(x_N)_{N\in\mathscr N}$ a compatible point in the completion. Given
$K\in\mathscr K$, choose $N\in\mathscr N$ with $N\leq K$ and take the image of $x_N$ in
$K\backslash\Icell(C)$. This coordinate does not depend on the choice of $N$, because the system is
directed \cite{RibesZalesskii2010}.

In fact, we will show that the action of $\Pi$ on each cell and occurrence set is free and has
finitely many orbits, see \cref{prop:original-core}. Since $\Pi$ is residually finite and
$\mathscr N$ is cofinal, the coordinate maps give injections with dense image
\[
C^d\hookrightarrow\wh C_{\mathscr N}^{\,d}\quad(0\leq d\leq2),
\qquad
I_{10}(C)\hookrightarrow\wh I_{10,\mathscr N}(C),
\qquad
I_{21}(C)\hookrightarrow\wh I_{21,\mathscr N}(C).
\]
We call a point in one of these dense images, that is, a point coming from the discrete complex, an
\emph{original cell} or an \emph{original occurrence}.

Choose base lifts $\widetilde\alpha\in\mathscr A$ and $\widetilde\beta\in\mathscr B$, with
stabilizers $A,B\leq\Pi$, and set
\[
\Omega=\{g\in\Pi:\widetilde\alpha\cap g\widetilde\beta\neq\varnothing\}.
\]
This set is $A$--$B$ bi-invariant. If $e=x\cl A\in E(\wh T_\alpha)$ and
$f=y\cl B\in E(\wh T_\beta)$, we define
\[
e\cross f
\quad\Longleftrightarrow\quad
x^{-1}y\in\cl\Omega\subseteq\wh\Pi.
\]
Then the condition does not depend on the coset representatives, because replacing $x$ by $xa$ and
$y$ by $yb$, with $a\in\cl A$ and $b\in\cl B$, replaces $x^{-1}y$ by $a^{-1}x^{-1}yb$, while
$\cl\Omega$ is $\cl A$--$\cl B$ bi-invariant. We call $\cross$ the \emph{crossing relation}. By
\cref{lem:finite-cover-crossing}, two edges cross exactly when the corresponding elevations meet in
every regular cover from a cofinal family.

We next define the profinite cubical residual complex of the product of the two profinite trees.
For $\xi\in\{\alpha,\beta\}$, let $V_\xi$ and $E_\xi$ be the vertex and edge spaces of $\wh T_\xi$,
and write $\partial_{\xi,0},\partial_{\xi,1}:E_\xi\to V_\xi$ for the two endpoint maps. Set
\[
J_\xi=E_\xi\times\{0,1\},
\qquad
p_\xi(e,r)=e,
\qquad
q_\xi(e,r)=\partial_{\xi,r}(e).
\]
As $\{0,1\}$ is discrete, $J_\xi$ is profinite and both maps are continuous. The remaining component
spaces are
\[
P^0=V_\alpha\times V_\beta,
\qquad
P^1=(E_\alpha\times V_\beta)\sqcup(V_\alpha\times E_\beta),
\qquad
P^2=E_\alpha\times E_\beta,
\]
\[
I_{10}(P)=(J_\alpha\times V_\beta)\sqcup(V_\alpha\times J_\beta),
\qquad
I_{21}(P)=(J_\alpha\times E_\beta)\sqcup(E_\alpha\times J_\beta).
\]
On $J_\alpha\times V_\beta$, we set
\[
p_{10}(\iota,v)=(p_\alpha(\iota),v),
\qquad q_{10}(\iota,v)=(q_\alpha(\iota),v).
\]
On $V_\alpha\times J_\beta$, we set
\[
p_{10}(v,\iota)=(v,p_\beta(\iota)),
\qquad q_{10}(v,\iota)=(v,q_\beta(\iota)).
\]
On $J_\alpha\times E_\beta$, we set
\[
p_{21}(\iota,e)=(p_\alpha(\iota),e),
\qquad q_{21}(\iota,e)=(q_\alpha(\iota),e).
\]
On $E_\alpha\times J_\beta$, we set
\[
p_{21}(e,\iota)=(e,p_\beta(\iota)),
\qquad q_{21}(e,\iota)=(e,q_\beta(\iota)).
\]
All component spaces are profinite, and all four maps are continuous. We write
$\Icell(\wh T_\alpha\times\wh T_\beta)$ for this five component profinite cubical residual
complex.

The \emph{completed coordinate map} is the unique continuous extension
\[
\wh j:\wh{\Icell}_{\mathscr N}(C)
\longrightarrow\Icell(\wh T_\alpha\times\wh T_\beta)
\]
of the five component maps induced by $j$. Uniqueness follows from density of the original component
spaces and the Hausdorff property of the codomain spaces. \Cref{prop:completion-embedding} proves
that these component maps form an embedding and that the completed square space is
\[
\{e\times f:e\in E(\wh T_\alpha),\ f\in E(\wh T_\beta),\ e\cross f\}.
\]
\end{definition}

\begin{example}
Let $x=\widetilde\alpha\cap g\widetilde\beta$ be a crossing, let $q_x$ be its dual square, and let
$e_{\widetilde\alpha}$ and $e_{g\widetilde\beta}$ be the corresponding tree edges. We fix a nested
cofinal sequence $\Pi\geq N_1\geq N_2\geq\cdots$ of finite index normal subgroups and form the
completion with respect to this sequence. Such a sequence exists because a finitely generated group
has only finitely many subgroups of each finite index: enumerate the finite index normal subgroups
as $L_1,L_2,\ldots$ and set $N_m=L_1\cap\cdots\cap L_m$. The original square determined by $q_x$ is
$\wh q_x=\bigl(N_mq_x\bigr)_{m\geq1}\in\wh C^{\,2}$. Its four side positions give four elements
\[
\bigl(N_m(q_x,j,\epsilon)\bigr)_{m\geq1}
\in\wh I_{21}(C),
\qquad j\in\{1,2\},\ \epsilon\in\{-1,+1\},
\]
and the inverse limit map $q_{21}$ sends them to the four boundary edge elements of $\wh q_x$.
Moreover,
\[
e_{\widetilde\alpha}\cross e_{g\widetilde\beta}
\quad\Longleftrightarrow\quad g\in\Omega,
\qquad
\wh j(\wh q_x)=e_{\widetilde\alpha}\times e_{g\widetilde\beta}.
\]

Let $\gamma\in\wh\Pi$. For each $m$, choose $g_m\in\Pi$ representing the image of $\gamma$ in
$\Pi/N_m$. Then compatibility of the quotient coordinates implies that
$\gamma\wh q_x=\bigl(N_mg_mq_x\bigr)_{m\geq1}$ is a well defined element of $\wh C^{\,2}$,
independent of the representatives $g_m$, and by equivariance
$\wh j(\gamma\wh q_x)=\gamma\bigl(e_{\widetilde\alpha}\times e_{g\widetilde\beta}\bigr)$. If
$\gamma\notin\Pi$, then $\gamma\wh q_x$ is not original: the completed square orbit is a free
$\wh\Pi$-orbit, so an equality $\gamma\wh q_x=\delta\wh q_x$ with $\delta\in\Pi$ would imply
$\gamma=\delta$. Hence left multiplication by an element of $\wh\Pi\setminus\Pi$ preserves the
completed cubical residual complex but moves every square in this orbit outside the original
subset. This is why we need normalization in \cref{thm:discrete-star}.
\end{example}

\begin{definition}\label{def:characteristic-cover}
Given $G$ finitely generated, a connected finite sheeted characteristic cover of a connected
aspherical space with fundamental group $G$ is the cover corresponding to a finite index
characteristic subgroup of $G$. For $m\geq1$, we set $K_m(G)=\bigcap_{[G:L]\leq m}L$. If
$N\normalf G$ and $[G:N]=d$, then $K_d(G)\leq N$. We therefore obtain a cofinal family
$(K_m(G))_{m\geq1}$ among the finite index normal subgroups of $G$, which we call the \emph{standard
family of characteristic subgroups}.
\end{definition}

\begin{definition}\label{def:divisible-fillings}
For $n\geq1$, we set $\ell_n=\lcm(1,2,\ldots,n)$. Then
\[
\ell_n\mid\ell_{n+1},
\qquad
n\geq d\Longrightarrow d\mid\ell_n,
\qquad
\ell_n\longrightarrow\infty.
\]
Indeed, $\ell_{n+1}=\lcm(\ell_n,n+1)$, the integer $d$ occurs among $1,\ldots,n$ whenever $n\geq d$,
and $\ell_n\geq n$. For a marked element $a$, the quotient obtained by imposing $a^{\ell_n}=1$ is
called the $n$th \emph{divisible filling}. In the simultaneous cusped construction, the orders
$r_i\ell_n$, with fixed positive integers $r_i$, are called the corresponding \emph{divisible
filling orders}.
\end{definition}

\begin{definition}\label{def:period-prime-group-primitivity}
Let $(\phi^t)_{t\in\R}$ be a flow without stationary points on a manifold $M$. A point $x\in M$ is
periodic if $\phi^T(x)=x$ for some $T>0$. Its \emph{least period} is
\[
T_x=\min\{T>0:\phi^T(x)=x\}.
\]
For a homeomorphism $f:S\to S$, a point $x\in S$ has \emph{least return period} $m\geq1$ if
$f^m(x)=x$ and $f^j(x)\neq x$ for $1\leq j<m$. A positive periodic parametrization of the flow orbit
through $x$ is the map
\[
\gamma_x:\R/T_x\Z\longrightarrow M,
\qquad \gamma_x([s]_{T_x})=\phi^s(x).
\]
A positively oriented closed suspension trajectory is \emph{prime} if it is not the $r$-fold
traversal of another positively oriented closed suspension trajectory for any integer $r\geq2$. For
a positive periodic parametrization $\gamma:\R/T\Z\to M$, its $r$-fold traversal is the map
\[
\gamma^{[r]}:\R/(rT)\Z\longrightarrow M,
\qquad
\gamma^{[r]}([s]_{rT})=\gamma([s]_T).
\]
Let $K$ be a group and let $b\in K$. The element $b$ is a \emph{proper power in $K$} if $b=g^r$ for
some $g\in K$ and some integer $r\geq2$. Thus $b$ is not a proper power in $K$ if no such pair
$(g,r)$ exists.

The group matters here, since roots may appear after enlarging the group: if
$G=\langle a\rangle\cong\Z$ and $H=\langle a^2\rangle\leq G$, then $a^2$ is not a proper power in
$H$ but is a square in $G$. Later the same element will lie in a fiber subgroup, in a finite index
subgroup and in a mapping torus group, and we always say in which group a statement about proper
powers is meant. Dynamical primeness is a different condition; for pseudo-Anosov suspension orbits,
the implication we need is \cref{lem:orbit-roots}.
\end{definition}

\begin{definition}\label{def:fibered-class}
Let $M$ be a connected manifold. An integral class $\chi\in H^1(M,\Z)=\Hom(\pi_1M,\Z)$ is
\emph{primitive} if $\chi(\pi_1M)=\Z$. A primitive integral class $\chi$ is \emph{fibered} if it is
induced by a fibration $M\to S^1$ with connected fiber. Its fiber group is $\Pi_\chi=\ker\chi$. See
\cite[Section~3]{Thurston1986} for fibered classes in dimension three. The class $\chi$ is
primitive, hence surjective, and there is an exact sequence
\[
1\longrightarrow \Pi_\chi\longrightarrow \pi_1M
\overset{\chi}{\longrightarrow}\Z\longrightarrow1.
\]
We choose an element $t\in\pi_1M$ such that $\chi(t)=1\in\Z$. The kernel $\Pi_\chi$ is normal, so
conjugation by $t$ restricts to an automorphism
\[
a_\chi:\Pi_\chi\longrightarrow\Pi_\chi,
\qquad a_\chi(x)=txt^{-1},
\]
and the choice of $t$ splits the sequence:
\[
\pi_1M\cong\Pi_\chi\rtimes_{a_\chi}\langle t\rangle,
\qquad \chi(t)=1.
\]
If $t'$ is another element with $\chi(t')=1$, then $h=t't^{-1}\in\Pi_\chi$ and
\[
\conj{t'}|_{\Pi_\chi}=\Inn(h)\circ\conj{t}|_{\Pi_\chi}.
\]
Consequently the outer class $[a_\chi]\in\Out(\Pi_\chi)$ is independent of the chosen element $t$
with $\chi(t)=1$ and is the algebraic monodromy of the oriented fibered class $\chi$. Choosing $t$
with $\chi(t)=-1$ instead would replace this outer class by its inverse. The condition $\chi(t)=1$
fixes the orientation determined by $\chi$.
\end{definition}

\begin{definition}\label{def:full-cellular-occurrence}
Let $X$ be a finite dimensional regular CW complex of dimension $d$ with a characteristic
homeomorphism $\chi_\sigma$ for each open $k$-cell $\sigma$. A \emph{codimension one face position}
of $\sigma$ is an open $(k-1)$-cell $\iota$ of the pulled back boundary decomposition. Its image
\[
\tau=\chi_\sigma(\iota)\in X^{k-1}
\]
is the corresponding open codimension one face of $\sigma$. An \emph{occurrence of $\tau$ as a face
of $\sigma$} is a face position $\iota$ with $\chi_\sigma(\iota)=\tau$, or equivalently the
restriction $\chi_\sigma|_\iota:\iota\xrightarrow{\cong}\tau$. Thus an occurrence contains both the
incident face and its position in the model boundary.

For $0\leq k\leq d$, write $X^k$ for the set of open $k$-cells. If $1\leq k\leq d$, set
\[
I_{k,k-1}(X)
=\left\{(\sigma,\tau,\iota):
\begin{array}{l}
\sigma\in X^k,\ \tau\in X^{k-1},                                          \\
\iota\subset S^{k-1}\text{ is a codimension one face position of }\sigma, \\
\chi_\sigma(\iota)=\tau
\end{array}
\right\}.
\]
We keep the two projections $p_{k,k-1}(\sigma,\tau,\iota)=\sigma$ and
$q_{k,k-1}(\sigma,\tau,\iota)=\tau$. The \emph{full cellular residual complex} $\Icell_\bullet(X)$
consists of all cell spaces $X^k$, all codimension one occurrence spaces $I_{k,k-1}(X)$, and all
maps $p_{k,k-1},q_{k,k-1}$. If $X$ is a square complex, this structure contains the complex of
\cref{def:cubical-occurrence}, with endpoint and side occurrence spaces $I_{1,0}$ and $I_{2,1}$.

A cellular automorphism $g$ acts on face positions by
\[
g(\sigma,\tau,\iota)
=\bigl(g\sigma,g\tau,\iota'\bigr),
\qquad
\iota'=(\chi_{g\sigma})^{-1}
\bigl(g\chi_\sigma(\iota)\bigr).
\]
The set $\iota'$ is the unique face position in the chosen model of $g\sigma$ that maps to $g\tau$.
If a subgroup $N$ acts cellularly on $X$, define the quotient cellular residual complex
component-wise by
\[
\Icell_\bullet(N\backslash X)
:=N\backslash\Icell_\bullet(X).
\]
Two face position orbits remain distinct unless an element of $N$ identifies them, even when their
image cells in $N\backslash X$ coincide.

Suppose a finitely generated residually finite group $\Gamma$ acts freely, cellularly, and
cocompactly on a connected, locally finite, finite dimensional regular CW complex $X$. For a cofinal
family $\mathscr N$ of finite index normal subgroups of $\Gamma$, we define
\[
\wh{\Icell}_\bullet(X)
=\varprojlim_{N\in\mathscr N}\Icell_\bullet(N\backslash X),
\]
where the inverse limit is taken simultaneously on every cell space, every occurrence space, and
every structure map. An isomorphism of completed full cellular residual complexes consists of
homeomorphisms on these profinite spaces that commute with all structure maps and admit an inverse
with the same property.
\end{definition}


\section{Main theorems}\label{sec:main-results}
We first state the cut tree theorem, then the realization theorems for cubical residual complexes,
and finally the applications: profinite rigidity of lattices in $\PSL_2(\C)$, and realization
statements in arbitrary dimension relative to the family of characteristic subgroups.

We begin by fixing notation. For $i\in\{1,2\}$, let $S_i$ be a closed orientable surface of genus
$g_i\geq2$, set $\Pi_i=\pi_1(S_i)$, and let $\alpha_i$ be a nonseparating simple closed curve on
$S_i$. Let $H_i$ be the fundamental group of the surface obtained by cutting $S_i$ along $\alpha_i$.
Choose generators $a_{i,-},a_{i,+}\in H_i$ for the two boundary subgroups and a stable letter $t_i$
such that
\[
\Pi_i=
\left\langle H_i,t_i\ \middle|\
t_i a_{i,-}t_i^{-1}=a_{i,+}
\right\rangle,
\qquad
a_i=a_{i,-},\quad A_i=\langle a_i\rangle.
\]
Suppose that
\[
\theta:\wh\Pi_1\xrightarrow{\cong}\wh\Pi_2,
\qquad
\theta(\cl A_1)=x\cl A_2x^{-1}
\quad\text{for some }x\in\wh\Pi_2.
\]

For $n\geq3$, set
\[
K_{\alpha_i,n}=\Ncl{a_i^n}{\Pi_i},
\qquad
\Gamma_{\alpha_i,n}=\Pi_i/K_{\alpha_i,n},
\]
and let $\rho_{\alpha_i,n}:\wh\Pi_i\twoheadrightarrow \wh\Gamma_{\alpha_i,n}$ be the continuous
epimorphism induced by the quotient. Also set
\[
V_{\alpha_i,n}
=
H_i/\Ncl{a_{i,-}^n,a_{i,+}^n}{H_i}.
\]
Then $V_{\alpha_i,n}$ is the cocompact orientable Fuchsian orbifold group of genus $g_i-1$ with two
cone points of order $n$, while $\Gamma_{\alpha_i,n}$ has the HNN splitting
\[
\Gamma_{\alpha_i,n}
\cong
\left\langle V_{\alpha_i,n},t_i\ \middle|\
t_i\cl a_{i,-}t_i^{-1}=\cl a_{i,+}
\right\rangle,
\]
where $\cl a_{i,-}$ and $\cl a_{i,+}$ have order $n$. This splitting is efficient, so the inclusion
$V_{\alpha_i,n}\hookrightarrow\Gamma_{\alpha_i,n}$ induces an embedding
\[
\wh V_{\alpha_i,n}\hookrightarrow\wh\Gamma_{\alpha_i,n}.
\]
We set
\[
W_{\alpha_i,n}
:=
\im\!\left(
\wh V_{\alpha_i,n}
\hookrightarrow
\wh\Gamma_{\alpha_i,n}
\right)
=
\cl{V_{\alpha_i,n}}^{\,\wh\Gamma_{\alpha_i,n}}
\]
and $P_{\alpha_i,n} := \rho_{\alpha_i,n}^{-1}(W_{\alpha_i,n})$; here $W_{\alpha_i,n}$ is the closed
stabilizer of the vertex represented by $V_{\alpha_i,n}$ in the standard profinite Bass--Serre tree.
The HNN edge group of $\Gamma_{\alpha_i,n}$ is cyclic of order $n$, with images
\[
C_{i,n}^{-}
=
\langle\cl a_{i,-}\rangle,
\qquad
C_{i,n}^{+}
=
\langle\cl a_{i,+}\rangle.
\]
\begin{theorem}\label{thm:finite-cone-main}
With the notation above, let $A_i=\langle a_i\rangle$ be the maximal cyclic subgroup of $\Pi_i$
represented by $\alpha_i$, and let
\[
H_i=\pi_1\bigl(S_i\setminus\Int N(\alpha_i)\bigr)
\]
be the vertex group of the cut surface, where $N(\alpha_i)$ is a closed annular neighborhood.
Suppose
\[
\theta:\wh\Pi_1\xrightarrow{\cong}\wh\Pi_2,
\qquad
\theta(\cl A_1)=x\cl A_2x^{-1}
\quad\text{for some }x\in\wh\Pi_2.
\]
For $n\geq3$, let $P_{\alpha_i,n}\leq\wh\Pi_i$ be the inverse image of the completed Fuchsian vertex
group under
\[
\wh\Pi_i\longrightarrow
\wh{\Pi_i/\Ncl{a_i^n}{\Pi_i}}.
\]
Then
\[
\cl H_i=\bigcap_{n\geq3}P_{\alpha_i,\ell_n}.
\]
There is $g\in\wh\Pi_2$ such that
\begin{equation}\label{eq:cut-surface-conjugacy-main}
\theta(\cl H_1)=g\cl H_2g^{-1}.
\end{equation}
The map on vertex cosets
\[
q\cl H_1\longmapsto\theta(q)g\cl H_2
\]
extends uniquely to a $\theta$-equivariant isomorphism of the full profinite cut trees
\[
\wh T_{\alpha_1}\xrightarrow{\cong}\wh T_{\alpha_2}.
\]
It preserves the endpoint occurrence maps after a single global reversal of the two endpoint labels
in $\wh T_{\alpha_2}$ when the chosen edge orientations disagree. The induced tree isomorphism is
independent of the element $g$ satisfying \eqref{eq:cut-surface-conjugacy-main}, and these tree
isomorphisms are compatible with composition of profinite isomorphisms.
\end{theorem}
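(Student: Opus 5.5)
Our plan has three stages: identify $\cl H_i$ as an intersection of filling data, transport that description through $\theta$, and then build the tree map from vertex stabilizers.

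\emph{Step 1: the formula for $\cl H_i$.} The inclusion $\cl H_i\subseteq P_{\alpha_i,\ell_n}$ holds for each $n$, since $H_i$ maps into $V_{\alpha_i,\ell_n}$.

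For the reverse inclusion, take $\gamma\notin\cl H_i$. The profinite tree $\wh T_{\alpha_i}$ has vertex stabilizers the conjugates of $\cl H_i$. Then $\gamma\cl H_i\neq\cl H_i$ as vertices, and I separate them in a finite quotient $\Pi_i/N$. I then choose $n$ so large that $\ell_n$ is a multiple of the order of $a_i$ modulo $N$. Then $K_{\alpha_i,\ell_n}\leq N$, so $\Pi_i/N$ is a quotient of $\Gamma_{\alpha_i,\ell_n}$.

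Efficiency of the finite-edge-group HNN splitting of $\Gamma_{\alpha_i,\ell_n}$ (virtually free by Fuchsian, hence efficient) gives a profinite Bass--Serre tree with vertex stabilizer $W_{\alpha_i,\ell_n}$. The tree $\wh T_{\alpha_i}$ maps equivariantly to it, and $\gamma$ fixing the base vertex there would force $\gamma\in P$. The key point is that the separation in $\Pi_i/N$ must be witnessed in the quotient tree. I intend to argue via the closed subgroup $\bigcap_n P_{\alpha_i,\ell_n}$: it is the stabilizer of a compatible system of vertices, so it stabilizes a vertex of $\wh T_{\alpha_i}=\varprojlim$ of the quotient trees. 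Here I am using that $\wh T_{\alpha_i}$ is the inverse limit of the trees of the fillings, since $\bigcap_n \cl{K_{\alpha_i,\ell_n}}=1$. By construction this vertex is the base vertex, so the intersection lies in $\cl H_i$.

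\emph{Step 2: transporting through $\theta$.} Because $\theta(\cl A_1)=x\cl A_2x^{-1}$, and $\cl{\langle a^{k\mu}\rangle}=\cl{\langle a^k\rangle}$ for units $\mu$, $\theta$ carries $\cl{K_{\alpha_1,\ell_n}}$ onto $\cl{K_{\alpha_2,\ell_n}}$. This uses two facts: the closed normal subgroup generated by $\cl{\langle a_i^{\ell_n}\rangle}$ is the kernel of $\rho$, and it is invariant under conjugation. Hence $\theta$ induces $\wh\Gamma_{\alpha_1,\ell_n}\cong\wh\Gamma_{\alpha_2,\ell_n}$.

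Each $W_{\alpha_i,\ell_n}$ is a profinite orbifold (Poincar\'e duality type) group, and the image of $W_{\alpha_1,\ell_n}$ is such a group acting on the profinite tree of $\Gamma_{\alpha_2,\ell_n}$. By Wilton--Zalesskii \cite[Theorem~1.10, Corollary~1.11]{WiltonZalesskii2019} it fixes a vertex, and by a symmetric index and maximality argument it equals a conjugate of $W_{\alpha_2,\ell_n}$. So the set $X_n$ of $g$ with $\theta(P_{\alpha_1,\ell_n})=gP_{\alpha_2,\ell_n}g^{-1}$ is nonempty and closed, hence compact, and the $X_n$ are nested because the $P$'s decrease. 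Thus $\bigcap_n X_n\neq\varnothing$. For $g$ in this intersection, Step 1 gives \eqref{eq:cut-surface-conjugacy-main}.

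\emph{Step 3: the tree isomorphism.} Vertices map by $q\cl H_1\mapsto\theta(q)g\cl H_2$. This is well defined and bijective by \eqref{eq:cut-surface-conjugacy-main}, and independent of $g$ since $g$ is determined up to $N_{\wh\Pi_2}(\cl H_2)=\cl H_2$. That normalizer identity follows from profinite malnormality of $\cl H_2$ \cite[Theorem~3.3]{WiltonZalesskii2017}. For edges, two vertices are adjacent iff the intersection of their stabilizers is infinite, and it is then a conjugate of $\cl A$; again this is by malnormality, since nonadjacent stabilizers meet in a finite, hence trivial, subgroup in a torsion-free group. So adjacency is preserved, the edge map is forced, and continuity follows. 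The endpoint labels are determined by whether $\theta$ sends the orientation class of $\cl A_1$ to that of $\cl A_2$; otherwise one globally swaps labels. Compatibility with composition follows from uniqueness.

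The main obstacle I expect is Step 1's reverse inclusion, namely identifying $\wh T_\alpha$ with the inverse limit of the filling trees and ensuring that vertex stabilizers pass to the limit.
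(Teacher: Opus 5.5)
Your outline has the same architecture as the paper's proof, but two load-bearing steps are not actually justified. Both Step 1 and Step 2 are missing the identity
\[
P_{\alpha,n}=\cl H\,N_{\alpha,n},\qquad N_{\alpha,n}=\cl{K_{\alpha,n}}.
\]
It holds because $\rho_{\alpha,n}(\cl H)$ is compact, hence closed, and equals the closure of $\rho_{\alpha,n}(H)=V_{\alpha,n}$, namely $W_{\alpha,n}$. Therefore $P_{\alpha,n}=\rho_{\alpha,n}^{-1}\rho_{\alpha,n}(\cl H)=\cl H\,N_{\alpha,n}$.

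\emph{Step 1.} Your inverse-limit argument is circular as stated. On vertices, $\varprojlim_n\wh\Pi/P_{\alpha,\ell_n}=\wh\Pi/\bigcap_nP_{\alpha,\ell_n}$, so identifying $\wh T_\alpha$ with the limit of the filling trees is equivalent to the claim $\bigcap_nP_{\alpha,\ell_n}=\cl H$. The fact $\bigcap_n N_{\alpha,\ell_n}=1$ gives this only once you know $N_{\alpha,\ell_n}\backslash\wh T_\alpha\cong T_{\alpha,\ell_n}$, which is again the identity above. With the identity, your first idea finishes directly. Choose $U\normalo\wh\Pi$ with $\gamma\notin\cl H U$, and choose $n$ so that the order of $a$ modulo $U$ divides $\ell_n$. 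Then $N_{\alpha,\ell_n}\le U$, so $P_{\alpha,\ell_n}\subseteq\cl HU$ and $\gamma\notin P_{\alpha,\ell_n}$.

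\emph{Step 2.} ``Nested because the $P$'s decrease'' does not follow. From $\theta(P_{\alpha_1,\ell_{n+1}})=gP_{\alpha_2,\ell_{n+1}}g^{-1}$ and $P_{\alpha,\ell_{n+1}}\le P_{\alpha,\ell_n}$, you only learn that $\theta(P_{\alpha_1,\ell_n})$ and $gP_{\alpha_2,\ell_n}g^{-1}$ contain a common subgroup. What gives $X_{n+1}\subseteq X_n$ is $P_{\alpha,\ell_n}=P_{\alpha,\ell_{n+1}}N_{\alpha,\ell_n}$, together with $\theta(N_{\alpha_1,\ell_n})=N_{\alpha_2,\ell_n}$ and normality. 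Without nestedness, compactness does not produce a single conjugator valid at all levels; the paper stresses exactly this point.

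\emph{Step 3.} The malnormality you invoke is false: $H$ is not malnormal in $\Pi$, since $tHt^{-1}\cap H\supseteq tA_-t^{-1}=A_+$. So \cite[Theorem~3.3]{WiltonZalesskii2017} cannot be applied to $\cl H$, and it yields neither $N_{\wh\Pi}(\cl H)=\cl H$ nor the adjacency criterion. The correct input is malnormality of the maximal cyclic edge group: $\cl A\cap z\cl Az^{-1}=1$ for $z\notin\cl A$. By \cite[Lemma~7.3]{WiltonZalesskii2017} this makes the action on $\wh T_\alpha$ $1$-acylindrical. From $1$-acylindricity, distinct vertices are adjacent iff their stabilizers meet in an infinite, procyclic edge group. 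Then $N_{\wh\Pi}(\cl H)=\cl H$ follows: if $n$ normalizes $\cl H$ and $nv\neq v$, the two vertices would have equal stabilizer $\cl H$ and be adjacent, forcing $\cl H$ to be procyclic, although it contains the nonabelian free group $H$.

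Two smaller points. First, the fixed-point theorem of \cite{WiltonZalesskii2019} is for torsion-free $PD^2$ groups. You must apply it to the completion of a torsion-free finite-index normal subgroup of $V_{\alpha,n}$, where edge stabilizers become trivial, show that its fixed vertex is unique, and then use normality. Second, the endpoint-label reversal cannot be read off from an ``orientation class'' of $\cl A$, since $\theta(a_1)$ is only conjugate to $a_2^{\mu}$ with $\mu\in\hZ^\times$. It is determined by where the vertex map sends the two endpoints of the base edge, and the reversal is global because the edge space is a single orbit.
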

Now choose on each $S_i$ a nonseparating filling pair $(\alpha_i,\beta_i)$, and write
$C_i=C(\alpha_i,\beta_i)$ for the dual square complex of \cref{def:core}. After lifting the geodesic
representatives to $\widetilde S_i\cong\HH^2$, its vertices are the complementary regions, its edges
are dual to wall segments between consecutive crossings, and its squares are dual to crossings of an
$\alpha_i$ lift with a $\beta_i$ lift. We call $C_i$ the \emph{original filling core} only to
distinguish this discrete $\Pi_i$ complex from its cell-wise profinite completion $\wh C_i$.

Let $T_{\alpha_i}$ and $T_{\beta_i}$ be the original cut trees and let $\wh T_{\alpha_i}$ and
$\wh T_{\beta_i}$ be their profinite completions. The coordinate map
\[
j_i:C_i\hookrightarrow
T_{\alpha_i}\times T_{\beta_i},
\]
which sends a complementary region to the pair of components of the complements of the two lift
families that contain it, extends uniquely to a continuous $\wh\Pi_i$-equivariant cellular embedding
\[
\wh j_i:\wh C_i\hookrightarrow
\wh T_{\alpha_i}\times\wh T_{\beta_i},
\]
see Sections~\ref{sec:cut-tree} and~\ref{sec:core}. Its square space is the closed set of products
$e_\alpha\times e_\beta$ such that $e_\alpha\cross e_\beta$, where $\cross$ is the profinite
crossing relation of \cref{def:core}. With this notation, our second main theorem reads as follows.

\begin{theorem}
\label{thm:core-main}
Let $S_i$, $\Pi_i$ and $(\alpha_i,\beta_i)$ be as above, let $A_i,B_i\leq\Pi_i$ be the maximal
cyclic subgroups represented by $\alpha_i$ and $\beta_i$, and put
\[
C_i=C(\alpha_i,\beta_i).
\]
Suppose that
\[
\theta:\wh\Pi_1\xrightarrow{\cong}\wh\Pi_2
\]
is a continuous isomorphism satisfying
\[
\theta(\cl A_1)=x\cl A_2x^{-1},
\qquad
\theta(\cl B_1)=y\cl B_2y^{-1}
\]
for some $x,y\in\wh\Pi_2$.

Let
\[
F_\alpha:\wh T_{\alpha_1}
\xrightarrow{\cong}\wh T_{\alpha_2},
\qquad
F_\beta:\wh T_{\beta_1}
\xrightarrow{\cong}\wh T_{\beta_2}
\]
be the $\theta$-equivariant cut tree isomorphisms of \cref{thm:finite-cone-main}. Then, for every
$e\in E(\wh T_{\alpha_1})$ and $f\in E(\wh T_{\beta_1})$,
\[
e\cross f
\quad\Longleftrightarrow\quad
F_\alpha(e)\cross F_\beta(f).
\]

Under the completed coordinate embeddings $\wh j_i$, the product $F_\alpha\times F_\beta$ and its
induced maps on occurrences restrict to an isomorphism
\[
F:\wh{\Icell}(C_1)
\xrightarrow{\cong}\wh{\Icell}(C_2).
\]

\end{theorem}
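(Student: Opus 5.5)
The plan has two parts. First we show that the crossing relation is preserved. Then we transport the completed coordinate embeddings along $F_\alpha\times F_\beta$.

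\textbf{Crossing is preserved.} Write $e=u\cl A_1$ and $f=v\cl B_1$ with $u,v\in\wh\Pi_1$. By \cref{thm:finite-cone-main} there are $g,h\in\wh\Pi_2$ with
\[
F_\alpha(u\cl A_1)=\theta(u)g\cl A_2,
\qquad
F_\beta(v\cl B_1)=\theta(v)h\cl B_2.
\]
This holds up to the global endpoint relabelling, which does not affect edge spaces. Here $g$ can be taken so that $\theta(\cl A_1)=g\cl A_2g^{-1}$, after adjusting by the normalizer; since $\cl A_2$ is self-normalizing up to $\cl A_2$ by profinite malnormality, $g\cl A_2$ is determined. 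The same applies to $h$. The claim therefore reduces to
\[
\theta(\cl\Omega_1)=g\,\cl A_2\cl\Omega_2\cl B_2\,h^{-1}=g\cl\Omega_2h^{-1}.
\]

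To prove this identity I would invoke \cref{lem:finite-cover-crossing}. That lemma says $x^{-1}y\in\cl\Omega$ exactly when, in every cover from a cofinal family, the corresponding elevations of $\alpha$ and $\beta$ meet. The Boggi--Zalesskii homological criterion converts "the elevations meet" into a condition on the finite quotients. Namely, in the characteristic cover $S_N$ the elevations have nonzero algebraic intersection in a further cover, or nonzero $\Z/m$ homology pairing. This is expressed through the images of $\cl A$ and $\cl B$ in $H_1$ of finite index subgroups, and the cup product pairing on $H_1(\wh N;\hZ)$.

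Now $\theta$ induces isomorphisms $H_1(\wh N_1;\hZ)\to H_1(\wh N_2;\hZ)$ for $N_2=\theta(N_1)$. By profinite Poincar\'e duality these preserve the intersection pairing up to a unit $\lambda\in\hZ^\times$, the orientation character. The generators $a_1,b_1$ go to $\mu a_2,\nu b_2$ with $\mu,\nu\in\hZ^\times$, after conjugation. Vanishing versus nonvanishing of the pairing is insensitive to the units $\lambda\mu\nu$. Hence the criterion holds for $(u^{-1}v)$ iff it holds for $g^{-1}\theta(u^{-1}v)h$. Applying the same argument to $\theta^{-1}$ gives the reverse inclusion.

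\textbf{Transporting the complexes.} By \cref{prop:completion-embedding}, each $\wh j_i$ identifies $\wh\Icell(C_i)$ with a closed subcomplex of $\Icell(\wh T_{\alpha_i}\times\wh T_{\beta_i})$. Its square space is $\{e\times f: e\cross f\}$. Its edge, vertex and occurrence spaces are the faces and occurrences of these squares: every edge and vertex of $C_i$ lies in a square, since each wall segment bounds two squares.

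The product $F_\alpha\times F_\beta$ acts on all five components of $\Icell(\wh T_{\alpha_1}\times\wh T_{\beta_1})$. On $J_\xi$ it acts by $(e,r)\mapsto(F_\xi(e),r)$, or $1-r$ under the global reversal. It is a homeomorphism commuting with the structure maps, since $F_\alpha,F_\beta$ commute with the endpoint maps. By the crossing statement it maps squares of $\wh j_1(\wh C_1)$ bijectively onto squares of $\wh j_2(\wh C_2)$. Faces and side and endpoint occurrences are determined by squares through the structure maps, so the remaining four components also correspond. Composing with $\wh j_2^{-1}$ and $\wh j_1$ yields $F$, with continuity and bijectivity inherited. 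The global endpoint reversal is harmless here: it permutes the labels $\epsilon$ uniformly, so it just relabels the occurrence spaces consistently.

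\textbf{Main obstacle.} The hard step is the crossing equivalence, specifically making the homological criterion behave uniformly over the cofinal family. We must check that the conjugators $g,h$ coming from \cref{thm:finite-cone-main} are the same ones that align $\theta(a_1)$ with a unit power of $a_2$, so that the Boggi--Zalesskii condition transported by $\theta$ really refers to $g\cl\Omega_2h^{-1}$. We also need compactness of $\cl\Omega$, as a closure of finitely many double cosets $A g B$, to pass from all finite levels to the limit.
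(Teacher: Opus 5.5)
Your proposal is correct and follows essentially the paper's route. You test crossing level by level through \cref{lem:finite-cover-crossing} and convert it via the Boggi--Zalesskii criterion into orthogonality of elevation subspaces, which survives $\theta$ by the similitude of \cref{lem:surface-similitude} and the unit-power invariance of \cref{lem:elevation-subspaces}; this is \cref{prop:unit-power,prop:crossing-invariance}. You then transport faces and occurrences through the structure maps, which is exactly \cref{cor:completed-core}.

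The conjugator alignment you single out as the main obstacle is automatic. Equivariance gives $\theta(\cl A_1)=\Stab F_\alpha(\cl A_1)$, and the tree maps carry $\wh K_1$-orbits of edges (the elevations at a characteristic level) onto $\wh K_2$-orbits. So the conjugators relating the elevation generators can be taken in $\wh K_2$, which is how the paper closes this step.
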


For arbitrary free cocompact actions on regular square complexes, the corresponding realization
statement is \cref{thm:discrete-star}. Our incidence structure retains face positions and their
multiplicities, but not a parametrized attaching map up to isotopy.

\begin{theorem}
\label{thm:all-dimensional-occurrence}
For $i=1,2$, let a finitely generated residually finite group $\Gamma_i$ act freely, cellularly, and
cocompactly on a connected, locally finite, finite dimensional regular CW complex $X_i$. Suppose
\[
\eta:\wh\Gamma_1\xrightarrow{\cong}\wh\Gamma_2.
\]
We form the completed full cellular residual complexes $\wh{\Icell}_\bullet(X_i)$ of
\cref{def:full-cellular-occurrence} over the standard families of characteristic subgroups
$(K_m(\Gamma_i))_{m\geq1}$. By \cref{lem:char-functor}, $\eta$ identifies the corresponding inverse
systems of group quotients. Suppose further that
\[
F:\wh{\Icell}_\bullet(X_1)
\xrightarrow{\cong}
\wh{\Icell}_\bullet(X_2)
\]
is an $\eta$-equivariant isomorphism of completed full cellular residual complexes. Then there are
an element $u\in\wh\Gamma_2$ and an isomorphism $h:\Gamma_1\to\Gamma_2$ such that
\[
\eta=\Inn(u)\circ\wh h,
\qquad
F(\Icell(X_1))=u\Icell(X_2)
\]
on every original cell and occurrence space. Equivalently, after replacing $(\eta,F)$ by
$(\Inn(u^{-1})\circ\eta,u^{-1}F)$, the normalized map sends the full original cellular residual
complex of $X_1$ onto that of $X_2$.
\end{theorem}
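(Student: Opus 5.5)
The plan is to normalize $F$ at a single original vertex and then propagate originality through the occurrence structure, using freeness of the action at every step. Freeness and cocompactness give, for every cell or occurrence space $Z$ of $X_i$, a decomposition $Z=\bigsqcup_{j=1}^{r_Z}\Gamma_i z_j$ into free orbits. As recorded after \cref{def:cubical-action-completion}, the completion is then $\wh Z=\bigsqcup_{j}\wh\Gamma_i z_j$, with each $\wh\Gamma_i z_j$ a free $\wh\Gamma_i$-orbit. Its original points are exactly the points $\gamma z_j$ with $\gamma\in\Gamma_i$. By \cref{lem:char-functor}, $\eta$ carries $K_m(\Gamma_1)$-quotients to $K_m(\Gamma_2)$-quotients, so $\eta$-equivariance of $F$ is meaningful at every level.

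\emph{Normalization.} Fix an original vertex $v_0\in X_1^0$. Then $F(v_0)\in\wh X_2^{\,0}=\bigsqcup_j\wh\Gamma_2 w_j$, so $F(v_0)=u w_j$ for some $u\in\wh\Gamma_2$ and some representative $w_j$. Replace $(\eta,F)$ by $(\Inn(u^{-1})\circ\eta,u^{-1}F)$. This preserves equivariance, and now $F(v_0)$ is original.

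\emph{Key lemma.} Let $s:\wh Y\to\wh Z$ be any structure map $p_{k,k-1}$ or $q_{k,k-1}$. The claim is that every point of $\wh Y$ lying over an original point of $\wh Z$ is itself original; for the $q$-maps, the fibre over an original point is then finite by local finiteness. To prove it, write a point of $\wh Y$ as $\gamma y_k$ with $\gamma\in\wh\Gamma_i$. The image $s(y_k)$ is original, say $s(y_k)=h z_j$ with $h\in\Gamma_i$. If $s(\gamma y_k)=\gamma h z_j$ equals an original point $g z_j$, freeness of the completed orbit forces $\gamma h=g$, so $\gamma\in\Gamma_i$.

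\emph{Propagation.} Start at $v_0$. The key lemma applied to $q_{1,0}$ shows that the endpoint occurrences at $v_0$, and hence the edges through it, are original. Since $F$ commutes with the structure maps and $F(v_0)$ is original, their images under $F$ are again original. Each such edge carries a second endpoint occurrence, which is original by the lemma applied to $p_{1,0}$; its vertex is then original, and likewise its image. Connectedness of the $1$-skeleton now gives that $F$ maps every original vertex and edge to an original one. Next induct on $k$: every original $k$-cell $\sigma$ has a face occurrence over an original $(k-1)$-cell. Applying the lemma to $q_{k,k-1}$ on the $X_2$ side and then to $p_{k,k-1}$ shows that $F(\sigma)$ and all occurrence images are original. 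The same argument applied to $F^{-1}$ yields surjectivity onto the original complex of $X_2$.

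\emph{The group.} For $g\in\Gamma_1$ we have $F(gv_0)=\eta(g)F(v_0)$. Both $F(gv_0)$ and $F(v_0)$ are original, so freeness of the completed orbit forces $\eta(g)\in\Gamma_2$. Symmetrically, $\eta^{-1}(\Gamma_2)\subseteq\Gamma_1$, so $h=\eta|_{\Gamma_1}:\Gamma_1\to\Gamma_2$ is an isomorphism. The maps $\eta$ and $\wh h$ are continuous and agree on the dense subgroup $\Gamma_1$, so $\eta=\wh h$. Undoing the normalization gives $\eta=\Inn(u)\circ\wh h$ and $F(\Icell(X_1))=u\Icell(X_2)$.

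I expect the main obstacle to be the key lemma: one must make sure that the completed spaces really are disjoint unions of free completed orbits, with no identifications between distinct occurrence orbits in the limit. This should follow from residual finiteness together with the cofinality of $(K_m(\Gamma_i))_m$, which separates finitely many orbit representatives and their finitely many relations at a finite level.
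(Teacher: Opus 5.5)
Your proposal is correct and follows the paper's proof essentially step for step. You normalize by a left translation at one original vertex, then use freeness of the completed orbits to show that a completed occurrence is original as soon as its image under $\wh p_{k,k-1}$ or $\wh q_{k,k-1}$ is original. From there you propagate along the connected $1$-skeleton through endpoint occurrences, induct on $k$ through a codimension one face, repeat for $F^{-1}$, and read off $\eta(\Gamma_1)=\Gamma_2$ from the orbit of $v_0$. Two small remarks. First, you leave implicit that freeness on points forces trivial setwise stabilizers of cells, and hence of occurrences; the paper makes this explicit, and it is where regularity is used, through Brouwer's fixed point theorem applied to the closed cell. Second, the disjointness of completed orbits that you flag as the main obstacle is routine, since $N\backslash(\Gamma_i z_j)\cong\Gamma_i/N$ for each free orbit.
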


For a directed multigraph $X$, we use
\[
\Dgraph(X)=\bigl(V(X),A(X),s_X,t_X\bigr)
\]
for the vertex set, the directed edge set, and the two endpoint maps, see
\cref{sec:graph-occurrence}. If both orientations of a geometric edge occur, we also keep track of
reversal. We form quotients on every orbit set and every structure map, so loops and parallel edges
remain distinct. See \cref{def:graph-diagram,def:graph-distinguished,def:graph-completion} for the
full definitions.

\begin{theorem}\label{thm:graph-realization}
For $i=1,2$, let a finitely generated residually finite group $G_i$ act cocompactly on a connected
locally finite directed multigraph $X_i$ with finite vertex stabilizers. Let
$\mathcal S_i=(Z_{i,\lambda})_{\lambda\in\Lambda}$ be finite labelled families of invariant
subgraphs, possibly empty, and let $\mathfrak P$ be a subgroup of the symmetric group
$\mathfrak S_\Lambda$ on $\Lambda$, the group of permitted label permutations. Fix an isomorphism
\[
\Phi:\wh G_1\xrightarrow{\cong}\wh G_2,
\]
and let $\Phi_m:G_1/K_m(G_1)\to G_2/K_m(G_2)$ be the induced isomorphism.

Assume either of the following two conditions.
\begin{enumerate}[label=\textup{(\alph*)}]
\item There is an increasing unbounded sequence $(m_j)$ and, for every $j$, a
$\Phi_{m_j}$-equivariant isomorphism
\[
F_j:\Dgraph(K_{m_j}(G_1)\backslash X_1)
\xrightarrow{\cong}
\Dgraph(K_{m_j}(G_2)\backslash X_2)
\]
mapping the quotient of $Z_{1,\lambda}$ onto the quotient of $Z_{2,\sigma_j(\lambda)}$
for some $\sigma_j\in\mathfrak P$. The maps $F_j$ and the permutations $\sigma_j$ are
not required to be compatible with one another.

\item There are $\sigma\in\mathfrak P$ and a $\Phi$-equivariant isomorphism
\[
F:\wh{\Dgraph}_{G_1}(X_1)
\xrightarrow{\cong}
\wh{\Dgraph}_{G_2}(X_2)
\]
such that
\[
F\bigl(\wh{\Dgraph}_{G_1}(Z_{1,\lambda})\bigr)
=\wh{\Dgraph}_{G_2}(Z_{2,\sigma(\lambda)})
\qquad(\lambda\in\Lambda).
\]
\end{enumerate}

Under condition \textup{(a)}, we may select the maps compatibly at the chosen levels, all with the
same permutation $\sigma\in\mathfrak P$. Their inverse limit is an isomorphism $F$ as in
\textup{(b)}. Under either condition there are $u\in\wh G_2$ and an isomorphism
$h:G_1\xrightarrow{\cong}G_2$ such that
\begin{equation}\label{eq:graph-discrete-realization}
\Phi=\Inn(u)\circ\wh h.
\end{equation}
Moreover, $L_{u^{-1}}\circ F$, where $L_{u^{-1}}$ denotes left multiplication by $u^{-1}$ on the
completed vertex and directed edge spaces, restricts to an $h$-equivariant directed graph
isomorphism
\[
\Dgraph(X_1)\xrightarrow{\cong}\Dgraph(X_2)
\]
mapping $Z_{1,\lambda}$ onto $Z_{2,\sigma(\lambda)}$ for every $\lambda\in\Lambda$.
\end{theorem}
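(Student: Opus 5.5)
The plan is to first reduce condition (a) to condition (b) by a compactness argument, and then prove the realization statement from (b) by normalizing at a single vertex and propagating through the graph. For (a)$\Rightarrow$(b), fix for each $m$ the finite set $\mathcal F_m$ of pairs $(F,\sigma)$ with $\sigma\in\mathfrak P$ and $F$ a $\Phi_m$-equivariant isomorphism $\Dgraph(K_m(G_1)\backslash X_1)\to\Dgraph(K_m(G_2)\backslash X_2)$ carrying the quotient of $Z_{1,\lambda}$ onto that of $Z_{2,\sigma(\lambda)}$. Since $\mathfrak P$ is finite, infinitely many $j$ share one permutation $\sigma$; pass to that subsequence. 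For $m\le m_j$ we have $K_{m_j}(G_i)\le K_m(G_i)$, and $\Phi_{m_j}$ induces $\Phi_m$ by \cref{lem:char-functor}. Hence $F_j$ descends to an element of $\mathcal F_m$ with the same $\sigma$: it is equivariant, so it respects the coarser orbit relations on vertices, directed edges and reversal. Thus every $\mathcal F_m^\sigma$ is finite and nonempty, and the descent maps $\mathcal F_{m'}^\sigma\to\mathcal F_m^\sigma$ form an inverse system. Its limit is nonempty, and a point of it is a compatible family whose inverse limit is a $\Phi$-equivariant $F$ as in (b).

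Assume (b). The completed vertex space is $\bigsqcup_j\wh G_2 w_j$, each orbit being homeomorphic to $\wh G_2/H_j$ with $H_j$ the finite stabilizer. This holds because $H_j$ is finite, hence closed, and $G_2$ is residually finite. Pick an original vertex $v\in X_1$ and write $F(v)=u w$ with $u\in\wh G_2$ and $w$ original. Replace $(\Phi,F)$ by $(\Inn(u^{-1})\circ\Phi,\,L_{u^{-1}}\circ F)$; this pair is still equivariant and preserves the labelled subgraph closures. After this normalization, $F(v)=w$ is original.

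The key local step is the following claim, which I expect to be the main obstacle: \emph{for an original vertex, the completed fibres $s^{-1}(v)$ and $t^{-1}(v)$ consist exactly of the original directed edges at $v$}. I would prove it as follows. By local finiteness the star of $v$ is finite, and the finitely many elements of $G_1$ relating distinct star edges or vertices can be avoided by some $K_m(G_1)$. For $m$ at least this bound, the quotient map is therefore injective on the star, and the star maps bijectively onto the star of the image vertex in $K_m(G_1)\backslash X_1$. Finite vertex stabilizers are what keep this step valid, since such a stabilizer meets $K_m(G_1)$ trivially for large $m$. A compatible thread in the completed fibre over $\iota(v)$ then picks, at each large level, the image of one fixed original edge. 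So the completed fibre is finite and equals the original star. Since $F$ commutes with $s$ and $t$, it maps the original star of $v$ bijectively onto the original star of $w$. By connectedness of $X_1$, induction along paths shows that $F$ sends every original vertex and directed edge to an original one; the inverse $F^{-1}$ gives surjectivity.

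For the group, let $g\in G_1$. Then $\Phi(g)w=F(gv)$ is original, say equal to $g'w$ with $g'\in G_2$. Then $g'^{-1}\Phi(g)$ lies in the stabilizer of $w$ in $\wh G_2$, which is the finite group $\Stab_{G_2}(w)\subseteq G_2$. Hence $\Phi(G_1)\subseteq G_2$, and symmetrically $\Phi^{-1}(G_2)\subseteq G_1$. Put $h=\Phi|_{G_1}$; by density of $G_1$ in $\wh G_1$ we get $\Phi=\wh h$, which before normalization is \eqref{eq:graph-discrete-realization}. Finally, $Z_{1,\lambda}$ is sent into $\wh\Dgraph_{G_2}(Z_{2,\sigma(\lambda)})\cap\Dgraph(X_2)$. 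This intersection equals $Z_{2,\sigma(\lambda)}$, because an invariant subgraph is a union of $G_2$-orbits and distinct orbits are separated in some finite quotient. Applying the same argument to $F^{-1}$ gives equality of the images.
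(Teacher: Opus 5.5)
Your proposal is correct and follows the paper's proof. The shared steps are: compactness to select compatible finite-level maps with a single permutation, a left translation $L_{u^{-1}}$ sending one original vertex to an original vertex, the local fact that completed directed edges at an original vertex are original, propagation by connectedness, and recovery of $h$ from the finite vertex stabilizer. The differences are minor. You fix $\sigma$ by pigeonhole over the finite group $\mathfrak P$ before passing to the inverse limit, whereas the paper carries the permutation as a coordinate in its compactness argument. You also prove the local fact via injectivity of the quotient map on the finite star at deep characteristic levels, while the paper computes in coset coordinates, $s(gL)=gcV$, and concludes $g\in\gamma Vc^{-1}\subseteq G_2$.
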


\begin{theorem}
\label{thm:graph-pl-realization}
For $i=1,2$, let $C_i$ be connected finite simplicial complexes, let $p_i:X_i\to C_i$ be their
simplicial universal coverings, and identify $G_i=\pi_1(C_i)$ with the deck groups. Assume $G_i$ are
residually finite and let $\Phi:\wh G_1\xrightarrow{\cong}\wh G_2$. Suppose that for an increasing
unbounded sequence $(m_j)$ there are $\Phi_{m_j}$-equivariant isomorphisms
\begin{equation}\label{eq:graph-simplex-finite}
\Icell_\bullet\bigl(K_{m_j}(G_1)\backslash X_1\bigr)
\xrightarrow{\cong}
\Icell_\bullet\bigl(K_{m_j}(G_2)\backslash X_2\bigr).
\end{equation}
Although these isomorphisms need have nothing to do with one another, compatible maps can be
selected at all chosen levels, and if $F$ is the resulting inverse limit isomorphism, then there are
$u\in\wh G_2$ and $h:G_1\xrightarrow{\cong}G_2$ such that $\Phi=\Inn(u)\circ\wh h$ and $u^{-1}F$
maps every original simplex and codimension one occurrence onto its original counterpart. The
resulting map is an $h$-equivariant simplicial isomorphism $X_1\to X_2$ and descends to a
simplicial, hence PL, homeomorphism $C_1\to C_2$ inducing $h$ up to basepoint conjugacy.

If finite families of invariant subcomplexes are distinguished and the finite maps preserve them by
permutations in a fixed finite permutation group, the selected maps all involve the same permutation
and the descended homeomorphism preserves the corresponding subcomplexes.
\end{theorem}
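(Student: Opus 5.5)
The proof has two stages. First we select compatible finite maps and pass to an inverse limit. Then we apply the all-dimensional realization theorem, \cref{thm:all-dimensional-occurrence}, to $X_i$, which are connected, locally finite, finite dimensional regular CW complexes carrying free cellular cocompact deck actions.

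\textbf{Compatible selection.} For each $j$, let $\mathcal F_j$ be the set of $\Phi_{m_j}$-equivariant isomorphisms \eqref{eq:graph-simplex-finite}. When families of subcomplexes are distinguished, we instead take the pairs (isomorphism, permutation in the fixed finite group) respecting them. Each $\mathcal F_j$ is finite and nonempty, since the quotient diagrams are finite.

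For $j\le j'$ we have $K_{m_{j'}}(G_i)\le K_{m_j}(G_i)$, and the induced maps $\Phi_m$ are compatible by \cref{lem:char-functor}. Hence a $\Phi_{m_{j'}}$-equivariant isomorphism descends along the attaching maps $r_{N',N}$ to a $\Phi_{m_j}$-equivariant isomorphism at level $j$. The descent is well defined: equivariance with respect to the finite quotient groups means the map sends $K_{m_j}(G_1)/K_{m_{j'}}(G_1)$-orbits to the corresponding orbits, because $\Phi_{m_{j'}}$ carries that subgroup onto its counterpart. It is bijective because both sides are quotients by corresponding subgroups. It preserves structure maps, and it keeps the same permutation.

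This gives an inverse system of finite nonempty sets $\mathcal F_j$. By König's lemma (compactness), its inverse limit is nonempty. A thread yields compatible isomorphisms all with one permutation $\sigma$, because the permutation is constant along descents. Since $(m_j)$ is unbounded and the characteristic family is cofinal, their limit is a $\Phi$-equivariant isomorphism $F$ of the completed full cellular residual complexes of \cref{def:full-cellular-occurrence}. If subcomplexes are distinguished, $F$ maps the completion of $Z_{1,\lambda}$ onto that of $Z_{2,\sigma(\lambda)}$, since these completions are closed and equal to the limits of their quotients.

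\textbf{Realization.} \Cref{thm:all-dimensional-occurrence} now provides $u$ and $h$ with $\Phi=\Inn(u)\circ\wh h$, such that $u^{-1}F$ maps original cells and occurrences bijectively onto original ones. It is $h$-equivariant on originals because $\Inn(u^{-1})\circ\Phi=\wh h$ restricts to $h$ on $G_1$.

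\textbf{Main obstacle: recovering a genuine simplicial map.} We must show that an isomorphism of residual complexes of simplicial complexes gives a simplicial isomorphism $X_1\to X_2$. The plan is to prove, by induction on dimension, that a simplex is determined by its vertex set. The vertices of a $k$-simplex are obtained by iterating the face occurrences $q_{k,k-1}$ down to dimension $0$. In a simplicial complex, two distinct simplices have distinct vertex sets. Preservation of all codimension one occurrences then makes the vertex map send each vertex set of a simplex to the vertex set of the image simplex. Hence the vertex bijection extends linearly to a simplicial isomorphism, which is $h$-equivariant.

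Equivariance lets it descend to $G_1\backslash X_1=C_1\to C_2$. The descended map is simplicial, hence PL, and it induces $h$ up to the choice of basepoint lift, that is, up to conjugacy. Finally, the distinguished subcomplexes are preserved on originals, since originals of $Z_{i,\lambda}$ are exactly the original cells lying in its completion. Hence the descended homeomorphism preserves their images.
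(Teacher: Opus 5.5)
Your proposal is correct and follows essentially the same route as the paper. It selects compatible finite maps by descent along the characteristic levels and a compactness argument, carrying the permutation as an extra coordinate. It then applies \cref{thm:all-dimensional-occurrence} to normalize $u^{-1}F$, recovers the simplicial isomorphism because iterated codimension one occurrences determine vertex sets, and descends to $C_1\to C_2$ by equivariance, with distinguished subcomplexes handled exactly as in the paper.
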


At any one level in \eqref{eq:graph-simplex-finite}, quotienting the equivariant isomorphism by
$G_i/K_{m_j}(G_i)$ produces an isomorphism of the full residual complexes of the base
triangulations $C_1$ and $C_2$. Their face relations determine the simplices and hence give a
simplicial isomorphism $C_1\to C_2$. The cofinal family is used to obtain a realization whose
induced isomorphism on profinite completions agrees with the given $\Phi$ up to inner conjugation.

\subsection*{The two curve criterion}
\begin{corollary}\label{cor:two-curve}
Let $S_i$ be closed orientable surfaces of genus at least two, $\Pi_i=\pi_1(S_i)$, and let
$(\alpha_i,\beta_i)$ be nonseparating filling pairs with associated maximal cyclic subgroups
$A_i,B_i\leq\Pi_i$, for $i=1,2$. If
\[
\theta:\wh\Pi_1\xrightarrow{\cong}\wh\Pi_2,
\quad
\theta(\cl A_1)=x\cl A_2x^{-1},
\quad
\theta(\cl B_1)=y\cl B_2y^{-1},
\]
then $\theta$ is discretely induced up to profinite inner automorphism.
\end{corollary}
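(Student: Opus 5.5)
The plan is to compose \cref{thm:core-main} with the square-complex realization theorem \cref{thm:discrete-star}; the three-dimensional version \cref{thm:all-dimensional-occurrence} would serve equally well, since $C_i$ is a regular square complex.

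First, apply \cref{thm:finite-cone-main} separately to the marked pairs $(\wh\Pi_i,[\cl A_i])$ and $(\wh\Pi_i,[\cl B_i])$. Both $\alpha_i$ and $\beta_i$ are nonseparating, so this gives $\theta$-equivariant isomorphisms $F_\alpha:\wh T_{\alpha_1}\to\wh T_{\alpha_2}$ and $F_\beta:\wh T_{\beta_1}\to\wh T_{\beta_2}$. \Cref{thm:core-main} then says that $F_\alpha\times F_\beta$ preserves the profinite crossing relation. Through the embeddings $\wh j_i$ it therefore restricts to a $\theta$-equivariant isomorphism
\[
F:\wh{\Icell}(C_1)\xrightarrow{\cong}\wh{\Icell}(C_2)
\]
of completed cubical residual complexes. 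For this I need \cref{prop:original-core}: the action of $\Pi_i$ on $C_i=C(\alpha_i,\beta_i)$ is free on every cell and occurrence set with finitely many orbits. The complex $C_i$ is connected and locally finite because it is the dual square complex of a filling pair in $\HH^2$. Hence the hypotheses of the realization theorem hold.

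Second, apply \cref{thm:discrete-star} to $(\theta,F)$. The argument behind it is that some completed vertex orbit is a single free orbit $\wh\Pi_2 v$. Choose an original vertex $v_1$ of $C_1$, and choose $u\in\wh\Pi_2$ with $F(v_1)=u v_2$ for an original vertex $v_2$. After normalizing to $(\Inn(u^{-1})\circ\theta,\,u^{-1}F)$, the attaching maps force the star of $v_1$ onto the original star of $v_2$. Connectedness then propagates originality through all cells. Freeness shows that the normalized $\theta$ carries $\Pi_1$ onto $\Pi_2$: for $g\in\Pi_1$, the element $u^{-1}\theta(g)u$ sends an original vertex to an original vertex, so it lies in $\Pi_2$. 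The result is an isomorphism $f:\Pi_1\to\Pi_2$ with $\theta=\Inn(u)\circ\wh f$, since both sides are continuous and agree on the dense subgroup $\Pi_1$. This is precisely the statement that $\theta$ is discretely induced up to profinite inner automorphism.

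The main obstacle is the compatibility of completions. \Cref{thm:discrete-star} may be formulated over a specific cofinal family, for instance the characteristic family $(K_m(\Pi_i))_m$, whereas $\wh{\Icell}(C_i)$ in \cref{thm:core-main} is defined over some family $\mathscr N$. I would resolve this using the restriction isomorphism from \cref{def:core}: completions over any two cofinal families are canonically isomorphic. By \cref{lem:char-functor}, $\theta$ carries $K_m(\Pi_1)$-closures to $K_m(\Pi_2)$-closures, so $F$ is compatible with the level-$m$ quotients. A secondary check is the global endpoint-label reversal allowed in \cref{thm:finite-cone-main}. That reversal only relabels occurrence coordinates by a fixed bijection of $\{-1,+1\}$, so after composing $F$ with it we still have an isomorphism of residual complexes commuting with all structure maps.
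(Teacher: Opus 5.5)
Your proposal is correct and follows the paper's own proof. You obtain tree isomorphisms from \cref{thm:finite-cone-main}, then the crossing-preserving isomorphism of completed residual complexes via \cref{thm:core-main} (that is, \cref{prop:crossing-invariance,cor:completed-core}), and then apply \cref{thm:discrete-star} to $\theta$ and $F$. The cofinal-family compatibility you flag is harmless, because \cref{thm:discrete-star} only uses that each completed component is a finite union of free $\wh\Pi_i$-orbits, not any matching of levels by $\theta$.
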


\begin{theorem}
\label{thm:no-exotic}
Let $S$ be a closed orientable surface of genus at least two, $\Pi=\pi_1(S)$, and let
$(\alpha,\beta)$ be an ordered nonseparating filling pair with maximal cyclic subgroups
$A,B\leq\Pi$. For every continuous automorphism $\theta\in\Aut(\wh\Pi)$ satisfying
\[
\theta(\cl A)=x\cl A x^{-1},
\qquad
\theta(\cl B)=y\cl B y^{-1}
\qquad\text{for some }x,y\in\wh\Pi,
\]
there exist $h\in\Aut(\Pi)$ and $u\in\wh\Pi$ such that $\theta=\Inn(u)\circ\wh h$. The two conjugacy
conditions are independent of the representative $\theta$ of its outer automorphism class.
\end{theorem}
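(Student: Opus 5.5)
The statement to prove is \cref{thm:no-exotic}. I would deduce it from \cref{cor:two-curve} in the special case $S_1=S_2=S$, $\Pi_1=\Pi_2=\Pi$, $(\alpha_1,\beta_1)=(\alpha_2,\beta_2)=(\alpha,\beta)$, so that $\theta$ becomes an automorphism of $\wh\Pi$. The hypotheses $\theta(\cl A)=x\cl Ax^{-1}$ and $\theta(\cl B)=y\cl By^{-1}$ are exactly those of the corollary. The corollary then yields an isomorphism $h:\Pi\to\Pi$, hence $h\in\Aut(\Pi)$, and an element $u\in\wh\Pi$ with $\theta=\Inn(u)\circ\wh h$. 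The substantive input is behind the corollary. \Cref{thm:finite-cone-main} gives the cut tree isomorphisms $F_\alpha,F_\beta$, \cref{thm:core-main} promotes $F_\alpha\times F_\beta$ to an isomorphism of completed cubical residual complexes, and \cref{thm:discrete-star} (or \cref{thm:all-dimensional-occurrence}) normalizes by a single profinite element to get the discrete realization. Here I simply invoke the corollary.

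For the second assertion, suppose $\theta'=\Inn(z)\circ\theta$ with $z\in\wh\Pi$. Then
\[
\theta'(\cl A)=z\theta(\cl A)z^{-1}=(zx)\cl A(zx)^{-1},
\]
and likewise $\theta'(\cl B)=(zy)\cl B(zy)^{-1}$. So the two conjugacy conditions depend only on the outer class of $\theta$. Both $\theta$ and $\theta'$ then satisfy the conclusion, with $u$ replaced by $zu$ and the same $h$.

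I expect no real obstacle, since the theorem is a direct specialization of \cref{cor:two-curve}. The only point to check is that the corollary genuinely allows $S_1=S_2$ with the same filling pair, which it does because nothing there requires the two surfaces or pairs to be distinct.
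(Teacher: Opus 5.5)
Your proposal is correct and is essentially the paper's own proof: the paper likewise specializes \cref{cor:two-curve} to a single surface with the same filling pair and reads off $\theta=\Inn(u)\circ\wh h$ with $h\in\Aut(\Pi)$. The paper only remarks that passing to outer classes removes $\Inn(u)$; your computation, that replacing $\theta$ by $\Inn(z)\circ\theta$ turns the conjugators $x,y$ into $zx,zy$ and $u$ into $zu$, makes the representative-independence claim explicit.
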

\begin{remark}
The surface markings are conjugacy classes of closed procyclic subgroups, and no generator is
chosen. With this marking, the given profinite isomorphism is realized after conjugation by an
element of $\wh\Pi_2$. The example in \cref{rem:translation-necessary} shows that this conjugation
cannot be omitted.
\end{remark}

\subsection*{Profinite rigidity for $3$-dimensional hyperbolic lattices}
In Sections~\ref{sec:closed-sign}--\ref{sec:cusps}, we construct the surface markings required for
the lattice theorem.

\begin{theorem}\label{thm:lattice-rigidity}
Let $\Gamma,\Delta<\PSL_2(\C)$ be lattices, with torsion allowed. Every continuous isomorphism
$\Phi:\wh\Gamma\to\wh\Delta$ has the form
\[
\Phi=\Inn(u)\circ\wh f,
\qquad u\in\wh\Delta,\qquad
f:\Gamma\xrightarrow{\cong}\Delta.
\]
The isomorphism $f$ is induced, up to a discrete inner automorphism, by an isometry of the
corresponding finite volume hyperbolic orbifolds, possibly reversing orientation. Consequently
lattices in $\PSL_2(\C)$ with isomorphic profinite completions are isomorphic, and completion
induces
\[
\Out(\Gamma)\xrightarrow{\cong}\Out(\wh\Gamma).
\]
\end{theorem}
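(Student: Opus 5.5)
The proof of \cref{thm:lattice-rigidity} is an assembly of the closed case, the cusped case and a passage to torsion-free finite index subgroups; the genuinely new input is the surface realization, \cref{cor:two-curve}, applied to fiber subgroups.

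\textbf{Reduction to torsion-free lattices.} By Selberg's lemma, $\Gamma$ has a torsion-free characteristic-type finite index subgroup $\Gamma_0$. Set $\Delta_0=\Delta\cap\Phi(\cl\Gamma_0)$. Open subgroups correspond under $\Phi$, so $\wh\Gamma_0\cong\wh\Delta_0$. Goodness of $3$-manifold groups and cohomological dimension detection in $\wh\Delta_0$ show that $\Delta_0$ is torsion-free as well. Compactness and profinite detection of cusps \cite{WiltonZalesskii2017} show that $\Gamma_0$ and $\Delta_0$ are simultaneously closed or cusped.

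\textbf{Torsion-free closed case.} This is \cref{cor:closed-rigidity}, built as follows.
\begin{enumerate}[label=\textup{(\arabic*)}]
\item Pass to a common finite cover that fibers (Agol).
\item By Liu's theorem, $\Phi$ matches fibered classes up to a unit $\mu\in\hZ^\times$. \cref{thm:closed-integral}, via Massey products and virtual domination, forces $\mu=\pm1$. Hence $\Phi$ restricts to an isomorphism $\wh\Pi_1\to\wh\Pi_2$ of completed fiber groups carrying the monodromy.
\item Through twisted torsion and traces, prime periodic orbits of the two suspension flows correspond.
\item A second independent fibered class locates a matched orbit on a simple curve in the fiber.
\item Large pseudo-Anosov iterates produce nonseparating filling pairs $(\alpha_i,\beta_i)$ whose procyclic classes $\Phi$ preserves.
\item \cref{cor:two-curve} gives $\Phi|_{\wh\Pi_1}=\Inn(u)\circ\wh h$ with $h$ discrete.
\item Since monodromies correspond, $h$ conjugates $a_{\chi_1}$ to $a_{\chi_2}$ up to inner automorphism. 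Hence $h$ extends to the semidirect products, and the extension agrees with $\Phi$ up to $\Inn(u)$ on the fiber and on the stable letter. The discrepancy lies in the centralizer of $\wh\Pi$, which is trivial.
\end{enumerate}

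\textbf{Cusped case.} This is \cref{cor:cusped-rigidity}. Divisible cyclic orbifold fillings are preserved because peripheral maps are unit multiples of integral maps. Apply the closed case to the fillings of high order. Only finitely many discrete outer classes occur, by Mostow--Prasad rigidity and the finiteness of $\Out$. A compactness argument over nested nonempty compact conjugator sets then selects one discrete $f_0:\Gamma_0\to\Delta_0$ with $\Phi|=\Inn(u)\circ\wh f_0$.

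\textbf{Ascending to $\Gamma$.} Mostow rigidity realizes $f_0$ by an isometry. Then $\Inn(u^{-1})\circ\Phi$ maps the dense subgroup $\Gamma$ into the profinite normalizer of $\Delta_0$.

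This step is the main obstacle. One must show that $\Inn(u^{-1})\Phi(\gamma)$ lies in $\Delta$, possibly after adjusting $u$ by an element of $\wh\Delta_0$. The plan is as follows.
\begin{enumerate}[label=\textup{(\roman*)}]
\item For each $\gamma\in\Gamma$, conjugation by $\gamma$ is an automorphism of $\Gamma_0$. Transported, it gives an automorphism of $\wh\Delta_0$ that is discretely induced.
\item By Mostow rigidity, that automorphism comes from an element $\delta\in\operatorname{Isom}(\HH^3)$ normalizing $\Delta_0$.
\item Centerlessness of $\wh\Delta_0$ (Wilton--Zalesskii) forces $\Inn(u^{-1})\Phi(\gamma)=\delta$ in the appropriate normalizer.
\item To conclude $\delta\in\Delta$, use the finite group $\wh\Delta/\wh\Delta_0\cong\Delta/\Delta_0$ together with the fact that the normalizer of $\Delta_0$ is discrete.
\end{enumerate}
Orientation reversal enters here, since $\delta$ may reverse orientation.

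\textbf{Out statement.} Injectivity of $\Out(\Gamma)\to\Out(\wh\Gamma)$ follows from conjugacy separability of lattices together with the trivial centralizer of $\wh\Gamma$. Surjectivity is the case $\Gamma=\Delta$ of the main statement.
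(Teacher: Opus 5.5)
Your treatment of the torsion-free closed and cusped cases amounts to citing \cref{cor:closed-rigidity,cor:cusped-rigidity}, and the Selberg reduction is fine. The paper avoids having to prove $\Delta_0$ torsion-free by taking $K_m(\Gamma)$ and $K_m(\Delta)$ inside torsion-free normal subgroups on both sides.

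The gap is in the step you flag yourself. The ingredient you invoke, triviality of centres, does not close it. In (iii), $\delta\in\operatorname{Isom}(\HH^3)$ and $\Phi'(\gamma)=\Inn(u^{-1})\Phi(\gamma)\in\wh\Delta$ live in different groups. To compare them you must work in $\wh N$ with $N=N_{\operatorname{Isom}(\HH^3)}(\Delta_0)\supseteq\Delta$, where $\delta^{-1}\Phi'(\gamma)$ centralizes $\wh\Delta_0$. From $Z(\wh\Delta_0)=1$ you only learn that $C_{\wh N}(\wh\Delta_0)$ is finite and meets $\wh\Delta_0$ trivially. Write one of its elements as $nd$ with $n\in N$ and $d\in\wh\Delta_0$. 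Then the completion of the automorphism $\Inn(n)|_{\Delta_0}$ is inner in $\wh\Delta_0$. So killing that centralizer requires injectivity of $\Out(\Delta_0)\to\Out(\wh\Delta_0)$, equivalently $N_{\wh\Delta_0}(\Delta_0)=\Delta_0$, together with $C_N(\Delta_0)=1$. That is \cref{lem:normalizer-rigidity}. Its proof rests on conjugacy separability, the identity $C_{\wh G}(g)=\cl{C_G(g)}$, and separability of $\langle x\rangle\langle y\rangle$ for two loxodromics, not on centerlessness.

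Once you have that lemma, the Mostow detour is unnecessary. For $\gamma\in\Gamma$, write $\Phi'(\gamma)=nd$ with $n\in\Delta$ and $d\in\wh\Delta_0$, using $\wh\Delta=\Delta\wh\Delta_0$. Both $\Phi'(\gamma)$ and $n$ normalize $\Delta_0$, hence so does $d$. So $d\in N_{\wh\Delta_0}(\Delta_0)=\Delta_0$ and $\Phi'(\gamma)\in\Delta$. The symmetric argument for $\Phi'^{-1}$ gives $\Phi'(\Gamma)=\Delta$, and $\Phi'=\wh f$ by density. The paper reaches the same point through \cref{lem:extensions}(ii), i.e.\ \cref{prop:centerless-extension}. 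Its hypotheses are exactly $Z(\wh H_i)=1$ and the injectivity of $\Out(H_i)\to\Out(\wh H_i)$ supplied by \cref{lem:normalizer-rigidity}. Mostow--Prasad is used only afterwards, to realize $f$ by a possibly orientation-reversing isometry.

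The same missing ingredient undermines your injectivity argument for $\Out$. If $\wh\alpha=\Inn(u)$, conjugacy separability only shows that $\alpha$ is class-preserving, which is weaker than inner, and $Z(\wh\Gamma)=1$ does not bridge that difference. For a lattice with torsion, conjugacy separability would itself need a separate argument. The paper instead takes a torsion-free characteristic $H\normalf\Gamma$ and writes $u=\gamma v$ with $\gamma\in\Gamma$ and $v\in\wh H$. Since $\alpha(H)=H$, $v$ normalizes $H$, so $v\in N_{\wh H}(H)=H$ and $u\in\Gamma$.
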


In the closed case, we combine the integral sign theorem, the prime orbit correspondence, and the
two fibration configuration of
\cref{thm:closed-integral,thm:orbit-correspondence,thm:two-fibers,cor:configuration}. The one orbit
theorem, \cref{thm:one-orbit}, then invokes the two curve criterion. In the cusped case we use the
unit invariant filling argument of \cref{cor:cusped-rigidity}. Peripheral integrality follows from
\cref{cor:all-integrality}.


\subsection{Applications to nonlattices}
The graph theorem does not require a geometric action on hyperbolic space, and
\cref{cor:graph-kleinian} therefore applies to every finitely generated torsion-free Kleinian group
through a compact core graph that is surjective on fundamental groups. Directed Cayley graphs allow
torsion in \cref{cor:graph-cayley}. In both applications, the finite quotient maps are not assumed
to be compatible with each other at the outset.

The boundary subgroup conclusion of \cref{thm:graph-boundary-realization} allows compressible
boundary and determines the images of the boundary fundamental groups. If we keep the full cellular
residual complex, \cref{cor:graph-compact-core-pl,cor:graph-core-pairs} instead give PL
homeomorphisms of the given compact cores and core pairs. For incompressible boundary with the full
induced profinite topology, \cref{cor:graph-boundary-filling} applies the divisible filling and
filling core theorems to marked boundary filling pairs.
\section{Profinite preliminaries}\label{sec:preliminaries}
We collect some elementary facts about profinite groups in the form in which they will be used. Most
of them can be found in the references cited; a few need minor modifications, and for these we give
proofs.

Throughout, $G$ is finitely generated and $K_m(G)=\bigcap_{[G:L]\leq m}L$ is the standard family of
characteristic subgroups of \cref{def:characteristic-cover}.

\begin{lemma}\label{lem:char-functor}
Let $G,H$ be finitely generated groups. Every continuous isomorphism
$\Phi:\wh G\xrightarrow{\cong}\wh H$ satisfies
\[
\Phi\bigl(\cl{K_m(G)}\bigr)=\cl{K_m(H)}
\qquad(m\geq1).
\]
\end{lemma}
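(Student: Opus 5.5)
The plan is to describe $\cl{K_m(G)}$ intrinsically in $\wh G$ as the intersection of all open subgroups of index at most $m$, and then to use that this description is preserved by any continuous isomorphism.

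\textbf{Step 1: index correspondence.} Since $G$ is finitely generated, there are only finitely many subgroups $L\leq G$ with $[G:L]\leq m$, so $K_m(G)$ is a finite intersection of finite index subgroups and therefore has finite index. The standard correspondence \cite{RibesZalesskii2010} applies: $L\mapsto\cl L$ is an index preserving bijection between finite index subgroups of $G$ and open subgroups of $\wh G$, with inverse $U\mapsto U\cap G$.

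\textbf{Step 2: closure of a finite intersection.} This bijection preserves intersections, because $U\mapsto U\cap G$ visibly commutes with intersections. Applied to the finitely many $L$ of index at most $m$, it gives
\[
\cl{K_m(G)}=\bigcap_{[G:L]\leq m}\cl L=\bigcap_{\substack{U\leq_o\wh G\\ [\wh G:U]\leq m}}U .
\]

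\textbf{Step 3: transport by $\Phi$.} A continuous isomorphism $\Phi:\wh G\to\wh H$ of profinite groups is a homeomorphism, since $\wh G$ is compact and $\wh H$ is Hausdorff. Hence $\Phi$ carries open subgroups of index at most $m$ bijectively onto open subgroups of index at most $m$. It follows that $\Phi$ maps the right-hand side of Step 2 for $G$ onto the corresponding intersection for $H$, which by Step 2 again equals $\cl{K_m(H)}$.

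The only point that needs care is the intersection compatibility in Step 2. I would check it directly: $\cl{L_1}\cap\cl{L_2}$ is an open subgroup, and its intersection with $G$ is $L_1\cap L_2$. Since open subgroups are determined by their traces on $G$, this gives $\cl{L_1}\cap\cl{L_2}=\cl{L_1\cap L_2}$.

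This argument does not require the Nikolov--Segal theorem, because $\Phi$ is assumed to be continuous.
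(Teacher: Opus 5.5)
Your proposal is correct and follows the paper's proof essentially verbatim: it identifies $\cl{K_m(G)}$ with the intersection of all open subgroups of $\wh G$ of index at most $m$, using the finite-index/open-subgroup correspondence and the fact that closure commutes with finite intersections, and then transports this description along $\Phi$. One cosmetic point: $G$ is not assumed residually finite here, so $U\cap G$ should be read as the preimage of $U$ under the canonical map $G\to\wh G$.
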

\begin{proof}
Since there are only finitely many subgroups of index at most $m$, and closure commutes with finite
intersections of finite-index subgroups, \cite[Lemma~3.2.1 and Proposition~3.2.2]{RibesZalesskii2010} give
\[
\cl{K_m(G)}
=\bigcap_{\substack{U\leq\wh G\text{ open}\\
[\wh G:U]\leq m}}U.
\]
The same identity holds for $H$. As $\Phi$ preserves open subgroups and their indices, it maps the
first intersection onto the second.
\end{proof}
\begin{lemma}\label{lem:virtual-retract-topology}
Let $G$ be residually finite, let $H\leq G$ have finite index, and let $A\leq H$ admit a retraction
$r:H\to A$. Then $A$ is closed in the profinite topology of $G$, and the topology induced from $G$
on $A$ is the full profinite topology of $A$.
\end{lemma}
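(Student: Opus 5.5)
We prove the two assertions in turn: first that $A$ is closed in the profinite topology of $G$, then that the topology $G$ induces on $A$ is the full profinite topology of $A$. Both reduce to the corresponding statements for $H$, because $H$ has finite index in $G$.

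\emph{Reduction to $H$.} Every finite index subgroup of $H$ has finite index in $G$, so it contains a finite index normal subgroup of $G$, namely its normal core in $G$. Conversely, the intersection with $H$ of a finite index subgroup of $G$ has finite index in $H$. Hence the profinite topology of $G$ restricts on $H$ to the full profinite topology of $H$. Since $H$ is open, hence closed, in $G$, a subset of $H$ is closed in $G$ exactly when it is closed in $H$. It therefore suffices to prove both assertions with $G$ replaced by $H$.

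\emph{Closedness in $H$.} Note first that $H$ is residually finite, being a subgroup of $G$. Now let $h\in H\setminus A$, and put $c=h\,r(h)^{-1}$. Since $r(h)\in A$ and $h\notin A$, we have $c\neq1$. Choose a finite index normal subgroup $N\normal H$ with $c\notin N$. Then $M=r^{-1}(A\cap N)\cap N$ has finite index in $H$.

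We claim $hM\cap A=\varnothing$. Suppose instead that $hm=a\in A$ for some $m\in M$. Applying $r$, which fixes $A$ pointwise, gives $a=r(h)r(m)$. Hence
\[
c=h\,r(h)^{-1}=a m^{-1}\,r(m)\,a^{-1}\cdot a\,r(h)^{-1}\cdots
\]
and it is cleaner to compute directly: $h=a m^{-1}=r(h)r(m)m^{-1}$, so $c=h\,r(h)^{-1}=r(h)\,r(m)m^{-1}\,r(h)^{-1}$. Here $r(m)\in A\cap N\subseteq N$ and $m\in N$, so $r(m)m^{-1}\in N$. Since $N$ is normal in $H$, its conjugate by $r(h)$ is still in $N$, so $c\in N$. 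This is a contradiction. Thus $hM$ is an open neighbourhood of $h$ disjoint from $A$, and $A$ is closed in $H$.

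\emph{Induced topology.} Given a finite index subgroup $U\leq A$, the preimage $r^{-1}(U)$ has finite index in $H$, and $r^{-1}(U)\cap A=U$. So every finite index subgroup of $A$ is open in the topology induced from $H$. Conversely, intersections with $A$ of finite index subgroups of $H$ have finite index in $A$. The two topologies therefore coincide.

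\emph{Main obstacle.} The only step that is not formal is choosing the separating subgroup in the closedness argument. It must be $r$-adapted, which is why $M$ is taken inside $r^{-1}(N)$ and not simply equal to $N$.
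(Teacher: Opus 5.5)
Your proof is correct. The induced-topology step is the paper's argument: the paper takes the normal core in $G$ of $r^{-1}(N)$ directly, where you first reduce to $H$ and then pass to $G$ through cores.

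The difference is in closedness. The paper does not prove it; it cites Minasyan's Lemma~2.2 on virtual retracts in residually finite groups. You give a self-contained proof. In effect you show that $A=\{h\in H: h=r(h)\}$ is the equalizer of two homomorphisms $H\to H$, both continuous for the profinite topology, taking values in a Hausdorff space. Your subgroup $M=N\cap r^{-1}(N)$ is the neighbourhood that makes this explicit. For $x=hm$ with $m\in M$, the element $x\,r(x)^{-1}$ stays congruent to $c\notin N$ modulo $N$. This removes the dependence on the citation at the cost of a few lines, and it makes the lemma fully elementary.

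One cleanup is needed. Delete the aborted display that begins $c=am^{-1}r(m)a^{-1}\cdot a\,r(h)^{-1}\cdots$. As written it asserts a false identity: its right side simplifies to $am^{-1}r(m)r(h)^{-1}$, whereas $c=am^{-1}r(h)^{-1}$. The direct computation $c=r(h)\,r(m)m^{-1}\,r(h)^{-1}$ that follows it is the correct one.
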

\begin{proof}
Closedness of a virtual retract in a residually finite group is
\cite[Lemma~2.2]{Minasyan2021VirtualRetractions}. We prove that the induced topology is the full
profinite topology.

We fix $N\normalf A$. The subgroup $r^{-1}(N)$ has finite index in $H$ and satisfies
$r^{-1}(N)\cap A=N$. Let
\[
D=\normalcore{G}{r^{-1}(N)}=\bigcap_{g\in G}g r^{-1}(N)g^{-1}.
\]
Only finitely many conjugates occur because $r^{-1}(N)$ has finite index, so $D\normalf G$.
Moreover, $D\cap A\leq r^{-1}(N)\cap A=N$, so every basic neighborhood $N$ of $1$ in the full
profinite topology of $A$ contains the intersection with $A$ of an open normal subgroup of $G$. The
reverse inclusion follows because the intersection with $A$ of a finite index subgroup of $G$ has
finite index in $A$. Therefore the induced and full profinite topologies on $A$ coincide.
\end{proof}

\begin{lemma}\label{lem:completion-quotient}
If $N\normal G$, then
\[
\wh{G/N}\cong \wh G/\cl{N},
\]
where $\cl N$ is also the closed normal subgroup of $\wh G$ generated by $N$.
\end{lemma}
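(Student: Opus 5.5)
The plan is to build the isomorphism from universal properties and use right exactness of profinite completion. Write $\pi:G\to G/N$ for the quotient map and $\iota:G\to\wh G$ for the canonical map. Since $\wh\pi:\wh G\to\wh{G/N}$ is continuous and $\pi(N)=1$, the kernel of $\wh\pi$ is a closed normal subgroup containing $\iota(N)$. Hence it contains $\cl N$, and $\wh\pi$ factors through a continuous homomorphism
\[
\bar\pi:\wh G/\cl N\longrightarrow\wh{G/N}.
\]
This map is surjective. Its image is compact and contains the dense subgroup $\iota_{G/N}(G/N)$, since $\wh\pi\circ\iota_G=\iota_{G/N}\circ\pi$.

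For the inverse, note that $\wh G/\cl N$ is a profinite group, because $\cl N$ is closed. The composite $G\to\wh G\to\wh G/\cl N$ kills $N$, so it induces a homomorphism $G/N\to\wh G/\cl N$. By the universal property of profinite completion, this extends to a continuous homomorphism $\psi:\wh{G/N}\to\wh G/\cl N$.

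The two composites $\psi\circ\bar\pi$ and $\bar\pi\circ\psi$ are continuous. Each agrees with the identity on the image of $G$, respectively of $G/N$, and these images are dense. Since the spaces are Hausdorff, both composites are the identity. This gives $\wh{G/N}\cong\wh G/\cl N$. Equivalently, one can argue at finite level: open normal subgroups of $\wh G$ containing $\cl N$ correspond to finite index normal subgroups $U\normalf G$ with $N\le U$, and these in turn correspond to finite index normal subgroups of $G/N$. Comparing the inverse limits then gives the same identification.

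It remains to identify $\cl N$ with the closed normal subgroup of $\wh G$ generated by $\iota(N)$. Because $N$ is normal in $G$ and conjugation in $\wh G$ is continuous, the closure $\cl N$ is normalized by the dense subgroup $\iota(G)$. It is therefore normalized by all of $\wh G$, since the normalizer of a closed subgroup is closed. So $\cl N$ is a closed normal subgroup containing $\iota(N)$. Conversely, every closed normal subgroup containing $\iota(N)$ contains its closure $\cl N$, so $\cl N$ is the smallest such subgroup.

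The only point that needs care is the normality of $\cl N$, that is, the closedness of normalizers in a Hausdorff topological group. Everything else is formal; the statement is \cite[Proposition~3.2.5]{RibesZalesskii2010}, and no injectivity of $\iota$ is needed.
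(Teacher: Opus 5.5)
Your proof is correct. It is the standard universal-property argument for the quotient form of right exactness of profinite completion, which is exactly what the paper invokes by citing Ribes--Zalesskii without further detail, and your separate check that $\cl N$ is normal in $\wh G$ (its normalizer is closed and contains the dense subgroup $\iota(G)$) fills in a point the citation leaves implicit. The only cosmetic issue is ordering: you form $\wh G/\cl N$ in the first paragraph but prove normality only at the end, which is harmless because that proof does not use the earlier steps.
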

\begin{proof}
This is the quotient form of the right exactness of profinite completion \cite{RibesZalesskii2010}.
\end{proof}

\begin{lemma}\label{lem:compact-conjugators}
Let $G$ and $H$ be finitely generated residually finite groups. Let $f:G\to H$ be an isomorphism and
$\Phi:\wh G\to\wh H$ an isomorphism. If the maps induced by $\Phi$ and $\wh f$ on every standard
characteristic quotient differ by an inner automorphism, then $\Phi=\Inn(u)\circ\wh f$ for some
$u\in\wh H$.
\end{lemma}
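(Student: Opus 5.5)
Write $Q_m(G)=G/K_m(G)$. By \cref{lem:completion-quotient}, $Q_m(G)=\wh G/\cl{K_m(G)}$, and the same holds for $H$. By \cref{lem:char-functor}, both $\Phi$ and $\wh f$ carry $\cl{K_m(G)}$ onto $\cl{K_m(H)}$. Hence they induce isomorphisms
\[
\Phi_m,\ f_m : Q_m(G)\longrightarrow Q_m(H).
\]
The hypothesis says that $\Phi_m=\Inn(c_m)\circ f_m$ for some $c_m\in Q_m(H)$. The plan is a compactness argument: choose the conjugators $c_m$ compatibly along the tower and take their inverse limit.

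For each $m$, let
\[
U_m=\{c\in Q_m(H):\Phi_m=\Inn(c)\circ f_m\}.
\]
This set is finite, and it is nonempty by hypothesis; it is a coset of the center of $Q_m(H)$. For $m\le m'$ we have $K_{m'}(H)\le K_m(H)$, so there is a natural projection $\pi_{m'm}:Q_{m'}(H)\to Q_m(H)$. The maps $\Phi_m$ and $f_m$ are compatible with these projections, because all of them are induced from the fixed maps $\Phi$ and $\wh f$. Consequently, if $c\in U_{m'}$, then
\[
\Phi_m\circ\pi_{m'm}=\pi_{m'm}\circ\Phi_{m'}=\pi_{m'm}\circ\Inn(c)\circ f_{m'}=\Inn(\pi_{m'm}c)\circ f_m\circ\pi_{m'm}.
\]
Since $\pi_{m'm}$ is surjective, this gives $\pi_{m'm}(U_{m'})\subseteq U_m$. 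Thus $(U_m)_m$ is an inverse system of nonempty finite sets. Its inverse limit is nonempty (the standard fact \cite[Proposition~1.1.4]{RibesZalesskii2010}), so we can pick a compatible family $(c_m)$. By cofinality of $(K_m(H))_{m\ge1}$ (\cref{def:characteristic-cover}) we have $\wh H=\varprojlim_m Q_m(H)$, so this family defines an element $u\in\wh H$ projecting to $c_m$ at every level.

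It remains to check that $\Phi=\Inn(u)\circ\wh f$. Both sides are continuous homomorphisms $\wh G\to\wh H$, and their compositions with each projection $\wh H\to Q_m(H)$ agree: they equal $\Phi_m=\Inn(c_m)\circ f_m$ composed with $\wh G\to Q_m(G)$. The subgroups $\cl{K_m(H)}$ form a neighborhood basis of the identity, so they intersect trivially, and hence the two maps agree.

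The only mildly delicate step is the compatibility $\pi_{m'm}(U_{m'})\subseteq U_m$. It uses that $\Phi$ and $\wh f$ both preserve the characteristic closures, which is exactly \cref{lem:char-functor}; the rest is routine. The point of the argument is that the conjugators at different levels need not be compatible as given, since each is only determined up to the center, and the compactness step is what produces a single $u$.
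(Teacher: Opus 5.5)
Your proof is correct and is essentially the paper's argument. The paper works with the nested nonempty closed sets $C_m=q_m^{-1}(U_m)\subseteq\wh H$ and takes a point of their intersection by compactness of $\wh H$; this is the same as your choice of a compatible family in the inverse limit of the nonempty finite sets $U_m$.
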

\begin{proof}
For each $m$, let $q_m:\wh H\to H/K_m(H)$ be the standard characteristic quotient and define
\[
C_m=\{u\in\wh H:q_m\Phi(x)=q_m(u\wh f(x)u^{-1})\text{ for every }x\in\wh G\}.
\]
A finite topological generating set of $\wh G$ suffices to test the equality. So $C_m$ is an
intersection of finitely many inverse images of the diagonal in the finite group $H/K_m(H)$, so it
is closed and open. The conditions make $C_m$ nonempty.

Equality modulo $K_{m+1}(H)$ implies equality modulo $K_m(H)$, because $K_{m+1}(H)\leq K_m(H)$. So
$C_{m+1}\subseteq C_m$. By compactness of $\wh H$ there is an element $u\in\bigcap_m C_m$. For every
$x\in\wh G$, the two elements $\Phi(x)$ and $u\wh f(x)u^{-1}$ have the same image in every standard
characteristic quotient. The family is cofinal and has trivial intersection, so the two elements are
equal. Hence $\Phi=\Inn(u)\circ\wh f$.
\end{proof}

\begin{proposition}
\label{prop:centerless-extension}
For $i=1,2$, let $G_i$ be finitely generated residually finite groups and let
\[
1\longrightarrow H_i\longrightarrow G_i\longrightarrow Q_i\longrightarrow1
\]
be exact sequences with $Q_i$ finite. Assume that the topology induced on $H_i$ by the profinite
topology of $G_i$ is the full profinite topology of $H_i$, so that completion yields an exact
sequence
\[
1\longrightarrow\wh H_i\longrightarrow\wh G_i\longrightarrow Q_i\longrightarrow1.
\]
Assume further that
\[
Z(H_i)=Z(\wh H_i)=1,
\qquad
\Out(H_i)\longrightarrow\Out(\wh H_i)\text{ is injective}.
\]
Let $\Phi:\wh G_1\to\wh G_2$ be an isomorphism such that $\Phi(\wh H_1)=\wh H_2$. Suppose that there
are an isomorphism $f_H:H_1\to H_2$ and an element $a\in\wh H_2$ such that
\[
\Phi|_{\wh H_1}=\Inn(a)\circ\wh f_H.
\]
Then there are an isomorphism $f:G_1\to G_2$ and an element $u\in\wh G_2$ such that
\[
\Phi=\Inn(u)\circ\wh f.
\]
\end{proposition}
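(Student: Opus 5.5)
The plan is to normalize $\Phi$ so that it is literally $\wh f_H$ on $\wh H_1$. Then show that $\Phi$ sends the dense subgroup $G_1$ into $G_2$, and take $f=\Phi|_{G_1}$ with $u=a$.

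\emph{Normalization.} Set $\Psi=\Inn(a^{-1})\circ\Phi$. Since $a\in\wh H_2$, we still have $\Psi(\wh H_1)=\wh H_2$, and now $\Psi|_{\wh H_1}=\wh f_H$. It suffices to show $\Psi(G_1)=G_2$. Then $f:=\Psi|_{G_1}$ is an isomorphism $G_1\to G_2$, density of $G_1$ gives $\Psi=\wh f$, and so $\Phi=\Inn(a)\circ\wh f$.

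\emph{Comparing the two finite extensions of $\Inn(H_2)$.} For $g\in G_1$ and $h\in\wh H_1$ we have $\Psi(g)\wh f_H(h)\Psi(g)^{-1}=\wh f_H(ghg^{-1})$. Hence conjugation by $\Psi(g)$ on $\wh H_2$ is the completion of the discrete automorphism $\phi_g:=f_H\circ\conj{g}|_{H_1}\circ f_H^{-1}\in\Aut(H_2)$. Let $O_1\leq\Out(H_2)$ be the image of $\{\phi_g:g\in G_1\}$, and let $O_2$ be the image of $G_2$ acting on $H_2$ by conjugation. Both are finite.

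Under $\Out(H_2)\to\Out(\wh H_2)$, both $O_1$ and $O_2$ map onto the image of $\wh G_2$ acting on $\wh H_2$ by conjugation. This is because $\Psi(G_1)\wh H_2$ and $G_2\wh H_2$ are both dense and open, hence equal to $\wh G_2$. Injectivity of $\Out(H_2)\to\Out(\wh H_2)$ then gives $O_1=O_2$. So for each $g\in G_1$ there is $g'\in G_2$ with $\conj{g'}|_{H_2}=\phi_g\circ\Inn(h)$ for some $h\in H_2$. Replacing $g'$ by $g'h^{-1}$ we get $g''\in G_2$ with $\conj{g''}|_{H_2}=\phi_g$ exactly. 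Consequently $c:=\Psi(g)g''^{-1}$ centralizes $\wh H_2$, by density of $H_2$ in $\wh H_2$.

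\emph{The centralizer is discrete.} This is the step I expect to be the main obstacle: $C_{\wh G_2}(\wh H_2)$ could a priori contain non-original elements. Let $C=C_{\wh G_2}(\wh H_2)$ and $C_0=C_{G_2}(H_2)$, so that $C_0\subseteq C$.
\begin{itemize}
\item Since $Z(\wh H_2)=1$, we have $C\cap\wh H_2=1$, so $C$ embeds in $Q_2$ and is finite. Likewise $Z(H_2)=1$ gives $C_0\cap H_2=1$.
\item The image of $\wh G_2$ in $\Out(\wh H_2)$ is $\wh G_2/\wh H_2C$, and the image of $G_2$ in $\Out(H_2)$ is $G_2/H_2C_0$, which is $O_2$.
\item By the injectivity step above these images have the same order. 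Since $|\wh G_2/\wh H_2|=|G_2/H_2|=|Q_2|$, we get $|C|=|C_0|$, hence $C=C_0\subseteq G_2$.
\end{itemize}
Therefore $\Psi(g)=cg''\in G_2$, which gives $\Psi(G_1)\subseteq G_2$.

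\emph{Surjectivity onto $G_2$.} Apply the same argument to $\Psi^{-1}$, which restricts to $\wh f_H^{-1}=\wh{f_H^{-1}}$ on $\wh H_2$. The hypotheses are symmetric, since the $\Out$ injectivity and centerlessness are assumed for both $i=1,2$. This gives $\Psi^{-1}(G_2)\subseteq G_1$, so $\Psi(G_1)=G_2$, completing the proof with $u=a$.
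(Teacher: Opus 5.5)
Your proof is correct. After the common first step, though, it follows a different route from the paper's.

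\textbf{The common step.} Both arguments normalize by $\Inn(a^{-1})$. Both then use injectivity of $\Out(H_2)\to\Out(\wh H_2)$ to compare the outer action of $G_1$ on $H_2$ (transported by $f_H$) with that of $G_2$.

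\textbf{The paper's route.} From this comparison the paper only records that $f_H$ and the induced map $Q_1\to Q_2$ identify the abstract kernels $Q_i\to\Out(H_i)$. It then invokes the Eilenberg--Mac Lane description $G\cong\{(q,\alpha)\in Q\times\Aut(H):[\alpha]=\omega(q)\}$ of extensions with centerless kernel. This produces an abstract isomorphism $f:G_1\to G_2$ extending $f_H$. Finally, with $\Theta=\wh f^{-1}\circ\Inn(a^{-1})\circ\Phi$, each $\Theta(x)x^{-1}$ lies in $\wh H_1$ and centralizes it, so it is trivial because $Z(\wh H_1)=1$.

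\textbf{Your route.} You never construct $f$ abstractly. Instead you prove directly that $\Inn(a^{-1})\circ\Phi$ carries $G_1$ into $G_2$. The new ingredient is the equality $C_{\wh G_2}(\wh H_2)=C_{G_2}(H_2)$, which you get by counting. Since $C_{\wh G_2}(\wh H_2)\cap\wh H_2=1$ and $C_{G_2}(H_2)\cap H_2=1$, their orders are $|Q_2|$ divided by the orders of the images of $\wh G_2$ and $G_2$ in the respective outer automorphism groups. The injective map identifies these two images, so the orders agree.

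\textbf{What each buys.} The paper's version is shorter on the page because it delegates to classical extension theory. It also makes plain that $f$ is determined by $f_H$ and $Q_1\to Q_2$. Yours is self-contained and produces $f$ concretely as a restriction of the normalized $\Phi$, so $u=a$ is visible at once (as it is, implicitly, in the paper). It also isolates a centralizer statement in the spirit of $N_{\wh G}(G)=G$ from \cref{lem:normalizer-rigidity}.

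\textbf{A small simplification.} Your second pass with $\Psi^{-1}$ is unnecessary. Your $\Psi(G_1)$ is already a subgroup of $G_2$ that contains $\Psi(H_1)=f_H(H_1)=H_2$ and maps onto $Q_2$, so it equals $G_2$.
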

\begin{proof}
We use the classification of extensions with centerless kernel \cite{Brown1982}.

Compose $\Phi$ with $\Inn(a^{-1})$. This normalization does not change the induced finite quotient
map and makes
\[
\Phi|_{\wh H_1}=\wh f_H.
\]
Let $\theta:Q_1\to Q_2$ be the isomorphism induced by $\Phi$. For $q\in Q_1$, choose lifts
$x\in G_1$ and $y\in G_2$ of $q$ and $\theta(q)$. Since $\Phi(x)$ and $y$ have the same image in
$Q_2$, there is $c_q\in\wh H_2$ with
\[
\Phi(x)=y c_q.
\]
For every $h\in H_1$, applying $\Phi$ to $xhx^{-1}$ we find
\[
\wh f_H(xhx^{-1})
=y c_q\,\wh f_H(h)\,c_q^{-1}y^{-1}.
\]
Consequently the automorphism of $\wh H_2$ obtained by sending conjugation by $x$ through $\wh f_H$
differs from conjugation by $y$ by an inner automorphism. Injectivity of $\Out(H_2)\to\Out(\wh H_2)$
therefore shows that the two discrete outer actions agree. Thus $f_H$ and $\theta$ identify the
abstract kernels
\[
Q_1\longrightarrow\Out(H_1),
\qquad
Q_2\longrightarrow\Out(H_2).
\]

For a centerless group $H$ and a homomorphism $\omega:Q\to\Out(H)$, called an abstract kernel, we
set
\[
P_\omega=\{(q,\alpha)\in Q\times\Aut(H):[\alpha]=\omega(q)\},
\]
where $[\alpha]$ is the class of $\alpha$ modulo inner automorphisms. If $1\to H\to G\to Q\to1$
induces $\omega$ by conjugation, then $g\longmapsto\bigl(\bar g,\conj g|_H\bigr)$ is an isomorphism
$G\to P_\omega$, where $\bar g$ is the image of $g$ in $Q$. Injectivity follows from $Z(H)=1$, and
surjectivity follows because any two automorphisms representing the same outer class differ by
conjugation by an element of $H$. This construction goes back to Eilenberg and Mac Lane
\cite{EilenbergMacLane1947}. Equivalently, the obstruction and ambiguity groups have coefficients in
$Z(H)$ and vanish when the center is trivial. See also \cite[Chapter~IV, Section~6]{Brown1982}.
Applied to the identified abstract kernels, this construction produces an isomorphism
\[
f:G_1\xrightarrow{\cong}G_2
\]
inducing $f_H$ and $\theta$.

We set
\[
\Psi=\wh f^{-1}\circ\Phi\in\Aut(\wh G_1).
\]
The map $\Psi$ fixes $\wh H_1$ pointwise and induces the identity on $Q_1$. For $x\in\wh G_1$, we
define
\[
a_x=\Psi(x)x^{-1}\in\wh H_1.
\]
For every $k\in\wh H_1$ one has
\[
a_x(xkx^{-1})a_x^{-1}
=\Psi(x)\,k\,\Psi(x)^{-1}
=\Psi(xkx^{-1})
=xkx^{-1}.
\]
The element $a_x$ centralizes $\wh H_1$, because $x\wh H_1x^{-1}=\wh H_1$, and $Z(\wh H_1)=1$ then
forces $a_x=1$ for every $x$. Hence $\Psi=\id$ and the normalized map is $\wh f$. After restoring
the initial conjugation we have $\Phi=\Inn(u)\circ\wh f$ for some $u\in\wh G_2$.
\end{proof}

We next collect the separability, centerlessness, and injectivity results for outer automorphism
groups used in the realization arguments.

\begin{theorem}[Niblo-Wilton]
\label{thm:surface-double-cosets}
Let $\Pi$ be the fundamental group of a connected compact surface with negative Euler
characteristic, and let $P,Q\leq\Pi$ be finitely generated. Then:
\begin{enumerate}[label=\textup{(\roman*)}]
\item every double coset $PgQ$ is separable in $\Pi$,
\item in the profinite completion,
\[
\cl P\cap\cl Q=\cl{P\cap Q}.
\]
\end{enumerate}
\end{theorem}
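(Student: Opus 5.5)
Part (i) is a known theorem, so I would cite it rather than reprove it. Niblo proved that for a Fuchsian group, hence for the fundamental group $\Pi$ of a connected compact surface with $\chi<0$, every double coset $PgQ$ of finitely generated subgroups $P,Q$ is closed in the profinite topology of $\Pi$. For bounded surfaces $\Pi$ is free, and this is also Gitik--Rips and Niblo's free group theorem. Wilton's later treatment of surface and limit groups gives the same statement. I would only check that the hypotheses match: compactness together with $\chi<0$ places us in the finitely generated Fuchsian or free case.

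For (ii), the inclusion $\cl{P\cap Q}\subseteq\cl P\cap\cl Q$ is immediate, so only the reverse inclusion needs work. I would deduce it from (i), applied to finite index subgroups of $P$ and with $g=1$. I use one standard fact: for a subset $Y\subseteq\Pi$, the set $\cl Y\cap\Pi$ is the closure of $Y$ in the profinite topology of $\Pi$. Hence a separable subset $Y$ satisfies $\cl Y\cap\Pi=Y$.

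Fix $x\in\cl P\cap\cl Q$ and an open normal subgroup $U\normalo\wh\Pi$, and set $P_U=P\cap U$. This subgroup has finite index in $P$, so it is finitely generated. Moreover $\cl P$ is the finite union of the cosets $p\,\cl{P_U}$ with $p\in P$. Choose $p\in P$ with $x\in p\,\cl{P_U}$. Then
\[
p\in x\,\cl{P_U}\subseteq\cl Q\,\cl{P_U}\subseteq\cl{QP_U},
\]
where the last inclusion holds because the product of two closures lies in the closure of the product. Thus $p\in\cl{QP_U}\cap\Pi$. By (i), applied to the double coset $Q\cdot1\cdot P_U$, this intersection equals $QP_U$. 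So $p=qp'$ with $q\in Q$ and $p'\in P_U$. It follows that $q=p\,p'^{-1}\in P\cap Q$, and
\[
x\in p\,\cl{P_U}=q\,\cl{P_U}\subseteq(P\cap Q)\,U.
\]

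Since this holds for every $U\normalo\wh\Pi$, we get
\[
x\in\bigcap_U (P\cap Q)U=\cl{P\cap Q}.
\]

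The only real obstacle is (i), namely double coset separability, which is deep and is imported from Niblo and Wilton. Everything in (ii) is the formal compactness and coset-counting argument above. The one step needing care is that I apply (i) to $P_U$ rather than to $P$. That is legitimate because $P_U$ is again finitely generated, and the hypothesis of (i) covers every pair of finitely generated subgroups.
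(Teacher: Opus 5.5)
Your argument is correct. For (i) you cite the same theorem of Niblo (and Wilton) as the paper, but your hypothesis check is slightly off. A closed nonorientable surface with $\chi<0$ has a fundamental group that is neither free nor Fuchsian (it is an NEC group), so ``compactness and $\chi<0$ place us in the Fuchsian or free case'' misses this case. The paper covers it by passing to the orientable double cover $\Pi_0$. With $P_0=P\cap\Pi_0$ and $Q_0=Q\cap\Pi_0$, it writes $PgQ$ as a finite union of translates $p_i\bigl(P_0(gQ_0g^{-1})\bigr)gq_j$, and it uses that $\Pi_0$ is closed of finite index with its full profinite topology. You should add this step, or an equivalent finite-extension argument.

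For (ii) your route genuinely differs from the paper's. The paper simply invokes Minasyan's criterion for intersections of closures in groups with separable double cosets \cite[Corollary~1.2]{Minasyan2023}. You instead prove the needed implication by hand: you cover $\cl P$ by the finitely many translates $p\,\cl{P\cap U}$, and use separability of $Q(P\cap U)$ to pull the relevant $p$ back into $(P\cap Q)(P\cap U)$. Your version is self-contained and shows that only the products $Q(P\cap U)$, for $U$ open normal, need to be separable. This is essentially the sufficiency half of Minasyan's pairwise criterion \cite[Theorem~1.1]{Minasyan2023}. The same computation would also handle the abelian-subgroup case for $3$-manifold groups in \cref{thm:normalizer-comparisons}(iii), since $P\cap U$ stays finitely generated abelian. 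The paper's citation buys brevity and places the step inside a general if-and-only-if framework.
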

\begin{proof}
Part~\textup{(i)} is Niblo's theorem \cite{Niblo1992}. For orientable compact surfaces, we use its
precise formulation in \cite[Theorem~3.3]{Wilton2010}, which yields separability of $P(gQg^{-1})$,
and hence of its right translate $PgQ$. For completeness, if the surface is nonorientable, let
$\Pi_0\normal\Pi$ be the subgroup of its orientable double cover. We set $P_0=P\cap\Pi_0$ and
$Q_0=Q\cap\Pi_0$. Choose finite coset decompositions $P=\bigcup_i p_iP_0$ and $Q=\bigcup_j Q_0q_j$.
Then
\[
PgQ=\bigcup_{i,j}p_i\bigl(P_0(gQ_0g^{-1})\bigr)gq_j.
\]
The two factors inside the parentheses are finitely generated subgroups of $\Pi_0$. Their product is
closed in $\Pi_0$ by the orientable case. Because $\Pi_0$ is closed in $\Pi$ and inherits its full
profinite topology, each of these translates is closed in $\Pi$. The finite union is therefore
closed. Part~\textup{(ii)} follows from Minasyan's criterion for intersections of subgroup closures
for groups with separable double cosets \cite[Corollary~1.2]{Minasyan2023}.
\end{proof}

\begin{theorem}\label{thm:normalizer-comparisons}
Let $G$ be either the fundamental group of a connected closed orientable surface of genus at least
two or the fundamental group of an orientable torsion-free finite volume hyperbolic $3$-manifold.
Then we have:
\begin{enumerate}[label=\textup{(\roman*)}]
\item The group $G$ is hereditarily conjugacy separable.
\item For every $g\in G$,
\[
C_{\wh G}(g)=\cl{C_G(g)},
\]
\item In the case of surface groups, for every pair of finitely generated subgroups
$P,Q\leq G$, every double coset $PgQ$ is separable and
\[
\cl P\cap\cl Q=\cl{P\cap Q}.
\]
In the finite volume hyperbolic $3$-manifold case, the same two conclusions hold for
every pair of finitely generated abelian subgroups $P,Q\leq G$.
\item If $g\in G$ is loxodromic and is not a proper power in $G$, then $\langle g\rangle$ is
a virtual retract, is closed, and inherits its full profinite topology from $G$.
\end{enumerate}
\end{theorem}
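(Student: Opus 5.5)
The theorem collects known results, so the plan is to prove it by citing the literature item by item. Where necessary, I will pass to finite index subgroups or use the geometry of the two classes of groups.

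For (i), hereditary conjugacy separability:
\begin{itemize}
\item For surface groups, I will cite Martino's result that limit groups, and in particular surface groups, are hereditarily conjugacy separable.
\item For closed hyperbolic $3$-manifolds, I will use Agol--Wise virtual specialness. This gives a finite index subgroup embedding as a virtual retract of a right-angled Artin group. Minasyan's theorem then gives hereditary conjugacy separability of virtually compact special hyperbolic groups.
\item For the cusped case, I will cite Hamilton--Wilton--Zalesskii, who prove conjugacy separability of finite volume hyperbolic $3$-manifold groups. Since finite index subgroups are again such groups, this gives the hereditary version.
\end{itemize}

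For (ii), I will use the Minasyan / Hamilton--Wilton--Zalesskii description of centralizers in the completion. In these groups the centralizer of a nontrivial element is either maximal cyclic or, for parabolics in the cusped case, a peripheral $\Z^2$. Hereditary conjugacy separability together with separability of these centralizers gives $C_{\wh G}(g)=\cl{C_G(g)}$. This is the standard consequence recorded in \cite{Minasyan2023} and \cite{HamiltonWiltonZalesskii2013}.

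For (iii), the surface case is exactly \cref{thm:surface-double-cosets}. In the $3$-manifold case I will use the following facts:
\begin{itemize}
\item Finitely generated abelian subgroups are cyclic loxodromic or peripheral.
\item They are relatively quasiconvex.
\item Double cosets of relatively quasiconvex subgroups in virtually compact special relatively hyperbolic groups are separable, by results of Minasyan and Hamilton--Wilton--Zalesskii on double coset separability.
\end{itemize}
The intersection identity then follows again from Minasyan's criterion \cite[Corollary~1.2]{Minasyan2023}.

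For (iv), I will argue as follows.
\begin{enumerate}
\item A loxodromic element $g$ that is not a proper power generates a maximal cyclic, quasiconvex subgroup.
\item In a virtually special group, a quasiconvex subgroup is a virtual retract. This uses Haglund--Wise in the closed case and the relatively hyperbolic version of Sageev--Wise and Groves--Manning in the cusped case. For surfaces, Scott's LERF argument gives it directly.
\item \cref{lem:virtual-retract-topology} then gives closedness and the full induced profinite topology.
\end{enumerate}

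The main obstacle is the cusped half of (iii) and (iv). There, quasiconvexity and virtual retraction must be taken relative to the peripheral structure, and I will need to check that the cited relatively hyperbolic virtual-specialness results apply to finitely generated abelian subgroups, including peripheral ones. If that fails, I will fall back on the following route:
\begin{itemize}
\item Pass to a finite index subgroup $G_0\le G$ in which the peripheral subgroups are virtual retracts. This is possible because cusped hyperbolic $3$-manifold groups are virtually compact special.
\item Run the coset-decomposition argument used in the proof of \cref{thm:surface-double-cosets} to transfer double coset separability from $G_0$ back to $G$.
\end{itemize}
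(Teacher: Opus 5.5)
Your plan follows the paper's, which also proves the four items by citing the literature with essentially the same inputs. One step fails as written: the intersection identity in the $3$-manifold half of (iii).

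\textbf{The gap in (iii).} You apply \cite[Corollary~1.2]{Minasyan2023} ``again''. As the paper uses it, that corollary is stated for groups in which double cosets of \emph{all} finitely generated subgroups are separable. Here you only know this for abelian subgroups. What you need is the pairwise criterion \cite[Theorem~1.1]{Minasyan2023}. Its input is separability of $(P\cap L)Q$ for every finite index $L\leq G$ containing $P\cap Q$, not merely separability of $PQ$. This holds because $P\cap L$ is again finitely generated abelian, so \cite[Theorem~1.4]{HamiltonWiltonZalesskii2013} applies to it. That is exactly the extra step the paper inserts.

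Your fallback does not supply this either. The coset decomposition from \cref{thm:surface-double-cosets} only transfers double coset separability up from a finite index subgroup. Virtual retraction onto the peripheral subgroups does not produce double coset separability in that subgroup in the first place.

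\textbf{Smaller points.}
\begin{enumerate}
\item In (i), Minasyan's theorem on right-angled Artin groups gives hereditary conjugacy separability only of the compact special finite index subgroup. Conjugacy separability does not pass automatically to finite extensions, so for $G$ itself you need a further theorem. It is simpler to cite \cite[Theorem~1.3]{HamiltonWiltonZalesskii2013} for all finite volume cases, as the paper does.
\item In (ii), separability of centralizers plays no role; hereditary conjugacy separability alone suffices. Let $x\in C_{\wh G}(g)$ and $N\normalf G$, and write $x\in x_0\cl N$ with $x_0\in G$. Then $g$ and $x_0^{-1}gx_0$ are conjugate in the closure of $N\langle g\rangle$. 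By conjugacy separability of that finite index subgroup they are conjugate in $N\langle g\rangle$, so $x_0\in C_G(g)N$. Since $N$ is arbitrary, $x\in\cl{C_G(g)}$. This is \cite[Corollary~12.3]{Minasyan2012}, which the paper cites.
\item In (iv), the paper uses the dichotomy ``virtual retract or virtual fiber'' of \cite[(H.9) and (L.15)]{AschenbrennerFriedlWilton2015} and then excludes the fiber case. This rests on the same fact you use: geometrically finite subgroups of these virtually special groups are virtual retracts. So your direct route is equivalent, and (L.15) is the cleanest citation for the cusped case.
\end{enumerate}
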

\begin{proof}
For surface groups, hereditary conjugacy separability follows from Martino
\cite[Theorem~3.7]{Martino2007}, since every subgroup of finite index is again a surface group. For
finite volume hyperbolic $3$-manifold groups it follows from Hamilton--Wilton--Zalesskii
\cite[Theorem~1.3]{HamiltonWiltonZalesskii2013}. Item~\textup{(ii)} is Minasyan
\cite[Corollary~12.3]{Minasyan2012}. The statement about double cosets in surface groups is
\cref{thm:surface-double-cosets}(i), and the $3$-manifold statement is Hamilton--Wilton--Zalesskii
\cite[Theorem~1.4]{HamiltonWiltonZalesskii2013}. In the surface case, the identity for intersections
of subgroup closures follows from \cref{thm:surface-double-cosets}. In the $3$-manifold case, let
$P,Q$ be finitely generated abelian and let $L\leq G$ have finite index with $P\cap Q\leq L$. Then
$P\cap L$ is finitely generated abelian, so $(P\cap L)Q$ is separable by the same theorem about
double cosets. By the pairwise criterion \cite[Theorem~1.1]{Minasyan2023},
$\cl P\cap\cl Q=\cl{P\cap Q}$. In the Kleinian case item~\textup{(iv)} follows from the alternative
between a virtual retract and a virtual fiber subgroup \cite[(H.9) and
(L.15)]{AschenbrennerFriedlWilton2015}. The virtual fiber alternative cannot hold for
$\langle g\rangle$. Indeed, a fiber in a finite cover of a finite volume hyperbolic $3$-manifold has
negative Euler characteristic, and every finite index subgroup of its fundamental group is
nonabelian. Such a group cannot be commensurable with an infinite cyclic group. In the surface case
item~\textup{(iv)} follows from quasiconvex virtual retraction. Closedness and fullness then follow
from \cref{lem:virtual-retract-topology}.
\end{proof}
\begin{lemma}\label{lem:normalizer-rigidity}
Let $G$ be either the fundamental group of a connected closed orientable surface of genus at least
two or the fundamental group of an orientable torsion-free finite volume hyperbolic $3$-manifold,
regarded as a dense subgroup of $\wh G$. Then
\[
N_{\wh G}(G)=G,
\qquad
Z(\wh G)=1,
\qquad
\Out(G)\longrightarrow\Out(\wh G)
\text{ is injective}.
\]
The same conclusions hold for every finite index subgroup of $G$.
\end{lemma}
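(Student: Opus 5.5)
We prove the three conclusions of \cref{lem:normalizer-rigidity} in the order $Z(\wh G)=1$, then $N_{\wh G}(G)=G$, then injectivity of $\Out(G)\to\Out(\wh G)$. Each step rests on \cref{thm:normalizer-comparisons}.

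\emph{Trivial center.} Take $z\in Z(\wh G)$ and choose a nontrivial element $g\in G$. Then $z$ lies in $C_{\wh G}(g)$, which equals $\cl{C_G(g)}$ by \cref{thm:normalizer-comparisons}(ii). In both cases $C_G(g)$ is abelian: it is cyclic for surface groups, and cyclic or a cusp $\Z^2$ for hyperbolic $3$-manifold groups.

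Now pick $g,g'\in G$ generating a free subgroup with $C_G(g)\cap C_G(g')=1$. Two loxodromics with distinct axes will do. Then $z\in\cl{C_G(g)}\cap\cl{C_G(g')}$. By the intersection identity of \cref{thm:normalizer-comparisons}(iii), applied to the finitely generated abelian subgroups $C_G(g)$ and $C_G(g')$, this intersection is $\cl{C_G(g)\cap C_G(g')}=1$. Hence $z=1$.

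\emph{Normalizer.} Let $\gamma\in N_{\wh G}(G)$. Conjugation by $\gamma$ restricts to an automorphism $\phi$ of $G$. For every $g\in G$, the element $\phi(g)=\gamma g\gamma^{-1}$ is conjugate to $g$ in $\wh G$.

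Conjugacy separability (\cref{thm:normalizer-comparisons}(i)) then makes $\phi(g)$ conjugate to $g$ in $G$ for every $g$, so $\phi$ is pointwise inner. For surface groups and for hyperbolic $3$-manifold groups, a pointwise inner automorphism is inner. In the surface case this is Grossman's argument. In the $3$-manifold case it follows from the standard fact for torsion-free hyperbolic (relatively hyperbolic) groups, or from Mostow rigidity: a class-preserving outer automorphism preserves all translation lengths, hence is realized by an isometry homotopic to the identity.

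So $\phi=\Inn(h)$ for some $h\in G$. Then $h^{-1}\gamma$ centralizes $G$, and by density it centralizes $\wh G$. The trivial center now gives $\gamma=h\in G$.

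\emph{Injectivity on $\Out$.} Suppose $\phi\in\Aut(G)$ satisfies $\wh\phi=\Inn(\gamma)$ with $\gamma\in\wh G$. Then $\gamma G\gamma^{-1}=\phi(G)=G$, so $\gamma\in N_{\wh G}(G)=G$ and $\phi$ is inner in $G$.

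\emph{Finite index subgroups.} A finite index subgroup $G'\leq G$ is again a closed surface group or a finite volume hyperbolic $3$-manifold group, and $\wh{G'}=\cl{G'}$. So the same argument applies verbatim to $G'$.

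The main obstacle is the normalizer step, specifically the passage from "pointwise inner" to "inner" for cusped $3$-manifolds. I would first try to cite Minasyan's result that hereditarily conjugacy separable groups in which pointwise inner automorphisms are inner have $\Out(G)\to\Out(\wh G)$ injective. That reduces everything to the class-preserving statement, which holds for these groups: acylindrically hyperbolic groups have no nontrivial pointwise inner outer automorphisms, by Antolín--Minasyan--Sisto.
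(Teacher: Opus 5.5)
Your proof is correct, but the normalizer step follows a genuinely different route from the paper. The center step is essentially the paper's, and the $\Out$ step is identical.

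\textbf{Your route.} You reduce the normalizer step to the discrete statement that pointwise inner (class-preserving) automorphisms are inner. Conjugation by $\gamma\in N_{\wh G}(G)$ is pointwise inner by conjugacy separability, so it equals $\Inn(h)$ for some $h\in G$. Then $h^{-1}\gamma\in Z(\wh G)=1$. This needs the trivial center first, and it needs an external theorem. For surface groups that theorem is Grossman's. For finite volume hyperbolic $3$-manifold groups you can use the Minasyan--Osin theorem that normal automorphisms of non-elementary relatively hyperbolic groups with no nontrivial finite normal subgroups are inner, or the Antolín--Minasyan--Sisto result you quote. Note that the latter also requires the absence of nontrivial finite normal subgroups; torsion-freeness supplies this.

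\textbf{The paper's route.} The paper never invokes pointwise inner automorphisms. It fixes two loxodromic elements $x,y\in G$ that are not proper powers and have distinct axes, and argues as follows.
\begin{enumerate}
\item Conjugacy separability gives $a\in G$ with $uxu^{-1}=axa^{-1}$, so $h=a^{-1}u$ centralizes $x$ and $h\in\cl{\langle x\rangle}$.
\item A second use of conjugacy separability gives $b\in G$ with $b^{-1}h\in\cl{\langle y\rangle}$.
\item Separability of the double coset $\langle x\rangle\langle y\rangle$ gives $b\in G\cap\cl{\langle x\rangle}\,\cl{\langle y\rangle}=\langle x\rangle\langle y\rangle$, say $b=x^ry^s$.
\item Then $x^{-r}h\in\cl{\langle x\rangle}\cap\cl{\langle y\rangle}=1$, so $h=x^r\in G$.
\end{enumerate}
This uses only items (i)--(iii) of \cref{thm:normalizer-comparisons} and does not need the trivial center. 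The paper derives $Z(\wh G)=1$ afterwards from the same two elements, exactly as in your center step.

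\textbf{Comparison.} Your route is cleaner conceptually. It isolates a general principle: conjugacy separability, plus Grossman's property A, plus trivial profinite center, implies $N_{\wh G}(G)=G$. The paper's route is self-contained and avoids the relatively hyperbolic and acylindrical machinery. In effect it proves just the special case of property A that is needed.

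\textbf{A caution on the Mostow alternative.} As written it is not an argument. By Mostow--Prasad, every automorphism of $G$ is realized by an isometry and so preserves translation lengths, so preserving translation lengths cannot detect innerness. You would still need to show that an isometry fixing every free homotopy class is homotopic to the identity. Since you also give the citation route, this does not affect correctness.
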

\begin{proof}
These facts are elementary, but we did not find a reference for exactly this form, so we give the
proof. All closures in this proof are taken in $\wh G$. We use
\cref{thm:normalizer-comparisons}(i)--(iii).

First choose loxodromic elements $x,y\in G$, neither of which is a proper power, whose axes are
distinct. Such elements exist because $G$ is a non-elementary torsion-free discrete group of
orientation preserving isometries of $\HH^2$ or $\HH^3$. In either case, the centralizer in $G$ of a loxodromic element is infinite cyclic. Choosing generators of these centralizers, we therefore
have
\[
C_G(x)=\langle x\rangle,
\qquad
C_G(y)=\langle y\rangle.
\]
Moreover,
\[
\langle x\rangle\cap\langle y\rangle=\{1\}.
\]
Indeed, an equality $x^r=y^s\neq1$ would force $x$ and $y$ to have the same axis, since a nonzero
power of a loxodromic element has the same axis as that element.

By \cref{thm:normalizer-comparisons}(ii),
\[
C_{\wh G}(x)=\cl{\langle x\rangle},
\qquad
C_{\wh G}(y)=\cl{\langle y\rangle}.
\]
By part~\textup{(iii)} of the same theorem,
\[
\cl{\langle x\rangle}
\cap\cl{\langle y\rangle}
=\cl{\langle x\rangle\cap\langle y\rangle}
=\{1\}.
\]
It also yields separability of the double coset $\langle x\rangle\langle y\rangle$. Consequently,
\[
G\cap
\bigl(
\cl{\langle x\rangle}
\cl{\langle y\rangle}
\bigr)
=\langle x\rangle\langle y\rangle,
\]
since the product of the closures of $\langle x\rangle$ and $\langle y\rangle$ is the closure of the
product $\langle x\rangle\langle y\rangle$.

Now let $u\in N_{\wh G}(G)$. Then $uxu^{-1}\in G$. This element and $x$ are conjugate in $\wh G$. By
conjugacy separability, \cref{thm:normalizer-comparisons}(i), they are conjugate in $G$. Choose
$a\in G$ such that $uxu^{-1}=axa^{-1}$, and put $h=a^{-1}u$. Then $h$ normalizes $G$ and centralizes
$x$, so $h\in\cl{\langle x\rangle}$. Since $h$ normalizes $G$, the element $hyh^{-1}$ belongs to
$G$. Applying conjugacy separability again, choose $b\in G$ such that $hyh^{-1}=byb^{-1}$. It
follows that
\[
b^{-1}h\in C_{\wh G}(y)
=\cl{\langle y\rangle}.
\]
Therefore
\[
b=h(b^{-1}h)^{-1}
\in
G\cap
\bigl(
\cl{\langle x\rangle}
\cl{\langle y\rangle}
\bigr)
=\langle x\rangle\langle y\rangle.
\]
Write
\[
b=x^r y^s
\qquad(r,s\in\Z).
\]
As $h\in\cl{\langle x\rangle}$, we have $x^{-r}h\in\cl{\langle x\rangle}$. On the other hand,
\[
x^{-r}h
=y^s(b^{-1}h)
\in\cl{\langle y\rangle}.
\]
The intersection of these two closures is trivial, so $h=x^r\in G$ and $u=ah\in G$. The reverse
inclusion is immediate, so $N_{\wh G}(G)=G$. If $z\in Z(\wh G)$, then $z$ centralizes both $x$ and
$y$. Hence
\[
z\in
C_{\wh G}(x)\cap C_{\wh G}(y)
=
\cl{\langle x\rangle}
\cap\cl{\langle y\rangle}
=\{1\}.
\]
This proves $Z(\wh G)=1$.

Every automorphism of $G$ extends uniquely to a continuous automorphism of $\wh G$, and completion
sends inner automorphisms to inner automorphisms. It therefore induces a homomorphism on outer
automorphism groups. Suppose that $\alpha\in\Aut(G)$ satisfies
\[
\wh\alpha=\Inn(u)
\qquad\text{for some }u\in\wh G.
\]
Because $\alpha(G)=G$, this equality implies $uGu^{-1}=G$. By the normalizer equality already
proved, $u\in G$. Restricting to $G$ yields $\alpha=\Inn(u)$, so the homomorphism
\[
\Out(G)\longrightarrow\Out(\wh G)
\]
is injective.

Finally, let $L\leq G$ have finite index. The corresponding connected finite cover is again a closed
orientable surface of genus at least two, or an orientable finite volume hyperbolic $3$-manifold,
respectively. The argument above, applied to $L$, shows that
\[
N_{\wh L}(L)=L,
\qquad
Z(\wh L)=1,
\qquad
\Out(L)\longrightarrow\Out(\wh L)
\text{ is injective}.
\]
\end{proof}
\begin{theorem}
\label{thm:torsion-completion-sources}
Every orientable finite volume hyperbolic $3$-manifold group has a finite index subgroup that is the
fundamental group of a compact special cube complex. If a torsion-free group has such a finite index
subgroup, then its profinite completion is torsion-free.
\end{theorem}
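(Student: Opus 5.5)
The statement has two parts. The first is that finite volume hyperbolic $3$-manifold groups are virtually compact special. The second is that torsion-freeness passes to the profinite completion for torsion-free groups with such a finite index subgroup. I would treat them separately and reduce each to published results.

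\emph{Virtual specialness.} For closed hyperbolic $3$-manifolds this is Agol's theorem \cite{Agol2013}. It rests on Kahn--Markovic surface subgroups \cite{KahnMarkovic2012}, which give a cubulation by Bergeron--Wise, and on Wise's quasiconvex hierarchy machinery \cite{Wise2021}. For cusped manifolds, Wise's theorem on groups with a quasiconvex hierarchy gives virtual compact specialness of the relatively hyperbolic group directly. One can also cite the summary in \cite[(G.18) and Section~5]{AschenbrennerFriedlWilton2015}. Orientability is harmless, and the finite index subgroup is the fundamental group of a compact special cube complex $X$.

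\emph{Torsion-freeness.} Let $G$ be torsion-free with $L\leq G$ of finite index, $L=\pi_1(X)$ and $X$ compact special. Then:
\begin{enumerate}[label=\textup{(\arabic*)}]
\item By Haglund--Wise, $L$ embeds as a virtual retract of a finitely generated right-angled Artin group $R$.
\item Right-angled Artin groups are good in the sense of Serre and have finite cohomological dimension. Goodness passes to virtual retracts, since a retract induces the full profinite topology by \cref{lem:virtual-retract-topology} and cohomology splits off. It also passes to finite index overgroups.
\item Hence $G$ is good. Since $L$ is good and has a finite $K(L,1)$ (the compact NPC complex $X$), we get $\cd_p\wh L\leq\dim X<\infty$ for every prime $p$.
\item So $\wh L$ is torsion-free, because a finite cyclic subgroup of prime order would have infinite $p$-cohomological dimension.
\end{enumerate}

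It remains to pass from $\wh L$ to $\wh G$; this is the main obstacle. An element of order $p$ in $\wh G$ need not lie in $\wh L$, and goodness together with torsion-freeness of $G$ does not by itself exclude finite subgroups of $\wh G$ meeting $\wh L$ trivially. I would use the known theorem that a good group of type $\mathrm{VF}$ which is torsion-free has torsion-free profinite completion. The argument goes as follows:
\begin{enumerate}[label=\textup{(\arabic*)}]
\item A finite subgroup $F\leq\wh G$ of prime order $p$ gives the open subgroup $U=F\wh L$.
\item $G\cap U$ is a torsion-free finite index subgroup containing $L$, with completion $U$.
\item Goodness gives $H^*(U;\F_p)\cong H^*(G\cap U;\F_p)$, which vanishes above $\dim X$, since $G\cap U$ is torsion-free and virtually of finite cohomological dimension, hence of finite cohomological dimension by Serre's theorem.
\item Goodness applies to all finite index subgroups, so $\cd_p U<\infty$.
\item This contradicts $F\leq U$.
\end{enumerate}
For the precise form I would cite Wilton--Zalesskii or Reid's survey, as these steps are standard.
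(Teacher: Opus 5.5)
Your proof is correct. For virtual compact specialness you rely on the same sources as the paper: Agol in the closed case and Wise in the cusped case.

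For torsion-freeness the routes differ in presentation. The paper simply cites \cite[Proposition~3.2]{WiltonZalesskii2017}. You reconstruct the standard argument behind that citation: Haglund--Wise virtual retraction into a right-angled Artin group, goodness of right-angled Artin groups and its inheritance by virtual retracts and finite-index overgroups, and Serre's theorem on cohomological dimension.

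One simplification: the ``main obstacle'' you identify does not actually arise. Since $G$ is torsion-free and $\cd L\le\dim X$, Serre's theorem already gives $\cd G=\cd L\le\dim X$. Goodness of $G$ then yields $H^n(\wh G,M)\cong H^n(G,M)=0$ for $n>\dim X$ and every finite $\wh G$-module $M$. Hence $\cd_p\wh G\le\dim X$ for every prime $p$, and $\wh G$ is torsion-free directly.

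Your passage through $U=F\wh L$ is the same argument applied to $G\cap U$ in place of $G$. As written, it also needs $\wh L$ to be normal, or at least normalized by $F$, for $F\wh L$ to be a subgroup. You can arrange this by replacing $L$ with its normal core, which is still the fundamental group of a compact special cube complex.
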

\begin{proof}
For closed hyperbolic $3$-manifolds, virtual compact specialness follows from \cite[Theorems~1.1
and~9.3]{Agol2013}. For cusped finite volume hyperbolic $3$-manifolds, it is Wise's theorem in the
form stated in \cite[Theorem~9.2]{WiltonZalesskii2017}. The assertion that a torsion-free virtually
compact special group has torsion-free profinite completion is
\cite[Proposition~3.2]{WiltonZalesskii2017}.
\end{proof}

\begin{theorem}\label{thm:quasiconvex-subgroup-conjugacy}
Let $G$ be a hyperbolic virtually compact special group and let $H,K\leq G$ be infinite quasiconvex
subgroups. Regard $G$ as a dense subgroup of $\wh G$. If
\[
\cl K=z\cl H z^{-1}
\qquad\text{for some }z\in\wh G,
\]
then there is $g\in G$ such that $K=gHg^{-1}$.
\end{theorem}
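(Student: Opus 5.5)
We reduce the statement to a profinite detection of inclusion up to conjugacy, combined with a co‑Hopfian‑type property of quasiconvex subgroups. By symmetry, $\cl H=z^{-1}\cl Kz$, so it suffices to prove the following one‑sided assertion for arbitrary infinite quasiconvex $H,K$:
\[
\cl K\leq z\cl Hz^{-1}\text{ for some }z\in\wh G
\quad\Longrightarrow\quad
K\leq gHg^{-1}\text{ for some }g\in G.
\]
Given this, we obtain $g,g'\in G$ with $K\leq gHg^{-1}$ and $H\leq g'Kg'^{-1}$. Hence $H\leq (g'g)H(g'g)^{-1}$. A quasiconvex subgroup of a hyperbolic group cannot be conjugate onto a proper subgroup of itself: the inclusion $c^{-1}Hc\leq H$ with $c=g'g$ forces $c^{-n}Hc^{n}\leq H$ for all $n$, and finite height of quasiconvex subgroups, together with comparison of limit sets, shows this is impossible unless it is an equality. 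Therefore all the inclusions are equalities, and $K=gHg^{-1}$.

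For the one‑sided assertion, first pass to a torsion‑free finite index normal subgroup $G_0$ that is the fundamental group of a compact special cube complex, using \cref{thm:torsion-completion-sources} in spirit (virtual specialness is part of the hypothesis). Intersecting $H$ and $K$ with $G_0$ changes them only up to finite index, and by quasiconvex separability (Haglund--Wise) the closures in $\wh G_0$ of the intersections are the corresponding intersections of closures. Coset bookkeeping then reduces the problem to $G_0$, at the cost of a finite ambiguity that is absorbed into the choice of $g$. In $G_0$, quasiconvex subgroups are virtual retracts. Moreover, double cosets $H\gamma K$ of quasiconvex subgroups are separable (Minasyan), so, exactly as in \cref{thm:surface-double-cosets}, we have
\[
G_0\cap\cl H\,\cl K=HK
\qquad\text{and}\qquad
\cl P\cap\cl Q=\cl{P\cap Q}
\]
for quasiconvex $P,Q$.

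The key step, and the main obstacle, is to remove the profinite conjugator $z$. Since $G$ is dense in $\wh G$, write $z=\lim g_m$ with $g_m\in G$. Fix a finite generating set $k_1,\dots,k_r$ of $K$. For each $m$ the set
\[
C_m=\{\,w\in\wh G: w^{-1}k_iw\in\cl H\cdot\cl{K_m(G)}\ (1\leq i\leq r)\,\}
\]
is open, closed and nonempty, and the sets $C_m$ are nested. The aim is to show that some element of $\bigcap_m C_m\cap G$ exists. The first approach is to intersect with the closed subgroup $\cl H$: the intersection $\cl K\cap z\cl Hz^{-1}=\cl K$ is the closure of the discrete intersection $K\cap gHg^{-1}$ whenever $z\in\cl K\,g\,\cl H$. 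Double coset separability (the identity $G\cap\cl K g\cl H=KgH$) lets us choose such a $g$ if we can show that $z$ lies in the closure of finitely many double cosets $KgH$. This finiteness is where hyperbolicity enters: only finitely many double cosets $KgH$ have $K\cap gHg^{-1}$ infinite, by quasiconvexity and bounded packing. If $z$ lay outside their closure, then $\cl K=\cl K\cap z\cl Hz^{-1}$ would be a limit of closures of finite groups, which contradicts $K$ being infinite (indeed, torsion‑free in $G_0$). Once $z\in\cl Kg\cl H$ for some $g\in G$, we get $\cl K=\cl{K\cap gHg^{-1}}$, and closedness of $K\cap gHg^{-1}$ in the full profinite topology of $K$ (again by separability) gives $K\leq gHg^{-1}$. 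I expect the compactness argument excluding $z$ from the complement of the finitely many relevant double coset closures to require the most care.
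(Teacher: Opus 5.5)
Your outer reductions are sound. The co-Hopfian step works: from $H\le cHc^{-1}$ you get $H\le c^nHc^{-n}$ for all $n\geq0$, finite height forces $c^j\in H$ for some $j\geq1$, and hence equality. It is also true that if $z\in\cl K\,g\,\cl H$ with $g\in G$, then $\cl K=\cl{K\cap gHg^{-1}}$, and separability gives $K\le gHg^{-1}$.

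\textbf{The gap.} It is exactly the step you flagged, and it is not a matter of care: the proposed contradiction does not exist. The assignment $w\mapsto\cl K\cap w\cl Hw^{-1}$ is not continuous. At a finite level $N$ the hypothesis only says $\cl K\le z\cl Hz^{-1}\cl N=g_N\cl Hg_N^{-1}\cl N$, i.e.\ $K\le g_NHg_N^{-1}N$ for a discrete approximant $g_N$ of $z$. This is fully compatible with $K\cap g_NHg_N^{-1}$ being finite for every $N$, so the finite groups $\cl K\cap g_N\cl Hg_N^{-1}$ say nothing about $\cl K\cap z\cl Hz^{-1}$. Likewise, your nested clopen sets $C_m$ have nonempty intersection by compactness, but nothing forces that intersection to meet $G$.

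Showing that $z$ lies in one of the finitely many closed double cosets $\cl Kg_j\cl H$ is precisely the profinite content of subgroup (into-)conjugacy separability. Already for $K=H$ malnormal it amounts to $N_{\wh G}(\cl H)=\cl H$, which needs real input such as profinite malnormality, not a limiting argument. So the proposal reduces the theorem to an unproved claim at least as strong as the theorem itself.

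\textbf{What the paper does.} It does not attempt this from scratch. It quotes Chagas--Zalesskii: two nonconjugate infinite quasiconvex subgroups have nonconjugate images in some finite quotient $q:G\to Q$. Since $\wh q(\cl H)=q(H)$ and $\wh q(\cl K)=q(K)$, the hypothesis gives $q(K)=\wh q(z)q(H)\wh q(z)^{-1}$, a contradiction. For a self-contained route, that separability theorem is what you would have to prove.

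\textbf{A smaller point.} Your passage to $G_0$ loses information. Knowing $K\cap G_0\le gHg^{-1}$ does not imply that $K$ is conjugate into $H$: take $K=\langle a\rangle$ and $H=\langle a^2\rangle\le G_0$ with $K\cap G_0=H$. So the finite ambiguity is not absorbed automatically. The reduction is unnecessary anyway, since quasiconvex subgroups and their double cosets are already separable in $G$ itself.
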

\begin{proof}
The infinite quasiconvex subgroup case of Chagas--Zalesskii \cite{ChagasZalesskii2016} states that
two nonconjugate infinite quasiconvex subgroups have nonconjugate images in some finite quotient of
$G$. Suppose that $H$ and $K$ were not conjugate in $G$, and choose such a quotient $q:G\to Q$. Its
continuous extension $\wh q:\wh G\to Q$ satisfies
\[
q(K)=\wh q(z)q(H)\wh q(z)^{-1},
\]
because $Q$ is finite and $\wh q(\cl H)=q(H)$ and $\wh q(\cl K)=q(K)$. This contradicts the choice
of $q$ and proves the claim.
\end{proof}


\section{Cut trees by divisible fillings}\label{sec:cut-tree}
Let $S$ be a closed orientable surface of genus $g\geq2$, and let $\alpha\subset S$ be a
nonseparating simple closed curve. Set $\Pi=\pi_1(S)$. Cutting along $\alpha$ produces
$F=S_{g-1,2}$, the compact orientable surface of genus $g-1$ with two boundary components, and the
free group $H=\pi_1(F)$. Choose generators $a_-,a_+\in H$ for the two maximal cyclic boundary
subgroups, with orientations agreeing after gluing. Thus, with the usual based paths, the boundary
orientations of the cut surface are represented by $a_-$ and $a_+^{-1}$. We choose a stable letter
$t$ such that
\begin{equation}\label{eq:surface-HNN}
\Pi=\langle H,t\mid ta_-t^{-1}=a_+\rangle.
\end{equation}
We write $A=A_-=\langle a\rangle$, with $a=a_-$, and $A_+=tAt^{-1}$. For $n\geq3$, we set
\[
K_{\alpha,n}=\Ncl{a^n}{\Pi},
\qquad
N_{\alpha,n}=\cl{K_{\alpha,n}}\normal\wh\Pi,
\qquad
\Gamma_{\alpha,n}=\Pi/K_{\alpha,n}.
\]
By \cref{lem:completion-quotient}, $\wh\Gamma_{\alpha,n}\cong\wh\Pi/N_{\alpha,n}$.

\begin{terminology}
A profinite graph is a profinite space $T=V(T)\sqcup E(T)$ with closed vertex set $V(T)$ and
continuous endpoint maps $d_0,d_1:T\to V(T)$ restricting to the identity on $V(T)$. We set
$E^*(T)=T/V(T)$, with distinguished point $*$. It is a profinite tree if
\[
0\longrightarrow\hZ[[E^*(T),*]]
\xrightarrow{\partial}\hZ[[V(T)]]\xrightarrow{\epsilon}\hZ\longrightarrow0
\]
is exact, where $\partial(e)=d_1(e)-d_0(e)$ and $\epsilon(v)=1$. The double brackets denote inverse
limits of free modules on finite quotient sets, with the distinguished point omitted in the pointed
case. See \cite{WiltonZalesskii2019}.
\end{terminology}

\begin{theorem}[Ribes]
\label{thm:finite-edge-completion}
Let $(\mathcal G,Y)$ be a finite connected efficient graph of groups with fundamental group
$\Gamma$. Fix a maximal subtree $Y_0\subset Y$ and use the corresponding vertex embeddings and
stable letters in the discrete and profinite presentations. The continuous homomorphism from the
profinite fundamental group of the graph of completed groups to $\wh\Gamma$ that extends the
completed vertex embeddings and sends each stable letter to the image of the corresponding discrete
stable letter is an isomorphism. If every edge group is finite, the completed graph of groups is
proper and its standard profinite Bass--Serre tree has the coset description: vertex and edge
stabilizers are the corresponding conjugates.
\end{theorem}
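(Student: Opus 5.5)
The plan is to compare universal properties. Write $\Pi_1^{\mathrm{pro}}(\wh{\mathcal G},Y)$ for the profinite fundamental group of the graph of completed groups $(\wh{\mathcal G},Y)$, built with the same maximal subtree $Y_0$. Its universal property is that of a pro-$\mathcal C$ amalgam/HNN object: continuous homomorphisms out of it correspond to compatible families of continuous homomorphisms on the vertex groups $\wh{\mathcal G}(v)$ together with images of the stable letters, satisfying the edge relations (and trivial stable letters on $Y_0$). I would construct maps in both directions and check that both composites are the identity.

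\textbf{The map $\Psi$.} The completed vertex embeddings $\wh{\mathcal G}(v)\to\wh\Gamma$ are defined because efficiency makes $\mathcal G(v)$ closed with its full profinite topology, so $\wh{\mathcal G}(v)=\cl{\mathcal G(v)}\leq\wh\Gamma$. Together with the images of the discrete stable letters, these satisfy the edge relations in $\wh\Gamma$: the relations hold on the dense subgroups $\mathcal G(e)$ and extend to closures by continuity. The universal property then gives a continuous $\Psi:\Pi_1^{\mathrm{pro}}\to\wh\Gamma$.

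\textbf{The map $\Theta$.} The discrete group $\Gamma$ is generated by its vertex groups and stable letters subject only to the graph-of-groups relations. These relations hold in $\Pi_1^{\mathrm{pro}}$ via $\mathcal G(v)\to\wh{\mathcal G}(v)\to\Pi_1^{\mathrm{pro}}$, so there is a homomorphism $\Gamma\to\Pi_1^{\mathrm{pro}}$. It extends to a continuous $\Theta:\wh\Gamma\to\Pi_1^{\mathrm{pro}}$ because the target is profinite.

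\textbf{Inverse check.} The composites $\Psi\Theta$ and $\Theta\Psi$ fix generators on dense subgroups: the image of $\Gamma$ in the first case, and the subgroup generated by the $\mathcal G(v)$ and the stable letters in the second. Both composites are continuous into Hausdorff groups, so they are the identity. This is just the statement that profinite completion commutes with this colimit construction once the vertex groups carry the induced full topology.

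\textbf{Properness and the tree.} Assume every edge group is finite. By the isomorphism $\Psi$, the vertex maps $\wh{\mathcal G}(v)\to\Pi_1^{\mathrm{pro}}$ are identified with the inclusions $\cl{\mathcal G(v)}\leq\wh\Gamma$, which are injective by efficiency; this is exactly properness. (Alternatively, for finite edge groups one can cite Ribes' theorem that such profinite graphs of groups are always proper; either route suffices.) For the coset description, define the standard profinite graph $S$ with
\[
V(S)=\bigsqcup_{v}\wh\Gamma/\wh{\mathcal G}(v),
\qquad
E(S)=\bigsqcup_{e}\wh\Gamma/\wh{\mathcal G}(e),
\]
and endpoint maps given by the edge monomorphisms and stable letters, exactly as in the discrete Bass--Serre construction. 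Ribes' theorem for proper profinite graphs of groups \cite{Ribes2017} says this $S$ is a profinite tree on which $\wh\Gamma$ acts. In $S$, the stabilizer of $g\wh{\mathcal G}(v)$ is $g\wh{\mathcal G}(v)g^{-1}$, and similarly for edges, which gives the claimed stabilizers.

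\textbf{Main obstacle.} The main obstacle is the tree property: exactness of the sequence $\hZ[[E^*(S),*]]\to\hZ[[V(S)]]\to\hZ$. I would not reprove it. Instead I would use that $S$ is the inverse limit of the quotient graphs $N\backslash S$ over open normal $N\leq\wh\Gamma$, identify these with $(N\cap\Gamma)\backslash T$ for the discrete Bass--Serre tree $T$ (efficiency guarantees that the coset spaces match), and pass homology through the inverse limit. That reduction to the discrete tree is precisely Ribes' argument, which I would cite.
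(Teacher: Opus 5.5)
Your proposal is correct and follows the paper, which simply defers to Ribes: your universal-property comparison of $\Psi$ and $\Theta$ is the standard content of that citation, and you also cite Ribes for the profinite tree property. For that property, the quotient graphs $(N\cap\Gamma)\backslash T$ are not trees, so the real point is that their first homology vanishes in the inverse limit. Your observation that properness already follows from efficiency through the isomorphism $\Psi$, without using finiteness of the edge groups, is a correct small sharpening of the stated hypothesis.
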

For the proof see \cite{Ribes2017}.

The HNN splitting \eqref{eq:surface-HNN} obtained by cutting the surface is efficient. Indeed, $H$
and $A_\pm$ are finitely generated quasiconvex subgroups of the hyperbolic surface group $\Pi$.
Surface groups are virtually compact special, so the virtual retraction theorem for quasiconvex
subgroups makes these subgroups retracts of finite index subgroups of $\Pi$
\cite{HaglundWise2008,Wise2021}. By \cref{lem:virtual-retract-topology}, they are closed in the
profinite topology and inherit their full profinite topologies. The standard completion theorem for
finite efficient graphs of groups then identifies the profinite fundamental group of the completed
HNN splitting with $\wh\Pi$ \cite{Ribes2017}. In particular, the full profinite cut tree
$\wh T_\alpha$ used below is well defined and has vertex and edge spaces $\wh\Pi/\cl H$ and
$\wh\Pi/\cl A$, respectively.

\begin{terminology}
Let $p$ be a prime, $n\geq0$ an integer, and $P$ a profinite group. We write
$\Z_p[[P]]=\varprojlim_{r,U}(\Z/p^r\Z)[P/U]$, where $r\geq1$ and $U$ ranges over open normal
subgroups of $P$. The group $P$ is a profinite $PD^n$ group at $p$ if the trivial module $\Z_p$ has
a resolution by finitely generated projective profinite $\Z_p[[P]]$-modules, $\cd_p(P)=n$, and
\[
H^j(P,\Z_p[[P]])=0\quad(j\neq n),
\qquad H^n(P,\Z_p[[P]])\cong\Z_p
\]
as abelian groups. Cohomology here is continuous cohomology with profinite coefficients. This is the
definition of \cite{WiltonZalesskii2019}.
\end{terminology}

\begin{theorem}\label{thm:fuchsian-fixed}
Let $V$ be a non-elementary cocompact Fuchsian group. Every continuous action of $\wh V$ on a
profinite tree, in the sense of \cite[Definition~1.2]{WiltonZalesskii2019}, with finite edge
stabilizers has a fixed vertex.

In addition, in the standard tree of a proper finite graph of profinite groups with finite edge
groups, the intersection of two distinct vertex stabilizers is finite. More precisely, it is
contained in the stabilizer of every edge in the smallest profinite subtree joining the two
vertices.
\end{theorem}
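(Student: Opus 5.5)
The statement has two parts: the fixed point assertion for $\wh V$, and the finiteness of intersections of distinct vertex stabilizers in the standard tree.

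\emph{Fixed vertex for $\wh V$.} I would reduce to Wilton--Zalesskii \cite[Theorem~1.10 and Corollary~1.11]{WiltonZalesskii2019}, which give fixed points for profinite $PD^n$ groups at a prime $p$ (with $n\geq2$) acting on profinite trees with edge stabilizers of small cohomological dimension. A cocompact Fuchsian group $V$ is virtually a closed surface group, so pass to a torsion-free finite index normal subgroup $V_0\normalf V$ with $V_0=\pi_1(\Sigma)$ and $\Sigma$ of genus at least two. Surface groups are good, so $\wh V_0$ is a profinite $PD^2$ group at every prime $p$. Finite subgroups of the torsion-free group $\wh V_0$ are trivial; this follows for instance from \cref{thm:torsion-completion-sources}, since surface groups are virtually compact special. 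Hence $\wh V_0$ acts on the tree with trivial edge stabilizers. The cited theorem then gives a fixed vertex $v_0$ for $\wh V_0$; if I recall correctly, the hypothesis is edge stabilizers of $\cd_p\leq n-2=0$, which holds here.

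To pass from $\wh V_0$ to $\wh V$, I would use the second part of the statement (proved independently below). Any two distinct vertices fixed by $\wh V_0$ would force $\wh V_0$ into a finite group, which is impossible since $\wh V_0$ is infinite. So $\Fix(\wh V_0)=\{v_0\}$. Because $\wh V_0$ is normal in $\wh V$, the fixed set $\Fix(\wh V_0)$ is $\wh V$-invariant, and therefore $v_0$ is fixed by all of $\wh V$.

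\emph{Finite intersections.} Let the proper graph of profinite groups have finite edge groups and standard tree $T$, and take distinct vertices $v\neq w$. Let $[v,w]$ denote the smallest profinite subtree containing both; it exists and is unique by \cite{RibesZalesskii2010,Ribes2017}. The group $G_v\cap G_w$ fixes $v$ and $w$, so it preserves $[v,w]$ setwise. The key claim is that the pointwise fixed set of a closed subgroup in a profinite tree is itself a subtree. This holds because $T^{K}$ is an inverse limit of fixed subtrees in finite quotient graphs, or alternatively by the standard result in \cite{Ribes2017}. Granting this, $[v,w]\subseteq T^{G_v\cap G_w}$, so $G_v\cap G_w$ fixes every edge of $[v,w]$. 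That edge set is nonempty because $v\neq w$, and each edge stabilizer is a conjugate of a finite edge group. Hence $G_v\cap G_w$ is contained in every such edge stabilizer and is finite.

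\emph{Expected obstacle.} The main difficulty is matching the exact hypotheses of the Wilton--Zalesskii fixed point theorem: which prime to use, the required bound on the cohomological dimension of edge stabilizers, and whether their notion of action coincides with \cite[Definition~1.2]{WiltonZalesskii2019}. The passage to the torsion-free subgroup $V_0$, which makes edge stabilizers trivial, is designed to remove that difficulty.
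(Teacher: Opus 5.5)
Your proposal is correct and follows essentially the same route as the paper. It uses Selberg's lemma to pass to a torsion-free surface subgroup $V_0\normal V$, applies Wilton--Zalesskii's Theorem~1.10 and Corollary~1.11 to $\wh V_0$ with trivial restricted edge stabilizers, gets uniqueness of the fixed vertex from the fixed-subtree property of \cite{Ribes2017}, and uses normality to pass to $\wh V$. The second part is proved exactly as you do, and the only inessential difference is that the paper derives torsion-freeness of $\wh V_0$ from $\cd_p(\wh V_0)=2$ rather than from virtual specialness.

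Two small corrections are worth making. First, the second part is stated only for standard trees, so for uniqueness in the arbitrary tree of the first part you should say that its argument, which uses only finiteness of edge stabilizers, applies verbatim; the paper simply runs that argument directly. Second, drop the ``inverse limit of fixed subtrees in finite quotient graphs'' justification, because the finite quotients of a profinite tree need not be trees; the citation to \cite{Ribes2017} is what carries that step.
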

\begin{proof}
By Selberg's lemma, we choose a torsion-free normal subgroup $\Sigma\normalf V$. As $V$ is cocompact
Fuchsian, there is a closed orientable hyperbolic surface $S_\Sigma$ with $\Sigma=\pi_1(S_\Sigma)$.
A finite index subgroup is closed and inherits its full profinite topology from the  group,
so the closure of $\Sigma$ in $\wh V$ is an open normal subgroup canonically identified with
$\wh\Sigma$.

The group $\Sigma$ is a $PD^2$ group, that is, a Poincar\'e duality group of dimension $2$
\cite{Brown1982}. It is good in the sense of Serre because it is the fundamental group of the
compact $3$-manifold $S_\Sigma\times I$ \cite{AschenbrennerFriedlWilton2015}. By
\cite[Theorem~1.10]{WiltonZalesskii2019}, $\wh\Sigma$ is a profinite $PD^2$ group at every prime. In
particular, $\cd_p(\wh\Sigma)=2$ for every prime $p$.

The group $\wh\Sigma$ is torsion-free, for if $\wh\Sigma$ contained an element of finite order, then
it would contain a closed subgroup $C_p$ of prime order for some $p$. By monotonicity of
$p$-cohomological dimension for closed subgroups we have
\[
\cd_p(C_p)
\leq
\cd_p(\wh\Sigma)=2,
\]
which contradicts $\cd_p(C_p)=\infty$ \cite{RibesZalesskii2010}. Hence $\wh\Sigma$ has no nontrivial
finite subgroup.

We now restrict the given action to $\wh\Sigma$. For every edge $e$ of the profinite tree,
\[
\Stab_{\wh\Sigma}(e)
=
\wh\Sigma\cap\Stab_{\wh V}(e)
=
1,
\]
because the $\wh V$-stabilizer of $e$ is finite and $\wh\Sigma$ is torsion-free. Thus every edge
stabilizer of the restricted action has cohomological dimension $0=2-2$. Corollary~1.11 of
Wilton--Zalesskii \cite{WiltonZalesskii2019} applies to this good $PD^2$ group and shows that
\[
T^{\wh\Sigma}\neq\varnothing.
\]
It follows that $\wh\Sigma$ has a fixed vertex. Alternatively, one can apply
\cite[Theorem~1.7]{WiltonZalesskii2019}, since the restricted edge stabilizers have cohomological
dimension $0<1$.

In fact, this fixed vertex is unique. Suppose that $\wh\Sigma$ fixed two distinct vertices. Then for
each $h\in\wh\Sigma$, by \cite[Theorem~4.1.5]{Ribes2017} its fixed point set is a closed profinite
subtree, so by \cite[Proposition~2.4.9]{Ribes2017} it contains the smallest profinite subtree
joining those vertices. Hence every element of $\wh\Sigma$ fixes this subtree pointwise. The subtree
contains an edge, so $\wh\Sigma$ would be contained in an edge stabilizer, which is trivial for the
restricted action. This is impossible because the residually finite infinite group $\Sigma$ embeds
densely in $\wh\Sigma$. Thus $T^{\wh\Sigma}$ is a singleton. As $\wh\Sigma\normal\wh V$, the group
$\wh V$ preserves this singleton and therefore fixes its unique vertex.

Now let $T$ be the standard tree of a proper finite graph of
profinite groups with finite edge groups, and let $v\neq w$ be
distinct vertices. Let $U$ be the smallest profinite subtree
containing $v$ and $w$. Since $U$ contains two distinct vertices,
it contains an edge $e$. By the same two results from
\cite{Ribes2017}, every element fixing both $v$ and $w$ fixes $U$
pointwise. Hence, $\Stab(v)\cap\Stab(w)\leq\Stab(e).$
The edge stabilizer is finite, which proves the claim.
\end{proof}
Our first lemma is a routine consequence of the orbifold presentation and of profinite Bass--Serre
theory; we prove the HNN identification and efficiency, after which the completion and tree
statements follow from \cite[Proposition~6.5.3 and Corollary~6.3.6]{Ribes2017}.
\begin{lemma}\label{lem:cone-HNN}
Let $n\geq3$ and
\[
\Gamma_{\alpha,n}=\HNN(V_{\alpha,n},C_n^-,C_n^+),
\]
where
\[
V_{\alpha,n}=H/\Ncl{a_-^n,a_+^n}{H}
\]
is the cocompact orientable Fuchsian orbifold group of signature $(g-1,n,n)$, meaning the orbifold
has underlying surface a closed surface of genus $g-1$ and two cone points of order $n$, and
$C_n^\pm\cong\Z/n\Z$ are generated by the images of $a_\pm$. This HNN splitting is efficient and
proper after completion. We write $T_{\alpha,n}$ for its standard profinite tree.
\end{lemma}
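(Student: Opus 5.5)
The plan has three steps: identify the HNN presentation of $\Gamma_{\alpha,n}$, establish efficiency, and then read off properness and the tree from \cref{thm:finite-edge-completion}.

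\textbf{Presentation.} Start from \eqref{eq:surface-HNN}. Imposing $a^n=1$ in $\Pi$ also kills $a_+^n=ta_-^nt^{-1}$. A presentation computation therefore gives
\[
\Gamma_{\alpha,n}=\bigl\langle H,t\mid ta_-t^{-1}=a_+,\ a_-^n,\ a_+^n\bigr\rangle
=\bigl\langle V_{\alpha,n},t\mid t\cl a_-t^{-1}=\cl a_+\bigr\rangle .
\]
For this to be a genuine HNN extension with edge group $C_n^\pm\cong\Z/n\Z$, the images $\cl a_\pm$ must have order exactly $n$ in $V_{\alpha,n}$. This holds because $H$ is free on generators including $a_-$, with $a_+$ a boundary word. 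The quotient $V_{\alpha,n}$ is the orbifold group of signature $(g-1,n,n)$, whose cone point generators have order exactly $n$. Since $2-2(g-1)-2(1-1/n)<0$ for $g\ge2$ and $n\ge3$, this group is non-elementary cocompact Fuchsian.

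\textbf{Efficiency.} The edge groups are finite, hence closed in any residually finite group, and they trivially carry their full profinite topology. It remains to show that $V=V_{\alpha,n}$ is closed in $\Gamma=\Gamma_{\alpha,n}$ and carries its full profinite topology. I would construct, for each finite index torsion-free normal $\Sigma\normalf V$, a finite quotient of $\Gamma$ that restricts to $V/\Sigma$. Take $\psi:V\to Q=V/\Sigma$. The elements $\psi(\cl a_-)$ and $\psi(\cl a_+)$ both have order $n$, because $\Sigma$ is torsion-free. Form the finite graph of finite groups consisting of one vertex with group $Q$ and one loop with edge group $\Z/n\Z$. Its fundamental group is virtually free, hence residually finite, and $Q$ embeds in it. Composing $\Gamma\to\HNN(Q,\ldots)\to$ a finite quotient separating the finitely many nontrivial elements of $Q$ gives a finite quotient of $\Gamma$ whose restriction to $V$ has kernel exactly $\Sigma$. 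Since Selberg's lemma makes the torsion-free $\Sigma$ cofinal in $V$, this shows at once that $\Gamma$ is residually finite and that $V$ has the full induced topology.

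For closedness, I would use the same virtually free quotients. Given $\gamma\notin V$, its image in the virtually free group $\HNN(Q,\ldots)$ lies outside the image of $Q$. The vertex group $Q$ is finite, hence separable there, and so some finite quotient separates $\gamma$ from $V$. One subtlety: I must ensure the image of $\gamma$ is not in $Q$ for a suitable $\Sigma$. Using normal forms of the HNN extension, $\gamma$ is either a reduced word involving $t$, which stays reduced in $\HNN(Q,\ldots)$ once $\Sigma$ avoids the finitely many relevant elements of $V\setminus C^\pm$, or it already lies in $V$.

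\textbf{Main obstacle and conclusion.} I expect this reduced-word bookkeeping to be the main obstacle. It requires choosing $\Sigma$ so that the finitely many vertex syllables of $\gamma$ lying outside $C_n^\pm$ stay outside the images of $C_n^\pm$ in $Q$. This is possible because $C_n^\pm$ are finite and hence separable in the residually finite group $V$. Once efficiency holds, \cref{thm:finite-edge-completion} (equivalently \cite[Proposition~6.5.3, Corollary~6.3.6]{Ribes2017}) gives $\wh\Gamma_{\alpha,n}$ as the profinite HNN extension of $\wh V_{\alpha,n}$ over $C_n^\pm$. Because the edge groups are finite, this completed splitting is proper, and its standard tree $T_{\alpha,n}$ has the coset description.
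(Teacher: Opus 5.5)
Your proposal is correct and follows essentially the same route as the paper. The finite HNN extensions over $V_{\alpha,n}/\Sigma$ are virtually free and residually finite, and they give both the full induced topology on $V_{\alpha,n}$ and, via reduced normal forms and separability of the finite edge groups, its closedness in $\Gamma_{\alpha,n}$; \cref{thm:finite-edge-completion} then finishes. The differences are cosmetic: you use Selberg's lemma to get kernels meeting $C_n^\pm$ trivially, where the paper intersects finitely many separating subgroups using only residual finiteness of $V_{\alpha,n}$; and you cite the exact order of the cone-point generators, where the paper exhibits an explicit homomorphism $V_{\alpha,n}\to\Z/n\Z$.
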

\begin{proof}
Quotienting \eqref{eq:surface-HNN} by the normal closure of $a_-^n$ also imposes $a_+^n=1$, and we
obtain
\[
\Gamma_{\alpha,n}
\cong
\langle V_{\alpha,n},t\mid tc_-t^{-1}=c_+\rangle.
\]
The group $V_{\alpha,n}$ has presentation
\[
\left\langle x_1,y_1,\ldots,x_{g-1},y_{g-1},c_-,c_+
\ \middle|\
\prod_{j=1}^{g-1}[x_j,y_j]c_-c_+^{-1}=1,
\ c_-^n=c_+^n=1
\right\rangle.
\]
Since the defining relations show that the orders of $c_-$ and $c_+$ divide $n$, and the assignments
\[
c_-\longmapsto 1,\qquad c_+\longmapsto1,
\qquad x_j,y_j\longmapsto0
\]
define a homomorphism $V_{\alpha,n}\to\Z/n\Z$, both $c_-$ and $c_+$ have order at least $n$, and
hence $n$. Moreover,
\[
\chi_{\mathrm{orb}}=2-2(g-1)-2\left(1-\frac1n\right)
=2-2g+\frac2n<0.
\]
Replacing $c_+$ by $c_+^{-1}$ in the vertex presentation turns it into the usual oriented orbifold
presentation of signature $(g-1,n,n)$. Thus $V_{\alpha,n}$ is a non-elementary cocompact Fuchsian
group. Abelianizing the full HNN presentation, we find
\[
(\Gamma_{\alpha,n})_{\mathrm{ab}}
\cong \Z^{2g-1}\oplus\Z/n\Z.
\]
This agrees with imposing $n[a]=0$ on the primitive class $[a]\in H_1(S,\Z)$.

Next we show efficiency. We first prove that the vertex group inherits its full profinite topology.
For this argument, write
\[
V=V_{\alpha,n},
\qquad
\Gamma=\Gamma_{\alpha,n}.
\]
Let $N\normalf V$ be arbitrary. The set
\[
D=(C_n^-\cup C_n^+)\setminus\{1\},\qquad  C_n^\pm=\langle c_\pm\rangle
\]
is finite. By residual finiteness of $V$, for each $d\in D$ there exists $L_d\normalf V$ such that
$d\notin L_d$. Set
\[
M=N\cap\bigcap_{d\in D}L_d.
\]
This is a finite index normal subgroup of $V$, and
\[
M\leq N,
\qquad
M\cap C_n^-=M\cap C_n^+=\{1\}.
\]
Consequently, the quotient homomorphism
\[
\pi_M:V\longrightarrow B_M:=V/M
\]
is injective on both $C_n^-$ and $C_n^+$.

In particular, $\pi_M(c_-)$ and $\pi_M(c_+)$ both have order $n$. So the rule
\[
\pi_M(c_-^k)\longmapsto\pi_M(c_+^k)
\]
defines an isomorphism between their cyclic subgroups. We may therefore form the HNN extension
\[
\Gamma_M
=
\left\langle B_M,s
\ \middle|\
s\pi_M(c_-)s^{-1}=\pi_M(c_+)
\right\rangle.
\]
By the normal form theorem for HNN extensions, $B_M$ embeds in $\Gamma_M$. The assignments
\[
v\longmapsto\pi_M(v)\quad(v\in V),
\qquad
t\longmapsto s
\]
respect the defining relation of $\Gamma$ and therefore define a surjective homomorphism
$q_M:\Gamma\longrightarrow\Gamma_M$.

The group $\Gamma_M$ is the fundamental group of a finite graph of finite groups, hence virtually
free and residually finite. For each $b\in B_M\setminus\{1\}$ choose a homomorphism
$\theta_b:\Gamma_M\longrightarrow F_b$ to a finite group such that $\theta_b(b)\neq1$. Taking the
product we get a homomorphism $\theta:\Gamma_M\longrightarrow \prod_{b\in B_M\setminus\{1\}}F_b$.
Then $\theta|_{B_M}$ is injective, since every nonidentity element of $B_M$ has nonidentity image in
at least one coordinate.

Set $K=\ker(\theta\circ q_M)$. The group on the right of $\theta$ is finite, so $K\normalf\Gamma$.
For $v\in V$, injectivity of $\theta|_{B_M}$ shows that $v\in K$ if and only if $v\in M$, that is,
$K\cap V=M\leq N$.

The neighborhoods of the identity in the topology induced from the profinite topology of $\Gamma$
have a basis consisting of the subgroups $K'\cap V$, where $K'\normalf\Gamma$. Each such
intersection is a finite index normal subgroup of $V$. Conversely, the construction above shows that
every $N\normalf V$ contains such an intersection. These two neighborhood bases are therefore
cofinal, so the induced topology on $V$ equals its full profinite topology. We next prove that $V$
is closed in the profinite topology of $\Gamma$. Let $w\in\Gamma\setminus V$. By the HNN normal form
theorem, we can write
\[
w=v_0t^{\varepsilon_1}v_1\cdots t^{\varepsilon_r}v_r,
\qquad r\geq1,
\quad v_i\in V,
\quad \varepsilon_i\in\{1,-1\},
\]
with no pinch. Thus, for $1\leq i<r$,
\[
\begin{aligned}
v_i&\notin C_n^- &&\text{if }(\varepsilon_i,\varepsilon_{i+1})=(1,-1),\\
v_i&\notin C_n^+ &&\text{if }(\varepsilon_i,\varepsilon_{i+1})=(-1,1).
\end{aligned}
\]
Choose $M\normalf V$ meeting both $C_n^-$ and $C_n^+$ trivially and satisfying
\[
\begin{aligned}
\pi_M(v_i)&\notin\pi_M(C_n^-) &&\text{if }(\varepsilon_i,\varepsilon_{i+1})=(1,-1),\\
\pi_M(v_i)&\notin\pi_M(C_n^+) &&\text{if }(\varepsilon_i,\varepsilon_{i+1})=(-1,1).
\end{aligned}
\]
To obtain $M$, take all nonidentity elements of $C_n^-$ and $C_n^+$, together with the elements
$v_ic^{-1}$ for $c\in C_n^-$ when $(\varepsilon_i,\varepsilon_{i+1})=(1,-1)$, and for $c\in C_n^+$
when $(\varepsilon_i,\varepsilon_{i+1})=(-1,1)$. Since the edge groups are finite and the word has
no pinch, this is a finite set of nonidentity elements. As $V$ is residually finite, the product of
finitely many separating quotients is a finite quotient of $V$ in which each of these elements is
nontrivial, and its kernel is a finite index normal subgroup $M$ with these properties. The image
\[
q_M(w)=\pi_M(v_0)s^{\varepsilon_1}\pi_M(v_1)\cdots s^{\varepsilon_r}\pi_M(v_r)
\]
is still reduced and has positive stable letter length, so $q_M(w)\notin B_M$ by the HNN normal form
theorem. For every $b\in B_M$, since $\Gamma_M$ is residually finite there is a homomorphism
$\eta_b:\Gamma_M\to Q_b$ to a finite group with
\[
\eta_b\bigl(q_M(w)b^{-1}\bigr)\neq1.
\]
The product homomorphism
\[
\eta=(\eta_b)_{b\in B_M}:\Gamma_M\longrightarrow\prod_{b\in B_M}Q_b
\]
then satisfies $\eta(q_M(w))\notin\eta(B_M)$. As $q_M(V)=B_M$, the finite quotient defined by
$\eta\circ q_M$ separates $w$ from $V$. This proves that $V$ is closed in the profinite topology of
$\Gamma$.

The same argument separates every element of $\Gamma\setminus V$ from the identity, and if
$1\neq v\in V$, we choose $N\normalf V$ with $v\notin N$; the construction above of
$K\normalf\Gamma$ with $K\cap V\leq N$ then separates $v$ from the identity. So $\Gamma$ is
residually finite. In particular the finite edge groups are closed in $\Gamma$ and carry their full
profinite topologies, and together with the statement about $V$ proved above this is efficiency. By
\cref{thm:finite-edge-completion}, the canonical homomorphism from the profinite fundamental group
of the completed HNN splitting to $\wh\Gamma$ is an isomorphism, and the completed splitting is
proper, hence its standard profinite tree has the coset description given in the statement.
\end{proof}
The next lemma combines the behaviour of $\theta$ on the closed normal subgroups defining the
fillings with the fixed point theorem (\cref{thm:fuchsian-fixed}).
\begin{lemma}\label{lem:cone-recognition}
For $i=1,2$, use the notation above for $(S_i,\alpha_i)$. If
\[
\theta:(\wh\Pi_1,[\cl A_1])\xrightarrow{\cong}(\wh\Pi_2,[\cl A_2]),
\]
then, for every $n\geq3$, $\theta$ induces
\[
\theta_n:\wh\Gamma_{\alpha_1,n}\xrightarrow{\cong}\wh\Gamma_{\alpha_2,n},
\]
and $\theta_n$ maps the completed Fuchsian vertex group of $\wh\Gamma_{\alpha_1,n}$ onto a conjugate
of the completed Fuchsian vertex group of $\wh\Gamma_{\alpha_2,n}$.
\end{lemma}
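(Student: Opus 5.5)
Two steps: show that $\theta$ preserves the closed normal subgroup defining the $n$th filling, so it descends to the completed filling groups; then apply the fixed point theorem to locate the image of the Fuchsian vertex group in the tree.

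\emph{Step 1: $\theta$ preserves the filling kernels.} By \cref{lem:completion-quotient}, $\wh\Gamma_{\alpha_i,n}\cong\wh\Pi_i/N_{\alpha_i,n}$, where $N_{\alpha_i,n}=\cl{K_{\alpha_i,n}}$. This is the closed normal subgroup of $\wh\Pi_i$ generated by $a_i^n$, and hence also the one generated by $\cl{A_i}^{\,n}=\{z^n:z\in\cl A_i\}$. Since $A_i\cong\Z$ is closed and carries its full profinite topology (\cref{thm:normalizer-comparisons}(iv)), we have $\cl A_i\cong\hZ$ and $\cl{A_i}^{\,n}=\cl{\langle a_i^n\rangle}$. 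Write $\theta(a_1)=xzx^{-1}$ with $z=a_2^{\mu}$ a topological generator of $\cl A_2$, so $\mu\in\hZ^\times$. Then $\theta(a_1^n)=xa_2^{n\mu}x^{-1}$. Because $\cl{\langle a_2^{n\mu}\rangle}=\cl{\langle a_2^n\rangle}$, the closed normal subgroup generated by $\theta(a_1^n)$ is $N_{\alpha_2,n}$. Hence $\theta(N_{\alpha_1,n})=N_{\alpha_2,n}$, and $\theta$ descends to an isomorphism $\theta_n$.

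\emph{Step 2: the image of the vertex group fixes a vertex.} Let $W_1=W_{\alpha_1,n}\cong\wh V_{\alpha_1,n}$; this identification is the efficiency statement of \cref{lem:cone-HNN}. The image $\theta_n(W_1)$ is a copy of $\wh V_{\alpha_1,n}$, which is the completion of a non-elementary cocompact Fuchsian group. It acts continuously on the standard profinite tree $T_{\alpha_2,n}$ of the proper splitting from \cref{lem:cone-HNN}. The edge stabilizers of this action are conjugates of the finite groups $C^\pm_{2,n}$, so they are finite. By \cref{thm:fuchsian-fixed}, $\theta_n(W_1)$ fixes a vertex $gW_2$. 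Vertex stabilizers are the conjugates of $W_2$, so $\theta_n(W_1)\le gW_2g^{-1}$.

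\emph{Step 3: upgrade the inclusion to equality.} Apply Step 2 symmetrically to $\theta_n^{-1}$: there is $h$ with $\theta_n^{-1}(W_2)\le hW_1h^{-1}$. Combining the two inclusions gives
\[
W_1\le\theta_n^{-1}(g)\,\theta_n^{-1}(W_2)\,\theta_n^{-1}(g)^{-1}\le kW_1k^{-1}
\]
for some $k$. I expect this to be the main obstacle: a proper inclusion $W_1<kW_1k^{-1}$ must be ruled out. If $k\notin W_1$, the two vertices $W_1$ and $kW_1$ are distinct, and their stabilizers $kW_1k^{-1}$ and $W_1$ both contain $W_1$. By the second part of \cref{thm:fuchsian-fixed}, $W_1\le W_1\cap kW_1k^{-1}$ is then finite, which contradicts the fact that $W_1$ is infinite. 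So $k\in W_1$ and all the inclusions above are equalities; in particular $\theta_n(W_1)=gW_2g^{-1}$.
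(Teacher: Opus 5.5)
Your proposal is correct and follows essentially the same route as the paper. You descend $\theta$ using $\cl{\langle a_2^{n\mu}\rangle}=\cl{\langle a_2^{n}\rangle}$, apply \cref{thm:fuchsian-fixed} in both directions, and use finiteness of intersections of distinct vertex stabilizers to force equality. The only difference is cosmetic: the paper applies the fixed point theorem directly to $\theta_n^{-1}(\Stab(v))$ instead of to $\theta_n^{-1}(W_2)$ followed by conjugation by $\theta_n^{-1}(g)$.
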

\begin{proof}
After an inner conjugation we may assume $\theta(\cl A_1)=\cl A_2$. A continuous isomorphism of
procyclic groups sends a topological generator to a unit power of a topological generator, so
\[
\theta(a_1)=a_2^\mu,
\qquad \mu\in\wh\Z^{\times}.
\]
Multiplication by $\mu$ preserves the open subgroup $n\wh\Z$. So $a_2^{n\mu}$ and $a_2^n$
topologically generate the same subgroup. Taking closed normal closures, we get
$\theta(N_{\alpha_1,n})=N_{\alpha_2,n}$, and therefore the quotient isomorphism $\theta_n$.

Let $W_i$ be the completed Fuchsian vertex group and $v_i$ its vertex in $T_{\alpha_i,n}$. Then the
group $\theta_n(W_1)$ acts on $T_{\alpha_2,n}$, whose edge stabilizers are finite, so by
\cref{thm:fuchsian-fixed} there is a fixed vertex $v$ with $\theta_n(W_1)\leq\Stab(v)$. Applying
\cref{thm:fuchsian-fixed} to $\theta_n^{-1}(\Stab(v))$ acting on $T_{\alpha_1,n}$, we find a fixed
vertex $w$ with
\[
W_1\leq\theta_n^{-1}(\Stab(v))\leq\Stab(w).
\]
If $w\neq v_1$, the intersection of the distinct vertex stabilizers $W_1$ and $\Stab(w)$ would
contain the infinite group $W_1$, contradicting the finiteness of intersections of distinct vertex
stabilizers in \cref{thm:fuchsian-fixed}. It follows that $w=v_1$ and all inclusions are equalities.
So $\theta_n(W_1)$ is a vertex stabilizer, and therefore a conjugate of $W_2$.
\end{proof}

Let
\[
\rho_{\alpha,n}:\wh\Pi\longrightarrow\wh\Gamma_{\alpha,n},
\qquad
P_{\alpha,n}=\rho_{\alpha,n}^{-1}(\wh V_{\alpha,n}).
\]

\begin{lemma}\label{lem:preimage-divisibility}
For every $n\geq3$ we have
\[
P_{\alpha,n}=\cl H\,N_{\alpha,n}.
\]
If $n\mid m$, then
\begin{equation}\label{eq:divisibility}
N_{\alpha,m}\leq N_{\alpha,n},
\qquad
P_{\alpha,n}=P_{\alpha,m}N_{\alpha,n}.
\end{equation}
\end{lemma}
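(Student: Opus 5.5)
Both identities come from unwinding the definitions of $\rho_{\alpha,n}$, $N_{\alpha,n}$ and the completed vertex group. For the first, write $\rho=\rho_{\alpha,n}$. By \cref{lem:completion-quotient}, $\rho$ is the quotient map $\wh\Pi\to\wh\Pi/N_{\alpha,n}$, so $\ker\rho=N_{\alpha,n}$.

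The completed vertex group $\wh V_{\alpha,n}\le\wh\Gamma_{\alpha,n}$ is, by \cref{lem:cone-HNN} (efficiency), the closure of the image of $V_{\alpha,n}$. Since $V_{\alpha,n}$ is the image of $H$ under the discrete quotient $\Pi\to\Gamma_{\alpha,n}$, it equals the closure of $\rho(H)$. Now $\rho$ is continuous and $\wh\Pi$ is compact, so $\rho(\cl H)$ is compact, hence closed, and contains $\rho(H)$. Continuity also gives $\rho(\cl H)\subseteq\cl{\rho(H)}$. Therefore
\[
\rho(\cl H)=\wh V_{\alpha,n}.
\]
It follows that
\[
P_{\alpha,n}=\rho^{-1}\rho(\cl H)=\cl H\,\ker\rho=\cl H\,N_{\alpha,n}.
\]
This product is a subgroup because $N_{\alpha,n}$ is normal.

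For the divisibility statement, suppose $n\mid m$. Then $a^m=(a^n)^{m/n}\in K_{\alpha,n}$, so $K_{\alpha,m}\le K_{\alpha,n}$. Taking closures gives $N_{\alpha,m}\le N_{\alpha,n}$. Using the first part twice,
\[
P_{\alpha,m}N_{\alpha,n}=\cl H\,N_{\alpha,m}N_{\alpha,n}=\cl H\,N_{\alpha,n}=P_{\alpha,n}.
\]

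The only point requiring care is the identification of the completed vertex group with $\rho(\cl H)$. The paper's $\wh V_{\alpha,n}$ means the image $W_{\alpha,n}$ of the completion, and this coincides with the closure of the image of $V_{\alpha,n}$ by efficiency. The compactness argument above then gives equality with $\rho(\cl H)$. No finer input such as \cref{thm:fuchsian-fixed} is needed.
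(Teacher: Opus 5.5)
Your proof is correct and follows the paper's argument essentially step for step: identify $\rho(\cl H)$ with the completed vertex group by compactness and continuity, apply $\rho^{-1}\rho(L)=L\ker\rho$ with $\ker\rho=N_{\alpha,n}$, and then deduce the divisibility identity from $N_{\alpha,m}\le N_{\alpha,n}$ and normality. One small point: the equality of the image of $\wh V_{\alpha,n}$ with $\cl{V_{\alpha,n}}$ holds for any continuous extension by compactness and density, so efficiency is needed only for injectivity of $\wh V_{\alpha,n}\to\wh\Gamma_{\alpha,n}$, not for this identification.
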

\begin{proof}
Let $q=\rho_{\alpha,n}:\wh\Pi\to\wh\Gamma_{\alpha,n}$, and let $V_n$ denote the completed Fuchsian
vertex group. The discrete image $q(H)$ is $V_{\alpha,n}$. Because $\cl H$ is compact and $q$ is
continuous, its image is closed and
\[
q(\cl H)=\cl{q(H)}=\cl{V_{\alpha,n}}=V_n,
\]
where the last equality uses \cref{lem:cone-HNN}. For any quotient $q:G\to G/N$ and subgroup
$L\leq G$ we have
\[
q^{-1}(q(L))=LN;
\]
indeed $q(x)\in q(L)$ if and only if $q(x)=q(\ell)$ for some $\ell\in L$, which is equivalent to
$\ell^{-1}x\in N$ and hence to $x\in LN$. Taking $L=\cl H$ and $N=N_{\alpha,n}$ proves
$P_{\alpha,n}=\cl HN_{\alpha,n}$.

Suppose $n\mid m$, say $m=rn$. Then $a^m=(a^n)^r$, so the closed normal subgroup generated by $a^m$
is contained in the one generated by $a^n$: $N_{\alpha,m}\leq N_{\alpha,n}$. Since $N_{\alpha,n}$ is
normal we have
\[
P_{\alpha,m}N_{\alpha,n}
=\cl HN_{\alpha,m}N_{\alpha,n}
=\cl HN_{\alpha,n}
=P_{\alpha,n}.
\]
The product $\cl HN_{\alpha,n}$ is a subgroup because the second factor is normal.
\end{proof}

\begin{theorem}[Scott]
\label{thm:scott-separability}
Let $\Pi$ be the fundamental group of a connected compact surface and let $H\leq\Pi$ be finitely
generated. Then $H$ is separable in $\Pi$ \cite{Scott1978}; equivalently, $H$ is closed in the
profinite topology of $\Pi$.
\end{theorem}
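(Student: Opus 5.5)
The statement is Scott's theorem, and in the paper it is quoted from \cite{Scott1978}. The plan below instead derives it from tools already used in \cref{sec:preliminaries,sec:cut-tree}, namely virtual retractions onto quasiconvex subgroups together with \cref{lem:virtual-retract-topology}.

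\emph{Step 1: reduction to the orientable hyperbolic case.} First I dispose of the surfaces with nonnegative Euler characteristic. Their fundamental groups are trivial, $\Z/2\Z$, $\Z$, $\Z^2$, or the Klein bottle group. These are finitely generated virtually abelian groups, in which every subgroup is separable by the elementary argument for polycyclic groups. For a nonorientable surface I pass to the orientable double cover $\Pi_0$, as in the proof of \cref{thm:surface-double-cosets}:
\begin{itemize}
\item $H_0=H\cap\Pi_0$ is finitely generated and has finite index in $H$;
\item $\Pi_0$ is closed in $\Pi$ and inherits its full profinite topology;
\item so if $H_0$ is closed in $\Pi_0$, then $H$, a finite union of translates of $H_0$, is closed in $\Pi$.
\end{itemize}
Hence I may assume $\Pi$ is the fundamental group of an orientable compact surface with $\chi<0$.

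\emph{Step 2: $H$ is quasiconvex (the main obstacle).} If the surface has boundary, $\Pi$ is free, and every finitely generated subgroup of a free group is quasiconvex. This is standard via Stallings folding; Hall's theorem would even give separability directly. If $S$ is closed, fix a hyperbolic metric and consider the cover $\HH^2/H\to S$. There are two cases.
\begin{itemize}
\item If $[\Pi:H]<\infty$, there is nothing to prove.
\item Otherwise $\HH^2/H$ is a hyperbolic surface of infinite area with finitely generated fundamental group. Its convex (Nielsen) core is compact, because a finitely generated Fuchsian group without parabolics is geometrically finite. So $H$ acts cocompactly on the convex hull of its limit set, and therefore $H$ is quasiconvex in $\Pi$.
\end{itemize}
This local quasiconvexity is the step I expect to carry the real geometric content. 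I would cite geometric finiteness of finitely generated Fuchsian groups rather than reprove it.

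\emph{Step 3: separability.} Surface groups are hyperbolic and virtually compact special, and quasiconvex subgroups of such groups are virtual retracts \cite{HaglundWise2008,Wise2021}. This is exactly how efficiency of \eqref{eq:surface-HNN} was shown in \cref{sec:cut-tree}. So there are a finite index subgroup $L\leq\Pi$ containing $H$ and a retraction $r:L\to H$. \Cref{lem:virtual-retract-topology} then gives that $H$ is closed in the profinite topology of $\Pi$, which is equivalent to separability.

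The equivalence of "separable" with "closed in the profinite topology" is just the definition of the profinite topology by finite index cosets, so nothing further is needed. As a fallback, Scott's original argument would also do: embed the compact convex core in a finite cover via a tiling by right-angled polygons.
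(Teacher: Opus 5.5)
Your proof is correct, but it takes a different route from the paper. The paper gives no argument of its own and simply quotes Scott \cite{Scott1978}, whose proof is the geometric one you list as a fallback: enlarge the compact convex core of $\HH^2/H$ to a subsurface that embeds in a finite cover, using tilings by right-angled polygons.

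You instead reduce to the orientable $\chi<0$ case. You then get quasiconvexity from geometric finiteness of finitely generated Fuchsian groups, or from Stallings folding in the free case. Finally you apply the Haglund--Wise virtual retraction theorem \cite{HaglundWise2008} together with \cref{lem:virtual-retract-topology}. This is exactly the mechanism the paper itself uses in \cref{sec:cut-tree} to prove efficiency of the surface HNN splitting, so it fits the paper's toolbox. It also gives more than closedness: by the second half of that lemma, $H$ carries its full profinite topology. If you only know that $\Pi$ is \emph{virtually} compact special, apply the retraction theorem to $H\cap\Pi'$ for a compact special finite index $\Pi'$ and take finitely many cosets; nothing else changes.

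The cost is depth, and one input needs care to avoid circularity. The claim that surface groups are virtually compact special should not come from the Haglund--Wise separability criterion applied to a cube complex dual to filling curves. That criterion needs separability of cyclic hyperplane subgroups and of their double cosets, which are Scott- and Niblo-type results. A non-circular justification is that closed surface groups of genus at least two are commensurable with the reflection group of a right-angled pentagon, and right-angled Coxeter groups are virtually special. So Scott's right-angled geometry reappears inside your inputs.

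Conversely, Scott's argument yields the stronger ``almost geometric'' conclusion, that suitable elevations are simple in a finite cover. That is what the paper actually uses in \cref{lem:elevations}, and the virtual retraction route does not give it directly.
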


\begin{lemma}\label{lem:divisible-intersection}
\[
\bigcap_{n\geq3}P_{\alpha,\ell_n}=\cl H.
\]
\end{lemma}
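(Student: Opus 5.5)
The inclusion $\cl H\subseteq\bigcap_{n\geq3}P_{\alpha,\ell_n}$ is immediate from \cref{lem:preimage-divisibility}, which gives $P_{\alpha,\ell_n}=\cl H\,N_{\alpha,\ell_n}\supseteq\cl H$ for every $n\geq3$. All the work is in the reverse inclusion. I plan to prove it by comparing the subgroups $\cl H N_{\alpha,\ell_n}$ with the standard neighbourhood description of a closed subgroup.

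The general fact I will use is this: for a closed subgroup $L$ of a profinite group $G$,
\[
L=\bigcap_{U\normalo G}LU .
\]
Indeed, if $x\notin L$, then compactness of $L$ and the fact that the open normal subgroups form a neighbourhood basis of $1$ give some $U$ with $xU\cap L=\varnothing$, that is, $x\notin LU$. I apply this with $G=\wh\Pi$ and $L=\cl H$, which is closed by definition as a closure. It therefore suffices to show that every open normal subgroup $U$ of $\wh\Pi$ admits some $n$ with $P_{\alpha,\ell_n}\leq\cl H\,U$.

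Fix an open normal subgroup $U$ of $\wh\Pi$ and put $d=[\wh\Pi:U]$. The image of $a$ in the finite group $\wh\Pi/U$ has order dividing $d$, so $a^d\in U$. If $n\geq d$, then $d\mid\ell_n$ by \cref{def:divisible-fillings}, hence $a^{\ell_n}\in U$. Since $U$ is closed and normal, it then contains the closed normal subgroup $N_{\alpha,\ell_n}$ generated by $a^{\ell_n}$. Consequently
\[
P_{\alpha,\ell_n}=\cl H\,N_{\alpha,\ell_n}\leq\cl H\,U .
\]
It follows that
\[
\bigcap_{m\geq3}P_{\alpha,\ell_m}\subseteq\bigcap_{U\normalo\wh\Pi}\cl H\,U=\cl H .
\]

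The one point needing care is that the family $(\ell_n)$ is eventually divisible by every positive integer. This is exactly the divisibility property of $\ell_n$ recorded in \cref{def:divisible-fillings}, and it is why the intersection is taken over the orders $\ell_n$ rather than over all $n$: for a single $n$, the subgroup $N_{\alpha,n}$ need not lie in a given $U$. By \eqref{eq:divisibility} the groups $P_{\alpha,\ell_n}$ also decrease in $n$, but the argument above does not depend on this. Beyond this point I expect no real obstacle. Closedness of $\cl H$ is automatic, and \cref{thm:scott-separability} is needed only if one wants to identify $\cl H$ with $\wh H$, not for the equality itself.
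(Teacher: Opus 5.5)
Your proposal is correct and follows essentially the same route as the paper's proof: both separate a point $x\notin\cl H$ from $\cl H$ by $\cl H\,U$ for an open normal $U$, then use $d\mid\ell_n$ for large $n$ to get $N_{\alpha,\ell_n}\leq U$ and hence $P_{\alpha,\ell_n}=\cl H N_{\alpha,\ell_n}\leq\cl H\,U$. The only cosmetic difference is that you take $d=[\wh\Pi:U]$ where the paper takes $d$ to be the order of the image of $a$ in $\wh\Pi/U$; either works, provided you also take $n\geq 3$.
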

\begin{proof}
From the formula $P_{\alpha,m}=\cl HN_{\alpha,m}$ we have $\cl H\subseteq P_{\alpha,m}$ for every
$m$, hence $\cl H\subseteq\bigcap_{n\geq3}P_{\alpha,\ell_n}$.

For the reverse inclusion, let $x\in\wh\Pi\setminus\cl H$. The subgroup $\cl H$ is closed in
$\wh\Pi$ by definition and is compact. The compact set $\cl H$ and the point $x$ can therefore be
separated in a finite quotient: there is an open normal subgroup $U\normalo\wh\Pi$ such that
\[
x\notin\cl H U.
\]
Let $d$ be the order of the image of $a$ in the finite group $\wh\Pi/U$. For every
$n\geq\max\{3,d\}$ one has $d\mid\ell_n$, and therefore
\[
a^{\ell_n}\in U,
\qquad
N_{\alpha,\ell_n}\leq U.
\]
The preimage formula of \cref{lem:preimage-divisibility} at the exponent $\ell_n$ reads
\[
P_{\alpha,\ell_n}
=\cl HN_{\alpha,\ell_n}
\leq\cl H U.
\]
Thus $x\notin P_{\alpha,\ell_n}$ for every sufficiently large $n$. Every point outside $\cl H$ is
excluded by the intersection, proving equality.
\end{proof}

\begin{proposition}\label{prop:vertex-recognition}
Under the conditions of \cref{lem:cone-recognition}, there is an element $g\in\wh\Pi_2$ such that
\[
\theta(\cl H_1)=g\cl H_2g^{-1}.
\]
\end{proposition}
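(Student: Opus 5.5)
We follow the outline already sketched in the introduction: combine \cref{lem:cone-recognition} at the divisible exponents $\ell_n$ with the intersection formula of \cref{lem:divisible-intersection}, and choose the conjugators compatibly by a compactness argument. After an inner conjugation we may assume $\theta(\cl A_1)=\cl A_2$. Then $\theta(N_{\alpha_1,m})=N_{\alpha_2,m}$ for every $m\geq3$, as in the proof of \cref{lem:cone-recognition}. For each $n\geq3$, \cref{lem:cone-recognition} at exponent $\ell_n$ gives $\theta_{\ell_n}(W_{1,\ell_n})=\bar g\,W_{2,\ell_n}\bar g^{-1}$ for some $\bar g\in\wh\Gamma_{\alpha_2,\ell_n}$. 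Pulling back along $\rho_{\alpha_2,\ell_n}$ and using $\theta(N_{\alpha_1,\ell_n})=N_{\alpha_2,\ell_n}$, this says $\theta(P_{\alpha_1,\ell_n})=gP_{\alpha_2,\ell_n}g^{-1}$ for any lift $g\in\wh\Pi_2$ of $\bar g$.

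Next define
\[
X_n=\{g\in\wh\Pi_2:\theta(P_{\alpha_1,\ell_n})=gP_{\alpha_2,\ell_n}g^{-1}\}.
\]
Each $X_n$ is nonempty by the previous paragraph. It is closed, being a union of cosets $g\,N_{\wh\Pi_2}(P_{\alpha_2,\ell_n})$, and normalizers of closed subgroups are closed. The key point is that the family is nested: $X_{n+1}\subseteq X_n$. Indeed, $\ell_n\mid\ell_{n+1}$, so \cref{lem:preimage-divisibility} gives $P_{\alpha_i,\ell_n}=P_{\alpha_i,\ell_{n+1}}N_{\alpha_i,\ell_n}$. If $g\in X_{n+1}$, then
\[
\theta(P_{\alpha_1,\ell_n})=\theta(P_{\alpha_1,\ell_{n+1}})\theta(N_{\alpha_1,\ell_n})=gP_{\alpha_2,\ell_{n+1}}g^{-1}N_{\alpha_2,\ell_n}=g\,P_{\alpha_2,\ell_n}g^{-1},
\]
using normality of $N_{\alpha_2,\ell_n}$. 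By compactness of $\wh\Pi_2$, the intersection $\bigcap_n X_n$ is nonempty; choose $g$ in it.

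Finally, \cref{lem:divisible-intersection} on both sides, together with bijectivity of $\theta$ (so that it commutes with intersections), gives
\[
\theta(\cl H_1)=\bigcap_n\theta(P_{\alpha_1,\ell_n})=\bigcap_n gP_{\alpha_2,\ell_n}g^{-1}=g\cl H_2g^{-1}.
\]
Restoring the initial inner conjugation changes $g$ by left multiplication by a fixed element. The main obstacle is the compatibility of the conjugators across levels: \cref{lem:cone-recognition} supplies a conjugator at each level separately, and without the nesting $X_{n+1}\subseteq X_n$ these conjugators need not converge. That nesting relies on the divisibility chain $\ell_n\mid\ell_{n+1}$ through \cref{lem:preimage-divisibility}. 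This is precisely why the exponents $\ell_n$ are used rather than all $n$.
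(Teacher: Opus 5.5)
Your proposal is correct and follows the paper's proof essentially step for step. You take the nonempty conjugator sets at the levels $\ell_n$ supplied by \cref{lem:cone-recognition}, nest them via $P_{\alpha_i,\ell_n}=P_{\alpha_i,\ell_{n+1}}N_{\alpha_i,\ell_n}$ and normality of $N_{\alpha_2,\ell_n}$, extract a common conjugator by compactness, and conclude with \cref{lem:divisible-intersection}.

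One small precision: $X_n$ is a single left coset $g\,N_{\wh\Pi_2}(P_{\alpha_2,\ell_n})$ of the closed normalizer, and that is what makes it closed. Calling it a union of cosets is not enough, since an arbitrary union of cosets need not be closed.
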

\begin{proof}
For $n\geq3$, we set
\[
C_n=\{g\in\wh\Pi_2:\theta(P_{\alpha_1,\ell_n})=gP_{\alpha_2,\ell_n}g^{-1}\}.
\]
By \cref{lem:cone-recognition}, each $C_n$ is nonempty. It is a coset of the closed normalizer of
$P_{\alpha_2,\ell_n}$ and hence closed. Since $\ell_n\mid\ell_{n+1}$, if $g\in C_{n+1}$, then
\eqref{eq:divisibility}, normality of $N_{\alpha_2,\ell_n}$, and
$\theta(N_{\alpha_1,\ell_n})=N_{\alpha_2,\ell_n}$ give
\[
\theta(P_{\alpha_1,\ell_n})
=\theta(P_{\alpha_1,\ell_{n+1}}N_{\alpha_1,\ell_n})
=gP_{\alpha_2,\ell_{n+1}}g^{-1}N_{\alpha_2,\ell_n}
=gP_{\alpha_2,\ell_n}g^{-1}.
\]
Thus $C_{n+1}\subseteq C_n$. By compactness we get $g\in\bigcap_{n\geq3}C_n$. A homeomorphism
commutes with arbitrary intersections, so \cref{lem:divisible-intersection} yields the result.
\end{proof}

\begin{theorem}[Wilton--Zalesskii]\label{thm:profinite-malnormality}
Let $G$ be word-hyperbolic and suppose that $G$ is the fundamental group of a compact virtually
special cube complex. If $\{H_1,\ldots,H_r\}$ is a malnormal family of quasiconvex subgroups of $G$,
then $\{\cl H_1,\ldots,\cl H_r\}$ is a malnormal family in $\wh G$.
\end{theorem}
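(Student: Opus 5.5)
This is the theorem of Wilton and Zalesskii \cite[Theorem~3.3]{WiltonZalesskii2017}, and in the paper we cite it rather than reprove it. If we did want a self-contained argument, we would argue as follows.

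\emph{Reduction to finite quotients.} The family $\{\cl H_1,\ldots,\cl H_r\}$ is malnormal in $\wh G$ when, for $\gamma\in\wh G$, the intersection $\cl H_i\cap\gamma\cl H_j\gamma^{-1}$ is trivial unless $i=j$ and $\gamma\in\cl H_i$. Suppose it is nontrivial and pick a nontrivial element $z$ in it.

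\emph{Step one: produce a discrete witness in a finite quotient.} Quasiconvex subgroups of a hyperbolic virtually special group are virtual retracts \cite{HaglundWise2008,Wise2021}. By \cref{lem:virtual-retract-topology}, each $H_i$ is then closed and carries its full profinite topology, so $\cl H_i\cong\wh H_i$. Double cosets $H_igH_j$ of quasiconvex subgroups are separable in virtually special hyperbolic groups (Minasyan), and Minasyan's criterion then gives
\[
\cl H_i\cap g\cl H_jg^{-1}=\cl{H_i\cap gH_jg^{-1}}
\qquad(g\in G).
\]
For discrete $g$ the right side is trivial by malnormality unless $i=j$ and $g\in H_i$. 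This settles the case $\gamma\in G$, and more generally every $\gamma$ lying in a double coset $\cl H_i g\cl H_j$ with $g\in G$.

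\emph{Step two: handle nondiscrete $\gamma$ by a tree argument.} This is the main obstacle. First we would use Wise's malnormal special quotient/hierarchy: the quasiconvex malnormal family gives a virtually special cube complex in which each $H_i$ is represented by a locally convex subcomplex. Passing to a finite cover, we would then build a splitting in which the $H_i$ become elliptic. One option is to amalgamate along or cone off the subcomplexes, producing a graph of groups whose vertex groups include the $H_i$ and whose edge groups are trivial or finite. The fixed set of $z$ in the associated profinite tree is a subtree (\cite[Theorem~4.1.5]{Ribes2017}). Since $z$ fixes the vertices of both $\cl H_i$ and $\gamma\cl H_j$, it fixes the geodesic between them, and therefore lies in an edge stabilizer. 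As in the second part of \cref{thm:fuchsian-fixed}, triviality or finiteness of edge stabilizers, combined with torsion-freeness from \cref{thm:torsion-completion-sources} after passing to a torsion-free finite index subgroup, forces the two vertices to coincide. That means $i=j$ and $\gamma\in\cl H_i$.

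The hard part is constructing a finite-index splitting of this kind in which every $H_i$ is a vertex group and the edge groups are small. This is precisely the content of the Wilton--Zalesskii argument, and in the paper we therefore cite \cite[Theorem~3.3]{WiltonZalesskii2017} instead of redoing it.
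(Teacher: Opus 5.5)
Citing \cite[Theorem~3.3]{WiltonZalesskii2017} is exactly what the paper does. The paper gives no argument of its own, so as a justification inside the paper your proposal is fine. The problem is the self-contained sketch. In particular, you assert that its missing step is \emph{precisely the content of the Wilton--Zalesskii argument}. That step cannot be carried out in the cases the paper needs, so the sketch does not describe a viable route.

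\textbf{Step one.} This step is correct, but it only handles $\gamma$ in the countable union $\bigcup_{g\in G}\cl H_i\,g\,\cl H_j$ of closed sets. That union does not exhaust $\wh G$: already for $G=\Pi$ and $H_i=H_j=A$, counting in the finite quotients $V/\ker(V\to H_1(V;\Z/p^k))$ shows that no $\cl A g\cl A$ contains a coset of an open normal subgroup $V$. So all the content sits in Step two.

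\textbf{Step two.} Here you need a finite index subgroup $G_0$ acting on a tree with finite or trivial edge stabilizers, with the groups $H_i\cap G_0$ as vertex stabilizers. This is impossible whenever $G_0$ is one-ended. That holds for every finite index subgroup of a closed surface group, and the paper applies the theorem exactly there, to $\Pi$ and $A=\langle a\rangle$ in \cref{lem:full-acylindricity}. It also holds for every finite index subgroup of a closed hyperbolic $3$-manifold group. The argument is as follows.
\begin{itemize}
\item By Stallings' ends theorem, such a $G_0$ fixes a vertex $w$ of any tree on which it acts with finite edge stabilizers.
\item If the infinite group $H_i\cap G_0$ is the stabilizer of a vertex $v\neq w$, it fixes the geodesic from $v$ to $w$. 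It therefore lies in a finite edge stabilizer, which is absurd.
\item If instead $v=w$, then $H_i\cap G_0=G_0$, contradicting infinite index.
\end{itemize}

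The alternatives you mention do not rescue this. Amalgamating along the $H_i$ makes the edge stabilizers conjugates of $\cl H_i$, so the fixed subtree argument merely returns the malnormality you are trying to prove. Coning off the subcomplexes kills the $H_i$ and produces a quotient of $G$, not a splitting of it.

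The nondiscrete $\gamma$ must therefore be controlled by an input that is uniform over the finite quotients of $G$. A Bass--Serre tree for a finite index subgroup of $G$ cannot supply it.
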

This is \cite[Theorem~3.3]{WiltonZalesskii2017}.

\begin{terminology}[\cite{WiltonZalesskii2017}]
For an integer $k\geq0$, an action of a profinite group $P$ on
a profinite tree $T$ is $k$-acylindrical if, for every
$1\neq g\in P$ and every two vertices $v,w$ fixed by $g$, $d_T(v,w)\leq k.$
Here $d_T(v,w)$ is the number of edges in a shortest finite
edge path joining $v$ and $w$, and is infinite if no such
path exists. This is
\cite[Definition~7.2]{WiltonZalesskii2017}.
\end{terminology}

Profinite malnormality \cite[Theorem~3.3]{WiltonZalesskii2017} and the acylindricity criterion for a
graph of profinite groups with one edge \cite[Lemma~7.3]{WiltonZalesskii2017} are the two
ingredients of the next lemma; the criterion for adjacency comes from the edge stabilizers being
infinite procyclic.
\begin{lemma}
\label{lem:full-acylindricity}
The action of $\wh\Pi$ on the full cut tree $\wh T_\alpha$ is $1$-acylindrical. Consequently, for
two distinct vertices $v,w\in V(\wh T_\alpha)$,
\[
\Stab_{\wh\Pi}(v)\cap\Stab_{\wh\Pi}(w)
\text{ is infinite}
\quad\Longleftrightarrow\quad
v\text{ and }w\text{ are adjacent}.
\]
In the adjacent case this intersection is the infinite procyclic stabilizer of their common edge.
\end{lemma}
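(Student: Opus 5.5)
The plan is to verify the two hypotheses of the Wilton--Zalesskii acylindricity criterion \cite[Lemma~7.3]{WiltonZalesskii2017} for the one-edge graph of profinite groups obtained by completing the HNN splitting \eqref{eq:surface-HNN}, and then to read off the adjacency criterion from the coset description of $\wh T_\alpha$.

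\textbf{Step 1: malnormality in $\Pi$.} I will first check that $\{A\}$ is malnormal in $\Pi$ and that the boundary pair $\{A_-,A_+\}$ is a malnormal family in $H$. For $A$ this holds because $A$ is maximal cyclic in a torsion-free hyperbolic group: if $gAg^{-1}\cap A\neq1$, then $g$ commutes with a power of $a$, hence lies in the cyclic centralizer, which is $A$. For $H=\pi_1(S_{g-1,2})$, the two boundary components are not freely homotopic in the cut surface, and each boundary subgroup is maximal cyclic and malnormal in $H$. All of these subgroups are quasiconvex, and $\Pi$ and $H$ are hyperbolic and virtually compact special.

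\textbf{Step 2: profinite malnormality.} \Cref{thm:profinite-malnormality} then gives that $\cl A$ is malnormal in $\wh\Pi$ and that $\{\cl A_-,\cl A_+\}$ is a malnormal family in $\cl H\cong\wh H$. By efficiency, the closure of $A_\pm$ in $\wh H$ coincides with its closure in $\wh\Pi$. This is the input needed for \cite[Lemma~7.3]{WiltonZalesskii2017}, whose conclusion is that the action on the standard profinite tree $\wh T_\alpha$ is $1$-acylindrical. If one prefers to argue directly instead: any nontrivial $g$ fixing two vertices at distance $2$, say $v\cl H$-adjacent through $e_1$ and $e_2$, lies in two distinct edge stabilizers at $v$. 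These are distinct $\cl H$-conjugates of $\cl A_\pm$, so this contradicts malnormality of the family in $\cl H$. Profinite geodesics are handled by the fact, used in \cref{thm:fuchsian-fixed}, that the fixed set of $g$ is a subtree \cite{Ribes2017}.

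\textbf{Step 3: the adjacency criterion.} If $v$ and $w$ are adjacent through an edge $e$, the intersection $\Stab(v)\cap\Stab(w)$ contains $\Stab(e)$, a conjugate of $\cl A\cong\wh\Z$, which is infinite. Conversely, suppose $v\neq w$. Every $g$ in the intersection fixes the smallest subtree $[v,w]$ pointwise, so the intersection lies in every edge stabilizer along $[v,w]$. If the intersection is nontrivial, $1$-acylindricity forces $d(v,w)\leq1$, so the vertices are adjacent. If they are not adjacent, the intersection is therefore trivial, in particular finite. In the adjacent case, the intersection equals $\Stab(e)$: every element of it fixes the unique edge joining $v$ and $w$, since the subtree $[v,w]$ is that edge. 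This edge stabilizer is procyclic and infinite.

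\textbf{Main obstacle.} The hardest step is the careful application of \cite[Lemma~7.3]{WiltonZalesskii2017}, in particular matching its hypotheses for the HNN case. One must treat both edge embeddings $\cl A_\pm\hookrightarrow\cl H$ together with the stable letter, and handle the case where distinct edges at a vertex have stabilizers that are $\cl H$-conjugates of the same $\cl A_-$. To deal with this, I would check that distinct edges at $v$ correspond to distinct double-coset data, and then invoke malnormality of each $\cl A_\pm$ in $\cl H$ separately, together with the trivial intersection of their conjugates.
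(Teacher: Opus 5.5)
Your proposal is correct, and its main line is the paper's. Both use malnormality of the maximal cyclic subgroup $A$ in $\Pi$, then profinite malnormality of $\cl A$ in $\wh\Pi$ via \cref{thm:profinite-malnormality}, then \cite[Lemma~7.3]{WiltonZalesskii2017} for the completed one-edge splitting. Your deduction of the adjacency criterion from $1$-acylindricity is also valid under the stated definition, where $d_T=\infty$ when no finite path exists.

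\textbf{Unneeded HNN bookkeeping.} The family condition on $\{\cl A_-,\cl A_+\}$ in $\cl H$, and the HNN bookkeeping you call the main obstacle, are not needed. Because $E(\wh T_\alpha)=\wh\Pi/\cl A$ is a single orbit, take distinct edges $x\cl A\neq y\cl A$ and put $z=x^{-1}y\notin\cl A$. Then
$x\cl Ax^{-1}\cap y\cl Ay^{-1}=x\bigl(\cl A\cap z\cl Az^{-1}\bigr)x^{-1}=1$.
So malnormality of $\cl A$ in $\wh\Pi$ alone gives trivially intersecting stabilizers for \emph{any} two distinct edges, adjacent or not. It also implies your family condition.

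\textbf{How the paper proves the converse.} The paper does not rely on the exact form of the Wilton--Zalesskii conclusion. It takes $z\neq1$ and uses \cite[Theorem~4.1.5]{Ribes2017} to see that the fixed set $U$ of $z$ is a profinite subtree. By the observation above, $U$ has at most one edge. An edge-free profinite tree has one vertex. A one-edge profinite tree cannot have a third vertex, since a locally constant $\F_2$-valued function separating it from the two endpoints would contradict exactness. This is precisely what handles two fixed vertices joined by no finite path.

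\textbf{Where your direct alternative falls short.} The ``direct'' alternative in your Step 2 does not reach that case. Excluding a nontrivial element fixing two edges at a common vertex only controls fixed finite geodesics. The remark that the fixed set is a subtree does not by itself exclude two fixed vertices at infinite distance. To make that route work you need the two ingredients just described: trivial intersection for all pairs of distinct edges, and the exactness argument.

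\textbf{A minor point in Step 1.} From $gAg^{-1}\cap A\neq1$ you get $ga^mg^{-1}=a^n$ for some nonzero $m,n$, not directly that $g$ commutes with a power of $a$. The conclusion $g\in A$ still holds, because in a torsion-free hyperbolic group the maximal elementary subgroup containing $a$ is the maximal cyclic subgroup $A$.
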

\begin{proof}
The subgroup $A=\langle a\rangle$ is maximal cyclic in the torsion-free hyperbolic surface group
$\Pi$. Hence it is quasiconvex and malnormal. Surface groups are fundamental groups of compact
special cube complexes, so by \cref{thm:profinite-malnormality}
\begin{equation}\label{eq:completed-curve-malnormal}
\cl A\cap z\cl A z^{-1}=1
\qquad(z\in\wh\Pi\setminus\cl A).
\end{equation}
Since the original HNN splitting obtained by cutting the surface is efficient, its completed graph
of profinite groups is proper, has profinite fundamental group $\wh\Pi$, and has edge group $\cl A$.
By \eqref{eq:completed-curve-malnormal}, the edge group is malnormal in the profinite fundamental
group. Wilton--Zalesskii's standard profinite Bass--Serre acylindricity lemma
\cite[Lemma~7.3]{WiltonZalesskii2017} therefore shows that the action on $\wh T_\alpha$ is
$1$-acylindrical.

Let $v\neq w$. If they are adjacent, then their unique common edge $e$ satisfies
\[
\Stab(v)\cap\Stab(w)=\Stab(e).
\]
Indeed, an element fixing both endpoints fixes the smallest profinite subtree joining them, which
here is the single edge $e$ with its endpoints, and its stabilizer is a conjugate of $\cl A\cong\hZ$
and is infinite.

Conversely, suppose that $1\neq z\in\Stab(v)\cap\Stab(w)$. By \cite[Theorem~4.1.5]{Ribes2017}, the
nonempty fixed subgraph $U=(\wh T_\alpha)^z$ is a profinite subtree. The coset description of the
edges and \eqref{eq:completed-curve-malnormal} show that two distinct edges have trivially
intersecting stabilizers, so $U$ has at most one edge.

Exactness of the sequence defining a profinite tree implies that an edge-free profinite tree has
only one vertex. As $v,w\in V(U)$ are distinct, $U$ has one edge, say with endpoints $v_0,v_1$. Then
it has no further vertex, for an additional vertex $u$ could be separated from the finite set
$\{v_0,v_1\}$ by a continuous function
\[
f:V(U)\longrightarrow\F_2,
\qquad f(u)=1,
\qquad f(v_0)=f(v_1)=0.
\]
The induced continuous homomorphism $\hZ[[V(U)]]\to\F_2$ would vanish on the image of the
boundary mapping but not on the augmentation-zero element $u-v_0$, contradicting exactness.
Therefore $v,w$ are the endpoints of the unique edge of $U$, so they are adjacent.

\end{proof}

\begin{theorem}
\label{thm:tree-correspondence}
For $i=1,2$, let $S_i$ be a closed orientable surface of genus at least two, $\Pi_i=\pi_1(S_i)$, and
let $\alpha_i\subset S_i$ be a nonseparating simple closed curve with maximal cyclic subgroup
$A_i\leq\Pi_i$. If
\[
\theta:\wh\Pi_1\xrightarrow{\cong}\wh\Pi_2,
\qquad
\theta(\cl A_1)=x\cl A_2x^{-1}
\quad(x\in\wh\Pi_2),
\]
then $\theta$ induces a $\theta$-equivariant isomorphism of the associated full profinite cut trees
\[
\wh T_{\alpha_1}\xrightarrow{\cong}\wh T_{\alpha_2}.
\]
This isomorphism preserves endpoint occurrences. If the chosen edge orientations disagree, we
reverse the two endpoint labels throughout $\wh T_{\alpha_2}$.
\end{theorem}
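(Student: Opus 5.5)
The plan is to build the tree isomorphism directly from the coset descriptions
\[
V(\wh T_{\alpha_i})=\wh\Pi_i/\cl H_i,
\qquad
E(\wh T_{\alpha_i})=\wh\Pi_i/\cl A_i .
\]
Proposition~\ref{prop:vertex-recognition} gives $g\in\wh\Pi_2$ with $\theta(\cl H_1)=g\cl H_2g^{-1}$. Define
\[
F_V(q\cl H_1)=\theta(q)g\cl H_2 .
\]
This is well defined, since $\theta(qh)g=\theta(q)g\,(g^{-1}\theta(h)g)$ and $g^{-1}\theta(h)g\in\cl H_2$. It is $\theta$-equivariant and continuous, and it is bijective because $\theta^{-1}$ gives the inverse map with $g$ replaced by $\theta^{-1}(g^{-1})$. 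So $F_V$ is a homeomorphism of vertex spaces.

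To define the map on edges we avoid choosing coset representatives for $\cl A_i$. Instead we characterize edges intrinsically through Lemma~\ref{lem:full-acylindricity}. An edge $e$ of $\wh T_{\alpha_i}$ is determined by its unordered pair of endpoints, because the fixed subtree argument in that lemma shows two adjacent vertices span a unique edge. Two distinct vertices are adjacent exactly when their stabilizers meet in an infinite group. Since $F_V$ is $\theta$-equivariant, $\Stab(F_V(v))=\theta(\Stab(v))$, so $F_V$ preserves adjacency in both directions.

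Define $F_E(e)$ as the unique edge joining $F_V(d_0e)$ and $F_V(d_1e)$. Note that $d_0e\neq d_1e$, because an HNN tree has no loops: $\cl H\cap\cl A t$ is empty in the discrete splitting, so the two endpoint cosets differ. The map $F_E$ is $\theta$-equivariant and bijective, with inverse built the same way from $F_V^{-1}$.

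For continuity and the endpoint labels, the plan uses equivariance on the single edge orbit. The base edge $\cl A_1$ has endpoints $\cl H_1$ and $t_1^{-1}\cl H_1$, with the convention $d_1(x\cl A)=xt^{-1}\cl H$ or its reverse. Its image $F_E(\cl A_1)$ is some edge $z\cl A_2$ with stabilizer $\theta(\cl A_1)=z\cl A_2z^{-1}$. Then
\[
F_E(q\cl A_1)=\theta(q)z\cl A_2 ,
\]
which is visibly continuous. Whether $F_V(d_0\cl A_1)=d_0(z\cl A_2)$ or $F_V(d_0\cl A_1)=d_1(z\cl A_2)$ is a single binary choice. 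Since the edge space is a single $\wh\Pi_i$-orbit and everything is equivariant, this choice is the same for all edges. Hence either $F$ preserves endpoint occurrences, or it does so after globally swapping $d_0,d_1$ on $\wh T_{\alpha_2}$. Edge reversal does not occur inside a single edge, since no element of $\wh\Pi_2$ inverts an edge of an HNN tree, so the two options are exclusive and global.

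The main obstacle is the claim that edges are determined by their endpoint pairs and exist exactly when stabilizers meet infinitely; for this we rely on Lemma~\ref{lem:full-acylindricity}, whose acylindricity argument already shows the fixed subtree of a nontrivial element has at most one edge. A secondary point is checking that the induced map on the edge space is actually a graph map compatible with the profinite topology. The orbit formula $F_E(q\cl A_1)=\theta(q)z\cl A_2$ handles this. It also shows the result is independent of $g$: another choice $g'$ differs by the normalizer of $\cl H_2$, which equals $\cl H_2$ by profinite malnormality of $\cl H_2$, since $H_2$ is quasiconvex and malnormal as a free factor system of the cut surface. So $F_V$ does not change.
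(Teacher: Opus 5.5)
Your proof is correct and is essentially the paper's route. You define $F_V(q\cl H_1)=\theta(q)g\cl H_2$ using \cref{prop:vertex-recognition}, transport adjacency through the infinite-stabilizer criterion of \cref{lem:full-acylindricity}, and obtain the edge map on the single edge orbit as $q\cl A_1\mapsto\theta(q)e_0'$. The endpoint reversal is then constant by equivariance. The only difference is order: you define $F_E$ endpoint-wise and then recover the orbit formula, whereas the paper computes $\Stab(e_0')=\theta(\cl A_1)$ directly as an intersection of vertex stabilizers.

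The one false claim is in your optional closing remark. $H_2$ is not malnormal, since $H_2\cap t_2H_2t_2^{-1}\supseteq t_2A_2t_2^{-1}\neq1$. So $N_{\wh\Pi_2}(\cl H_2)=\cl H_2$ must instead come from $1$-acylindricity, as in the paper's proof of \cref{thm:finite-cone-main}: two distinct vertices with common stabilizer $\cl H_2$ would be adjacent, forcing $\cl H_2$ to be procyclic. Independence of $g$ is not part of this statement anyway.
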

\begin{proof}
By \cref{prop:vertex-recognition}, choose $g\in\wh\Pi_2$ with $\theta(\cl H_1)=g\cl H_2g^{-1}$. We
define
\[
F_V(q\cl H_1)=\theta(q)g\cl H_2.
\]
This is a continuous $\theta$-equivariant bijection of vertex spaces. By
\cref{lem:full-acylindricity}, adjacency of distinct vertices is characterized by infinite
intersection of stabilizers, so $F_V$ preserves adjacency in both directions.

Let $e_0$ be the edge represented by the identity coset $\cl A_1$ in
$E(\wh T_{\alpha_1})=\wh\Pi_1/\cl A_1$. Then $\Stab_{\wh\Pi_1}(e_0)=\cl A_1$. Let $v_0,v_1$ be its
endpoints. Their images under $F_V$ are adjacent and hence are the endpoints of a unique edge
$e'_0$. Because $F_V$ is a $\theta$-equivariant bijection,
$\Stab_{\wh\Pi_2}(F_V(v))=\theta\bigl(\Stab_{\wh\Pi_1}(v)\bigr)$ for every vertex $v$. Applying
\cref{lem:full-acylindricity} to both trees, we obtain
\[
\begin{aligned}
\Stab_{\wh\Pi_2}(e'_0)
&=
\Stab_{\wh\Pi_2}(F_V(v_0))
\cap
\Stab_{\wh\Pi_2}(F_V(v_1))\\
&=
\theta\bigl(
\Stab_{\wh\Pi_1}(v_0)
\cap
\Stab_{\wh\Pi_1}(v_1)
\bigr)\\
&=
\theta\bigl(\Stab_{\wh\Pi_1}(e_0)\bigr)
=
\theta(\cl A_1).
\end{aligned}
\]
Therefore
\[
F_E(q\cl A_1)=\theta(q)e_0'
\]
is well defined, continuous, bijective, and equivariant, and it preserves unordered endpoint pairs.

Now let $d_{i,0},d_{i,1}$ be the endpoint maps of the $i$th tree. After reversing the labels if
necessary, we have $d_{2,r}(e'_0)=F_V(d_{1,r}(e_0))$ for $r=0,1$. Each edge space is one group
orbit, so equivariance makes the reversal constant, and
\[
d_{2,r}\circ F_E=F_V\circ d_{1,r}\qquad(r=0,1).
\]
The same reversal permutes the endpoint positions in \cref{def:cubical-occurrence}, so it is
compatible with the cubical occurrence structure maps.
\end{proof}

\begin{proof}[Proof of \Cref{thm:finite-cone-main}]
Set $\theta'=\Inn(x^{-1})\circ\theta$, so that $\theta'(\cl A_1)=\cl A_2$. We prove the theorem
first for $\theta'$. At the end, left multiplication by $x$ on the second cut tree restores the
original equivariance by $\theta$ and replaces the conjugator obtained for $\theta'$ by its product
with $x$. To simplify notation, we write $\theta$ for $\theta'$ during the normalized argument. For
every integer $m\geq3$, \cref{lem:cone-recognition} provides an induced quotient isomorphism
\[
\theta_m:\wh\Gamma_{\alpha_1,m}\longrightarrow\wh\Gamma_{\alpha_2,m}
\]
which sends the completed Fuchsian vertex group to a conjugate. Pulling that vertex group back to
the original completions, we get $\theta(P_{\alpha_1,m})=g_mP_{\alpha_2,m}g_m^{-1}$ for some
$g_m\in\wh\Pi_2$.

We set $m=\ell_n$. By \cref{lem:divisible-intersection},
\[
\cl H_i=\bigcap_{n\geq3}P_{\alpha_i,\ell_n}.
\]
The conjugators $g_m$ cannot simply be chosen independently, because they may then fail to converge.
Instead, \cref{prop:vertex-recognition} uses the formula $P_{\alpha,m}=P_{\alpha,m'}N_{\alpha,m}$
whenever $m\mid m'$ to show that the closed conjugator sets for consecutive $\ell_n$ are nested, and
by compactness there is one $g\in\wh\Pi_2$ conjugating every level simultaneously. Intersecting over
$n$, we obtain
\[
\theta(\cl H_1)=g\cl H_2g^{-1}.
\]

The vertex space of the cut tree is $\wh\Pi_i/\cl H_i$, so this equality induces a continuous
equivariant bijection on vertices. Since two distinct vertices are adjacent when their stabilizers
have infinite intersection, by \cref{lem:full-acylindricity} this condition is preserved by
$\theta$. An adjacent pair determines a unique edge, whose stabilizer is the intersection of the two
vertex stabilizers and hence a conjugate of $\cl A_i$, so \cref{thm:tree-correspondence} extends the
vertex map uniquely to an equivariant isomorphism of the profinite trees.

Every subgroup used here is obtained from the marked procyclic subgroup by taking power subgroups,
closed normal closures, quotient vertex groups, and intersections. So a marked profinite isomorphism
sends the completed cut surface vertex group to the corresponding conjugacy class and induces the
tree isomorphism. We now check that the result does not depend on the conjugator. For either cut
tree, let $v$ be the vertex with stabilizer $\cl H$. Then
\[
N_{\wh\Pi}(\cl H)=\cl H.
\]
Indeed, if $n$ normalizes $\cl H$, then $v$ and $nv$ have the same stabilizer $\cl H$. If they were
distinct, \cref{lem:full-acylindricity} would make them adjacent, and their stabilizer intersection
would be a procyclic edge stabilizer. This is impossible because $H$ is a nonabelian free group
embedded in $\cl H$. So $nv=v$, and $n\in\cl H$. The reverse inclusion is trivial.

If $g'$ also satisfies $\theta(\cl H_1)=g'\cl H_2(g')^{-1}$, then
$g^{-1}g'\in N_{\wh\Pi_2}(\cl H_2)=\cl H_2$. Hence $g'\cl H_2=g\cl H_2$, and the vertex map
$q\cl H_1\mapsto\theta(q)g\cl H_2$ is independent of the choice. The same normalizer calculation
shows that a vertex is determined by its stabilizer. Equivariance and bijectivity of $F_V$ give
\[
\Stab_{\wh\Pi_2}(F_V(v))
=
\theta\bigl(\Stab_{\wh\Pi_1}(v)\bigr),
\]
so this equality determines $F_V(v)$ for every vertex $v$. Because each edge is determined by its
two endpoints, the vertex map also determines the edge map. So there is a unique
$\theta$-equivariant tree isomorphism, with endpoint labels treated according to
\cref{thm:tree-correspondence}.

Let $F_\theta$ be this tree isomorphism. The identity tree map is equivariant for the identity group
isomorphism, so uniqueness forces $F_{\id}=\id$. Now let
\[
\theta:\wh\Pi_1\xrightarrow{\cong}\wh\Pi_2,
\qquad
\eta:\wh\Pi_2\xrightarrow{\cong}\wh\Pi_3
\]
be composable marked isomorphisms. For every $g\in\wh\Pi_1$ and every vertex or edge $x$ we have
\[
(F_\eta\circ F_\theta)(gx)
=
(\eta\circ\theta)(g)\,
(F_\eta\circ F_\theta)(x).
\]
Therefore $F_\eta\circ F_\theta$ is an $(\eta\circ\theta)$-equivariant tree isomorphism. By
uniqueness again, $F_{\eta\circ\theta}=F_\eta\circ F_\theta$. Since an endpoint occurrence in these
trees is a pair $(e,v)$ with $v$ an endpoint of $e$, its image is
\[
(e,v)\longmapsto(F_E(e),F_V(v)).
\]
The identity maps on edges and vertices therefore give the identity on endpoint occurrences.

With endpoint positions labelled $0$ and $1$, by \cref{thm:tree-correspondence} each tree map either
preserves both labels or interchanges them throughout the tree. A composite interchanges the labels
exactly when one of its two factors interchanges them and the other preserves them.

Return now to the original isomorphism $\theta$ and its normalization
$\theta'=\Inn(x^{-1})\circ\theta$ used above. If $\theta'(\cl H_1)=g\cl H_2g^{-1}$, then
\[
\begin{aligned}
\theta(\cl H_1)
&=x\theta'(\cl H_1)x^{-1}\\
&=(xg)\cl H_2(xg)^{-1}.
\end{aligned}
\]
Thus the conjugator for the original isomorphism is $xg$. Now let $F'$ be the tree isomorphism
constructed for $\theta'$, and define $F(y)=xF'(y)$ for every vertex or edge $y$. For
$h\in\wh\Pi_1$,
\[
\begin{aligned}
F(hy)
&=xF'(hy)\\
&=x\theta'(h)F'(y)\\
&=\theta(h)xF'(y)
=\theta(h)F(y).
\end{aligned}
\]
Therefore $F$ is $\theta$-equivariant. Its vertex map is given by
\[
F_V(q\cl H_1)
=x\theta'(q)g\cl H_2
=\theta(q)xg\cl H_2.
\]
Left multiplication by $x$ commutes with the endpoint maps, so $F$ has the same compatibility with
endpoint occurrences as $F'$. The uniqueness argument applies to $F$, so the identity and
composition conclusions also hold for the original marked isomorphisms.
\end{proof}


\section{Profinite cubical residual complex}\label{sec:core}
\subsection*{Filling core}
Let $(\alpha,\beta)$ be a nonseparating filling pair on $S$, and set $\Pi=\pi_1(S)$. We write
$T_\alpha,T_\beta$ for the original cut trees. In $\widetilde S\cong\HH^2$, the lifts of $\alpha$
and $\beta$ form locally finite families, and the geodesics in each family are pairwise disjoint.

\begin{proposition}\label{prop:original-core}
The geometric dual square complex $C=C(\alpha,\beta)$ is connected, locally finite, and regular, and
$\Pi$ acts freely, without inversions, and cocompactly. There is a $\Pi$-equivariant cellular
embedding
\[
j:C\hookrightarrow T_\alpha\times T_\beta
\]
whose squares are the products of crossing tree edges.
\end{proposition}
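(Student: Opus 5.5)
The plan is to identify $C$ geometrically with the cell structure dual to the lifted graph $\widetilde\Theta=p^{-1}(\alpha\cup\beta)$ in $\widetilde S$. The graph $\alpha\cup\beta\subset S$ is a finite graph; its vertices are the $i(\alpha,\beta)$ crossings, which are $4$-valent. Its complementary faces are open disks, so it is the $1$-skeleton of a cell structure on $S$. The dual cell structure $S^*$ has one vertex per face, one edge per segment of $\alpha\cup\beta$ and one square per crossing. Lifting to $\widetilde S$, the cells of \cref{def:core} are exactly the cells of the lifted dual structure $\widetilde{S^*}$.

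The first step is to show that $C$ is a square complex homeomorphic to $\widetilde S$. Hence it is connected (indeed contractible), and the deck group $\Pi$ acts on it by the lift of the dual structure. Local finiteness was already observed in \cref{def:core}. Freeness on every cell and occurrence set follows because $\Pi$ acts freely on regions, on wall segments and on crossings, since it acts freely on $\widetilde S$. Moreover an element fixing a cell setwise fixes a point of the corresponding open region, segment or crossing. Cocompactness follows from finiteness of the cell structure on $S$. No inversions occur, since a deck transformation preserving a wall segment fixes a point of it.

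Regularity is the main obstacle. I must show that each closed square has four distinct vertices and four distinct sides, and each edge two distinct endpoints. For the edges, the two regions adjacent to a segment $s\subset L$ lie on opposite sides of the geodesic $L$, so they are distinct. For a square at $x=L_\alpha\cap L_\beta$, the four regions lie in the four distinct quadrants cut out by $L_\alpha$ and $L_\beta$, since two geodesics meet at most once. So the regions are pairwise separated by $L_\alpha$ or $L_\beta$ and hence distinct. The four segments lie on different half branches of different geodesics, so they are distinct as well.

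Finally I apply the map $j$ constructed in \cref{def:core}; it is already cellular and $\Pi$-equivariant. For injectivity on vertices, suppose $U_\alpha(R)=U_\alpha(R')$ and $U_\beta(R)=U_\beta(R')$. I want to show $R=R'$. Any wall separating $R$ from $R'$ would separate them in $T_\alpha$ or in $T_\beta$, because walls of one family that separate two points are exactly the tree edges on the geodesic between them. Since the complement of all walls has components equal to intersections of complementary components, $R=R'$. Injectivity on edges and squares then follows. An edge dual to $s\subset L$ maps to $E_\alpha(L)\times\{U_\beta(s)\}$, and this pair determines $s$ as $L\cap U_\beta(s)$, which is connected because $U_\beta(s)$ is convex. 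A square maps to $E_\alpha(L_\alpha)\times E_\beta(L_\beta)$, which determines $x=L_\alpha\cap L_\beta$. Conversely, any product of edges $E_\alpha(L)\times E_\beta(L')$ with $L\cap L'\neq\varnothing$ is the image of the square at that crossing. Injectivity together with cellularity on compact closed cells gives a topological embedding onto a subcomplex, because both spaces are locally finite CW complexes. This shows that the squares of the image are exactly the products of crossing tree edges.
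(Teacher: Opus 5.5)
Your argument is largely sound, but it reaches connectivity and regularity by a different route from the paper's.

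\emph{Different route.} The paper gets regularity as a by-product of the embedding: each closed edge and square maps homeomorphically onto a product cell of $T_\alpha\times T_\beta$, so characteristic maps are injective on boundaries. It gets connectivity by joining two regions with a compact path transverse to the walls, which crosses only finitely many wall segments. You prove regularity directly by quadrant separation, which is correct and self-contained. You derive connectivity from an identification $C\cong\widetilde S$ with the dual of the lifted tiling. That is a standard duality fact for a locally finite tiling of $\HH^2$ by compact convex polygons, but you only announce it; it is also more than you need, since the transverse-path argument suffices.

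\emph{Points to make explicit.} Your injectivity argument for $j$ is the paper's convexity argument. The sentence ``components of the complement are the intersections $U_\alpha\cap U_\beta$'' is exactly the statement that the intersection of two convex sets is connected, and you should say so. For freeness, the claim that a stabilizing element fixes a point of the region needs two things. First, lifted regions must be bounded; the paper proves this by lifting the compact polygonal disks obtained by cutting $S$ along $\alpha\cup\beta$. Second, you need a fixed-point argument: an isometry preserving a bounded set is elliptic, or apply Brouwer on $\cl R$. Any fixed point in $\widetilde S$ then contradicts freeness.

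\emph{Genuine error in the last step.} It rests on the false claim that $T_\alpha\times T_\beta$ is a locally finite CW complex. A vertex of $T_\alpha$ is a component of $\widetilde S\setminus\bigcup\mathscr A$, whose frontier consists of infinitely many lifts of $\alpha$; its valence is $[H:A_-]+[H:A_+]=\infty$. So the product is neither locally finite nor locally compact, and no compactness argument on the target side is available. The conclusion survives for other reasons, and you can use either of two arguments.
\begin{enumerate}
\item Observe that $j(C)$ is a subcomplex and that $j$ is a bijection onto it restricting to homeomorphisms of closed cells. Then $j^{-1}$ is continuous for the weak topology, which for a subcomplex coincides with the subspace topology. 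Since both trees have countably many cells, the product topology on $T_\alpha\times T_\beta$ is its CW topology.
\item Argue as the paper does. Take the product $U$ of small neighbourhoods of the two coordinates of $j(x)$: a short open interval in an open edge, or a truncated open star at a vertex. Then $j^{-1}(U)\subseteq K_x=\bigcup_{x\in\cl\sigma}\cl\sigma$, which is compact by local finiteness of $C$ alone. Since $j|_{K_x}$ is a continuous injection from a compact space, it is a homeomorphism onto its image.
\end{enumerate}
The second argument also covers the path-metric topology on the trees.
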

\begin{proof}
\smallskip\noindent\emph{Step 1.}
A vertex of $C(\alpha,\beta)$ is a complementary region $R$ of the full wall arrangement in
$\widetilde S\cong\HH^2$. The region $R$ lies in one component $U_\alpha(R)$ of the complement of
the $\alpha$-walls and one component $U_\beta(R)$ of the complement of the $\beta$-walls. These
components are the vertices of $T_\alpha$ and $T_\beta$, so set $j(R)=(U_\alpha(R),U_\beta(R))$. A
dual edge changes one coordinate, and a dual square at a crossing changes both. This defines a
cellular $\Pi$-equivariant map $j:C\to T_\alpha\times T_\beta$.

\smallskip\noindent\emph{Step 2.}
Each component of the complement of one wall family is convex, so the intersection of a chosen
$\alpha$-component and a chosen $\beta$-component is connected. Two complementary regions with the
same pair of coordinates must therefore coincide. Since an edge dual to a segment of an
$\alpha$-wall $L$ has coordinates consisting of the tree edge represented by $L$ and a component
$U_\beta$ of the complement of the $\beta$-walls, the segment is exactly $L\cap U_\beta$. Convexity
makes this intersection connected, so the coordinates determine at most one such edge. The same
argument applies with the two wall families interchanged. A product $e_\alpha\times e_\beta$
determines one lifted geodesic from each family; these two geodesics meet in at most one point, so
the product determines at most one square, and it does so exactly when they cross. Hence each edge
and square maps homeomorphically onto its product cell. Together with injectivity on vertices, this
proves cellular injectivity. Since each closed edge and each closed square maps homeomorphically
onto its product cell, its characteristic map is injective on the boundary, hence $C$ is regular.
For surfaces, this determines Guirardel's core \cite{Guirardel2005}.

\smallskip\noindent\emph{Step 3.}
Any two complementary regions can be joined by a compact path in $\HH^2$ transverse to the walls and
avoiding their crossing points. Local finiteness implies that the path crosses only finitely many
wall segments. The dual edges of these segments form an edge path in $C$, so $C$ is connected.
Cutting the compact surface along the finite filling graph $\alpha\cup\beta$ produces finitely many
compact polygonal disks, each with finitely many sides. The map of each disk lifts to $\HH^2$ and
its interior maps onto a lifted complementary region, so the closure of that region is contained in
the compact image of the disk and has finitely many sides. So every lifted complementary region is
bounded and finitely sided, and every wall segment, lying in the boundary of an adjacent such
region, is also bounded. The quotient of $C$ is the finite square cellulation dual to the filling
graph. Finite sidedness implies local finiteness, and the finite quotient makes the action
cocompact. If a deck transformation stabilizes a vertex, edge, or square, it preserves respectively
a bounded complementary polygon, a bounded wall segment, or a crossing point in $\HH^2$. Such an
isometry is elliptic. But the surface group is torsion-free and acts freely on $\HH^2$, so the
stabilizer is trivial. Thus the action is free and without inversions.

Finally, give every tree edge length one. The coordinate maps of $j$ are continuous on every closed
cell of $C$, so $j$ is continuous for both the product metric topology and the product of the CW
topologies. Fix $x\in C$. Around each coordinate of $j(x)$ choose a small open interval contained in
its open edge, or, if the coordinate is a vertex, the open star truncated at distance $1/2$. Their
product $U$ is open for both topologies. Every product cell meeting $U$ contains $j(x)$ in its
closure. Since $j$ is injective and maps each closed cell onto the corresponding closed product
cell, we have
\[
j^{-1}(U)\subseteq K_x:=\bigcup_{\sigma:\,x\in\cl\sigma}\cl\sigma.
\]
Local finiteness makes $K_x$ a finite union of closed cells, hence compact, and the continuous
injection $j|_{K_x}$ is a homeomorphism onto its image. On $U\cap j(C)$, the inverse of $j$ is the
restriction of this continuous inverse, so $j^{-1}$ is continuous at $j(x)$. As $x$ was arbitrary,
$j$ is a topological embedding.
\end{proof}

We fix lifts $\widetilde\alpha,\widetilde\beta\subset\widetilde S$ stabilized by
$A=\langle a\rangle$ and $B=\langle b\rangle$, and define
\[
\Omega=\{g\in\Pi:\widetilde\alpha\cap g\widetilde\beta\neq\varnothing\}.
\]
Let $r=i(\alpha,\beta)$ be their geometric intersection number and choose representatives
$r_1,\ldots,r_r\in\Pi$ of the corresponding double cosets. Then
\begin{equation}\label{eq:crossing-double-cosets}
\Omega=\bigsqcup_{j=1}^{r}Ar_jB.
\end{equation}
Indeed, $A\backslash\Omega/B$ corresponds to the intersection points of the two closed curves. By
\cref{thm:surface-double-cosets}(i), each double coset is separable, and in fact
\[
\cl{ArB}=\cl A\,r\,\cl B
\]
for every $r\in\Pi$, since the map $\cl A\times\cl B\to\wh\Pi$, $(a,b)\mapsto arb$, has compact
image in which the image $ArB$ of the dense subset $A\times B$ is dense. The completed double cosets
$\cl A r_j\cl B$ are pairwise disjoint. To see this, suppose
$\cl A r_j\cl B\cap\cl A r_k\cl B\neq\varnothing$, and choose $\wh a_1,\wh a_2\in\cl A$ and
$\wh b_1,\wh b_2\in\cl B$ with $\wh a_1r_j\wh b_1=\wh a_2r_k\wh b_2$. Then
\[
r_k\in\cl A r_j\cl B=\cl{Ar_jB}.
\]
Because the element $r_k$ belongs to the discrete subgroup $\Pi$, and the double coset $Ar_jB$ is
closed in the profinite topology of $\Pi$ by \cref{thm:surface-double-cosets}(i), $r_k\in Ar_jB$, so
$Ar_kB=Ar_jB$. Thus distinct double cosets give disjoint sets.

For profinite edges $e=x\cl A$ of $\wh T_\alpha$ and $f=y\cl B$ of $\wh T_\beta$, the crossing
relation of \cref{def:core} is
\begin{equation}\label{eq:profinite-crossing}
e\cross f\quad\Longleftrightarrow\quad x^{-1}y\in\cl\Omega.
\end{equation}

\begin{terminology}[\cite{WiltonZalesskii2017}]
Let $c:S^1\to S$ be a closed curve and $p:S'\to S$ a covering map. An elevation of $c$ to $S'$ is
the map $Z\to S'$ induced by the second projection, where $Z$ is a connected component of
\[
S^1\times_S S'=\{(z,s')\in S^1\times S':c(z)=p(s')\}.
\]
The first projection $Z\to S^1$ is a connected covering; compare
\cite[Definition~2.3]{WiltonZalesskii2017}.
\end{terminology}

\begin{lemma}\label{lem:finite-cover-crossing}
Let $e=x\cl A$ and $f=y\cl B$. Then $e\cross f$ if and only if, for every open normal subgroup
$U\normalo\wh\Pi$, the elevations of $\alpha$ and $\beta$ determined by $e$ and $f$ in the regular
cover corresponding to $U\cap\Pi$ intersect.
\end{lemma}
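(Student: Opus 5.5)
We first make precise what ``the elevations determined by $e$ and $f$'' means. Let $U\normalo\wh\Pi$ and put $U_\Pi=U\cap\Pi$, a finite index normal subgroup of $\Pi$ with regular cover $S_U=U_\Pi\backslash\widetilde S$. The elevations of $\alpha$ to $S_U$ correspond bijectively to the double cosets $U_\Pi\backslash\Pi/A$. Since $U$ is open and normal, these correspond to $\wh\Pi/U\cl A$, which equals $U\backslash\wh\Pi/\cl A$. Hence $e=x\cl A$ determines the elevation $\alpha_{U,x}$ indexed by $xU\cl A=Ux\cl A$. Similarly $f=y\cl B$ determines $\beta_{U,y}$, indexed by $Uy\cl B$. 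Choose discrete $x_0,y_0\in\Pi$ with $x_0\in xU$ and $y_0\in yU$. Then $\alpha_{U,x}$ is the image of the lift $x_0\widetilde\alpha$ in $S_U$, and $\beta_{U,y}$ is the image of $y_0\widetilde\beta$.

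\emph{Key step.} The two elevations intersect in $S_U$ exactly when some $U_\Pi$-translate of $x_0\widetilde\alpha$ meets some translate $y_0 b\widetilde\beta=y_0\widetilde\beta$ with $b\in B$. Equivalently, there are $u\in U_\Pi$ and $a\in A$, $b\in B$ with $(ux_0a)^{-1}y_0b\in\Omega$. Since $\Omega$ is $A$--$B$ bi-invariant, this says
\[
x_0^{-1}U_\Pi y_0\cap\Omega\neq\varnothing,
\]
that is, $x_0^{-1}y_0\in\Omega\,U_\Pi$ by normality of $U_\Pi$. Passing to the completion, $\Omega U_\Pi$ is the intersection of $\Pi$ with the open set $\cl\Omega U$. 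Because $x^{-1}y\in x_0^{-1}y_0U$, the elevations meet if and only if $x^{-1}y\in\cl\Omega\,U$. This condition depends only on the cosets $x\cl A$ and $y\cl B$, because $\cl\Omega U$ is $\cl A$--$\cl B$ bi-invariant. We will check this bookkeeping carefully: it amounts to writing $\cl\Omega U$ as $\cl A\,(\Omega U)\,\cl B$ via \eqref{eq:crossing-double-cosets}.

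\emph{Conclusion.} The lemma then reduces to the identity
\[
\cl\Omega=\bigcap_{U\normalo\wh\Pi}\cl\Omega\,U.
\]
This holds for any closed subset of a profinite group. By \eqref{eq:crossing-double-cosets} and the separability discussion above, $\cl\Omega=\bigsqcup_j\cl A r_j\cl B$ is a finite union of compact sets, hence closed. For $z\notin\cl\Omega$, compactness of $\cl\Omega$ gives an open normal $U$ with $zU\cap\cl\Omega=\varnothing$, so $z\notin\cl\Omega U$. Thus $e\cross f$, which means $x^{-1}y\in\cl\Omega$, holds if and only if the elevations meet for every $U$.

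\emph{Main obstacle.} The real content lies in the identification of elevations with the cosets $Ux\cl A$ and in the resulting formula, namely that intersection in $S_U$ corresponds to $x^{-1}y\in\cl\Omega U$. Independence of representatives, and ensuring that $A$ (rather than $A\cap U_\Pi$) is used correctly, need care. The key input is that an elevation is the image of a single lift, with stabilizer $x_0Ax_0^{-1}\cap U_\Pi$. Its full set of lifts is the $U_\Pi$-orbit of $x_0\widetilde\alpha$, so the $A$-ambiguity is absorbed by the bi-invariance of $\Omega$.
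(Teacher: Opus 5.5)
Your proof is correct and is essentially the paper's argument: the paper also reduces intersection of the level-$U$ elevations to $q_U(x^{-1}y)\in q_U(\Omega)$, i.e.\ $x^{-1}y\in\Omega U=\cl\Omega\,U$. It then concludes with $\cl\Omega=\bigcap_U q_U^{-1}(q_U(\Omega))$, exactly as you do. Your appeal to the double coset decomposition to see that $\cl\Omega$ is closed is unnecessary, since $\cl\Omega$ is a closure by definition.
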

\begin{proof}
Let $q_U:\wh\Pi\to\wh\Pi/U$ be the quotient map, set $K=U\cap\Pi$, and choose discrete
representatives $x_U,y_U\in\Pi$ of the images of $x,y$ in $\wh\Pi/U$. The corresponding elevations
intersect when there exist $u,v\in K$ such that
$ux_U\widetilde\alpha\cap vy_U\widetilde\beta\neq\varnothing$; applying $(ux_U)^{-1}$, this is
equivalent to $x_U^{-1}u^{-1}vy_U\in\Omega$, and hence implies $q_U(x^{-1}y)\in q_U(\Omega)$.
Conversely, if this condition in the finite quotient holds, choose $\omega\in\Omega$ with the same
image as $x_U^{-1}y_U$. Then $x_U\omega y_U^{-1}\in K$, which yields intersecting representatives.
Since
\[
\cl\Omega=\bigcap_{U\normalo\wh\Pi}q_U^{-1}(q_U(\Omega)),
\]
the condition for all $U$ is equivalent to \eqref{eq:profinite-crossing}.
\end{proof}

\subsection*{The cell-wise completion and its embedding}
Note that each cell space, as well as $I_{10}(C)$ and $I_{21}(C)$, is a finite disjoint union of
free $\Pi$-orbits. For $0\leq d\leq2$, let $r_d$ be the number of cell orbits in dimension $d$, and
choose a representative $\sigma_{d,j}$ of each orbit, $1\leq j\leq r_d$. Then
\[
C^d=\bigsqcup_{j=1}^{r_d}\Pi\sigma_{d,j},
\qquad
\wh C^d\cong\bigsqcup_{j=1}^{r_d}\wh\Pi\sigma_{d,j},
\]
and the same holds for $I_{10}(C)$ and $I_{21}(C)$; the four structure maps extend uniquely and
compatibly over the common cofinal inverse system.

\begin{lemma}\label{lem:completed-orbit}
Let $\sigma$ be a cell of $C$, and let $P,Q\leq\Pi$ be the stabilizers of the two coordinate cells
of $j(\sigma)$ in $T_\alpha,T_\beta$. The diagonal coordinate map
\[
\iota_\sigma:\Pi\longrightarrow\Pi/P\times\Pi/Q,
\qquad
\gamma\longmapsto(\gamma P,\gamma Q),
\]
extends to a topological embedding
\[
\wh\iota_\sigma:\wh\Pi\hookrightarrow
\wh\Pi/\cl P\times\wh\Pi/\cl Q.
\]
Distinct original diagonal cell orbits in the product have disjoint closures.
\end{lemma}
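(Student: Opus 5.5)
The plan is to reduce both assertions to separability statements for finitely generated subgroups of the surface group $\Pi$, using \cref{thm:surface-double-cosets} and \cref{thm:scott-separability}. First I identify $P$ and $Q$. By \cref{prop:original-core}, $j(\sigma)$ is a product cell whose coordinates are vertices or edges of $T_\alpha$ and $T_\beta$. Hence $P$ is a conjugate of $H_\alpha$ or of $A$, and $Q$ is a conjugate of $H_\beta$ or of $B$. In every case both are finitely generated.

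\textbf{The intersection $P\cap Q$.} The stabilizer of the pair of coordinate cells of $j(\sigma)$ is $P\cap Q$. Since $j$ is an equivariant cellular embedding, this is the stabilizer of $\sigma$. The action is free by \cref{prop:original-core}, so $P\cap Q=1$. By \cref{thm:surface-double-cosets}(ii), $\cl P\cap\cl Q=\cl{P\cap Q}=1$. By \cref{thm:scott-separability}, $\cl P\cap\Pi=P$ and $\cl Q\cap\Pi=Q$, so the natural maps $\Pi/P\to\wh\Pi/\cl P$ and $\Pi/Q\to\wh\Pi/\cl Q$ are injective.

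\textbf{The embedding.} I define
\[
\wh\iota_\sigma(\gamma)=(\gamma\cl P,\gamma\cl Q),\qquad \gamma\in\wh\Pi .
\]
This map is continuous, and on $\Pi$ it agrees with $\iota_\sigma$ under the injective coset maps above, so it extends $\iota_\sigma$. It is injective: if $\gamma$ and $\delta$ have the same image, then $\delta^{-1}\gamma\in\cl P\cap\cl Q=1$. The domain $\wh\Pi$ is compact and the target is Hausdorff, so $\wh\iota_\sigma$ is a topological embedding. Its image is $\wh\Pi\cdot(\cl P,\cl Q)$, which is the closure of the original orbit $\Pi\cdot(P,Q)$ because $\Pi$ is dense in $\wh\Pi$.

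\textbf{Disjointness of closures.} Take two original orbits $\Pi j(\sigma)$ and $\Pi j(\tau)$. If their cells have different types (vertex or edge in each factor), they lie in different components of the product and there is nothing to prove. If the types agree, I translate both orbit representatives so that the first coordinates coincide. The representatives then become $(P,gQ)$ and $(P,g'Q)$ with $g,g'\in\Pi$, where $Q$ is now a fixed conjugate of the relevant base stabilizer. The completed orbits are $\wh\Pi$-orbits, so they are either equal or disjoint. Suppose they are equal. Then some $\gamma\in\wh\Pi$ satisfies $\gamma\cl P=\cl P$ and $\gamma g\cl Q=g'\cl Q$. So $\gamma\in\cl P$, and $g'\in\cl P\,g\,\cl Q$. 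The compactness argument given before \eqref{eq:profinite-crossing} shows that $\cl P\,g\,\cl Q=\cl{PgQ}$. The double coset $PgQ$ is closed in $\Pi$ by \cref{thm:surface-double-cosets}(i), so $g'\in PgQ$. Therefore $(P,g'Q)$ lies in the discrete orbit $\Pi\cdot(P,gQ)$, and the two original orbits coincide.

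\textbf{Main obstacle.} The step needing the most care is the bookkeeping of stabilizers. I must check that in each of the cases (vertex, $\alpha$-edge, $\beta$-edge, square) the groups $P$ and $Q$ are genuinely finitely generated subgroups. I also must check that the identification $P\cap Q=\Stab(\sigma)$ uses injectivity of $j$ on each cell type. Once that is settled, the rest is formal.
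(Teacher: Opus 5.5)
Your proof is correct and follows essentially the same route as the paper. You show $\Stab_\Pi(\sigma)=P\cap Q=1$, apply \cref{thm:surface-double-cosets}(ii) to get $\cl P\cap\cl Q=1$, and use injectivity of a map from compact to Hausdorff; disjointness then reduces, via representatives $(P,rQ)$, to $\cl P\,r\,\cl Q=\cl{PrQ}$ together with double coset separability. Your extra checks are details the paper leaves implicit: injectivity of $\Pi/P\to\wh\Pi/\cl P$ via \cref{thm:scott-separability}, which is needed for ``extends'', and the case split by cell type.
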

\begin{proof}
Under the coset identifications, the two coordinate cells of $j(\sigma)$ correspond to the identity
cosets $P$ and $Q$. By \cref{prop:original-core}, the map $j$ is $\Pi$-equivariant and injective on
cells. Consequently, for every $\gamma\in\Pi$,
\[
\begin{aligned}
\gamma\sigma=\sigma
&\quad\Longleftrightarrow\quad
\gamma j(\sigma)=j(\sigma)\\
&\quad\Longleftrightarrow\quad
(\gamma P,\gamma Q)=(P,Q)\\
&\quad\Longleftrightarrow\quad
\gamma\in P\cap Q.
\end{aligned}
\]
Hence $\Stab_\Pi(\sigma)=P\cap Q$. Since the action on the cells of $C$ is free by
\cref{prop:original-core},
\[
P\cap Q=\Stab_\Pi(\sigma)=\{1\}.
\]
In addition, the subgroups $P,Q$ are finitely generated, so by \cref{thm:surface-double-cosets}(ii)
we have
\[
\cl P\cap\cl Q=\cl{P\cap Q}=1.
\]
It follows that the completed diagonal map is injective, and a continuous injection from the compact
space $\wh\Pi$ to a Hausdorff space is a topological embedding.

A general pair has the form $(sP,rQ)$, and acting diagonally by $s^{-1}$ we get
\[
s^{-1}\cdot(sP,rQ)=(P,s^{-1}rQ).
\]
Hence every diagonal orbit contains a pair with first coordinate $P$, so we may use representatives
$(P,rQ)$ with $r\in\Pi$, and the orbit of $(P,rQ)$ depends only on the double coset $PrQ$. Suppose
the closures of the orbits represented by $r,s\in\Pi$ meet. Then for some $\gamma,\delta\in\wh\Pi$
we have
\[
(\gamma\cl P,\gamma r\cl Q)=(\delta\cl P,\delta s\cl Q).
\]
Put $p=\delta^{-1}\gamma$. Then the first coordinate shows $p\in\cl P$, and the second shows
$s^{-1}pr\in\cl Q$, so $s\in\cl P\,r\,\cl Q$. Since the map $\cl P\times\cl Q\to\wh\Pi$,
$(p,q)\mapsto prq$, has compact image and the image of the dense subset $P\times Q$ is $PrQ$, we
have $\cl P\,r\,\cl Q=\cl{PrQ}$. Because $s$ is discrete and $PrQ$ is separable, $s\in PrQ$, so the
original orbits were equal.
\end{proof}

\begin{proposition}\label{prop:completion-embedding}
The $\Pi$-equivariant cellular embedding $j:C(\alpha,\beta)\hookrightarrow T_\alpha\times T_\beta$
of the dual square complex extends to an embedding of its profinite completion. The extended maps on
vertices, edges, and squares are continuous and commute with the structure maps. Its square space is
the set of products $e\times f$ with $e\cross f$. Every profinite edge and every profinite vertex of
$\wh C$ is a face of a profinite square.
\end{proposition}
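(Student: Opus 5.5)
The plan is to construct the completed coordinate map $\wh j$ orbit by orbit and then identify its images.

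\emph{Setting up the map.} By \cref{prop:original-core}, every cell space and every occurrence space of $C=C(\alpha,\beta)$ is a finite disjoint union of free $\Pi$-orbits. Hence
\[
\wh C^d=\bigsqcup_{j}\wh\Pi\sigma_{d,j},
\]
and the analogous decompositions hold for $\wh I_{10}$ and $\wh I_{21}$. On a single orbit $\wh\Pi\sigma$ I would define
\[
\wh j(\gamma\sigma)=\gamma\, j(\sigma),
\]
where the right-hand side uses the coset descriptions $V(\wh T_\xi)=\wh\Pi/\cl H_\xi$ and $E(\wh T_\xi)=\wh\Pi/\cl A_\xi$. The map is well defined because the stabilizer is trivial. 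It is continuous and $\wh\Pi$-equivariant, and it agrees with $j$ on the original cells, so it is the unique continuous extension. Commuting with $p_{10},q_{10},p_{21},q_{21}$ holds on the dense original occurrences by equivariance of $j$. Both sides are continuous into Hausdorff spaces, so the identities extend to all of $\wh C$.

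\emph{Injectivity.} Injectivity on a single orbit is \cref{lem:completed-orbit}. For vertices the stabilizers $P,Q$ are conjugates of $H_\alpha,H_\beta$; for edges one of them is a conjugate of $A$ or $B$; for squares they are conjugates of $A$ and $B$. In each case they are finitely generated with trivial intersection, so \cref{thm:surface-double-cosets}(ii) applies. The same lemma shows that distinct original orbits have disjoint closures. Cells of different dimensions, and $\alpha$-edges versus $\beta$-edges, land in different summands of $P^1$. So $\wh j$ is injective on each component space, and it is a topological embedding by compactness. The occurrence spaces inject for the same reason: an occurrence is determined by its cell together with a discrete position label, and freeness makes the label orbit-constant.

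\emph{Identifying the square space.} This is the step I expect to need the most care. The squares of $C$ correspond to crossings $x=g\widetilde\alpha\cap g\omega\widetilde\beta$ with $\omega\in\Omega$, so
\[
j(C^2)=\{g\cl A\times g\omega\cl B:\ g\in\Pi,\ \omega\in\Omega\}.
\]
The image $\wh j(\wh C^2)$ is compact and contains this set densely, so it equals the closure of this set. By \eqref{eq:crossing-double-cosets} and $\cl{Ar_jB}=\cl A r_j\cl B$, the set $\{(x\cl A,y\cl B): x^{-1}y\in\cl\Omega\}$ is the finite union of the diagonal orbits
\[
\wh\Pi\cdot(\cl A,r_j\cl B).
\]
Each of these is compact and equals the closure of the corresponding original orbit. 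Hence the image is exactly the set of products $e\times f$ with $e\cross f$. The point to check is that the crossing set is genuinely the union of these orbits. Given $x^{-1}y=a r_j b$ with $a\in\cl A$ and $b\in\cl B$, the pair is $xa\cdot(\cl A,r_j\cl B)$, so this holds.

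\emph{Every edge and vertex is a face of a square.} In the original complex each edge lies in two squares and each vertex is a corner of a square, because the complementary regions are finite-sided polygons and a wall segment has crossing endpoints. So every edge-orbit representative $\sigma$ equals $q_{21}(s)$ for some original side occurrence $s$, and every vertex representative lies in the image of $q_{10}$. For profinite cells, I would write $\gamma\sigma=q_{21}(\gamma s)$ using equivariance of the completed structure maps; the vertex case is the same, passing through an edge.
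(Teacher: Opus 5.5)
Your proposal is correct and follows the paper's proof essentially step by step. You define $\wh j(\gamma\sigma)=\gamma j(\sigma)$ orbit by orbit, get injectivity and disjoint orbit closures from \cref{lem:completed-orbit}, obtain compatibility with the structure maps by density, and identify the square image with $\bigsqcup_j\wh\Pi\cdot(\cl A,r_j\cl B)$, that is, with the crossing pairs via $\cl\Omega=\bigcup_j\cl A\,r_j\,\cl B$. The only difference is in the last step: you translate an original side or endpoint occurrence by $\gamma$ using equivariance, whereas the paper notes that $\wh q_{21}$ and $\wh q_{10}$ have compact images containing the dense original cells. Both arguments are immediate.
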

\begin{proof}
\smallskip\noindent\emph{Step 1.}
For each original cell orbit, \cref{lem:completed-orbit} extends its coordinate map to a topological
embedding of a copy of $\wh\Pi$, and separates the closures of distinct cell orbits. We now treat
the occurrence spaces. Let $V_\xi,E_\xi$ denote the completed vertex and edge spaces of $T_\xi$ and
set $J_\xi=E_\xi\times\{0,1\}$, for $\xi\in\{\alpha,\beta\}$, as in \cref{def:core}. The coordinate
map sends endpoint occurrences into
\[
(J_\alpha\times V_\beta)\sqcup(V_\alpha\times J_\beta)
\]
and side occurrences into
\[
(J_\alpha\times E_\beta)\sqcup(E_\alpha\times J_\beta).
\]
Fix one component and one endpoint label in $\{0,1\}$. A labelled endpoint occurrence is a pair
$(e,r)\in J_\xi=E_\xi\times\{0,1\}$. Its underlying edge is $p_\xi(e,r)=e$, and it is attached to
the vertex
\[
q_\xi(e,r)=\partial_{\xi,r}(e).
\]
The label $r=0$ refers to the initial endpoint and $r=1$ to the terminal endpoint. The action of
$\wh\Pi$ on the cut tree preserves the endpoint labels, so $\gamma\cdot(e,r)=(\gamma e,r)$. Hence
\[
\gamma\cdot(e,r)=(e,r)
\quad\Longleftrightarrow\quad
\gamma e=e,
\]
and hence $\Stab(e,r)=\Stab(e)$. We now describe the occurrence maps using the product coordinates
of $j(C)$. All original coordinate pairs considered below represent cells of $C$ under the embedding
$j$.

First consider an edge whose image has coordinates $(e,v)$, where $e$ is an edge of $T_\alpha$ and
$v$ is a vertex of $T_\beta$. Its endpoint occurrence with label $r\in\{0,1\}$ has coordinates
$((e,r),v)$. The two structure maps are
\[
\begin{aligned}
p_{10}((e,r),v)&=(e,v),\\
q_{10}((e,r),v)&=(\partial_{\alpha,r}(e),v).
\end{aligned}
\]
For an edge with coordinates $(v,f)$, where $v$ is a vertex of $T_\alpha$ and $f$ is an edge of
$T_\beta$, we have
\[
\begin{aligned}
p_{10}(v,(f,r))&=(v,f),\\
q_{10}(v,(f,r))&=(v,\partial_{\beta,r}(f)).
\end{aligned}
\]

Now consider a square with coordinates $(e,f)$, representing the product of an edge $e$ of
$T_\alpha$ and an edge $f$ of $T_\beta$. The occurrence $((e,r),f)$ specifies the side obtained by
fixing the $T_\alpha$-coordinate at $\partial_{\alpha,r}(e)$. Therefore
\[
\begin{aligned}
p_{21}((e,r),f)&=(e,f),\\
q_{21}((e,r),f)&=(\partial_{\alpha,r}(e),f).
\end{aligned}
\]
Similarly, the occurrence $(e,(f,r))$ specifies the side obtained by fixing the $T_\beta$-coordinate
and in this case we have
\[
\begin{aligned}
p_{21}(e,(f,r))&=(e,f),\\
q_{21}(e,(f,r))&=(e,\partial_{\beta,r}(f)).
\end{aligned}
\]
The two choices of coordinate and the two choices of $r$ account for all four side positions. Now
fix one of the two subsets in the disjoint union defining the occurrence space, and fix a label
$r\in\{0,1\}$. With these choices, forgetting the label is a bijection between the occurrences
coming from $C$ and the images of its edges or squares. For example, $((e,r),v)\longmapsto(e,v)$ has
inverse $(e,v)\mapsto((e,r),v)$, and $((e,r),f)\longmapsto(e,f)$ has inverse
$(e,f)\mapsto((e,r),f)$. The correspondences with the label in the second coordinate have the same
property.

Since the action preserves the labels, $\gamma\cdot(e,r)=(\gamma e,r)$, all these bijections are
$\Pi$-equivariant, and they identify each occurrence orbit with the orbit of the edge or square to
which it belongs.

Since $\Stab_\Pi(e,r)=\Stab_\Pi(e)$, the two coordinate stabilizers of an endpoint occurrence
$((e,r),v)$ are $\Stab_\Pi(e)$ and $\Stab_\Pi(v)$, and those of a side occurrence $((e,r),f)$ are
$\Stab_\Pi(e)$ and $\Stab_\Pi(f)$. Hence they are the coordinate stabilizers $P,Q$ of the
corresponding edge or square, and the same holds when the label is in the second coordinate.

By \cref{lem:completed-orbit}, the coordinate map on each such cell orbit extends to an embedding of
a copy of $\wh\Pi$. Inserting the fixed label is continuous and has the continuous inverse that
forgets it, because the labels carry the discrete topology. Thus the coordinate map on each
occurrence orbit also extends to an embedding of a copy of $\wh\Pi$.

Hence within the chosen subset and with the label fixed, distinct occurrence orbits correspond to
distinct cell orbits, and by \cref{lem:completed-orbit}, their closures are also disjoint. Each
profinite occurrence space is the disjoint union of the two closed and open subsets above, within each of
which the parts labelled $0$ and $1$ are again disjoint and closed and open, and an orbit contained in one of
these subsets has its closure in the same subset. So orbit closures lying in different subsets or
having different labels cannot meet, and all distinct occurrence orbit closures are disjoint.

Since each of
\[
C^0,\quad C^1,\quad C^2,\quad
I_{10}(C),\quad I_{21}(C)
\]
is a finite disjoint union of free $\Pi$-orbits, its completion is a finite disjoint union of copies
of $\wh\Pi$. On the completed orbit $\wh\Pi\sigma$ of a chosen representative $\sigma$, we define
\[
\wh j_\sigma:\wh\Pi\sigma
\longrightarrow \wh\Pi j(\sigma),
\qquad
\wh j_\sigma(\gamma\sigma)=\gamma j(\sigma).
\]
The map $\wh j_\sigma$ is continuous and $\wh\Pi$-equivariant, and
\[
\wh j_\sigma|_{\Pi\sigma}=j|_{\Pi\sigma}.
\]
For any two chosen representatives $\sigma,\tau$ and any $\gamma,\delta\in\wh\Pi$, the orbit
argument above shows that
\[
\wh j_\sigma(\gamma\sigma)
=\wh j_\tau(\delta\tau)
\quad\text{if and only if}\quad
\sigma=\tau\ \text{and}\ \gamma=\delta,
\]
so each $\wh j_\sigma$ is injective, and the images of distinct completed orbits are disjoint. Each
completed component space is a finite disjoint union of closed and open copies of $\wh\Pi$, so the continuous
maps on these copies combine to give a continuous map on the entire component space. Each completed
component space is compact and each corresponding product component space is Hausdorff, so all five
component maps are topological embeddings with closed images. Combining the maps $\wh j_\sigma$ over
the chosen orbit representatives in each component space, we obtain the five extended coordinate
maps
\[
\wh j^d:\wh C^d\longrightarrow P^d
\qquad(0\leq d\leq2),
\]
\[
\wh j_{10}:\wh I_{10}(C)\longrightarrow I_{10}(P),
\qquad
\wh j_{21}:\wh I_{21}(C)\longrightarrow I_{21}(P),
\]
each of which agrees with $\wh j_\sigma$ on the completed orbit $\wh\Pi\sigma$ in its domain. Write
\[
\wh j=
\bigl(\wh j^0,\wh j^1,\wh j^2,
\wh j_{10},\wh j_{21}\bigr).
\]
We next show compatibility with the four structure maps. To distinguish the maps of the product
complex from those of $\wh C$, we write
\[
p_{10}^{P},\quad q_{10}^{P},\quad
p_{21}^{P},\quad q_{21}^{P}
\]
for the structure maps of the product complex. We claim that
\[
\begin{aligned}
\wh j^1\circ\wh p_{10}
&=p_{10}^{P}\circ\wh j_{10},\\
\wh j^0\circ\wh q_{10}
&=q_{10}^{P}\circ\wh j_{10},\\
\wh j^2\circ\wh p_{21}
&=p_{21}^{P}\circ\wh j_{21},\\
\wh j^1\circ\wh q_{21}
&=q_{21}^{P}\circ\wh j_{21}.
\end{aligned}
\]
For the first two identities, both sides are continuous maps defined on $\wh I_{10}(C)$ which agree
on the dense subset $I_{10}(C)$, because the original embedding commutes with the endpoint
occurrence maps; since their images lie in Hausdorff spaces, they agree on all of $\wh I_{10}(C)$.
The last two identities follow in the same way from equality on the dense subset
$I_{21}(C)\subseteq\wh I_{21}(C)$. The five component maps therefore define an embedding $\wh j$ of
profinite cubical residual complexes: every component map is a topological embedding, and all four
structure maps commute with these embeddings.

Finally, the image in each component is precisely the closure of the corresponding original image:
\[
\wh j^d(\wh C^d)
=\cl{j(C^d)}^{\,P^d}
\qquad(0\leq d\leq2),
\]
\[
\wh j_{10}(\wh I_{10}(C))
=\cl{j(I_{10}(C))}^{\,I_{10}(P)},
\]
\[
\wh j_{21}(\wh I_{21}(C))
=\cl{j(I_{21}(C))}^{\,I_{21}(P)},
\]
where $P$ stands for the product residual complex of $\wh T_\alpha$ and $\wh T_\beta$, and the
superscripts indicate the spaces in which the closures are taken. Each extended image is closed and
contains the original image, hence contains its closure. Conversely, continuity and density of the
original space in its completion imply that every point of the extended image belongs to that
closure. These component-wise equalities say that $\im(\wh j)=\cl{j(C)}$.

\smallskip\noindent\emph{Step 2.}
Since the original square orbits are indexed by the finitely many double cosets in
\eqref{eq:crossing-double-cosets}, the closure of the orbit represented by $r_j\in\Pi$ (see the
proof of \cref{lem:completed-orbit}) is
\[
\cl{\Pi\cdot(A,r_jB)}
=\wh\Pi\cdot(\cl A,r_j\cl B)
\subseteq E_\alpha\times E_\beta.
\]
Indeed, the continuous orbit map
\[
\wh\Pi\longrightarrow E_\alpha\times E_\beta,
\qquad
\gamma\longmapsto
(\gamma\cl A,\gamma r_j\cl B)
\]
has compact, hence closed, image. As $\Pi$ is dense in $\wh\Pi$, its image under this map is dense
in the full image. Hence, $\cl{\Pi\cdot(A,r_jB)}=\wh\Pi\cdot(\cl A,r_j\cl B)$. Moreover, for
$x,y\in\wh\Pi$,
\[
(x\cl A,y\cl B)
\in\wh\Pi\cdot(\cl A,r_j\cl B)
\quad\Longleftrightarrow\quad
x^{-1}y\in\cl A\,r_j\,\cl B,
\]
by the coordinate comparison in the proof of \cref{lem:completed-orbit}: if
$(x\cl A,y\cl B)=(\gamma\cl A,\gamma r_j\cl B)$ then $\gamma=xa$ and $y=\gamma r_jb$ with
$a\in\cl A$, $b\in\cl B$, so $x^{-1}y=ar_jb$, and conversely $x^{-1}y=ar_jb$ gives the equality with
$\gamma=xa$.

These orbit closures are pairwise disjoint by \cref{lem:completed-orbit}, and together they form the
image of the completed square space:
\[
\wh j^2(\wh C^2)
=\bigsqcup_{j=1}^{r}
\wh\Pi\cdot(\cl A,r_j\cl B).
\]
Since there are finitely many double cosets,
$\cl\Omega=\bigcup_{j=1}^{r}\cl{Ar_jB}=\bigcup_{j=1}^{r}\cl A\,r_j\,\cl B$, and so for $e=x\cl A$
and $f=y\cl B$ we have
\[
\begin{aligned}
(e,f)\in\wh j^2(\wh C^2)
&\quad\Longleftrightarrow\quad
x^{-1}y\in
\bigcup_{j=1}^{r}\cl A\,r_j\,\cl B\\
&\quad\Longleftrightarrow\quad
x^{-1}y\in\cl\Omega\\
&\quad\Longleftrightarrow\quad
e\cross f,
\end{aligned}
\]
where the last equivalence is \eqref{eq:profinite-crossing}.
\smallskip\noindent\emph{Step 3.}
Every original edge is a side of an original square, so the map
\[
q_{21}:I_{21}(C)\longrightarrow C^1
\]
is surjective. Its continuous extension
\[
\wh q_{21}:\wh I_{21}(C)\longrightarrow \wh C^1
\]
has compact, hence closed, image containing the dense original edge set $C^1$, and is therefore
surjective. Thus every completed edge is a side of a completed square. Because every original vertex
is an endpoint of an original edge, the map $q_{10}:I_{10}(C)\longrightarrow C^0$ is surjective. Its
continuous extension $\wh q_{10}:\wh I_{10}(C)\longrightarrow \wh C^0$ is likewise surjective by the
same argument using compactness of the image and density. So every completed vertex is an endpoint
of a completed edge, and therefore every completed edge and vertex is a face of a completed square,
using only the two occurrence spaces defined in \cref{def:cubical-occurrence}.
\end{proof}

\subsection*{Homological crossing and independent unit powers}
In this section we use the Boggi--Zalesskii criterion for finite covers; the realization argument
only requires the vanishing of algebraic intersection numbers. Most of the results and their proofs
are due to Boggi and Zalesskii \cite{BoggiZalesskii2019Magnus,BoggiZalesskii2017}. We state them in
the form needed for our constructions and give the proofs.

For arbitrary nontrivial $c,d\in\Pi$, the notation $i([c],[d])=0$ means that their
free homotopy classes admit representatives with disjoint images. The two representatives are
chosen independently, including when the classes coincide. This vanishing condition is unchanged
when either element is replaced by a nonzero integral power.
\begin{theorem}\label{thm:homological-intersection}
Let $S$ be a closed orientable surface of genus at least two and $\Pi=\pi_1(S)$. For $L\normalf\Pi$,
let $S_L$ be the associated cover space and let $V_L(c)\leq H_1(S_L,\Z)$ be the submodule generated
by the homology classes of all connected elevations of the closed curve represented by
$c\in\Pi\setminus\{1\}$. For $c,d\in\Pi\setminus\{1\}$,
\[
i([c],[d])=0
\quad\Longleftrightarrow\quad
V_L(c)\perp V_L(d)
\text{ for every }L\text{ in a cofinal normal family},
\]
where $\perp$ refers to the algebraic intersection form of the covering surface.
\end{theorem}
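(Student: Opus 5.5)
We prove the two implications separately. The forward direction is elementary. The reverse direction is the substance of the Boggi--Zalesskii criterion, and we follow their strategy \cite{BoggiZalesskii2017}.

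\emph{Forward direction.} Suppose $i([c],[d])=0$. Choose representatives $\gamma_c,\gamma_d$ of the two free homotopy classes with disjoint images. For every $L\normalf\Pi$, every connected elevation of $\gamma_c$ to $S_L$ is a component of $p_L^{-1}(\gamma_c)$. Likewise, every connected elevation of $\gamma_d$ is a component of $p_L^{-1}(\gamma_d)$. These preimages are disjoint. Homology classes of closed curves with disjoint images have algebraic intersection number zero. By bilinearity of the intersection form, $V_L(c)\perp V_L(d)$. This holds for every $L$, so in particular along any cofinal normal family.

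\emph{Reverse direction.} We argue by contraposition. Assume $i([c],[d])\neq0$; we want one $L$ in the given cofinal family with $V_L(c)\not\perp V_L(d)$. The plan has three steps.

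\begin{enumerate}
\item \emph{Reduction to simple primitive curves.} Write $c=c_0^k$ and $d=d_0^m$ with $c_0,d_0$ not proper powers. Replacing an element by a nonzero power rescales each elevation class by a nonzero integer. The lattice spanned by the classes changes, but the orthogonality condition does not. So we may assume $c,d$ are primitive. By Scott's theorem (\cref{thm:scott-separability}), applied to the cyclic subgroups, together with the LERF lifting argument, pass to a finite cover. There the elevations of $c$ and $d$ become simple closed curves, still intersecting essentially. Any further finite cover keeps them simple. Since the family is cofinal, we may choose $L$ inside this cover at the end.

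\item \emph{Forcing nonzero algebraic intersection.} Now $\tilde c,\tilde d$ are simple closed curves in some cover $S'$ with $i(\tilde c,\tilde d)>0$. Minimal position realizes this geometric intersection number. However, the algebraic intersection may vanish, because the signs at the crossing points can cancel.

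We remove the cancellation with a further cyclic cover. Cut $S'$ along $\tilde c$, or use a nonseparating simple curve to build a homomorphism $\Pi'\to\Z/N$. Choose it so that, in the associated cover, a chosen elevation of $\tilde d$ meets a chosen elevation of $\tilde c$ only at crossings of one fixed sign. The idea is to separate the crossing points using double coset separability (\cref{thm:surface-double-cosets}). The crossing points correspond to the finitely many double cosets $\langle c\rangle r_j\langle d\rangle$. Separability lets us find a finite quotient in which only the crossings in one double coset survive between a fixed pair of elevations. The algebraic intersection of that pair of elevations is then $\pm(\text{number of such crossings})\neq0$.

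\item \emph{Passing from single elevations to the spans $V_L$.} Step 2 exhibits one elevation of each curve with nonzero pairing, which already shows $V_L(c)\not\perp V_L(d)$. Finally, pass to $L$ in the cofinal family contained in the constructed subgroup. Elevations to $S_L$ push forward to multiples of elevations in the intermediate cover. The projection formula $p_*(x)\cdot y=x\cdot p^*(y)$ shows that a nonzero pairing downstairs produces a nonzero pairing between some elevations upstairs.
\end{enumerate}

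The main obstacle is step 2: isolating crossings of a single sign between two specific elevations. This requires controlling which crossing double cosets survive in the quotient. We would first try to arrange it with one separating finite quotient coming from the separability of $\langle c\rangle r_j\langle d\rangle$ together with malnormality of maximal cyclic subgroups.
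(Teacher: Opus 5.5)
Your proof is correct once Step~2 is written out, and it takes a different route from the paper. The paper does not argue this statement; it quotes it from Boggi--Zalesskii \cite{BoggiZalesskii2017}, specialized to all finite groups and closed surfaces. Your forward direction is fine.

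In the reverse direction, the double coset idea you hedge about in Step~2 is exactly what works, and with the bookkeeping it is the whole proof. Take $c,d$ primitive with axes $\tilde\gamma_c,\tilde\gamma_d$, and set $\Omega=\{x\in\Pi:\tilde\gamma_c\cap x\tilde\gamma_d\neq\varnothing\}=\bigsqcup_j\langle c\rangle r_j\langle d\rangle$. Fix $r_1$ indexing a transverse crossing of sign $\epsilon_1$; one exists because $i([c],[d])\neq0$. In $S_L$, let $\hat c$ and $\hat d$ be the elevations containing the images of $\tilde\gamma_c$ and $r_1\tilde\gamma_d$.

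Every crossing of $\hat c$ with $\hat d$ comes from some $\ell_1\tilde\gamma_c\cap\ell_2r_1\tilde\gamma_d\neq\varnothing$ with $\ell_1,\ell_2\in L$. Hence it comes from an element $\ell_1^{-1}\ell_2r_1\in Lr_1\cap\Omega$. It lies over the $j$th crossing in $S$ precisely when that element is in $\langle c\rangle r_j\langle d\rangle$, and then its sign is $\epsilon_j$, because the covering and the elevations preserve orientations.

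By \cref{thm:surface-double-cosets}(i), each $\langle c\rangle r_j\langle d\rangle$ with $j\neq1$ is closed in the profinite topology and misses $r_1$. So some finite index normal $L_0$ has $L_0r_1$ disjoint from all of them, and every $L\leq L_0$ in the given cofinal family inherits this. Then every crossing of $\hat c$ with $\hat d$ has sign $\epsilon_1$. There is at least one, coming from $\tilde\gamma_c\cap r_1\tilde\gamma_d$. Hence $\hat c\cdot\hat d\neq0$ and $V_L(c)\not\perp V_L(d)$.

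One case needs care. If the primitive roots of $c$ and $d$ are conjugate up to inversion, $\Omega$ also contains the double coset of coincident axes. You must separate $r_1$ from it as well, so that $\hat c\neq\hat d$. Your Step~1 achieves the same thing by making the two chosen elevations distinct simple curves. Note that Scott's theorem only makes one elevation simple; pass to a normal core to make all of them simple.

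Some ingredients can be dropped. The cyclic $\Z/N$ covers and malnormality play no role. Step~3 is correct but unnecessary, since the separation condition passes to every smaller $L$.

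Compared with the citation, your route gives a self-contained argument resting only on Niblo's double coset separability, which the paper already uses. It is the same $\Omega$ and double coset bookkeeping as in \cref{lem:finite-cover-crossing} and in the description of $\cl\Omega$ in \cref{sec:core}, extended from detecting crossings to signed counts. The cited theorem gives generality the paper does not need here, namely other classes of finite groups and punctured surfaces.
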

\begin{proof}
This is essentially the result of \cite{BoggiZalesskii2017}, with the class of all finite groups and
with no punctures. Their result applies to any cofinal family of normal finite index subgroups. In
particular, the standard characteristic family is admissible. Since algebraic intersection numbers
are integers, the criterion may equally be applied after extension of scalars to $\Q_p$, which is
how it is used below.
\end{proof}

For a finite index normal subgroup $L\normalf\Pi$, we identify $\wh L$ with its open closure in
$\wh\Pi$. For a prime $p$, we write $\Z_p$ for the ring of $p$-adic integers and $\Q_p$ for its
field of fractions. We set
\[
H_1(L,\Z_p)=H_1(L,\Z)\otimes_{\Z}\Z_p,
\qquad
H_1(L,\Q_p)=H_1(L,\Z)\otimes_{\Z}\Q_p.
\]
Because $L$ is finitely generated and its abelianization is free abelian, the maximal pro-$p$
quotient of the topological abelianization of $\wh L$ is $H_1(L,\Z_p)$. We denote the resulting
continuous map by
\[
\operatorname{ab}_{L,p}:\wh L\longrightarrow H_1(L,\Z_p).
\]
For $x\in\wh\Pi$, we let $n_L(x)$ denote the order of the image of $x$ in the finite quotient
$\wh\Pi/\wh L\cong\Pi/L$. Then $x^{n_L(x)}\in\wh L$. The next definition is due to Boggi and
Zalesskii \cite{BoggiZalesskii2019Magnus,BoggiZalesskii2017}.

\begin{terminology}
We define
\begin{equation}\label{eq:elevation-subspace}
W_{L,p}(x)
=\operatorname{span}_{\Q_p}
\left\{
\operatorname{ab}_{L,p}\bigl(hx^{n_L(x)}h^{-1}\bigr):
h\in\wh\Pi
\right\}
\subseteq H_1(L,\Q_p).
\end{equation}
\end{terminology}
The following elementary properties follow from the definition of $W_{L,p}(x)$. Part~\textup{(i)}
identifies it with the elevation module of \cite{BoggiZalesskii2017} after extension of scalars. The
corresponding construction for profinite simple closed curves appears in
\cite{BoggiZalesskii2019Magnus}. We give the proof for arbitrary profinite elements.
\begin{lemma}\label{lem:elevation-subspaces}
Let $L\normalf\Pi$ and let $p$ be a prime.
\begin{enumerate}[label=\textup{(\roman*)}]
\item If $x\in\Pi\setminus\{1\}$, then
\[
W_{L,p}(x)=V_L(x)\otimes_{\Z}\Q_p.
\]
\item For every $x,z\in\wh\Pi$ and every $\lambda\in\hZ^\times$,
\[
W_{L,p}(zxz^{-1})=W_{L,p}(x),
\qquad
W_{L,p}(x^\lambda)=W_{L,p}(x).
\]
\end{enumerate}
\end{lemma}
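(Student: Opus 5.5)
For \textup{(i)}, I would unwind both sides in terms of elevations. Put $n=n_L(x)$, the order of $x$ modulo $L$. For $h\in\Pi$ the element $hx^{n}h^{-1}$ lies in $L$ and represents a closed curve in $S_L$. That curve is the connected elevation of the curve represented by $x$ corresponding to the coset $hL$, traversed once. Indeed, the elevation through the lift determined by $hL$ is a connected $n$-fold cover of the circle. As $h$ runs over $\Pi$, every connected elevation arises, each with its degree-one fundamental class. The Hurewicz map $L\to H_1(L,\Z)$ followed by $\otimes\Z_p$ agrees with $\operatorname{ab}_{L,p}$ on $L$. Hence the $\Q_p$-span of the classes $\operatorname{ab}_{L,p}(hx^nh^{-1})$ with $h\in\Pi$ is $V_L(x)\otimes\Q_p$.

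It remains to allow $h\in\wh\Pi$. The map $h\mapsto\operatorname{ab}_{L,p}(hx^nh^{-1})$ is continuous on $\wh\Pi$, and $\Pi$ is dense. A finite-dimensional $\Q_p$-subspace meets $H_1(L,\Z_p)$ in a closed subset. So every value at $h\in\wh\Pi$ lies in the closure of the values at $h\in\Pi$, and therefore in the same span. More simply, the value depends only on the class of $h$ modulo the open subgroup $\wh L$ up to $\wh L$-conjugation, and conjugation by $\wh L$ acts trivially on the abelianization of $\wh L$. So each value already equals one attained with $h\in\Pi$. This also covers the case where $n$ counts elevations of a multiply covered class; the span is unaffected.

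For \textup{(ii)}, conjugation invariance is immediate. The order of $zxz^{-1}$ modulo $\wh L$ is again $n$, and $h(zxz^{-1})^nh^{-1}=(hz)x^n(hz)^{-1}$. Since $hz$ ranges over $\wh\Pi$ as $h$ does, the generating sets coincide.

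For the unit power, take $\lambda\in\hZ^\times$. The image of $x^\lambda$ in the finite group $\Pi/L$ is a power of the image of $x$ by an integer $k$ prime to $n$, because $\lambda$ is a unit modulo $n$. So $n_L(x^\lambda)=n$ and $x^{\lambda n}=(x^n)^\lambda\in\wh L$. The map $\operatorname{ab}_{L,p}$ is a continuous homomorphism on $\wh L$, and $\hZ$-powers in $\wh L$ go to $\hZ$-multiples, acting on $\Z_p$-modules through the projection $\lambda_p\in\Z_p^\times$. Hence
\[
\operatorname{ab}_{L,p}\bigl(hx^{\lambda n}h^{-1}\bigr)=\lambda_p\operatorname{ab}_{L,p}\bigl(hx^nh^{-1}\bigr),
\]
where $\lambda_p$ is a unit in $\Q_p$. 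The spans are therefore equal.

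The main care point is the identification in \textup{(i)} of $hx^nh^{-1}$ with a degree-one elevation class. One must check that the minimal power of $x$ landing in $L$ is precisely the degree of the elevation. This holds because the stabilizer of the coset $hL$ under $\langle x\rangle$ is $\langle x^n\rangle$, using normality of $L$.
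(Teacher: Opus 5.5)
Your proof is correct and follows essentially the same route as the paper: you identify $hx^{n_L(x)}h^{-1}$ with the once-traversed elevation attached to the coset of $h$ (degree $n_L(x)$ by normality of $L$), and you reduce $h\in\wh\Pi$ to $h\in\Pi$ because conjugation by $\wh L$ is trivial after $\operatorname{ab}_{L,p}$. You also handle conjugates and unit powers exactly as the paper does, via $hz$ ranging over $\wh\Pi$ and $\operatorname{ab}_{L,p}\bigl(h(x^{n})^{\lambda}h^{-1}\bigr)=\lambda_p\operatorname{ab}_{L,p}(hx^{n}h^{-1})$. Your additional density/closure argument for $h\in\wh\Pi$ is also valid, but it is redundant once the coset reduction is in place.
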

\begin{proof}
Conjugation by an element of $\wh L$ acts trivially after applying $\operatorname{ab}_{L,p}$, so the
classes in \eqref{eq:elevation-subspace} depend only on the image of $h$ in the finite group
$\wh\Pi/\wh L$, so the span is generated by a finite set.

Assume first that $x\in\Pi$. Since a connected elevation of the oriented closed curve represented by
$x$ to the normal cover $S_L\to S$ is determined by a double coset
$Lg\langle x\rangle\in L\backslash\Pi/\langle x\rangle$, its covering degree over the original curve
is the order of the image of $x$ in $\Pi/L$, namely $n_L(x)$. If a lift of the initial point is
chosen in the coset $Lg$ then the corresponding oriented closed elevation represents
$gx^{n_L(x)}g^{-1}\in L$. Conversely, every connected elevation is obtained in this way. So the
integral homology classes of the discrete elements occurring in \eqref{eq:elevation-subspace} are
precisely the classes of all connected elevations. Their $\Z$-span is $V_L(x)$, and extension of
scalars proves \textup{(i)}.

For the first equality in \textup{(ii)}, write $y=zxz^{-1}$. Then $n_L(y)=n_L(x)$ because $L$ is
normal, and
\[
h y^{n_L(y)}h^{-1}
=(hz)x^{n_L(x)}(hz)^{-1}.
\]
As $h$ ranges over $\wh\Pi$, so does $hz$, proving conjugacy invariance.

As for unit powers, a unit of $\hZ$ acts bijectively on every finite cyclic group, so $x$ and
$x^\lambda$ have the same order in $\Pi/L$ and $n_L(x^\lambda)=n_L(x)$. Inside the procyclic closure
of $\langle x\rangle$ we therefore have
\[
(x^\lambda)^{n_L(x)}=(x^{n_L(x)})^\lambda.
\]
Finally, the continuous homomorphism $\operatorname{ab}_{L,p}$ sends the right side to
$\lambda_p\operatorname{ab}_{L,p}(x^{n_L(x)})$, where $\lambda_p\in\Z_p^\times$ is the $p$-component
of $\lambda$, and multiplication by $\lambda_p$ is an invertible scalar on $H_1(L,\Q_p)$, so the
span is unchanged. Hence \textup{(ii)} follows.
\end{proof}
The next lemma is established for automorphisms in the proof of \cite{BoggiZalesskii2017}. We give a
cohomological proof in the form needed for isomorphisms between two profinite surface groups.
\begin{lemma}\label{lem:surface-similitude}
Let $L_i$ be closed orientable surface groups and let
\[
\vartheta:\wh L_1\xrightarrow{\cong}\wh L_2.
\]
For every prime $p$, the map induced by continuous abelianization,
\[
\vartheta_*:H_1(L_1,\Z_p)\xrightarrow{\cong}H_1(L_2,\Z_p),
\]
is a symplectic similitude for the alternating algebraic intersection forms $\langle\ ,\ \rangle_i$
on $H_1(L_i,\Z_p)$. Thus there is $\varepsilon_p\in\Z_p^\times$ such that
\begin{equation}\label{eq:surface-similitude}
\langle\vartheta_*u,\vartheta_*v\rangle_2
=\varepsilon_p\langle u,v\rangle_1
\qquad(u,v\in H_1(L_1,\Z_p)).
\end{equation}
\end{lemma}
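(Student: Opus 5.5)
The plan is to use goodness of surface groups together with profinite Poincaré duality, and to identify the intersection form with the cup product pairing on mod-$p^r$ cohomology.

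First we reduce to cohomology. Closed orientable surface groups are good in the sense of Serre, since they are fundamental groups of compact $3$-manifolds $S\times I$ (compare the proof of \cref{thm:fuchsian-fixed}). Hence for every $r\geq1$ the restriction maps
\[
H^k(\wh L_i,\Z/p^r)\to H^k(L_i,\Z/p^r)
\]
are isomorphisms. The isomorphism $\vartheta$ therefore induces ring isomorphisms
\[
\vartheta^*:H^\bullet(L_2,\Z/p^r)\to H^\bullet(L_1,\Z/p^r)
\]
compatible with cup products, because cup products are natural for continuous cohomology. Passing to the inverse limit over $r$ gives a $\Z_p$-linear isomorphism $H^1(L_2,\Z_p)\to H^1(L_1,\Z_p)$ that preserves the cup product pairing into $H^2(-,\Z_p)$. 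It also gives an isomorphism $\vartheta^*:H^2(L_2,\Z_p)\to H^2(L_1,\Z_p)$ between free rank one $\Z_p$-modules. Fix fundamental class generators $[S_i]^*$. Then $\vartheta^*$ sends one generator to $\varepsilon_p$ times the other, for some $\varepsilon_p\in\Z_p^\times$.

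Second, we transport the statement to homology. On a closed orientable surface, Poincaré duality $H_1(L_i,\Z)\cong H^1(L_i,\Z)$ identifies the algebraic intersection form with the cup product pairing evaluated on the fundamental class. After tensoring with $\Z_p$ the same identification holds. The continuous abelianization map $\vartheta_*$ on $H_1(-,\Z_p)=\Hom(H^1(-,\Z_p),\Z_p)$ is the transpose of $\vartheta^*$ on $H^1$. This follows because $H_1(L,\Z_p)$ is the maximal pro-$p$ abelian quotient of $\wh L$ and $H^1(\wh L,\Z_p)=\Hom(\wh L,\Z_p)$.

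Writing $D_i$ for the duality isomorphisms, we then compute
\[
\langle\vartheta_*u,\vartheta_*v\rangle_2=\langle D_2\vartheta_*u\cup D_2\vartheta_*v,[S_2]\rangle.
\]
Using $D_2\vartheta_*=\varepsilon'(\vartheta^*)^{-1}D_1$, the right-hand side becomes a unit multiple of $\langle u,v\rangle_1$. To verify the relation $D_2\vartheta_*=\varepsilon'(\vartheta^*)^{-1}D_1$, we use the cap product description $D_i(c)=c\cap[S_i]$ and naturality of the cap product, which holds up to the scalar by which $\vartheta$ moves the fundamental class in $H_2(\wh L_i,\Z_p)=\Z_p$. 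An equivalent and simpler route is to note that the cup pairing $H^1\times H^1\to H^2\cong\Z_p$ is perfect. Any isomorphism that preserves cup products up to the identification of $H^2$ then has a dual that preserves the dual pairing up to the inverse unit. This gives \eqref{eq:surface-similitude}.

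The main obstacle is the first step: making precise that continuous cohomology of $\wh L_i$ with $\Z_p$ coefficients, together with its cup product, agrees with the discrete one. Goodness handles finite coefficients $\Z/p^r$. For $\Z_p$ we take the inverse limit and use that the groups $H^k(L_i,\Z/p^r)$ are finite, so the relevant $\varprojlim^1$ terms vanish. This step is where the proof must argue carefully rather than appeal to general principles.
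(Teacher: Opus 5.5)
Your proposal is correct and follows essentially the paper's route. Goodness of surface groups gives cup-product-compatible isomorphisms $H^*(L_2,\Z/p^r)\to H^*(L_1,\Z/p^r)$ with a unit scalar on $H^2$, and that scalar is transferred to the intersection form on $H_1$ by duality; the paper does this transfer with explicit symplectic matrices at each finite level $r$ before taking the inverse limit, whereas you pass to $\Z_p$ first. Two small corrections: the transpose $\vartheta_*$ scales the intersection form by the same unit $\varepsilon_p$ by which $\vartheta^*$ scales the cup pairing, not by its inverse (your constant in $D_2\vartheta_*=\varepsilon'(\vartheta^*)^{-1}D_1$ is $\varepsilon'=\varepsilon_p$). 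Also, the genus-zero case, where $H^2(L_i,\Z_p)=0$ rather than $\Z_p$, must be set aside separately; it is trivial, since both first homology groups then vanish.
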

\begin{proof}
If one surface has genus zero, its group and its profinite completion are trivial. The other surface
also has genus zero, since a positive-genus surface group has a nontrivial finite abelian quotient.
Both first homology groups vanish, and we may take $\varepsilon_p=1$. We therefore assume that both
surfaces have positive genus. We fix $r\geq1$ and set $R_r=\Z/p^r\Z$. Since surface groups are good
\cite{AschenbrennerFriedlWilton2015}, restriction from continuous cochains identifies the pullback
induced by $\vartheta$ with an isomorphism of graded rings
\[
\vartheta_r^*:H^*(L_2,R_r)\xrightarrow{\cong}H^*(L_1,R_r).
\]
Let $S_i$ be a closed oriented surface with $\pi_1(S_i)=L_i$, and let
$\omega_i\in H^2(S_i,R_r)=H^2(L_i,R_r)$ be the reductions of integral orientation classes normalized
by $\langle\omega_i,[S_i]\rangle=1$, where $[S_i]\in H_2(S_i,R_r)$ is the fundamental class. Since
the groups in degree two are free of rank one over $R_r$, there is a unique $\delta_r\in R_r^\times$
such that
\[
\vartheta_r^*(\omega_2)=\delta_r\omega_1.
\]
For $a,b\in H^1(S_2,R_r)$, multiplicativity and evaluation on the fundamental classes give
\[
\bigl\langle\vartheta_r^*(a)\smile\vartheta_r^*(b),[S_1]\bigr\rangle
=\delta_r\langle a\smile b,[S_2]\rangle.
\]
The first cohomology groups have the same rank, so the surfaces have a common genus $g$. Now choose
integral symplectic bases for their first homology groups and reduce these bases modulo $p^r$. Then
in these bases the intersection matrix is
\[
J=\begin{pmatrix}0&I_g\\-I_g&0\end{pmatrix},
\qquad J^{-1}=-J.
\]
The evaluation-dual cohomology bases have cup-product matrix $J$ as well. Let $M_r$ be the matrix of
the homology map dual to $\vartheta_r^*$, with columns giving the images of the chosen basis of
$H_1(S_1,R_r)$ so the pullback matrix is $M_r^{\mathsf T}$, and the evaluated cup-product identity
reads $M_r J M_r^{\mathsf T}=\delta_r J$. Since both $M_r$ and $\delta_r$ are invertible, taking
inverses and using $J^{-1}=-J$ we get
\[
M_r^{-\mathsf T}J M_r^{-1}=\delta_r^{-1}J,
\qquad
M_r^{\mathsf T}J M_r=\delta_r J.
\]
Thus the homology map has the same multiplier $\delta_r$, also for $p=2$, since nothing here
requires division by $2$. The coefficient reductions $R_{r+1}\to R_r$ commute with $\vartheta^*$ and
preserve the chosen integral orientation classes, so $\delta_{r+1}\bmod p^r=\delta_r$. We set
$\varepsilon_p=\varprojlim_r\delta_r\in\Z_p^\times$. Passing to the inverse limit in the last matrix
identity yields \eqref{eq:surface-similitude}. Finally, naturality of the evaluation pairing
identifies the dual homology map with the map induced by $\vartheta$ on maximal pro-$p$ abelian
quotients, hence with the map $\vartheta_*$ above.
\end{proof}

\begin{proposition}\label{prop:unit-power}
Let $\Pi_i$ be closed orientable surface groups of genus at least two. Suppose
$\theta:\wh\Pi_1\to\wh\Pi_2$ is an isomorphism and
\[
\theta(c_1)=u c_2^\lambda u^{-1},
\qquad
\theta(d_1)=v d_2^\nu v^{-1},
\]
where $c_i,d_i\in\Pi_i\setminus\{1\}$, $u,v\in\wh\Pi_2$, and $\lambda,\nu\in\hZ^\times$. Then
\[
i([c_1],[d_1])=0
\quad\Longleftrightarrow\quad
i([c_2],[d_2])=0.
\]
\end{proposition}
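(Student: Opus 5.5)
By symmetry (apply the argument to $\theta^{-1}$, which satisfies the same hypotheses with $c_1,d_1$ and $c_2,d_2$ interchanged and inverse units), it suffices to prove that $i([c_1],[d_1])=0$ implies $i([c_2],[d_2])=0$. The plan is to transport the orthogonality criterion of \cref{thm:homological-intersection} through $\theta$ level by level. For each level we use the spaces $W_{L,p}$ of \eqref{eq:elevation-subspace}, the unit and conjugacy invariance of \cref{lem:elevation-subspaces}, and the similitude property of \cref{lem:surface-similitude}.

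\emph{Choice of levels.} Take the standard characteristic families $L_{i,m}=K_m(\Pi_i)$. By \cref{lem:char-functor}, $\theta(\wh L_{1,m})=\wh L_{2,m}$, so $\theta$ restricts to an isomorphism $\vartheta_m:\wh L_{1,m}\to\wh L_{2,m}$ of profinite surface groups. Fix a prime $p$, say $p=2$. By \cref{lem:surface-similitude}, the induced map $\vartheta_{m,*}:H_1(L_{1,m},\Z_p)\to H_1(L_{2,m},\Z_p)$ multiplies intersection forms by a unit $\varepsilon_p$. After extending scalars to $\Q_p$, it therefore carries orthogonal subspaces to orthogonal subspaces, and conversely.

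\emph{Matching of elevation spaces.} This is the key step. We claim that
\[
\vartheta_{m,*}\bigl(W_{L_{1,m},p}(c_1)\bigr)=W_{L_{2,m},p}(\theta(c_1)),
\]
and similarly for $d_1$. To see this, note three facts. First, $n_{L_{2,m}}(\theta(x))=n_{L_{1,m}}(x)$, because $\theta$ induces an isomorphism $\Pi_1/L_{1,m}\to\Pi_2/L_{2,m}$. Second, $\theta(hx^{n}h^{-1})=\theta(h)\theta(x)^n\theta(h)^{-1}$, and $\theta(h)$ ranges over all of $\wh\Pi_2$ as $h$ ranges over $\wh\Pi_1$. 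Third, $\operatorname{ab}_{L_{2,m},p}\circ\vartheta_m=\vartheta_{m,*}\circ\operatorname{ab}_{L_{1,m},p}$ by naturality of the maximal pro-$p$ abelian quotient. Combining these with \cref{lem:elevation-subspaces}(ii) gives
\[
W_{L_{2,m},p}(\theta(c_1))=W_{L_{2,m},p}(uc_2^\lambda u^{-1})=W_{L_{2,m},p}(c_2),
\]
and likewise $W_{L_{2,m},p}(\theta(d_1))=W_{L_{2,m},p}(d_2)$. By \cref{lem:elevation-subspaces}(i), each such space is $V_{L}(\cdot)\otimes\Q_p$.

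\emph{Conclusion.} Suppose $i([c_1],[d_1])=0$. By \cref{thm:homological-intersection}, applied to the cofinal family $(L_{1,m})$, we have $V_{L_{1,m}}(c_1)\perp V_{L_{1,m}}(d_1)$ for every $m$. Orthogonality of integral modules is equivalent to orthogonality after tensoring with $\Q_p$, since the integral form embeds in the $p$-adic one. Hence $W_{L_{1,m},p}(c_1)\perp W_{L_{1,m},p}(d_1)$. Applying the similitude $\vartheta_{m,*}$ then gives $W_{L_{2,m},p}(c_2)\perp W_{L_{2,m},p}(d_2)$, and therefore $V_{L_{2,m}}(c_2)\perp V_{L_{2,m}}(d_2)$ for all $m$. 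By \cref{thm:homological-intersection} for the cofinal family $(L_{2,m})$, we conclude $i([c_2],[d_2])=0$.

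The main obstacle is the naturality identity $\operatorname{ab}_{L_{2,m},p}\circ\vartheta_m=\vartheta_{m,*}\circ\operatorname{ab}_{L_{1,m},p}$. It requires that the map $\vartheta_{m,*}$ of \cref{lem:surface-similitude} is exactly the map induced on maximal pro-$p$ abelian quotients. That lemma's last sentence supplies this, so the remaining work is bookkeeping.
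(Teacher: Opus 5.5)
Your proposal is correct and follows the paper's proof essentially step for step. You restrict $\theta$ to the standard characteristic levels via \cref{lem:char-functor}, match the elevation spaces using naturality of $\operatorname{ab}_{L,p}$ and the conjugacy and unit-power invariance of \cref{lem:elevation-subspaces}, transport orthogonality by the similitude of \cref{lem:surface-similitude}, and conclude with \cref{thm:homological-intersection} on both sides. The only difference is cosmetic: you prove one implication and invoke symmetry via $\theta^{-1}$, whereas the paper reads off the biconditional at each level directly from the similitude.
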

\begin{proof}
Fix a prime $p$ and an integer $j\geq1$. Set $L_i=K_j(\Pi_i)$. By \cref{lem:char-functor},
$\theta(\wh L_1)=\wh L_2$, and by \cref{lem:surface-similitude} its restriction induces an
isomorphism
\[
\theta_{L,*}:H_1(L_1,\Q_p)\xrightarrow{\cong}H_1(L_2,\Q_p).
\]
Since $\theta$ identifies the finite quotients $\Pi_i/L_i$, matching elements have the same order
there, hence by equivariance of continuous abelianization
\[
\theta_{L,*}W_{L_1,p}(c_1)=W_{L_2,p}(c_2),
\qquad
\theta_{L,*}W_{L_1,p}(d_1)=W_{L_2,p}(d_2),
\]
the conjugators being irrelevant by conjugacy invariance and the unit powers by
\cref{lem:elevation-subspaces}(ii).

By this isomorphism and \cref{lem:elevation-subspaces}(i), we obtain, at every standard
characteristic level,
\[
V_{L_1}(c_1)\perp V_{L_1}(d_1)
\quad\Longleftrightarrow\quad
V_{L_2}(c_2)\perp V_{L_2}(d_2),
\]
where one may test the equality after tensoring with $\Q_p$ because algebraic intersection is
integer valued. Since the standard families of characteristic subgroups are cofinal, applying
\cref{thm:homological-intersection} on both sides proves the claimed equivalence.
\end{proof}

\begin{proposition}\label{prop:crossing-invariance}
For $i=1,2$, let $(\alpha_i,\beta_i)$ be a nonseparating filling pair, and let
$A_i=\langle a_i\rangle$ and $B_i=\langle b_i\rangle$ be the cyclic subgroups represented by
$\alpha_i$ and $\beta_i$, respectively. Suppose that
\[
\theta:\wh\Pi_1\xrightarrow{\cong}\wh\Pi_2
\]
is an isomorphism for which there exist $u,v\in\wh\Pi_2$ satisfying
\[
\theta(\cl A_1)=u\cl A_2u^{-1},
\qquad
\theta(\cl B_1)=v\cl B_2v^{-1},
\]
with closures taken in the respective profinite surface groups. Let
\[
F_\alpha:\wh T_{\alpha_1}\to\wh T_{\alpha_2},
\qquad
F_\beta:\wh T_{\beta_1}\to\wh T_{\beta_2}
\]
be the equivariant tree isomorphisms of \cref{thm:tree-correspondence}. Then
\[
e\cross f
\quad\Longleftrightarrow\quad
F_\alpha(e)\cross F_\beta(f)
\]
for every pair of profinite edges $e,f$.
\end{proposition}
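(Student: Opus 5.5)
The plan is to reduce the profinite crossing relation to disjointness of elevations in the characteristic covers, via \cref{lem:finite-cover-crossing}, and then to transport disjointness across $\theta$ by applying \cref{prop:unit-power} to the restriction of $\theta$ to matched characteristic subgroups.

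\emph{Step 1: make $F_\alpha$ and $F_\beta$ explicit.} Write $e=x\cl A_1$ and $f=y\cl B_1$. By \cref{lem:full-acylindricity}, distinct edges have distinct stabilizers, and $N_{\wh\Pi_2}(\cl A_2)=\cl A_2$. Together with $\theta(\cl A_1)=u\cl A_2u^{-1}$ and equivariance, this should give
\[
F_\alpha(x\cl A_1)=\theta(x)u\cl A_2,
\qquad
F_\beta(y\cl B_1)=\theta(y)v\cl B_2 .
\]
Hence the statement is equivalent to
\[
x^{-1}y\in\cl\Omega_1
\quad\Longleftrightarrow\quad
u^{-1}\theta(x^{-1}y)v\in\cl\Omega_2 .
\]

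\emph{Step 2: pass to a single finite level.} By \cref{lem:finite-cover-crossing}, applied over the standard characteristic family $L_i=K_m(\Pi_i)$, it suffices to prove the following for each $m$: the elevations determined by $e,f$ in $S_{L_1}$ intersect if and only if those determined by $F_\alpha(e),F_\beta(f)$ in $S_{L_2}$ do. This is legitimate because, by \cref{lem:char-functor}, $\theta$ matches $\cl{L_1}$ with $\cl{L_2}$, and the family is cofinal.

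Choose a discrete representative $x_m\in\Pi_1$ of $x$ modulo $\wh L_1$, and let $n=n_{L_1}(a_1)$. The elevation of $\alpha_1$ determined by $e$ is then the closed curve
\[
c_1=x_ma_1^{n}x_m^{-1}\in L_1 .
\]
Define $d_1$ from $y$ and $b_1$ in the same way. Elevations of simple closed geodesics are simple closed geodesics in the hyperbolic cover $S_{L_1}$. Two distinct elevations therefore meet exactly when $i_{S_{L_1}}([c_1],[d_1])\neq0$, with free homotopy now taken in $L_1$. The elevations of $\alpha_1$ and $\beta_1$ cannot coincide, since $\alpha_1\neq\beta_1$.

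\emph{Step 3: transport across $\theta$.} The $\alpha$-condition gives $\theta(a_1)=ua_2^{\lambda}u^{-1}$ with $\lambda\in\hZ^\times$. The orders of $a_1$ and $a_2$ in the matched quotients $\Pi_i/L_i$ then agree, so
\[
\theta(c_1)=\theta(x_m)u\,(a_2^{n})^{\lambda}\,(\theta(x_m)u)^{-1}.
\]
Next factor $\theta(x_m)u=wk$ with $w\in\Pi_2$ and $k\in\wh L_2$; here $w$ represents the coset $F_\alpha(e)$ modulo $\wh L_2$. Then $\theta(c_1)$ is a $\wh L_2$-conjugate of a unit power of $c_2=wa_2^nw^{-1}\in L_2$, and $c_2$ is exactly the elevation determined by $F_\alpha(e)$. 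The same holds for $d_1$ and $d_2$.

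Now apply \cref{prop:unit-power} to $\theta|:\wh L_1\to\wh L_2$. This is allowed because the $L_i$ are closed surface groups of genus at least two, and the proposition only needs unit powers up to profinite conjugacy. It yields $i([c_1],[d_1])=0\iff i([c_2],[d_2])=0$, which completes the equivalence at level $m$.

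\emph{Expected obstacle.} The main difficulty is the bookkeeping in Steps 1 and 3. One must check that the elevation attached to $F_\alpha(e)$ at level $L_2$ really is the $\wh L_2$-conjugacy class of $\theta(c_1)$, independently of the choices of $x_m$ and of $w$. One must also check that the passage from $\wh\Pi_2$-conjugacy to $\wh L_2$-conjugacy loses nothing. I would handle both points by noting that conjugation by $\wh L_2$ does not change the discrete elevation, because elevations are indexed by $L\backslash\Pi/\langle a\rangle$. The geometric input in Step 2, that distinct elevation geodesics intersect iff their intersection number is nonzero, is standard minimal-position theory for simple closed geodesics.
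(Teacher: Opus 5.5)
Your proposal is correct and follows essentially the paper's route. You reduce via \cref{lem:finite-cover-crossing} to the characteristic levels $K_m(\Pi_i)$ and identify the elevations attached to $e,f$ and to $F_\alpha(e),F_\beta(f)$ with elements $c_i,d_i$. You then show that $\theta(c_1),\theta(d_1)$ are $\wh K_2$-conjugates of unit powers of $c_2,d_2$, apply \cref{prop:unit-power} to $\theta|_{\wh K_1}$, and use that distinct simple closed geodesic elevations meet iff their intersection number is nonzero. The only cosmetic difference is that you first compute $F_\alpha(x\cl A_1)=\theta(x)u\cl A_2$ explicitly, using uniqueness of the edge with a given stabilizer. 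The paper instead observes that $F_\alpha,F_\beta$ carry $\wh K_1$-orbits of edges to $\wh K_2$-orbits and matches the edge stabilizers $\cl{\langle c_i\rangle}$ directly.
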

\begin{proof}
Write $A_i=\langle a_i\rangle$ and $B_i=\langle b_i\rangle$, where $a_i$ and $b_i$ represent one
oriented traversal of $\alpha_i$ and $\beta_i$, respectively.

Fix $m\geq1$ and set $K_i=K_m(\Pi_i)$, $U_i=\wh K_i$. By \cref{lem:char-functor}, $\theta(U_1)=U_2$.
Let $q_i:\wh\Pi_i\longrightarrow\wh\Pi_i/U_i\cong\Pi_i/K_i$ be the quotient map.

We first describe the $U_i$-orbits in the edge spaces. For $D=A_i$ or $D=B_i$, consider
\[
\rho_{i,D}:\wh\Pi_i/\cl D
\longrightarrow(\Pi_i/K_i)/q_i(D),
\qquad
x\cl D\longmapsto q_i(x)q_i(D).
\]
The fibers of $\rho_{i,D}$ are the $U_i$-orbits: multiplication on the left by an element of $U_i$
does not change the image. Conversely, if $q_i(y)q_i(D)=q_i(x)q_i(D)$, there is $d\in D$ such that
$q_i(y)=q_i(xd)$. So $u=yd^{-1}x^{-1}\in U_i$ and $y\cl D=ux\cl D$. Since $\rho_{i,D}$ is continuous
and its image is a finite discrete space, every such orbit is closed and open.

Every $U_i$-orbit contains an original edge, because the map $\Pi_i\to\Pi_i/K_i$ is surjective.
Moreover, if $x,y\in\Pi_i$ then
\[
u=yd^{-1}x^{-1}\in U_i\cap\Pi_i=K_i.
\]
Consequently, the original edges in one $U_i$-orbit form exactly one $K_i$-orbit. It follows that if
$D=A_i$, then these orbits correspond to the connected elevations of $\alpha_i$ in $S_{K_i}$, and if
$D=B_i$, then they correspond to the connected elevations of $\beta_i$.

Set
\[
e_1=e,\qquad f_1=f,\qquad
e_2=F_\alpha(e),\qquad f_2=F_\beta(f),
\]
and choose $g_i,h_i\in\Pi_i$ such that
\[
e_i\in U_i\cdot(g_i\cl A_i),
\qquad
f_i\in U_i\cdot(h_i\cl B_i).
\]
Define
\[
c_i=g_i a_i^{\,n_{K_i}(a_i)}g_i^{-1},
\qquad
d_i=h_i b_i^{\,n_{K_i}(b_i)}h_i^{-1}.
\]
Then normality of $K_i$ implies that $c_i,d_i\in K_i$ and that
\[
K_i\cap g_iA_i g_i^{-1}=\langle c_i\rangle,
\qquad
K_i\cap h_iB_i h_i^{-1}=\langle d_i\rangle.
\]
The elements $c_i$ and $d_i$ represent one oriented traversal of the connected elevations determined
by the two chosen edge orbits.

For any subgroup $H\leq\Pi_i$, openness of $U_i$ implies $U_i\cap\cl H=\cl{K_i\cap H}$. Indeed, $H$
is dense in $\cl H$, and its intersection with the open subset $U_i\cap\cl H$ is dense in that
subset, whence
\[
\begin{aligned}
\Stab_{U_i}(g_i\cl A_i)
&=U_i\cap g_i\cl A_i g_i^{-1}\\
&=\cl{K_i\cap g_iA_i g_i^{-1}}
=\cl{\langle c_i\rangle},\\
\Stab_{U_i}(h_i\cl B_i)
&=U_i\cap h_i\cl B_i h_i^{-1}\\
&=\cl{K_i\cap h_iB_i h_i^{-1}}
=\cl{\langle d_i\rangle},
\end{aligned}
\]
and these cyclic closures are isomorphic to $\hZ$.

As $\theta(U_1)=U_2$, equivariance implies that $F_\alpha$ and $F_\beta$ map $U_1$-orbits onto
$U_2$-orbits. Hence there exist $u',v'\in U_2$ such that
\[
F_\alpha(g_1\cl A_1)=u' g_2\cl A_2,
\qquad
F_\beta(h_1\cl B_1)=v' h_2\cl B_2.
\]
Since equivariance and bijectivity of the tree maps give equality of the corresponding stabilizers,
we have
\[
\theta\bigl(\cl{\langle c_1\rangle}\bigr)
=u'\cl{\langle c_2\rangle}u'^{-1},
\qquad
\theta\bigl(\cl{\langle d_1\rangle}\bigr)
=v'\cl{\langle d_2\rangle}v'^{-1}.
\]
An isomorphism between copies of $\hZ$ sends a topological generator to a unit power of a
topological generator, so
\[
\theta(c_1)=u' c_2^\lambda u'^{-1},
\qquad
\theta(d_1)=v' d_2^\nu v'^{-1}
\]
for some $\lambda,\nu\in\hZ^\times$.

Since the groups $K_i$ are fundamental groups of closed orientable surfaces of genus at least two,
we may apply \cref{prop:unit-power} to
\[
\theta|_{U_1}:\wh K_1\xrightarrow{\cong}\wh K_2.
\]
Therefore
\[
i([c_1],[d_1])=0
\quad\Longleftrightarrow\quad
i([c_2],[d_2])=0,
\]
where the intersection numbers are taken in $S_{K_1}$ and $S_{K_2}$, respectively.

Use the fixed geodesic representatives of $\alpha_i,\beta_i$ and the lifted hyperbolic metric on
$S_{K_i}$. Their connected elevations are simple closed geodesics. An elevation of $\alpha_i$ cannot
coincide with an elevation of $\beta_i$, since projecting their common image would give
$\alpha_i=\beta_i$, contrary to the filling-pair condition. Thus the two elevations determined by
$e_1,f_1$ intersect if and only if the two elevations determined by $e_2,f_2$ intersect.

Since this equivalence holds for every $m$ and the subgroups $U_i=\wh{K_m(\Pi_i)}$ form a cofinal
family of open normal subgroups, testing intersection at these levels is sufficient: given any open
normal subgroup, choose a characteristic level contained in it. Intersecting elevations in the finer
cover project to intersecting elevations in the coarser cover. Conversely, intersection at every
open normal level includes intersection at every characteristic level. Now by
\cref{lem:finite-cover-crossing},
\[
e\cross f
\quad\Longleftrightarrow\quad
F_\alpha(e)\cross F_\beta(f).
\]
\end{proof}
\begin{corollary}\label{cor:completed-core}
Under the conditions of \cref{prop:crossing-invariance}, put $C_i=C(\alpha_i,\beta_i)$. The product
map $F_\alpha\times F_\beta$, together with its induced maps on endpoint and side occurrences,
restricts to a $\theta$-equivariant isomorphism of profinite cubical residual complexes
\[
F:\wh{\Icell}(C_1)\xrightarrow{\cong}\wh{\Icell}(C_2).
\]
\end{corollary}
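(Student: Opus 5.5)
The plan is to realize everything inside the product residual complex $\Icell(\wh T_{\alpha_i}\times\wh T_{\beta_i})$ and then restrict to the image of $\wh j_i$. First I would form the product of the tree isomorphisms $F_\alpha$ and $F_\beta$ from \cref{thm:tree-correspondence} on all five component spaces. On $P^0$ and $P^2$ it is $F_\alpha\times F_\beta$. On the two summands of $P^1$ it is $F_{\alpha,E}\times F_{\beta,V}$ and $F_{\alpha,V}\times F_{\beta,E}$. On the labelled spaces $J_\xi$, I would send $(e,r)\mapsto(F_\xi(e),\sigma_\xi(r))$, where $\sigma_\xi$ is the identity or the global label swap fixed in \cref{thm:tree-correspondence}. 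Each of these is a $\theta$-equivariant homeomorphism, being a product of homeomorphisms of compact Hausdorff spaces. It commutes with $p_{10},q_{10},p_{21},q_{21}$ of the product complex, since $d_{2,\sigma(r)}\circ F_E=F_V\circ d_{1,r}$. So this gives an isomorphism of the product residual complexes.

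Next I would show that this product isomorphism carries $\im\wh j_1$ onto $\im\wh j_2$ componentwise. For squares, \cref{prop:completion-embedding} identifies $\wh j_i^2(\wh C_i^2)$ with the crossing pairs, and \cref{prop:crossing-invariance} says $e\cross f$ if and only if $F_\alpha(e)\cross F_\beta(f)$. For side occurrences, the image of $\wh j_{i,21}$ consists of the elements $((e,r),f)$ and $(e,(f,r))$ with $(e,f)$ a crossing pair. This holds because, on each summand with fixed label, the occurrence image is in bijection with the square image via forgetting the label, as in Step~1 of that proof. Hence the side occurrences are also preserved.

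For edges and vertices I would use the surjectivity statements from Step~3 of \cref{prop:completion-embedding}. The edge image is $q^P_{21}$ of the side occurrence image, and the endpoint occurrences are described through the edge image. The vertex image is $q^P_{10}$ of the endpoint occurrence image. Since the product isomorphism commutes with $q^P_{21}$ and $q^P_{10}$, it maps each of these images onto the corresponding one. I would verify the endpoint occurrence space just as for sides: with the label fixed, the occurrences $((e,r),v)$ correspond bijectively to the edges $(e,v)$ in the image.

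Finally I would define $F=\wh j_2^{-1}\circ(F_\alpha\times F_\beta)\circ\wh j_1$ componentwise. Because each $\wh j_i$ is an embedding with closed image commuting with the structure maps, $F$ is a continuous bijection on each component, hence a homeomorphism. It commutes with the four structure maps and is $\theta$-equivariant. The main obstacle is the bookkeeping of the label reversal. If $F_\alpha$ swaps endpoint labels, the occurrence $((e,r),f)$ goes to $((F_\alpha e,1-r),F_\beta f)$. I would check that the image description is still preserved, which holds because both labels occur over every edge in the image. The cubical positions $(j,\epsilon)$ are relabelled uniformly, and this is allowed because the morphisms in \cref{def:cubical-occurrence} only require commutation with the structure maps.
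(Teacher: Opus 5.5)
Your proposal is correct and follows essentially the same route as the paper. Squares are matched by \cref{prop:crossing-invariance}, side occurrences by their positions over matched squares, and edges and vertices by the surjectivity of $\wh q_{21}$ and $\wh q_{10}$ from Step~3 of \cref{prop:completion-embedding}, with inverses coming from the inverse product map; your one addition is to build the isomorphism on the whole product residual complex first, which makes explicit the commutation with structure maps and the global endpoint-label reversal that the paper leaves implicit.
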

\begin{proof}
By \cref{prop:completion-embedding}, the completed coordinate embedding identifies
$\wh{\Icell}(C_i)$ with a subdiagram of
\[
\Icell(\wh T_{\alpha_i}\times\wh T_{\beta_i})
\]
which is closed in each of the five profinite component spaces. The subdiagram consists of the
crossing squares and all their side edges, vertices, endpoint occurrences and side occurrences, with
the restricted structure maps. By \cref{prop:crossing-invariance}, $F_\alpha\times F_\beta$ maps the
completed square space of $C_1$ bijectively onto that of $C_2$.

Each completed square has four side occurrences in $\wh I_{21}(C_i)$, and by the definition of face
positions in \cref{def:cubical-occurrence}, the product tree map takes these four positions
bijectively to the side positions of the corresponding square in $\wh C_2$. It therefore induces a
bijection on the completed side occurrence spaces commuting with $p_{21}$ and $q_{21}$. By
\cref{prop:completion-embedding}, the completed map $\wh q_{21}$ is surjective. Then commutation
with $q_{21}$ yields surjectivity on the completed edge spaces, while injectivity follows from the
fact that $F_\alpha\times F_\beta$ is a homeomorphism of the products of completed cut trees. So the
induced product map is a bijection from the completed edge space of $\wh C_1$ onto that of
$\wh C_2$.

Denote by $F_0$ and $F_1$ the maps induced by $F_\alpha\times F_\beta$ on the vertex and edge spaces
of the products $\wh T_{\alpha_i}\times\wh T_{\beta_i}$, respectively. The previous paragraph shows
that $F_1$ restricts to a bijection between the completed edge spaces of $C_1$ and $C_2$. Each
completed edge has two endpoint occurrences in $\wh I_{10}(C_i)$. The product tree map induces a
bijection on endpoint occurrence spaces: the occurrence of an endpoint $v$ of an edge $e$ is mapped
to the occurrence of $F_0(v)$ as an endpoint of $F_1(e)$. This induced map commutes with $p_{10}$
and $q_{10}$. By \cref{prop:completion-embedding}, the completed map $\wh q_{10}$ is surjective.
Hence commutation with $q_{10}$ yields surjectivity on the completed vertex spaces, while
injectivity again follows from the product homeomorphism, so the induced product map is a bijection
from the completed vertex space of $\wh C_1$ onto that of $\wh C_2$. Consequently the restriction is
bijective, in the order
\[
\wh C^2\longleftarrow \wh I_{21}(C)
\longrightarrow \wh C^1\longleftarrow \wh I_{10}(C)
\longrightarrow \wh C^0,
\]
and commutes with all four structure maps. The inverse product map induces continuous inverse maps
on all five component spaces, commuting with the four structure maps. So the restriction is the
desired $\theta$-equivariant isomorphism of profinite cubical residual complexes.
\end{proof}

\subsection*{Realization of the original G-action}
\begin{theorem}\label{thm:discrete-star}
For $i=1,2$, let $\Gamma_i$ be a finitely generated residually finite group acting freely,
cellularly, and cocompactly on a connected locally finite regular square complex $X_i$. For each
$i$, use one cofinal family of finite index normal subgroups of $\Gamma_i$ for all five component
spaces in \cref{def:cubical-occurrence,def:cubical-action-completion}. Write $\wh X_i$ for the
resulting profinite cubical residual complex. Suppose that
\[
\eta:\wh\Gamma_1\xrightarrow{\cong}\wh\Gamma_2
\]
is a continuous group isomorphism and that $F:\wh X_1\to\wh X_2$ is an $\eta$-equivariant
isomorphism of residual complexes, so $F$ is a tuple of five homeomorphisms commuting with all
four structure maps. Then there are $u\in\wh\Gamma_2$ and an isomorphism $h:\Gamma_1\to\Gamma_2$
such that
\[
\eta=\Inn(u)\circ\wh h,
\qquad
F(X_1)=uX_2.
\]
Equivalently, the normalized pair
\[
(\Inn(u^{-1})\circ\eta,\,u^{-1}F)
\]
maps $X_1$ onto $X_2$, where $(u^{-1}F)(x)=u^{-1}F(x)$. In particular, $\eta$ is discretely induced
up to profinite inner automorphism.
\end{theorem}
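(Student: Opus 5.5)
The plan is to normalize $F$ at a single original vertex and then propagate originality through the occurrence structure, using only that completed free orbits are free $\wh\Gamma_i$-orbits. By \cref{def:cubical-action-completion}, each component space $Z$ of $X_2$ is a finite union of free orbits $\Gamma_2z_j$, and its completion is $\bigsqcup_j\wh\Gamma_2z_j$, with each $\wh\Gamma_2z_j\cong\wh\Gamma_2$. This yields the following basic fact, which I will use repeatedly.

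\emph{($\ast$)} If $x,y$ are original points of the same completed component and $\delta\in\wh\Gamma_2$ satisfies $\delta x=y$, then $\delta\in\Gamma_2$. Indeed, $x$ and $y$ lie in one completed orbit $\wh\Gamma_2z_j$. Writing $x=gz_j$ and $y=g'z_j$ with $g,g'\in\Gamma_2$, freeness of the completed orbit forces $\delta=g'g^{-1}$.

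\textbf{Step 1: normalize at one vertex and propagate along occurrences.} Fix an original vertex $v_0\in X_1$. Its image $F(v_0)$ lies in some $\wh\Gamma_2w$ with $w$ an original vertex, so $F(v_0)=uw$ for some $u\in\wh\Gamma_2$. Replace $(\eta,F)$ by $(\Inn(u^{-1})\circ\eta,u^{-1}F)$. This new pair is again an equivariant isomorphism, and now $F(v_0)=w$ is original.

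The key local claim is that the fibre of each completed structure map over an original point consists only of original points, and equals the discrete fibre. For instance, suppose $\wh q_{10}(\gamma\iota_k)=v$ with $v$ original. Then $\gamma\,q_{10}(\iota_k)=v$, and applying ($\ast$) to the vertex component gives $\gamma\in\Gamma_2$. The same argument applies to $\wh p_{10}$, $\wh p_{21}$ and $\wh q_{21}$, and also to their forward images, which are trivially original.

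Since $F$ is a bijection commuting with $q_{10}$, it maps $\wh q_{10}^{-1}(v_0)$ onto $\wh q_{10}^{-1}(F(v_0))$, which consists of original occurrences. Applying $p_{10}$ shows that the edges at $v_0$ go to original edges. Taking their other endpoint occurrences through $p_{10}^{-1}$, and then applying $q_{10}$, the neighbouring vertices go to original vertices. Connectedness of the $1$-skeleton of $X_1$ then shows that $F$ sends every original vertex, edge and endpoint occurrence to an original one. Next, $q_{21}^{-1}$ of an original edge consists of original side occurrences, and $p_{21}$ of those are original squares. Since every square has a side, $F$ maps all of $\Icell(X_1)$ into $\Icell(X_2)$. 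Running the same argument for $F^{-1}$, which sends the original vertex $w$ to $v_0$, gives $F(X_1)=X_2$ component-wise.

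\textbf{Step 2: the group.} For $\gamma\in\Gamma_1$, both $F(\gamma v_0)=\eta(\gamma)F(v_0)$ and $F(v_0)$ are original vertices. By ($\ast$), $\eta(\gamma)\in\Gamma_2$. Symmetrically, $\eta^{-1}(\Gamma_2)\subseteq\Gamma_1$, so $h:=\eta|_{\Gamma_1}$ is an isomorphism $\Gamma_1\to\Gamma_2$. Both $\eta$ and $\wh h$ are continuous and agree on the dense subgroup $\Gamma_1$, so $\eta=\wh h$ for the normalized $\eta$. Undoing the normalization gives $\eta=\Inn(u)\circ\wh h$ and $F(X_1)=uX_2$.

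\textbf{Main obstacle.} The delicate point is justifying ($\ast$) and the fibre claim in the profinite setting, that is, that the completion over the chosen cofinal family really identifies each completed orbit with a free $\wh\Gamma_2$-orbit in which discrete points have only discrete transporters. I would argue this directly from the inverse limit $\varprojlim N\backslash\Gamma_2z_j\cong\varprojlim\Gamma_2/N=\wh\Gamma_2$. Distinct orbits stay in distinct clopen pieces, which is why no extra separability is needed for free actions, unlike in \cref{lem:completed-orbit}.
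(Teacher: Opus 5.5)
Your proposal is correct and follows the paper's proof essentially step for step: one left translation sends an original vertex to an original vertex, freeness of the completed orbits shows that each completed structure map pulls original points back to original points, originality then propagates through endpoint and side occurrences by connectedness of the $1$-skeleton, the same argument is applied to $F^{-1}$, and $\Gamma_1$ is identified with $\Gamma_2$ through the orbit of $v_0$ together with density. The one point the paper proves that you instead take from \cref{def:cubical-action-completion} is that a point-free action has trivial cell stabilizers, and hence trivial occurrence stabilizers; the paper derives this from regularity and Brouwer's fixed point theorem, and it is exactly what makes your ($\ast$) valid.
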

\begin{proof}
Write
\[
\eta_0:\wh\Gamma_1\xrightarrow{\cong}\wh\Gamma_2,
\qquad
F_0:\wh X_1\xrightarrow{\cong}\wh X_2
\]
for the originally given maps $\eta,F$.

Fix $i\in\{1,2\}$. We first show that every cell stabilizer in $\Gamma_i$ is trivial. Suppose that
$\gamma\in\Gamma_i$ stabilizes an open edge $e$ setwise. It then preserves $\cl e$. By regularity,
the characteristic map $\chi_e$ is a homeomorphism onto $\cl e$, so
$\chi_e^{-1}\circ\gamma\circ\chi_e$ is a continuous self-map of a closed interval; it has a fixed
point, whose image under $\chi_e$ is fixed by $\gamma$. Freeness of the action implies $\gamma=1$.
Similarly, if $\gamma$ stabilizes an open square $q$ setwise, the characteristic map $\chi_q$
identifies its restriction with the continuous self-map $\chi_q^{-1}\circ\gamma\circ\chi_q$ of the
closed square, and Brouwer's fixed point theorem produces a fixed point. Again, freeness implies
$\gamma=1$, and $\Stab_{\Gamma_i}(\sigma)=\{1\}$ for every cell $\sigma$.

By equivariance of the structure maps,
\[
\Stab_{\Gamma_i}(\iota)
\leq\Stab_{\Gamma_i}(p_{10}(\iota))=\{1\}
\qquad(\iota\in I_{10}(X_i)),
\]
and
\[
\Stab_{\Gamma_i}(\kappa)
\leq\Stab_{\Gamma_i}(p_{21}(\kappa))=\{1\}
\qquad(\kappa\in I_{21}(X_i)),
\]
hence all occurrence stabilizers are trivial as well.

Cocompactness and local finiteness give finitely many cell orbits: a compact subset whose translates
cover $X_i$ meets only finitely many cells, and every cell orbit meets that subset. There are also
finitely many occurrence orbits, because each edge has two endpoint occurrences and each square has
four side occurrences.

Since for every original cell or occurrence $\sigma$, the orbit map
\[
\Gamma_i\longrightarrow\Gamma_i\sigma,
\qquad
\gamma\longmapsto\gamma\sigma,
\]
is an equivariant bijection, for each finite index normal subgroup $N\normalf\Gamma_i$, it induces a
bijection
\[
\Gamma_i/N\xrightarrow{\cong}
N\backslash(\Gamma_i\sigma),
\qquad
\gamma N\longmapsto N(\gamma\sigma).
\]
These bijections commute with the quotient maps, so their inverse limit identifies the completed
orbit with a copy of $\wh\Gamma_i$, denoted $\wh\Gamma_i\sigma$. In particular, if $Z_i$ is one of
the five original component spaces and $z_1,\ldots,z_r$ are its orbit representatives, then
\[
Z_i=\bigsqcup_{j=1}^{r}\Gamma_i z_j,
\qquad
\wh Z_i=\bigsqcup_{j=1}^{r}\wh\Gamma_i z_j.
\]
Distinct original orbits remain distinct in every quotient by a subgroup of $\Gamma_i$. By residual
finiteness, each original orbit is identified with a subset of its completion, and
$(\wh\Gamma_i z_j)\cap Z_i=\Gamma_i z_j$. Thus the action on each completed orbit is free.

We next determine when a completed occurrence is original. Let $s$ be one of the four structure maps
\[
p_{10},\quad q_{10},\quad p_{21},\quad q_{21},
\]
and choose an original occurrence $\xi$ in its domain. On its completed orbit, equivariance says
that
\[
\wh s(g\xi)=g\,s(\xi)
\qquad(g\in\wh\Gamma_i).
\]
If this image is original, the orbit decomposition above shows $g\,s(\xi)=\gamma\,s(\xi)$ for some
$\gamma\in\Gamma_i$. Freeness of the completed cell orbit implies $g=\gamma$, and $g\xi$ is
original. Conversely, the image of an original occurrence under any structure map is original.

Applying this argument to every occurrence orbit proves
\[
\begin{aligned}
\wh p_{10}^{-1}(X_i^1)
&=I_{10}(X_i),&
\wh q_{10}^{-1}(X_i^0)
&=I_{10}(X_i),\\
\wh p_{21}^{-1}(X_i^2)
&=I_{21}(X_i),&
\wh q_{21}^{-1}(X_i^1)
&=I_{21}(X_i),
\end{aligned}
\]
where each preimage is taken in the corresponding completed occurrence space.

Now choose an original vertex $v_1\in X_1^0$. The point $F_0(v_1)$ belongs to $\wh\Gamma_2v_2$ for
some original vertex representative $v_2$. Let $t\in\wh\Gamma_2$ so that $F_0(v_1)=tv_2$, and set
$w=t^{-1}$. Define
\[
F'(y)=wF_0(y),
\qquad
\eta'=\Inn(w)\circ\eta_0.
\]
Since the group action commutes with the structure maps, $F'$ is again an isomorphism of profinite
residual complexes, $F'(v_1)=v_2$, and for every $\gamma\in\wh\Gamma_1$
\[
\begin{aligned}
F'(\gamma y)
&=w\eta_0(\gamma)F_0(y)\\
&=\bigl(w\eta_0(\gamma)w^{-1}\bigr)F'(y)\\
&=\eta'(\gamma)F'(y).
\end{aligned}
\]
Thus $F'$ is $\eta'$-equivariant.

Let $v$ be an original vertex whose image under $F'$ is original, and let $e$ be an original edge
with endpoint $v$. Choose the endpoint occurrence $\iota$ of $e$ at $v$. Since $F'$ commutes with
the structure maps we have
\[
\wh q_{10}(F'(\iota))=F'(v)\in X_2^0.
\]
Therefore
\[
F'(e)=\wh p_{10}(F'(\iota))\in X_2^1,
\]
and if $\iota'$ is the other endpoint occurrence of $e$, then
\[
\wh p_{10}(F'(\iota'))=F'(e)\in X_2^1.
\]
The preimage for $\wh p_{10}$ shows that $F'(\iota')$ is original, and then so is the image of the
other endpoint.

Since the $1$-skeleton of $X_1$ is connected, starting at $v_1$ and applying this argument along
finite edge paths shows that every original vertex has original image. Hence by applying it to every
original edge we get
\[
F'(X_1^0)\subseteq X_2^0,
\qquad
F'(X_1^1)\subseteq X_2^1.
\]
Now for every original endpoint occurrence $\iota$ we have
\[
\wh p_{10}(F'(\iota))
=F'(p_{10}(\iota))\in X_2^1,
\]
so the same preimage argument yields
\[
F'(I_{10}(X_1))\subseteq I_{10}(X_2).
\]

Let $q$ be an original square and choose one of its side occurrences $\kappa$. Since its attached
edge $q_{21}(\kappa)$ has original image,
\[
\wh q_{21}(F'(\kappa))
=F'(q_{21}(\kappa))\in X_2^1.
\]
The preimage for $\wh q_{21}$ then shows that $F'(\kappa)$ is original, and therefore
\[
F'(q)=\wh p_{21}(F'(\kappa))\in X_2^2.
\]
The argument applies to every original side occurrence. Consequently,
\[
F'(X_1^2)\subseteq X_2^2,
\qquad
F'(I_{21}(X_1))\subseteq I_{21}(X_2).
\]

Now the inverse $F'^{-1}$ is $\eta'^{-1}$-equivariant and satisfies $F'^{-1}(v_2)=v_1$, so repeating
the argument for this inverse we get the reverse inclusions on all five original component spaces.
So $F'(X_1)=X_2$ as original residual complexes.

Finally, for $\gamma\in\Gamma_1$, by equivariance
\[
\eta'(\gamma)v_2
=F'(\gamma v_1)\in X_2^0,
\]
and since
\[
(\wh\Gamma_2v_2)\cap X_2^0=\Gamma_2v_2,
\]
there is $\delta\in\Gamma_2$ such that $\eta'(\gamma)v_2=\delta v_2$, and by freeness of the
completed vertex orbit we have $\eta'(\gamma)=\delta\in\Gamma_2$. Therefore
$\eta'(\Gamma_1)\subseteq\Gamma_2$. The same argument for $F'^{-1}$ shows
$\eta'^{-1}(\Gamma_2)\subseteq\Gamma_1$, so $\eta'(\Gamma_1)=\Gamma_2$. Hence the restriction
$h=\eta'|_{\Gamma_1}$ is an isomorphism. Therefore, the continuous maps $\eta'$ and $\wh h$ agree on
the dense subgroup $\Gamma_1$, and hence $\eta'=\wh h$. Setting $u=w^{-1}=t$ we get
\[
\eta_0=\Inn(u)\circ\wh h,
\qquad
F_0(X_1)=uX_2,
\]
which proves the theorem.
\end{proof}
\begin{remark}\label{rem:translation-necessary}
The example after \cref{def:core} shows that the normalization in \cref{thm:discrete-star} cannot be
omitted: left multiplication by an element of $\wh\Gamma_2\setminus\Gamma_2$ moves every original
square out of the original subset.
\end{remark}

\begin{proof}[Proof of \Cref{thm:core-main}]
\smallskip\noindent\emph{Step 1.}
By \cref{prop:original-core}, the original dual complex $C_i$ is a connected locally finite regular
square complex on which $\Pi_i$ acts freely and cocompactly, and it embeds in
$T_{\alpha_i}\times T_{\beta_i}$. Taking the component-wise inverse limit over the standard
characteristic family $(K_m(\Pi_i))_{m\geq1}$ produces the profinite cubical residual complex
$\wh{\Icell}(C_i)$. On each orbit, \cref{lem:completed-orbit} proves that the diagonal coordinate
map extends to an embedding in the product of the completed coordinate orbits. Separability keeps
distinct original orbits from acquiring common limit points. Because there are finitely many cell
orbits in each dimension, the orbit-wise maps together give the unique continuous cellular embedding
of \cref{prop:completion-embedding}.

\smallskip\noindent\emph{Step 2.}
The square orbits of $C_i$ are indexed by the finitely many double cosets corresponding to
intersection points of $\alpha_i$ and $\beta_i$. For a representative $r$ of one of these double
cosets, the corresponding completed square orbit is $\wh\Pi_i\cdot(\cl A_i,r\cl B_i)$. Given
$x,y\in\wh\Pi_i$, we have $(x\cl A_i,y\cl B_i)\in\wh\Pi_i\cdot(\cl A_i,r\cl B_i)$ if and only if
$x^{-1}y\in\cl A_i\,r\,\cl B_i$. By \cref{prop:completion-embedding}, the finite union of these
completed orbits is the image of $\wh C_i^2$, namely
\[
\wh j_i(\wh C_i^2)
=\{(e,f)\in E_{\alpha_i}\times E_{\beta_i}:
e\cross f\}.
\]

Fix a characteristic level
\[
K_i=K_m(\Pi_i),
\qquad U_i=\wh K_i.
\]
By \cref{lem:finite-cover-crossing}, the condition $e\cross f$ is equivalent to intersection of the
corresponding connected elevations at every such level. An elevation of $\alpha_i$ to $S_{K_i}$
corresponds to a $U_i$-orbit of edges. Choose an original representative $g\cl A_i$ of this orbit,
with $g\in\Pi_i$; the stabilizer of this edge in $U_i$ is
\[
\begin{aligned}
\Stab_{U_i}(g\cl A_i)
&=U_i\cap g\cl A_i g^{-1}\\
&=\cl{K_i\cap gA_i g^{-1}}\\
&=\cl{
\langle g a_i^{n_{K_i}(a_i)}g^{-1}\rangle},
\end{aligned}
\]
where $n_{K_i}(a_i)$ is the least positive integer with $a_i^{n_{K_i}(a_i)}\in K_i$; the element
$g a_i^{n_{K_i}(a_i)}g^{-1}$ represents one traversal of the connected elevation. The corresponding
statement for $\beta_i$ is obtained by replacing $A_i,a_i$ with $B_i,b_i$.

By \cref{lem:char-functor}, $\theta(U_1)=U_2$. Choose original edge representatives in the
corresponding $U_i$-orbits, and denote the resulting elevation generators by $c_i,d_i\in K_i$.
Equivariance and bijectivity of the tree maps identify their completed edge stabilizers.
Consequently,
\[
\theta(c_1)=u c_2^\lambda u^{-1},
\qquad
\theta(d_1)=v d_2^\nu v^{-1}
\]
for some $u,v\in U_2$ and $\lambda,\nu\in\hZ^\times$. The exponents are units because each cyclic
closure is isomorphic to $\hZ$, and an isomorphism sends a topological generator to a topological
generator. The two conjugators, and the two unit exponents, may well be different.
\Cref{prop:unit-power}, which rests on the homological criterion of
\cref{thm:homological-intersection}, shows that these replacements preserve disjointness. Hence the
criterion, applied over a cofinal family of finite covers, preserves the closed crossing relation,
and \cref{prop:crossing-invariance,cor:completed-core} give the required equivariant isomorphism
$\wh C_1\cong\wh C_2$.
\end{proof}

\begin{proof}[Proof of \Cref{cor:two-curve}]
Set $C_i=C(\alpha_i,\beta_i)$. Applying \cref{thm:finite-cone-main} to the subgroups $A_1,A_2$ and
then to $B_1,B_2$, we obtain isomorphisms
\[
F_\alpha:\wh T_{\alpha_1}
\xrightarrow{\cong}\wh T_{\alpha_2},
\qquad
F_\beta:\wh T_{\beta_1}
\xrightarrow{\cong}\wh T_{\beta_2}.
\]
Both maps are equivariant for the same isomorphism $\theta$, so their product
$F_\alpha\times F_\beta$ is $\theta$-equivariant for the diagonal actions. By
\cref{prop:crossing-invariance}, $e\cross f$ if and only if $F_\alpha(e)\cross F_\beta(f)$. By
\cref{prop:completion-embedding}, the product map therefore maps the completed square space of $C_1$
onto that of $C_2$. By \cref{cor:completed-core}, it induces a $\theta$-equivariant isomorphism
$F:\wh C_1\xrightarrow{\cong}\wh C_2$ on all five component spaces. By \cref{prop:original-core},
each $C_i$ is a connected, locally finite, regular square complex on which the finitely generated
residually finite group $\Pi_i$ acts freely and cocompactly, and the completions are taken over the
cofinal characteristic families, so \cref{thm:discrete-star} applies to $\theta$ and $F$ and gives
$u\in\wh\Pi_2$ and $h:\Pi_1\xrightarrow{\cong}\Pi_2$ with $\theta=\Inn(u)\circ\wh h$, as claimed.
\end{proof}
\begin{proof}[Proof of \Cref{thm:no-exotic}]
Let $[\theta]\in\Out(\wh\Pi)$ have a representative preserving the two marked conjugacy classes. We
choose conjugators $x,y\in\wh\Pi$ so that the conditions of \cref{cor:two-curve} are written
explicitly as $\theta(\cl A)=x\cl A x^{-1}$ and $\theta(\cl B)=y\cl B y^{-1}$. By the corollary,
$\theta=\Inn(u)\circ\wh h$ for a discrete automorphism $h\in\Aut(\Pi)$ and $u\in\wh\Pi$.

Passing to outer automorphism groups removes $\Inn(u)$, so $[\theta]$ is the image of
$[h]\in\Out(\Pi)$. Only the profinite outer class and the two conjugacy classes are given, so the
automorphism $h$ may move the chosen representatives $A$ and $B$.
\end{proof}

\subsection*{Realization in every dimension from the completed full cellular residual complex}
We now extend the realization argument to complexes of arbitrary dimension.

\begin{proof}[Proof of \Cref{thm:all-dimensional-occurrence}]
The proof of \cref{thm:discrete-star} only uses the occurrence spaces of codimension one faces and
their two structure maps, and it goes through in every dimension; we indicate the changes. Since
$\Gamma_i$ acts freely and $X_i$ is regular, every cell stabilizer is trivial, by Brouwer's fixed
point theorem applied to the characteristic map of the closed cell as in the proof of
\cref{thm:discrete-star}, and hence every occurrence stabilizer is trivial as well. As in that
proof, there are finitely many cell and occurrence orbits, and each completed orbit is a free
$\wh\Gamma_i$-orbit meeting the original space in the original orbit. Consequently, for every $k$,
an occurrence in $\wh I_{k,k-1}(X_i)$ is original as soon as its image under $\wh p_{k,k-1}$ or
under $\wh q_{k,k-1}$ is original.

After a left translation we may assume that $F$ sends some original vertex $v_1$ of $X_1$ to an
original vertex of $X_2$. Since the $1$-skeleton is connected, ordinariness propagates along edge
paths through the endpoint occurrences, exactly as in the square case, so all original vertices,
edges and endpoint occurrences have original images. Suppose that all original cells of dimension
less than $k$ have original images, and let $\sigma$ be an original $k$-cell. Its boundary sphere
contains a $(k-1)$-cell $\tau$, giving an occurrence $(\sigma,\tau,\iota)$ whose image under
$\wh q_{k,k-1}$ is the image of $\tau$, which is original. Therefore the occurrence has original
image, and so does $\sigma=p_{k,k-1}(\sigma,\tau,\iota)$. By induction on $k$, all original cells
and occurrences of $X_1$ have original images, and the same argument applied to $F^{-1}$ yields the
reverse inclusions. The identification of the dense subgroups through the action on $v_1$ is exactly
as before.
\end{proof}


\section{Graph residual complexes}
\label{sec:graph-occurrence}
In this section we construct graph residual complexes and prove their realization theorem, which
is used in \cref{sec:nonlattice}.
\subsection*{Characteristic quotients and directed edge residual complexes}

We keep the notation $K_m(G)=\bigcap_{[G:L]\leq m}L$ from \cref{def:characteristic-cover}, together
with $\wh G$, $\cl H$, $\Inn(u)$, and $\Stab_G(x)$. We write $\mathfrak S_\Lambda$ for the group of
all permutations of a finite set $\Lambda$. For an action on a graph or its completion, $L_u$
denotes left multiplication by $u$ on every component space. The notation $C^{(k)}$ denotes the
$k$-skeleton of a complex, whereas $C^k$ denotes its set of open $k$-cells. Directed multigraphs,
graph actions, and the reversal of an oriented edge use the definitions of \cite{SerreTrees2003}.
Profinite graphs and their quotient maps use \cite{Ribes2017}.

For the following definition, we use the standard directed graph structure, with edge reversal as in
\cite{SerreTrees2003} when present. Our profinite version uses separate profinite vertex and edge
spaces, corresponding to the closed and open edge case of \cite{Ribes2017}.
\begin{definition}
\label{def:graph-diagram}
A directed multigraph $X$ consists of a vertex set $V(X)$, a directed edge set $A(X)$, and initial
and terminal maps
\[
s_X,t_X:A(X)\longrightarrow V(X).
\]
Loops and parallel directed edges are allowed. Connectedness means connectedness of the underlying
undirected multigraph. Local finiteness means that each vertex is incident to finitely many directed
edges.

When $X$ comes from an undirected graph, each geometric edge has two distinct orientations. We keep
the reversal involution $a\mapsto\bar a$, satisfying
\[
\bar{\bar a}=a,\qquad \bar a\neq a,
\qquad
s_X(\bar a)=t_X(a),\qquad
t_X(\bar a)=s_X(a).
\]
The directed edge residual complex is
\[
\Dgraph(X)=\bigl(V(X),A(X),s_X,t_X\bigr),
\]
together with reversal when present.

An isomorphism $F:\Dgraph(X)\to\Dgraph(Y)$ consists of bijections
\[
F_V:V(X)\to V(Y),\qquad
F_A:A(X)\to A(Y)
\]
such that
\[
F_V\circ s_X=s_Y\circ F_A,
\qquad
F_V\circ t_X=t_Y\circ F_A,
\]
and, when reversal is present,
\[
F_A(\bar a)=\cl{F_A(a)}.
\]
A profinite directed edge residual complex has profinite vertex and directed edge spaces and
continuous structure maps, including reversal when present. Its isomorphisms satisfy the same
identities, with $F_V$ and $F_A$ homeomorphisms.

If a group $N$ acts on $X$ by automorphisms preserving these maps, its quotient residual complex
has component sets
\[
N\backslash V(X),\qquad N\backslash A(X),
\]
with
\[
s(Na)=N s_X(a),\qquad
t(Na)=N t_X(a),
\qquad
\cl{Na}=N\bar a
\]
when reversal is present. The induced reversal may have fixed points. If $Na=N\bar a$, then
\[
s(Na)=s(N\bar a)=t(Na),
\]
so this directed edge is a loop in the quotient. Different directed edge orbits stay different, even
when their initial and terminal vertex orbits agree.
\end{definition}
\begin{definition}\label{def:graph-distinguished}
Let $G\acts X$ by directed graph automorphisms. A finite distinguished family is a finite label set
$\Lambda$ and a family $\mathcal S=(Z_\lambda)_{\lambda\in\Lambda}$ of $G$-invariant subgraphs. Thus
$V(Z_\lambda)\subseteq V(X)$ and $A(Z_\lambda)\subseteq A(X)$ are invariant, the endpoint maps send
$A(Z_\lambda)$ into $V(Z_\lambda)$, and reversal preserves $A(Z_\lambda)$ when present. An
isomorphism maps a family $(Z_{1,\lambda})$ to a family $(Z_{2,\lambda})$ according to
$\sigma\in\mathfrak S_\Lambda$ if it sends $Z_{1,\lambda}$ onto $Z_{2,\sigma(\lambda)}$ for every
$\lambda$. The distinguished family may be empty.
\end{definition}

\begin{definition}
\label{def:graph-completion}
Let $G$ be finitely generated and residually finite, acting cocompactly on a locally finite directed
multigraph $X$ with finite vertex stabilizers. We define
\[
\wh{\Dgraph}_G(X)=\varprojlim_{m\geq1}\Dgraph\bigl(K_m(G)\backslash X\bigr).
\]
The inverse limit is taken on vertices, directed edges, both endpoint maps, and reversal when
present. For a distinguished family $\mathcal S=(Z_\lambda)_{\lambda\in\Lambda}$, we also define
\[
\wh{\Dgraph}_G(Z_\lambda)
=\varprojlim_{m\geq1}\Dgraph\bigl(K_m(G)\backslash Z_\lambda\bigr)
\subseteq\wh{\Dgraph}_G(X).
\]
We abbreviate these distinguished subcomplexes by $\wh Z_\lambda$ and set
$\wh{\mathcal S}=(\wh Z_\lambda)_{\lambda\in\Lambda}$. An original vertex or directed edge means a
point in the canonical image of $V(X)$ or $A(X)$, respectively. The completed orbits with finite
stabilizers and their original subsets are described in \cref{lem:graph-finite-orbit}.
\end{definition}

\begin{lemma}\label{lem:graph-finite-orbit}
Let $G$ be finitely generated and residually finite and let $H\leq G$ be finite. The canonical map
\begin{equation}\label{eq:graph-coset-completion}
\wh G/H\longrightarrow\varprojlim_m K_m(G)\backslash G/H
\end{equation}
is a homeomorphism of profinite $\wh G$-sets. Under this identification, $G/H$ embeds as the subset
of cosets represented by elements of $G$.
\end{lemma}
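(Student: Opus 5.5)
The plan is to reduce the statement to the standard description of $\wh G$ as an inverse limit and then pass to orbit spaces of the finite group $H$. Write $N_m=K_m(G)$ and $q_m:\wh G\to G/N_m$ for the quotient maps. Since $(N_m)_m$ is cofinal among the finite index normal subgroups and $G$ is residually finite, $\wh G=\varprojlim_m G/N_m$ (\cref{def:characteristic-cover}). The natural map
\[
\psi:\wh G/H\longrightarrow\varprojlim_m N_m\backslash G/H,
\qquad xH\longmapsto\bigl(N_m g_mH\bigr)_m,
\]
where $g_m\in G$ represents $q_m(x)$, is well defined because right multiplication by $H$ and the choice of $g_m$ modulo $N_m$ do not change $N_mg_mH$. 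Compatibility with the transition maps is clear. The map $\psi$ is continuous, since each coordinate factors through the finite quotient $\wh G/\cl{N_m}$. It is also $\wh G$-equivariant, where $\wh G$ acts on the target coordinatewise through $G/N_m$.

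Surjectivity follows from compactness. Given a compatible family $(N_mg_mH)_m$, the sets
\[
Y_m=\{x\in\wh G: q_m(x)\in g_mHN_m/N_m\}
\]
are nonempty and closed, because they are finite unions of cosets of the open subgroup $\cl{N_m}$. Compatibility makes them nested, so their intersection contains some $x$, and $\psi(xH)$ is the given family. Since $\wh G/H$ is compact and the target is Hausdorff, it remains only to prove injectivity; a continuous bijection between these spaces is then automatically a homeomorphism.

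Injectivity is the main point, and it is where the finiteness of $H$ is used. Suppose $\psi(xH)=\psi(yH)$. Then for every $m$ we have $q_m(x^{-1}y)\in q_m(H)$. Consider the sets
\[
Z_m=\{h\in H:q_m(x^{-1}y)=q_m(h)\}.
\]
They are nonempty subsets of the finite set $H$ and are nested in $m$, so they have a common element $h$. Then $x^{-1}y$ and $h$ agree in every quotient $G/N_m$, so $x^{-1}y=h$ because $\bigcap_m\cl{N_m}=1$. Hence $yH=xH$.

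For the last assertion, the map $G/H\to\wh G/H$ is injective. Indeed, if $g,g'\in G$ satisfy $g^{-1}g'\in H$ inside $\wh G$, then the equality already holds in $G$, because $G$ embeds in $\wh G$ by residual finiteness. Its image is by definition the set of cosets $xH$ with $x\in G$. Through $\psi$, these correspond to the original points $(N_mgH)_m$ of the completion in the sense of the terminology on original $G$-objects.

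I expect the only delicate step to be injectivity. For an infinite $H$ it would fail without $H$ being closed, and the argument via a nested family of nonempty subsets of the finite set $H$ is exactly what handles it.
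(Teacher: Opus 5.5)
Your proof is correct and follows essentially the paper's route: a nested-compact-sets argument for surjectivity, then the compact-to-Hausdorff conclusion. The only difference is in injectivity. You apply a pigeonhole argument to nested nonempty subsets of the finite group $H$, whereas the paper uses the identity $\bigcap_m H\,\cl{K_m(G)}=H$, valid for any closed subgroup, so its argument needs $H$ only closed rather than finite.
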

\begin{proof}
As $K_m(G)$ is normal, $K_m(G)\backslash G/H\cong G/(K_m(G)H)$ as coset sets. The finite subgroup
$H$ is closed in $\wh G$, so the quotient maps define \eqref{eq:graph-coset-completion}. For any
closed subgroup $B$ of a profinite group and cofinal family $(N_m)$ of open normal subgroups,
\[
B=\bigcap_m BN_m.
\]
If $z\notin B$, choose an open normal subgroup $N$ such that $zN\cap B=\varnothing$, and then choose
$m$ with $N_m\leq N$. We have $z\notin BN_m$. Applying the identity to $B=H$ proves injectivity of
\eqref{eq:graph-coset-completion}.

For surjectivity, a compatible family of quotient cosets defines nested nonempty compact sets
\[
g_m\cl{K_m(G)}H\subseteq\wh G.
\]
Any point in their intersection maps to the given family. The map is thus a continuous bijection
from a compact space to a Hausdorff space. The claim about the original subset follows from the
construction and the injection $G\hookrightarrow\wh G$.
\end{proof}

For $a\in A(X)$ one has $\Stab_G(a)\leq\Stab_G(s_X(a))$, so directed edge stabilizers are finite. By
cocompactness and local finiteness there are finitely many vertex and directed edge orbits, so each
completed orbit is a coset space $\wh G/H$ with $H$ finite, and every invariant subgraph is a union
of some of these finitely many orbits on vertices and edges. Hence its completed subcomplex meets
the original complex in that invariant subgraph.

\begin{definition}\label{def:graph-cofinal-matches}
For $i=1,2$, let $G_i\acts X_i$ satisfy \cref{def:graph-completion}, let
$\mathcal S_i=(Z_{i,\lambda})_{\lambda\in\Lambda}$ be distinguished families, and let
$\Phi:\wh G_1\xrightarrow{\cong}\wh G_2$. We fix a finite subgroup
$\mathfrak P\leq\mathfrak S_\Lambda$ of permitted label permutations. A cofinal family of
equivariant isomorphisms of finite quotients consists of an increasing unbounded sequence
$m_1<m_2<\cdots$ and, for every $j$, a $\Phi_{m_j}$-equivariant isomorphism
\begin{equation}\label{eq:graph-finite-matches}
F_j:\Dgraph\bigl(K_{m_j}(G_1)\backslash X_1\bigr)
\xrightarrow{\cong}
\Dgraph\bigl(K_{m_j}(G_2)\backslash X_2\bigr)
\end{equation}
which maps the quotient images of $Z_{1,\lambda}$ onto those of $Z_{2,\sigma_j(\lambda)}$ for some
$\sigma_j\in\mathfrak P$, for different $j$ unrelated to one another. If there are no distinguished
subgraphs, the permutation coordinate is ignored.
\end{definition}

\subsection*{Compatible selection at characteristic finite quotients}
\begin{lemma}\label{lem:graph-descent}
Let $k\geq j$. Every isomorphism \eqref{eq:graph-finite-matches} at level $m_k$ descends uniquely to
a $\Phi_{m_j}$-equivariant isomorphism at level $m_j$. The descended map has the same permutation of
the distinguished labels.
\end{lemma}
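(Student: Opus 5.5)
The plan is to define the descended map directly on orbit sets and check that it is well defined. Write $N_j^i=K_{m_j}(G_i)$ and $N_k^i=K_{m_k}(G_i)$. Since $m_k\geq m_j$, we have $N_k^i\leq N_j^i$, and the quotient $N_j^i/N_k^i$ is a normal subgroup of the finite group $G_i/N_k^i$. There is a natural identification
\[
N_j^i\backslash V(X_i)\cong (N_j^i/N_k^i)\backslash\bigl(N_k^i\backslash V(X_i)\bigr),
\]
and the same holds for $A(X_i)$. By \cref{lem:char-functor}, $\Phi(\cl{K_m(G_1)})=\cl{K_m(G_2)}$ for every $m$. Hence the finite isomorphism $\Phi_{m_k}$ maps $N_j^1/N_k^1$ onto $N_j^2/N_k^2$. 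It also reduces to $\Phi_{m_j}$ modulo these subgroups, because both are induced by the single isomorphism $\Phi$.

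Given a $\Phi_{m_k}$-equivariant isomorphism $F_k$, I will set $F(N_j^1x)=N_j^2y$, where $F_k(N_k^1x)=N_k^2y$, on both vertices and directed edges.

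\textbf{Well-definedness.} Suppose $N_j^1x=N_j^1x'$. Then $N_k^1x'=\bar n\,N_k^1x$ for some $\bar n\in N_j^1/N_k^1$. Equivariance gives $F_k(N_k^1x')=\Phi_{m_k}(\bar n)F_k(N_k^1x)$, and $\Phi_{m_k}(\bar n)\in N_j^2/N_k^2$. So the two images have the same $N_j^2$-orbit.

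\textbf{Bijectivity.} Applying the same construction to $F_k^{-1}$, which is $\Phi_{m_k}^{-1}$-equivariant, produces an inverse.

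\textbf{Structure maps.} Commutation with $s$, $t$ and reversal is inherited from $F_k$, because the quotient structure maps are defined orbitwise.

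\textbf{Equivariance.} For $g\in G_1$ with image $\bar g$ at level $m_k$, we have $F(gN_j^1x)=N_j^2\,\Phi_{m_k}(\bar g)y$. This depends only on $\Phi_{m_k}(\bar g)$ modulo $N_j^2$, which equals $\Phi_{m_j}$ of the image of $g$.

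\textbf{Uniqueness.} Uniqueness means that the descended map is the only map compatible with $F_k$ under the projections $N_k^i\backslash X_i\to N_j^i\backslash X_i$. Those projections are surjective, so the values of the descended map are forced.

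\textbf{Distinguished subgraphs.} The quotient image of $Z_{i,\lambda}$ at level $m_j$ is the projection of its image at level $m_k$. Therefore $F_k$ mapping the level-$m_k$ image of $Z_{1,\lambda}$ onto that of $Z_{2,\sigma_k(\lambda)}$ implies the same statement at level $m_j$, with the same permutation $\sigma_k$.

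The main obstacle is the compatibility of $\Phi_{m_k}$ with $\Phi_{m_j}$ together with the preservation of the intermediate kernels. I expect \cref{lem:char-functor} to settle this: the induced finite isomorphisms are all reductions of the one profinite isomorphism $\Phi$, and the closures of the characteristic subgroups are preserved. The rest is a routine verification of the definition.
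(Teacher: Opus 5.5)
Your proposal is correct and follows essentially the same route as the paper. You view the level-$m_j$ quotient as the quotient of the level-$m_k$ one by $N_j^i/N_k^i$, use equivariance together with \cref{lem:char-functor} to show that $F_k$ respects these orbits, descend $F_k^{-1}$ to get bijectivity, and get uniqueness from surjectivity of the projections; your explicit checks of equivariance and of the distinguished subgraphs spell out steps the paper states in a single line.
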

\begin{proof}
We set $N_{i,k}=K_{m_k}(G_i)$ and $N_{i,j}=K_{m_j}(G_i)$. Then $N_{i,k}\leq N_{i,j}$, and the
complex at level $m_j$ is the quotient of the complex at level $m_k$ by $N_{i,j}/N_{i,k}$. Two
vertices or directed edges at level $m_k$ with the same image at level $m_j$ differ by an element of
$N_{1,j}/N_{1,k}$. Equivariance and \cref{lem:char-functor} imply that their images differ by an
element of $N_{2,j}/N_{2,k}$. The map therefore descends and commutes with endpoints and reversal.
The inverse descends in the same way, so the descended map is bijective. The quotient images of the
distinguished subgraphs are permuted by the same permutation. Uniqueness follows from surjectivity
of the quotient projections.
\end{proof}

\begin{theorem}\label{thm:graph-compatible-selection}
Under \cref{def:graph-cofinal-matches}, permitted isomorphisms of finite quotients $E_j$ can be
chosen at all selected levels so that
\[
r_{k,j}(E_k)=E_j\qquad(k\geq j),
\]
where $r_{k,j}$ is the descent map of \cref{lem:graph-descent}. Their inverse limit is a
$\Phi$-equivariant isomorphism
\begin{equation}\label{eq:graph-compatible-limit}
F:\wh{\Dgraph}_{G_1}(X_1)\xrightarrow{\cong}\wh{\Dgraph}_{G_2}(X_2).
\end{equation}
If distinguished subgraphs are present, the same permutation $\sigma\in\mathfrak P$ occurs at every
selected level and in the inverse limit.
\end{theorem}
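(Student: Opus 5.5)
The argument is a compactness (K\"onig's lemma) selection over the finite sets of permitted level isomorphisms, followed by an inverse-limit construction.

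For each selected index $j$, let $\mathcal E_j$ be the set of all $\Phi_{m_j}$-equivariant isomorphisms
\[
\Dgraph\bigl(K_{m_j}(G_1)\backslash X_1\bigr)\xrightarrow{\cong}\Dgraph\bigl(K_{m_j}(G_2)\backslash X_2\bigr)
\]
that map the distinguished family by some permutation in $\mathfrak P$. The quotients have finitely many vertices and directed edges, because the actions are cocompact with finite stabilizers, the graphs are locally finite, and $K_{m_j}(G_i)$ has finite index. Hence $\mathcal E_j$ is finite, and it is nonempty since $F_j\in\mathcal E_j$. By \cref{lem:graph-descent}, descent gives maps $r_{k,j}:\mathcal E_k\to\mathcal E_j$ for $k\geq j$. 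Uniqueness of descent gives $r_{k,j}\circ r_{l,k}=r_{l,j}$, because descending in two steps and descending in one step both produce a map compatible with the same surjective quotient projections. So $(\mathcal E_j,r_{k,j})$ is an inverse system of nonempty finite sets. Its inverse limit is therefore nonempty, by Tychonoff or by K\"onig's lemma. This yields a compatible family $(E_j)$ with $r_{k,j}(E_k)=E_j$.

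To handle the permutations, I would refine $\mathcal E_j$ to the set of pairs $(E,\sigma)$ with $E$ realizing $\sigma$. \cref{lem:graph-descent} says descent preserves $\sigma$, so the compatible thread carries one constant permutation $\sigma$. If $\sigma$ is not uniquely determined by $E$ (for instance when some $Z_\lambda$ coincide), working with pairs avoids the ambiguity. One can also simply pigeonhole: the finite group $\mathfrak P$ has some $\sigma$ occurring for infinitely many $j$, and we can restrict the system to those pairs.

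Next, the levels $m_j$ are increasing and unbounded, and $K_m(G_i)$ decreases in $m$. So $(K_{m_j}(G_i))_j$ is cofinal in the standard family, and the inverse limits over the subsequence agree with $\wh{\Dgraph}_{G_i}(X_i)$, by the restriction-of-coordinates isomorphism of \cref{def:core}. Define $F=\varprojlim E_j$ componentwise on vertices and directed edges. It commutes with $s,t$ and with reversal because each $E_j$ does. It is bijective because $\varprojlim E_j^{-1}$, whose terms are compatible since inverses descend, is a two-sided inverse. It is continuous because each coordinate of $F$ depends on a single coordinate. A continuous bijection of compact Hausdorff spaces is a homeomorphism.

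For $\Phi$-equivariance, take $g\in\wh G_1$. At level $m_j$ the image of $g$ acts through $G_1/K_{m_j}(G_1)$, and $E_j$ intertwines it with $\Phi_{m_j}(g)$. Here $\Phi_{m_j}$ is the map induced via \cref{lem:char-functor}, which is compatible with the projections of $\wh G_i$. Passing to the limit gives $F(gx)=\Phi(g)F(x)$. Finally, $E_j$ maps the quotient image of $Z_{1,\lambda}$ onto that of $Z_{2,\sigma(\lambda)}$, and inverse limits of these subsets give $F(\wh Z_{1,\lambda})=\wh Z_{2,\sigma(\lambda)}$. The only delicate point is the functoriality $r_{k,j}\circ r_{l,k}=r_{l,j}$, which comes down to the uniqueness clause of \cref{lem:graph-descent}.
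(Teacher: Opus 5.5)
Your proposal is correct and follows essentially the same route as the paper. The paper also takes the finite nonempty sets of permitted level isomorphisms, with the permutation included as part of the data, and uses transitivity of descent. It then extracts a compatible thread by compactness of the product, via the finite intersection property, which is the same as your nonemptiness of an inverse limit of nonempty finite sets. Your checks of equivariance, continuity, bijectivity through the inverse thread, and preservation of the distinguished family fill in details the paper states only briefly.
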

\begin{proof}
Let $\mathscr E_j$ be the set of all permitted $\Phi_{m_j}$-equivariant isomorphisms, including the
permutation coordinate. Both quotient complexes and the permitted permutation group are finite, so
$\mathscr E_j$ is finite. It is nonempty by assumption. Descent satisfies
\[
r_{\ell,j}=r_{k,j}\circ r_{\ell,k}\qquad(\ell\geq k\geq j).
\]
We give each $\mathscr E_j$ the discrete topology. The product
$\mathscr E=\prod_{j\geq1}\mathscr E_j$ is compact. For $k\geq j$, the condition $r_{k,j}(E_k)=E_j$
defines a closed subset of $\mathscr E$. Any finite collection of these conditions involves finitely
many indices. We choose $N$ at least as large as all of them, choose any $E_N\in\mathscr E_N$, and
set every required lower coordinate equal to its descent from $E_N$. Transitivity satisfies all the
conditions in the finite collection. The closed conditions therefore have the finite intersection
property, and by compactness there is a compatible family. See also \cite{RibesZalesskii2010}.

The selected subgroups are cofinal because $(m_j)$ is unbounded. Taking inverse limits yields
\eqref{eq:graph-compatible-limit}, and the compatible inverse maps give its inverse. The permutation
coordinate is unchanged by descent, so the selected family involves a single permutation.
\end{proof}

\begin{remark}\label{rem:graph-selection-surjectivity}
The descent maps $r_{k,j}$ are not necessarily surjective, and the maps selected by
\cref{thm:graph-compatible-selection} may differ from the initial choices.
\end{remark}

\subsection*{Realization of original graph actions and their outer automorphisms}

\begin{proof}[Proof of \Cref{thm:graph-realization}]
Under condition \textup{(a)}, \cref{thm:graph-compatible-selection} provides compatible maps at the
chosen levels, a common permutation $\sigma\in\mathfrak P$, and a $\Phi$-equivariant inverse limit
isomorphism $F$ satisfying condition \textup{(b)}. Under condition \textup{(b)}, we use the given
map $F$. It is therefore enough to prove the realization conclusion from condition \textup{(b)}.

We choose $v_1\in V(X_1)$. The point $F(v_1)$ belongs to a completed orbit $\wh G_2/V_2$, where
$V_2$ is the finite stabilizer of some $v_2\in V(X_2)$. We choose $w\in\wh G_2$ with $wF(v_1)=v_2$,
and set
\[
F'=wF,\qquad \Phi'=\Inn(w)\circ\Phi.
\]
Then $F'$ is $\Phi'$-equivariant and $F'(v_1)=v_2$.

Fix a directed edge orbit of $X_2$ and choose an original representative $a_0$. Choose original
representatives $x_0,y_0$ of its initial and terminal vertex orbits, and set
\[
L=\Stab_{G_2}(a_0),\qquad V=\Stab_{G_2}(x_0),\qquad W=\Stab_{G_2}(y_0).
\]
All three stabilizers are finite. Since $x_0$ represents the $G_2$-orbit containing $s_{X_2}(a_0)$,
and $y_0$ represents the orbit containing $t_{X_2}(a_0)$, there exist $c,d\in G_2$ such that
\[
s_{X_2}(a_0)=cx_0,
\qquad
t_{X_2}(a_0)=dy_0.
\]
By equivariance of the completed endpoint maps,
\[
s(ga_0)=gcx_0,\qquad
t(ga_0)=gdy_0,
\]
so in these coset coordinates
\[
s(gL)=gcV,\qquad
t(gL)=gdW
\qquad(g\in\wh G_2).
\]

These formulas are well defined, since an element of $L$ fixes $a_0$ and hence both of its
endpoints, so that
\[
c^{-1}Lc\leq V,\qquad d^{-1}Ld\leq W.
\]
Hence, replacing $g$ by $g\ell$ with $\ell\in L$ changes neither endpoint coset:
\[g\ell cV=gc(c^{-1}\ell c)V=gcV,\qquad g\ell dW=gd(d^{-1}\ell d)W=gdW.\]
If $gcV=\gamma V$ for some $\gamma\in G_2$, then
\[
g\in\gamma Vc^{-1}\subseteq G_2.
\]
Hence the directed edge and its other endpoint are original. The argument for the terminal map is
identical. This is the local occurrence argument of \cref{thm:discrete-star}, adapted to finite
stabilizers.

Starting at $v_1$, apply the argument at the initial or terminal vertex to every incident directed
edge. Its image and its other endpoint are original. Induction on distance in the connected
underlying graph yields $F'(V(X_1))\subseteq V(X_2)$ and $F'(A(X_1))\subseteq A(X_2)$, and the local
argument applied to $(F')^{-1}$ turns these inclusions into equalities. Because distinguished
subgraphs are invariant unions of orbits, their completions meet the original complex in their
original vertices and directed edges. The distinguished family claim follows.

For $\gamma\in G_1$, by equivariance $F'(\gamma v_1)=\Phi'(\gamma)v_2$. This vertex is original, so
$\Phi'(\gamma)V_2=\delta V_2$ for some $\delta\in G_2$, and
$\Phi'(\gamma)\in\delta V_2\subseteq G_2$. The inverse argument shows $\Phi'(G_1)=G_2$. We set
$h=\Phi'|_{G_1}$. Continuity and density imply $\Phi'=\wh h$, and
\eqref{eq:graph-discrete-realization} follows with $u=w^{-1}$.

Conversely, a map $F$ as in condition \textup{(b)} descends to every characteristic quotient:
quotienting each completed orbit by $\cl{K_m(G_i)}$ determines the orbit at level $m$, while $\Phi$
identifies the relevant open normal subgroups by \cref{lem:char-functor}, and the descended maps
commute with the endpoint maps, reversal and the distinguished subgraphs. So condition \textup{(b)}
implies condition \textup{(a)} as well.
\end{proof}

\begin{corollary}\label{cor:graph-free}
Under the remaining conditions of \cref{thm:graph-realization}, suppose the actions are free on
vertices and directed edges. Then its realization conclusion holds, and every completed vertex and
directed edge orbit is a free profinite orbit.
\end{corollary}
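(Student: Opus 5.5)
The statement to prove is \cref{cor:graph-free}. I will not build a new argument. The corollary is \cref{thm:graph-realization} with vertex and edge stabilizers taken to be trivial, together with a short observation about orbits. The plan is to check that free actions meet the hypotheses of \cref{def:graph-completion} and \cref{thm:graph-realization}, and then to read the freeness of the completed orbits off \cref{lem:graph-finite-orbit} with $H=\{1\}$.

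First, hypothesis checking. A free action on vertices has trivial vertex stabilizers, which are in particular finite. For each $a$ we have $\Stab_{G_i}(a)\leq\Stab_{G_i}(s_{X_i}(a))$, so directed edge stabilizers are trivial too. Cocompactness, local finiteness and connectedness are assumed unchanged. So $\wh{\Dgraph}_{G_i}(X_i)$ is defined, and either condition (a) or condition (b) yields $u\in\wh G_2$ and $h$ with $\Phi=\Inn(u)\circ\wh h$. Moreover, $L_{u^{-1}}\circ F$ restricts to an $h$-equivariant isomorphism $\Dgraph(X_1)\to\Dgraph(X_2)$ that respects the distinguished families. This is the realization conclusion.

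Second, the orbit structure. There are finitely many vertex orbits and finitely many directed edge orbits, say with representatives $z_1,\dots,z_r$. By \cref{lem:graph-finite-orbit} with $H=\Stab_{G_i}(z_j)=\{1\}$, the completion of each orbit $G_iz_j$ is identified $\wh G_i$-equivariantly with $\wh G_i/\{1\}=\wh G_i$. Under this identification $\gamma z_j\mapsto\gamma$. So $\wh G_i$ acts on each completed orbit by left multiplication, which is free. The completed orbits are disjoint: they already differ at level $m=1$, because their images there are distinct $G_i$-orbits. Their union is the whole completed vertex space, respectively the whole completed directed edge space, so every completed orbit is a free profinite orbit. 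As in the proof of \cref{thm:discrete-star}, the original points in $\wh G_iz_j$ are exactly $G_iz_j$.

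The only point needing care is the identification of the completed orbit, that is, the inverse limit over $K_m(G_i)\backslash G_iz_j$, with $\wh G_i$. This uses injectivity in \cref{lem:graph-finite-orbit}, which reduces to $\bigcap_m\cl{K_m(G_i)}=1$. That holds because $(K_m(G_i))_m$ is cofinal by \cref{def:characteristic-cover}. I do not expect a real obstacle.
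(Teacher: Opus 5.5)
Your proposal is correct and follows the paper's proof: trivial stabilizers put you under the hypotheses of \cref{thm:graph-realization}, and \cref{lem:graph-finite-orbit} with $H=\{1\}$ identifies each completed orbit with a free copy of $\wh G_i$. Your extra checks (the orbits are already separated at level $m=1$, and the inverse limit splits as the finite disjoint union of the completed orbits) only make explicit what the paper's two-line citation leaves implicit.
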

\begin{proof}
The vertex and directed edge stabilizers are trivial. We apply
\cref{lem:graph-finite-orbit,thm:graph-realization}.
\end{proof}

\begin{definition}
\label{def:graph-action-automorphisms}
Let $G\acts X$ satisfy the action condition of \cref{thm:graph-realization} and fix a finite
labelled family $\mathcal S=(Z_\lambda)_{\lambda\in\Lambda}$ of invariant subgraphs. We define
$\Aut(G\acts X,\mathcal S)$ to consist of pairs $(\alpha,A)$ with $\alpha\in\Aut(G)$ and a complex
automorphism $A:\Dgraph(X)\to\Dgraph(X)$ such that
\[
A(gz)=\alpha(g)A(z),\qquad A(Z_\lambda)=Z_\lambda\quad(\lambda\in\Lambda).
\]
Each label is fixed in this definition. We set
\[
\Daction(G\acts X)=\{(\Inn(g),L_g):g\in G\},\qquad
\Out(G\acts X,\mathcal S)=\Aut(G\acts X,\mathcal S)/\Daction(G\acts X).
\]
The diagonal subgroup is normal, since conjugation by $(\alpha,A)$ sends $(\Inn(g),L_g)$ to
$(\Inn(\alpha(g)),L_{\alpha(g)})$. We define the completed action groups by the same equations, with
continuous maps and with diagonal translations by $u\in\wh G$. Since the subgroups $K_m(G)$ are
characteristic, the completion of a discrete pair is well defined, and we obtain
\begin{equation}\label{eq:graph-outer-completion}
\Out(G\acts X,\mathcal S)\longrightarrow
\Out\bigl(\wh G\acts\wh{\Dgraph}_G(X),\wh{\mathcal S}\bigr).
\end{equation}
\end{definition}

\begin{theorem}\label{thm:graph-action-outer}
Under the action conditions of \cref{thm:graph-realization}, the homomorphism
\eqref{eq:graph-outer-completion} is an isomorphism.
\end{theorem}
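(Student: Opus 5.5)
The plan is to prove surjectivity by feeding a completed pair into \cref{thm:graph-realization}(b) with both actions taken to be $G\acts X$, and to prove injectivity by showing that a completed diagonal translation which preserves the original complex must come from $G$. Before either step we check that \eqref{eq:graph-outer-completion} is a well defined homomorphism. If $(\alpha,A)\in\Aut(G\acts X,\mathcal S)$, then $\alpha$ permutes the characteristic subgroups $K_m(G)$, so $A$ descends to every quotient $\Dgraph(K_m(G)\backslash X)$. Taking the inverse limit gives an $\wh\alpha$-equivariant isomorphism $\wh A$ that preserves each $\wh Z_\lambda$. Completion is functorial and sends $(\Inn(g),L_g)$ to $(\Inn(g),L_g)$ with $g\in G\subseteq\wh G$. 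Hence completion maps the diagonal subgroup into the completed diagonal subgroup and induces a homomorphism on outer groups.

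\emph{Surjectivity.} Let $(\Phi,F)$ be a completed pair, so $\Phi\in\Aut(\wh G)$ and $F$ is a $\Phi$-equivariant automorphism of $\wh{\Dgraph}_G(X)$ fixing every $\wh Z_\lambda$. Apply \cref{thm:graph-realization} under condition (b), with $G_1=G_2=G$, $X_1=X_2=X$, $\mathcal S_1=\mathcal S_2=\mathcal S$, $\mathfrak P$ trivial and $\sigma=\id$. This yields $u\in\wh G$ and $h\in\Aut(G)$ such that $\Phi=\Inn(u)\circ\wh h$. It also yields that $B:=L_{u^{-1}}\circ F$ restricts to an $h$-equivariant automorphism $B_0$ of $\Dgraph(X)$ with $B_0(Z_\lambda)=Z_\lambda$ for every $\lambda$. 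Thus $(h,B_0)\in\Aut(G\acts X,\mathcal S)$. The completion $\wh B_0$ and $B$ are continuous maps into Hausdorff spaces. They agree on the original vertices and directed edges, which are dense in each completed orbit by \cref{lem:graph-finite-orbit}, so $\wh B_0=B$. Therefore
\[
(\Phi,F)=(\Inn(u),L_u)\circ(\wh h,\wh B_0),
\]
and the class of $(\Phi,F)$ is the image of the class of $(h,B_0)$.

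\emph{Injectivity.} Suppose the completion of $(\alpha,A)$ equals $(\Inn(u),L_u)$ for some $u\in\wh G$. Choose an original vertex $v$ with finite stabilizer $H$. Then $uv=\wh A(v)=A(v)$ is original. By \cref{lem:graph-finite-orbit} the completed orbit is $\wh G/H$, and its original points are exactly the cosets $\gamma H$ with $\gamma\in G$. So $uH=\gamma H$ for some $\gamma\in G$, which gives $u\in\gamma H\subseteq G$. Restricting to the dense subgroup $G$ gives $\alpha=\Inn(u)$. The original complex embeds in its completion, again by \cref{lem:graph-finite-orbit}, so restricting $\wh A=L_u$ gives $A=L_u$. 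Hence $(\alpha,A)$ lies in $\Daction(G\acts X)$, and the kernel is trivial.

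The main point needing care is the surjectivity step: the realization theorem must be applied with the permutation forced to be the identity, and one must check that $u^{-1}F$ is genuinely the completion of its restriction. The density argument above handles the second issue. The injectivity step uses only that stabilizers are finite and that $X$ has at least one vertex.
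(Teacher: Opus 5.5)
Your proposal is correct and follows the paper's proof essentially step for step. For surjectivity you apply \cref{thm:graph-realization} with the label permutation forced to be the identity and identify the translated pair with the completion of its restriction by density. For injectivity you use a finite vertex stabilizer to force $u\in\gamma V\subseteq G$. Your explicit check that \eqref{eq:graph-outer-completion} is a well-defined homomorphism is a harmless addition; the paper leaves this to \cref{def:graph-action-automorphisms}.
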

\begin{proof}
A continuous pair $(\Phi,F)$ on the completed action descends to every characteristic quotient by
\cref{lem:char-functor}. By \cref{thm:graph-realization}, a single diagonal left translation makes
this pair restrict to a discrete pair $(h,F'|_X)$ preserving every distinguished label. Its
completion is the translated profinite pair because the maps agree on the dense original vertex and
directed edge sets. So the map is surjective.

For injectivity, suppose a discrete pair $(\alpha,A)$ becomes diagonal: $\wh\alpha=\Inn(u)$ and
$\wh A=L_u$ for some $u\in\wh G$. We choose $v\in V(X)$ and set $V=\Stab_G(v)$, a finite subgroup.
As $A(v)=uv$ is original, $uV=\gamma V$ for some $\gamma\in G$, and $u\in\gamma V\subseteq G$.
Restricting both equalities to the original subsets, we get $\alpha=\Inn(u)$ and $A=L_u$. The
discrete pair was already diagonal.
\end{proof}

\subsection*{Realization from a graph}

\begin{definition}\label{def:graph-pi1-surjective}
Let $C$ be a connected finite simplicial complex. A finite connected subgraph $Y\subseteq C^{(1)}$
is \emph{$\pi_1$-surjective} if its inclusion induces a surjection
$\pi_1(Y)\twoheadrightarrow\pi_1(C)$. We give $Y$ the directed edge definition with both
orientations of every geometric edge.
\end{definition}

\begin{lemma}\label{lem:graph-full-lift}
Every connected finite simplicial complex contains a $\pi_1$-surjective finite connected subgraph.
If $p:\widetilde C\to C$ is the universal cover, $G=\pi_1(C)$ is its deck group, and $Y$ is such a
subgraph, then $X=p^{-1}(Y)$ is connected and locally finite. The action $G\acts X$ is free and
cocompact and $G\backslash X=Y$.
\end{lemma}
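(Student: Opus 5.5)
The plan is a standard covering-space argument, in three parts: build a $\pi_1$-surjective subgraph, use surjectivity to get connectedness of the preimage, and read off local finiteness, freeness and cocompactness from the covering map.

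\emph{Existence of $Y$.} Take $Y=C^{(1)}$, the full $1$-skeleton. It is a finite connected graph because $C$ is a connected finite simplicial complex. By cellular approximation, every loop in $C$ based at a vertex is homotopic to an edge loop, so $\pi_1(C^{(1)})\to\pi_1(C)$ is surjective. If a smaller subgraph is wanted, one can take a maximal tree together with the finitely many edges corresponding to a finite generating set of $\pi_1(C)$; the full skeleton already suffices for the statement.

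\emph{Connectedness of $X=p^{-1}(Y)$.} This is the main step. Fix a vertex $y_0\in Y$ and a lift $\tilde y_0$. Every point of $X$ is joined by a path in $X$ to some lift $g\tilde y_0$, since $Y$ is connected and paths in $Y$ lift to $X$. It therefore suffices to join $\tilde y_0$ to $g\tilde y_0$ inside $X$ for each $g\in G$. Under the standard identification of the deck group with $\pi_1(C,y_0)$, $g$ is represented by a loop $\gamma$ at $y_0$, and the lift of $\gamma$ starting at $\tilde y_0$ ends at $g\tilde y_0$. By $\pi_1$-surjectivity we may choose $\gamma$ inside $Y$. Its lift then stays in $p^{-1}(Y)=X$, which gives the required path. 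If the identification is the opposite one, the same argument applies with $g^{-1}$.

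\emph{Remaining properties.} The action of $G$ on $\widetilde C$ is free, and $X$ is $G$-invariant because $p\circ g=p$, so $G$ acts freely on $X$. The simplicial structure lifts, so $X$ is a subgraph of the $1$-skeleton of $\widetilde C$. Since $p$ is a covering map that is a local isomorphism of simplicial complexes, each vertex of $X$ has the same finite degree as its image in $Y$, so $X$ is locally finite. The quotient $G\backslash X$ is identified with $p(X)=Y$, because $p$ is surjective onto $Y$ and its fibers are exactly the $G$-orbits. Finally, $G\backslash X=Y$ is finite, so there are finitely many vertex and edge orbits. Hence the action is cocompact: finitely many closed edges, one from each orbit, form a compact set whose translates cover $X$. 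Directed edges are taken with both orientations, and $G$ preserves orientations, so the directed structure passes to the quotient. The only point needing care is the connectedness step, in particular matching the conventions for the deck-group identification.
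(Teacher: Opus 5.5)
Your proposal is correct and follows the paper's argument. You take $Y=C^{(1)}$, which is $\pi_1$-surjective because every loop is homotopic to an edge loop. Connectedness of $X=p^{-1}(Y)$ comes from $\pi_1$-surjectivity: the paper states that components of $p^{-1}(Y)$ correspond to cosets of the image of $\pi_1(Y)$, and your path-lifting argument proves this directly in the surjective case. Freeness, local finiteness, $G\backslash X=Y$ and cocompactness are then read off from the covering, and your remarks on evenly covered stars and the deck-group convention only make explicit what the paper leaves implicit.
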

\begin{proof}
The inclusion of the $1$-skeleton $C^{(1)}$ induces a surjection on fundamental groups, since every
loop in $C$ is homotopic to an edge path \cite{Hatcher2002}, so $C^{(1)}$ itself is a
$\pi_1$-surjective finite connected subgraph. Now let $Y$ be any such subgraph. The components of
$p^{-1}(Y)$ correspond to the cosets of the image of $\pi_1(Y)$ in $\pi_1(C)$, so $X=p^{-1}(Y)$ is
connected precisely because the inclusion of $Y$ is $\pi_1$-surjective. It is locally finite because
$C$ is finite, and the deck group acts freely on $\widetilde C$, hence on $X$, with quotient
$p(X)=Y$; in particular the action is cocompact.
\end{proof}

\begin{theorem}\label{thm:graph-core-realization}
For $i=1,2$, let $C_i$ be connected finite simplicial complexes, let $G_i=\pi_1(C_i)$ be residually
finite, and identify $G_i$ with the deck group of $p_i:\widetilde C_i\to C_i$. We choose
$\pi_1$-surjective graphs $Y_i\subseteq C_i^{(1)}$ and set $X_i=p_i^{-1}(Y_i)$. Suppose
$\Phi:\wh G_1\xrightarrow{\cong}\wh G_2$ and the directed edge complexes admit a cofinal family as
in \cref{def:graph-cofinal-matches}. Then the finite maps can be selected compatibly, and there are
$u\in\wh G_2$ and $h:G_1\xrightarrow{\cong}G_2$ with $\Phi=\Inn(u)\circ\wh h$. The normalized
inverse limit map restricts to an $h$-equivariant graph isomorphism $F':X_1\to X_2$ and descends to
a graph isomorphism $\cl F:Y_1\to Y_2$.

For compatible base vertices, let $\rho_i:\pi_1(Y_i)\twoheadrightarrow G_i$ be induced by inclusion.
Then
\begin{equation}\label{eq:graph-monodromy-compatibility}
\rho_2\circ\cl F_*=h\circ\rho_1.
\end{equation}
Changing the base vertices or connecting paths conjugates this identity in $G_2$.
\end{theorem}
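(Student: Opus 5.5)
The plan is to reduce everything to \cref{thm:graph-realization} and \cref{cor:graph-free}, and then read off the monodromy identity from the lifted graph isomorphism. First, by \cref{lem:graph-full-lift}, each $X_i=p_i^{-1}(Y_i)$ is connected and locally finite, and $G_i$ acts on it freely and cocompactly with $G_i\backslash X_i=Y_i$. We take the directed edge structure with both orientations and with the reversal involution. Vertex stabilizers are trivial, so the action hypotheses of \cref{thm:graph-realization} hold with an empty distinguished family. The cofinal family of finite equivariant isomorphisms is exactly condition \textup{(a)}. Hence \cref{thm:graph-compatible-selection} selects compatible maps, and their inverse limit $F$ is $\Phi$-equivariant. \Cref{thm:graph-realization} then gives $u\in\wh G_2$ and $h:G_1\xrightarrow{\cong}G_2$ with $\Phi=\Inn(u)\circ\wh h$. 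It also shows that $F'=L_{u^{-1}}\circ F$ restricts to an $h$-equivariant isomorphism $\Dgraph(X_1)\to\Dgraph(X_2)$ that commutes with endpoints and reversal, i.e.\ a graph isomorphism $X_1\to X_2$. Because $F'$ is $h$-equivariant, it maps $G_1$-orbits onto $G_2$-orbits and so descends to a bijection on vertex and directed edge orbits, commuting with $s,t$ and reversal. That descended map is the graph isomorphism $\cl F:Y_1\to Y_2$.

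For \eqref{eq:graph-monodromy-compatibility}, fix a base vertex $y_1\in Y_1$, a lift $\tilde y_1\in X_1$, and put $\tilde y_2=F'(\tilde y_1)$, $y_2=p_2(\tilde y_2)=\cl F(y_1)$. Use these lifts to identify $G_i$ with the deck groups, so that $\rho_i$ is the monodromy: a loop $\ell$ at $y_i$ lifts to a path from $\tilde y_i$ to $\rho_i([\ell])\tilde y_i$. This is the standard description of $\pi_1(Y_i)\to\pi_1(C_i)$ through the universal cover restricted to $p_i^{-1}(Y_i)$, and it matches the inclusion-induced map up to the usual convention. Take an edge loop $\ell$ at $y_1$ with lift $\tilde\ell$ from $\tilde y_1$ to $g\tilde y_1$, where $g=\rho_1([\ell])$. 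Then $F'(\tilde\ell)$ is an edge path from $\tilde y_2$ to $F'(g\tilde y_1)=h(g)\tilde y_2$. It projects to $\cl F(\ell)$, since $F'$ covers $\cl F$. So the lift of $\cl F(\ell)$ at $\tilde y_2$ ends at $h(g)\tilde y_2$, that is, $\rho_2(\cl F_*[\ell])=h(\rho_1[\ell])$. Every class in $\pi_1(Y_1,y_1)$ is represented by an edge loop, so this proves the identity.

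The main obstacle is bookkeeping the conventions: the identification of $G_i$ with the deck group must be the one attached to the chosen lift $\tilde y_i$, and the left action must agree with the covering monodromy so that no inverse or conjugate sneaks in. If the base vertex of $Y_2$ is not $\cl F(y_1)$, or the lift $\tilde y_2$ is chosen differently, there are two effects. A connecting path changes $\rho_2$ by an inner automorphism of $G_2$. A different lift $\gamma\tilde y_2$ replaces the deck identification by conjugation by $\gamma$. Either way the identity is conjugated in $G_2$, which is the final sentence of the statement. The only nonformal input is \cref{thm:graph-realization}, which we take as already proved.
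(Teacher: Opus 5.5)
Your proposal is correct and follows the paper's proof essentially step for step. \cref{lem:graph-full-lift} supplies the free cocompact action, \cref{thm:graph-compatible-selection} together with \cref{thm:graph-realization} (equivalently \cref{cor:graph-free}) gives $u$, $h$ and the normalized $h$-equivariant isomorphism $F'$, and passing to orbits yields $\cl F$.

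The monodromy identity is obtained, as in the paper, by lifting an edge loop at $\tilde y_1$, applying $F'$, and using equivariance to send the terminal deck element $g$ to $h(g)$. Your discussion of lifts and connecting paths is exactly the change-of-basepoint remark with which the paper finishes.
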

\begin{proof}
By \cref{lem:graph-full-lift}, the actions are free and cocompact. We apply
\cref{thm:graph-compatible-selection,cor:graph-free} and take the quotient of the normalized
original map. We choose $x_1\in X_1$ and $x_2=F'(x_1)$. Lift a based loop in $Y_1$ from $x_1$ and
apply $F'$. Equivariance maps its terminal deck transformation by $h$ to that of the image loop from
$x_2$. So \eqref{eq:graph-monodromy-compatibility} holds, and the formula for a change of basepoint
takes care of the remaining statement.
\end{proof}

\subsection*{Full simplex face residual complexes and PL realization}

\begin{definition}\label{def:graph-simplex-occurrence}
For a finite dimensional simplicial complex $X$, we use the full cellular residual complex
$\Icell_\bullet(X)$ of \cref{def:full-cellular-occurrence}. A simplex and one of its codimension one
faces determine a unique face position, so the occurrence spaces identify with
\[
I_{k,k-1}(X)=\{(\sigma,\tau)\in X^k\times X^{k-1}:\tau\text{ is a face of }\sigma\},
\]
with $p_{k,k-1}(\sigma,\tau)=\sigma$ and $q_{k,k-1}(\sigma,\tau)=\tau$. This pair notation
identifies the occurrence triples of \cref{def:full-cellular-occurrence}. For any subgroup $N$
acting cellularly on $X$, we keep
\[
\Icell_\bullet(N\backslash X)=N\backslash\Icell_\bullet(X)
\]
on every cell and occurrence space. In particular, the occurrence space is
$N\backslash I_{k,k-1}(X)$. Distinct occurrence orbits remain distinct even when their incident
quotient cells coincide. The same rule for orbits of face occurrences applies to regular CW
complexes.

A distinguished invariant subcomplex $Z_\lambda$ specifies the subsets $Z_\lambda^k\subseteq X^k$
and $I_{k,k-1}(Z_\lambda)\subseteq I_{k,k-1}(X)$ in every dimension. A map preserves a finite family
of such subcomplexes if it preserves all these subsets according to the same label permutation.
Completions are the component-wise characteristic inverse limits of
\cref{def:full-cellular-occurrence}.
\end{definition}

\begin{proof}[Proof of \Cref{thm:graph-pl-realization}]
The descent argument of \cref{lem:graph-descent} applies simultaneously to every cell space and
every codimension one occurrence space. At each selected level, the set of permitted isomorphisms is
finite and nonempty. The compactness argument of \cref{thm:graph-compatible-selection} therefore
produces compatible isomorphisms, with one permutation of the distinguished labels when present.
Their inverse limit is a $\Phi$-equivariant isomorphism
\[
F:\wh{\Icell}_\bullet(X_1)
\xrightarrow{\cong}
\wh{\Icell}_\bullet(X_2).
\]

Each $X_i$ is a connected, locally finite, finite dimensional simplicial complex. Its deck group
$G_i$ acts freely and cocompactly, and $G_i$ is finitely generated because $C_i$ is finite. It
follows that \cref{thm:all-dimensional-occurrence} applies and provides $u\in\wh G_2$ and an
isomorphism $h:G_1\to G_2$ such that
\[
\Phi=\Inn(u)\circ\wh h,
\qquad
F'=L_{u^{-1}}\circ F
\]
restricts to an $h$-equivariant isomorphism between the full original residual complexes.

By iterating the codimension one face relations, $F'$ preserves every face relation. In particular,
it sends the vertex set of each simplex onto the vertex set of its image. A simplex is determined by
its vertices, so these maps define a simplicial isomorphism $X_1\to X_2$. Its affine extensions
agree on common faces. By equivariance we therefore have a simplicial, hence PL, homeomorphism
\[
C_1=G_1\backslash X_1
\xrightarrow{\cong}
G_2\backslash X_2=C_2.
\]
Lifting based loops shows that its induced homomorphism on fundamental groups is $h$, up to the
choice of basepoint paths.

Finally, each distinguished invariant subcomplex is a union of cell and occurrence orbits. Its
completion intersects the original residual complex in precisely those orbits. Consequently, $F'$
and the descended homeomorphism preserve the distinguished subcomplexes according to the selected
permutation.
\end{proof}


\section{Massey triple products}\label{sec:closed-sign}
Let $M$ be a compact oriented $3$-manifold. We use the Thurston seminorm on $H^1(M,\R)$ and, when
present, its fibered cones \cite{Thurston1986,AschenbrennerFriedlWilton2015}. For a closed
hyperbolic $M$, or a compact core of a complete finite volume hyperbolic $3$-manifold, the Thurston
seminorm is a norm.
\subsection*{Cohomological background}

\begin{terminology}
Let $R$ be a commutative ring. A differential graded $R$-algebra $A^*$ is a graded unital
associative $R$-algebra with an $R$-linear differential $d:A^k\to A^{k+1}$ satisfying $d^2=0$ and
$d(xy)=d(x)y+(-1)^kxd(y)$ for $x\in A^k$. Its cohomology is
$H^k(A)=\ker(d:A^k\to A^{k+1})/\im(d:A^{k-1}\to A^k)$. A morphism of differential graded
$R$-algebras is a cochain map preserving the unit and multiplication. Such a morphism is a
\emph{quasi-isomorphism} if it induces an isomorphism in every cohomological degree. For
$a,b,c\in H^1(A)$ with $ab=bc=0$, we define
\[
\langle a,b,c\rangle_A
=\{[AV+UC]:[A]=a,\ [B]=b,\ [C]=c,\ dU=AB,\ dV=BC\},
\]
where $A,B,C$ are cocycles of degree one and $U,V\in A^1$. Juxtaposition denotes the algebra
product, written $\smile$ for cup products. This is the triple Massey product with the above sign
definition. Its indeterminacy is $aH^1(A)+H^1(A)c$, and the word \emph{complete} refers to the whole
set of classes in this definition. See \cite[Definition~1.2, Theorem~1.5 and
Proposition~2.3]{May1969}.
\end{terminology}

Let $\Phi:\wh{\pi_1M_1}\to\wh{\pi_1M_2}$ be an isomorphism of the completions of orientable
connected finite volume hyperbolic $3$-manifold groups. We begin with Liu's regularity theorem
\cite[Theorems~1.2--1.3 and Corollary~6.2]{Liu2023}, which gives
\begin{equation}
\Phi^*=\mu(F\otimes\hZ),\qquad
F:H^1(M_2,\Z)\xrightarrow{\cong}H^1(M_1,\Z),\quad
\mu\in\hZ^\times.
\label{eq:liu}
\end{equation}
The integral map $F$ and its inverse preserve the Thurston norm and fibered classes, and
corresponding fiber kernels have corresponding closures. From here on in this section all manifolds
are connected and closed.

\begin{lemma}\label{lem:massey-dga}
Let $R$ be a commutative ring and let
\[
F:A^*\longrightarrow B^*
\]
be a unital morphism of differential graded $R$-algebras that induces an isomorphism on cohomology.
Let $a,b,c\in H^1(A)$ satisfy $a\smile b=b\smile c=0$. Then
\begin{equation}\label{eq:massey-dga-naturality}
F_*\bigl(\langle a,b,c\rangle_A\bigr)
=
\langle F_*a,F_*b,F_*c\rangle_B.
\end{equation}
Moreover, $F_*$ maps the full indeterminacy subgroup
\[
a\smile H^1(A)+H^1(A)\smile c
\]
isomorphically onto
\[
F_*a\smile H^1(B)+H^1(B)\smile F_*c.
\]
The same conclusion holds for the cohomology isomorphism obtained from a finite sequence of unital
multiplicative quasi-isomorphisms pointing in either direction.
\end{lemma}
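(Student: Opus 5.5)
The plan is to prove the two inclusions in \eqref{eq:massey-dga-naturality} separately. The forward inclusion is formal, and the reverse inclusion uses surjectivity on cohomology in degree one and injectivity in degree two. The indeterminacy statement is then a short consequence of multiplicativity, and the zigzag version follows by composing the single-morphism case.

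\emph{Forward inclusion.} Take defining data $A,B,C,U,V$ in $A^*$ with $dU=AB$ and $dV=BC$. Since $F$ is a multiplicative cochain map, $F(A),F(B),F(C)$ are cocycles representing $F_*a,F_*b,F_*c$. Moreover
\[
dF(U)=F(A)F(B),\qquad dF(V)=F(B)F(C),
\]
so $F(A)F(V)+F(U)F(C)$ is an admissible representative for $\langle F_*a,F_*b,F_*c\rangle_B$. Its class is $F_*[AV+UC]$, which gives $F_*\langle a,b,c\rangle_A\subseteq\langle F_*a,F_*b,F_*c\rangle_B$.

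\emph{Reverse inclusion.} I would avoid a cochain-level lifting argument and compare indeterminacies instead.
\begin{enumerate}
\item Each Massey set is a coset of its indeterminacy subgroup (May, Proposition~2.3). So $F_*\langle a,b,c\rangle_A$ is a coset of $F_*(aH^1(A)+H^1(A)c)$.
\item Because $F_*$ is a ring isomorphism on cohomology and is bijective on $H^1$, we have
\[
F_*\bigl(a\smile H^1(A)+H^1(A)\smile c\bigr)=F_*a\smile F_*H^1(A)+F_*H^1(A)\smile F_*c,
\]
and $F_*H^1(A)=H^1(B)$.
\item Injectivity of $F_*$ on $H^2$ makes this map an isomorphism onto its image. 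This proves the second assertion of the lemma.
\item By the forward step, the coset $F_*\langle a,b,c\rangle_A$ lies inside the coset $\langle F_*a,F_*b,F_*c\rangle_B$. Two cosets of the same subgroup with a common point are equal, so equality follows.
\end{enumerate}

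The main point to check is that the complete Massey set really is a full coset of the stated indeterminacy with the sign conventions used here. In particular, one must verify that varying $U$ and $V$ by cocycles sweeps out exactly $aH^1+H^1c$. I would cite May's result and verify it briefly: changing $U$ to $U+Z$ adds $[ZC]=[Z]c$, and changing $V$ adds $a[Z']$. Changing the representatives $A,B,C$ by coboundaries does not change the class, which is the standard computation.

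\emph{Zigzags.} For a finite chain of quasi-isomorphisms in either direction, apply the single-map case to each arrow. For an arrow pointing backwards, say $G:B\to A$, the set equality for $G$ gives $G_*\langle G_*^{-1}a,G_*^{-1}b,G_*^{-1}c\rangle_B=\langle a,b,c\rangle_A$. Hence $(G_*)^{-1}$ also transports Massey sets and indeterminacies. Composing along the chain yields the claim for the composite cohomology isomorphism.
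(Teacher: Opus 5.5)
Your proposal is correct and is essentially the paper's argument. Both apply $F$ to a defining system to get the inclusion $\subseteq$, identify each complete Massey set as a single coset of $a\smile H^1+H^1\smile c$ (May, Proposition~2.3, together with the variation computations for $U$, $V$ and the representatives), transport the indeterminacy via the ring isomorphism $F_*$, and conclude because two cosets of the same subgroup that share a point are equal. The zigzag case is then handled by inverting the backward arrows and composing.
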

\begin{proof}
Choose cocycles $A,B,C$ representing $a,b,c$ and cochains $U,V$ with
\[
dU=A\smile B,
\qquad
dV=B\smile C.
\]
With the definition above in degree one, the corresponding Massey cocycle is $A\smile V+U\smile C$.
Since $F$ commutes with the differential and product, the five cochains $F(A),F(B),F(C),F(U),F(V)$
form a defining system in $B^*$, and
\[
F(A\smile V+U\smile C)
=F(A)\smile F(V)+F(U)\smile F(C).
\]
Thus the left side of \eqref{eq:massey-dga-naturality} is contained in the right side.

We check directly that the full indeterminacy is the subgroup in the statement. Replacing $U$ by
$U+Z$ for a cocycle of degree one $Z$ changes the Massey class by $[Z]\smile c$, and replacing $V$
by $V+Z'$ changes it by $a\smile[Z']$. Conversely, every element of $a\smile H^1(A)+H^1(A)\smile c$
is obtained in this way. We also check changes of representatives. For $s\in A^0$, replacing $A$ by
$A+ds$ and $U$ by $U+sB$ changes the Massey cocycle by $d(sV)$. Replacing $B$ by $B+ds$, $U$ by
$U-As$, and $V$ by $V+sC$ leaves it unchanged. Replacing $C$ by $C+ds$ and $V$ by $V-Bs$ changes it
by $-d(Us)$. Each replacement preserves the defining equations, so changing representatives does not
change the coset, and a nonempty complete triple Massey product is exactly one coset of its full
indeterminacy subgroup, as in \cite[Proposition~2.3]{May1969}.

The ring isomorphism $F_*$ maps the first indeterminacy subgroup isomorphically onto the second. Let
$m_A$ be the class of the chosen defining system. The image $F_*(\langle a,b,c\rangle_A)$ is
therefore the coset
\[
F_*(m_A)+F_*a\smile H^1(B)+H^1(B)\smile F_*c.
\]
The complete product in $B$ contains $F_*(m_A)$ and is a coset of the same subgroup. The two cosets
are equal, which proves \eqref{eq:massey-dga-naturality}. For a leftward quasi-isomorphism, apply
the result to its induced cohomology isomorphism and then invert that isomorphism. Composition
proves the statement for a finite sequence of these maps.
\end{proof}
Our next lemma compares finite-coefficient cohomology and complete triple Massey products through
the goodness of $3$-manifold groups.
\begin{lemma}\label{lem:comparison}
Let $P,Q$ be connected closed orientable aspherical $3$-manifolds, let
$\Psi:\wh{\pi_1P}\xrightarrow{\cong}\wh{\pi_1Q}$ be an isomorphism, and set $R_n=\Z/n\Z$ for
$n\geq1$. Then there are isomorphisms
\[
\Psi_n^*:H^*(Q,R_n)\xrightarrow{\cong}H^*(P,R_n),
\]
natural under change of coefficients, which preserve cup products and complete triple Massey
products of classes of degree one, including their full indeterminacy. In addition, if
$\omega_P\in H^3(P,\Z)$ and $\omega_Q\in H^3(Q,\Z)$ are orientation classes evaluating to one, then
\[
\Psi_n^*(\omega_Q\bmod n)
=\delta_n(\omega_P\bmod n),
\qquad
(\delta_n)_n=\delta\in\hZ^\times.
\]
\end{lemma}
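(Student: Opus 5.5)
The plan is to pass from $\Psi$ to cochain level via continuous cohomology, use goodness to identify with discrete (hence singular) cohomology, and then apply \cref{lem:massey-dga}.

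First, for a profinite group $G$ and a finite trivial module $R_n$, the continuous cochain complex $C^*_{\mathrm{cts}}(G,R_n)$ (inhomogeneous continuous cochains) is a unital differential graded $R_n$-algebra under the Alexander--Whitney cup product. The isomorphism $\Psi$ induces an isomorphism of these dg algebras, $C^*_{\mathrm{cts}}(\wh{\pi_1Q},R_n)\to C^*_{\mathrm{cts}}(\wh{\pi_1P},R_n)$, strictly compatible with products and differentials. Next, restriction along the dense embedding $\pi_1P\to\wh{\pi_1P}$ gives a unital multiplicative cochain map $C^*_{\mathrm{cts}}(\wh{\pi_1P},R_n)\to C^*(\pi_1P,R_n)$, and this map is a quasi-isomorphism precisely because $3$-manifold groups are good in the sense of Serre \cite{AschenbrennerFriedlWilton2015}. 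Finally, since $P$ is aspherical, the group cochains $C^*(\pi_1P,R_n)$ are connected to the singular cochains $C^*(P,R_n)$ by a zigzag of unital multiplicative quasi-isomorphisms (for instance through the singular cochains of $B\pi_1P$ and a homotopy equivalence $P\simeq B\pi_1P$, with the bar construction model identified with the nerve). The same holds for $Q$. Composing these gives the zigzag
\[
C^*(Q,R_n)\simeq C^*(\pi_1Q,R_n)\leftarrow C^*_{\mathrm{cts}}(\wh{\pi_1Q},R_n)\xrightarrow{\cong}C^*_{\mathrm{cts}}(\wh{\pi_1P},R_n)\rightarrow C^*(\pi_1P,R_n)\simeq C^*(P,R_n),
\]
and we define $\Psi_n^*$ to be the induced cohomology isomorphism. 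The last clause of \cref{lem:massey-dga} then gives preservation of cup products and of complete triple Massey products, together with their full indeterminacy. Naturality in $n$ holds because each map in the zigzag is natural under the coefficient reductions $R_{m}\to R_n$ for $n\mid m$.

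For the orientation classes, $H^3(P,R_n)\cong R_n$ is generated by $\omega_P\bmod n$, by Poincar\'e duality, and likewise for $Q$. So $\Psi_n^*(\omega_Q\bmod n)=\delta_n(\omega_P\bmod n)$ for a unique $\delta_n\in R_n$, and $\delta_n$ is a unit because $\Psi_n^*$ is an isomorphism. Naturality under $R_m\to R_n$, together with the fact that reduction maps orientation classes to orientation classes, gives $\delta_m\equiv\delta_n\pmod n$. Hence $(\delta_n)_n$ defines an element $\delta\in\varprojlim R_n^\times=\hZ^\times$.

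The main obstacle I expect is the cochain-level goodness statement. Goodness is usually stated as an isomorphism on cohomology for finite modules, and we must check that the restriction map is a multiplicative chain map, so that \cref{lem:massey-dga} applies rather than just a graded ring isomorphism. I would handle this by using inhomogeneous cochains on both sides, for which restriction is visibly a dg algebra map; the quasi-isomorphism property is then exactly goodness. One also has to verify that the topological comparison zigzag is multiplicative. For this I would use the standard natural identification of singular cochains of $K(\pi,1)$ with group cochains via the nerve, which is compatible with cup products.
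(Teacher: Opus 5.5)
Your proposal is correct and follows essentially the same route as the paper. Both arguments build a zigzag of unital multiplicative quasi-isomorphisms: continuous cochains of $\wh{\pi_1}$, then restriction to the discrete bar complex (a quasi-isomorphism by goodness), then the nerve $BG$, $\operatorname{Sing}|BG|$, and a classifying map of the aspherical manifold. Both then apply \cref{lem:massey-dga} for cup and Massey products, and obtain compatible units $\delta_n$ on $H^3\cong R_n$. The paper additionally spells out why $BG\to\operatorname{Sing}|BG|$ induces a quasi-isomorphism on $R_n$-cochains: the mapping cone is an acyclic complex of free abelian groups, hence contractible. It also chooses all simplicial and classifying maps independently of $n$, which is what secures naturality under change of coefficients.
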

\begin{proof}
Choose a basepoint in $P$ and set $G=\pi_1(P)$. Both $G$ and $\wh G$ act trivially on the
coefficient module $R_n$. Let $C^*_b(G,R_n)$ denote the normalized bar complex for group cohomology
\cite{Brown1982}. The continuous complex $C^*_{\mathrm{cts}}(\wh G,R_n)$ is defined by the same
formulas, with continuous functions on $(\wh G)^k$ and the discrete topology on $R_n$
\cite{RibesZalesskii2010}. The usual normalization maps and homotopies are finite sums of composites
of face and degeneracy operators in each degree. They also preserve continuity, so the normalized
continuous complex computes continuous cohomology. In both complexes the product is
\[
(f\smile g)(g_1,\ldots,g_{p+q})
=f(g_1,\ldots,g_p)g(g_{p+1},\ldots,g_{p+q})
\qquad (\deg f=p,\ \deg g=q).
\]
Restriction along $G\to\wh G$ therefore preserves the unit, differential, and product. As compact
$3$-manifold groups are good \cite{AschenbrennerFriedlWilton2015}, this restriction induces an
isomorphism on cohomology with coefficients in $R_n$. It is consequently a unital multiplicative
quasi-isomorphism.

Let $BG$ be the simplicial bar construction with $k$-simplices $G^k$, and let $|BG|$ be its
geometric realization \cite{Brown1982}. For a simplicial set $K$, write $N^*(K,R_n)$ for the
normalized simplicial cochain complex, whose degree-$q$ cochains are functions $K_q\to R_n$
vanishing on degenerate simplices. We equip this complex with the usual simplicial coboundary and
the Alexander--Whitney cup product, making it a differential graded $R_n$-algebra. Write
$\operatorname{Sing}X$ for the singular simplicial set of a space $X$. Because $P$ is aspherical and
has the homotopy type of a CW complex, there is a based classifying homotopy equivalence
$c_P:P\to |BG|$ inducing the identity of $G$ under the canonical identification $\pi_1(|BG|)=G$.
Recall that for normalized cochains, the Alexander--Whitney product is used \cite{Brown1982}. For a
simplex $\sigma$ of dimension $p+q$, this product evaluates $f$ on the face with vertices
$0,\ldots,p$ and $g$ on the face with vertices $p,\ldots,p+q$, and it is natural under simplicial
maps and agrees with the product under $C^*_{b}(G,R_n)\cong N^*(BG,R_n)$. Altogether we obtain the
following sequence of unital multiplicative quasi-isomorphisms:
\begin{equation}\label{eq:absolute-dga-zigzag}
C^*_{\mathrm{cts}}(\wh G,R_n)
\longrightarrow
C^*_{b}(G,R_n)
\cong N^*(BG,R_n)
\longleftarrow N^*(\operatorname{Sing}|BG|,R_n)
\xrightarrow{c_P^*}N^*(\operatorname{Sing}P,R_n).
\end{equation}
The middle arrow is induced by the canonical simplicial map $BG\to\operatorname{Sing}|BG|$. This map
is a weak equivalence and induces a homology isomorphism on normalized integral chains. Its mapping
cone is an acyclic complex of free abelian groups. Its cycles are free, so the short exact sequences
in that cone split and the cone is contractible. Applying $\Hom(-,R_n)$ therefore shows that the
middle arrow is a quasi-isomorphism for every $n$. The last arrow is a quasi-isomorphism because
$c_P$ is a homotopy equivalence. All arrows obviously preserve the Alexander--Whitney product.

Now we do exactly the same for $Q$ and then pull back continuous cochains by using $\Psi$, thereby
obtaining $\Psi_n^*$. As above, simplicial maps and classifying maps are chosen independently of
$n$, and restriction of group cochains also commutes with coefficient reduction. Hence their induced
cohomology isomorphisms and inverses commute with $R_m\to R_n$ whenever $n\mid m$. In degree one a
class corresponds under these identifications to its homomorphism to $R_n$, so $\Psi_n^*$ is
precomposition with $\Psi$. They also commute with restriction to finite index subgroups: for a
connected finite cover $p:P'\to P$ and the corresponding inclusion $\iota:G'\hookrightarrow G$, the
two classifying maps
\[
c_P\circ p,
\qquad
|B\iota|\circ c_{P'}:P'\longrightarrow |BG|
\]
induce the same homomorphism on fundamental groups, after compatible choices of basepoints. As
$|BG|$ is a $K(G,1)$, these maps are homotopic and induce the same pullback on cohomology.
Restriction of bar and continuous cochains commutes with the subgroup inclusions. The same argument
applies to $Q$ and its covers.

Naturality of cup products follows from multiplicativity. Naturality of triple Massey products as
sets, including the full indeterminacy, follows from \cref{lem:massey-dga} applied to the two
sequences of identification maps.

Since $P$ and $Q$ are connected closed orientable $3$-manifolds,
\[
H^3(P,R_n)\cong R_n,
\qquad
H^3(Q,R_n)\cong R_n.
\]
Thus $\Psi_n^*$ multiplies the chosen orientation generator by a unique unit
$\delta_n\in R_n^\times$. If $n\mid m$, naturality under $R_m\to R_n$ shows
$\delta_m\bmod n=\delta_n$. It follows that $(\delta_n)_n$ defines an element
$\delta\in\varprojlim_n R_n=\hZ$. Every component is a unit, so $\delta\in\hZ^\times$.
\end{proof}

Finally, the definition of triple Massey products gives
\[
\langle\lambda a,\lambda b,\lambda c\rangle
=\lambda^3\langle a,b,c\rangle\qquad(\lambda\in R_n^\times).
\]
Indeed, multiplying the three cocycle representatives by $\lambda$ and the two cochains solving the
defining equations by $\lambda^2$ proves one inclusion, and the same construction with
$\lambda^{-1}$ proves the reverse inclusion.

\begin{lemma}
\label{lem:integral-test}
There exists a connected closed oriented $3$-manifold $Y$ with integral classes $x_1,x_2,x_3,u,v$ of
degree one and $m\in\langle u,u,v\rangle$ such that
\begin{equation}
\langle x_1x_2x_3,[Y]\rangle=1,
\qquad \langle mv,[Y]\rangle=1,
\qquad u^2=uv=v^2=0.
\label{eq:test}
\end{equation}
\end{lemma}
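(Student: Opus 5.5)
The plan is to take $Y=T^3\# N$, where $N$ is the Heisenberg nilmanifold. Here $N$ is the circle bundle of Euler number one over $T^2$, and $\pi_1N=\mathcal H=\langle x,y,z\mid [x,y]=z,\ z\text{ central}\rangle$. The torus summand will carry the triple cup product. The nilmanifold carries the nonvanishing Massey product.

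First I will set up the cohomology of the connected sum. Let $q_T:Y\to T^3$ and $q_N:Y\to N$ be the degree one collapse maps. They induce $H^1(Y,\Z)\cong H^1(T^3,\Z)\oplus H^1(N,\Z)$, products of classes from different summands vanish, and both maps identify the orientation classes in degree three.

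Next I define the classes. Let $x_1,x_2,x_3$ be the pullbacks of the standard dual basis of $H^1(T^3,\Z)$, so $\langle x_1x_2x_3,[Y]\rangle=\pm1$. I fix the sign to be $+1$ by replacing $x_1$ with $-x_1$ if necessary. The sign of the $N$-summand is untouched by this change.

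For the nilmanifold, let $u,v\in H^1(N,\Z)=\Hom(\mathcal H,\Z)$ be the two coordinate homomorphisms, with $u(x)=1$ and $v(y)=1$. In $N$ the cup product $H^1\times H^1\to H^2$ vanishes, since the Euler class is $\pm uv$ and it dies in the bundle. This gives $u^2=uv=v^2=0$ in $N$, and by pullback also in $Y$.

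For the Massey product I work with explicit integral cochains on the normalized bar complex of $\mathcal H$, in the normal form $g=x^ay^bz^c$. I take $U(g)=\pm\binom{a}{2}$, which satisfies $dU=u\smile u$. For $V$ I take the correction function $\pm c(g)$, possibly shifted by a quadratic term in $a,b$; from the multiplication rule $c(gh)=c(g)+c(h)+a_hb_g$ (or the analogous formula) this gives $dV=u\smile v$. The cocycle $m_N=uV+Uv$ then represents a class in $\langle u,u,v\rangle$.

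The class $m_Nv$ is independent of the choice of $m_N$. Indeed, the indeterminacy is $uH^1+H^1v$, and multiplying by $v$ gives $uwv+w'v^2=0$, because $uw=0$ and $v^2=0$ in $N$.

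It remains to show $\langle m_Nv,[N]\rangle=\pm1$. I would do this by evaluating the cocycle $m_N\smile v$ on an explicit bar $3$-cycle representing $[N]$. Such a cycle comes from triangulating the fundamental domain $[0,1]^3$ in Mal'cev coordinates, which is the standard computation giving $\langle u,u,v\rangle\neq0$ for the Heisenberg group.

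Alternatively, I would use Poincaré duality on the mapping torus description $N=T^2\rtimes_{\left(\begin{smallmatrix}1&1\\0&1\end{smallmatrix}\right)}S^1$. There the Massey product is computed by the monodromy's unipotent part, which gives $\pm1$ directly.

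Then I transport everything to $Y$. Applying $q_N^*$ to the defining system gives $m=q_N^*m_N\in\langle u,u,v\rangle_Y$, and $\langle mv,[Y]\rangle=\langle m_Nv,[N]\rangle$ because $q_N$ has degree one.

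Finally I fix the sign. If the value is $-1$, I reverse the orientation of the $N$-summand only. This is possible because the connected sum of oriented manifolds may be formed with either orientation on each piece, and it changes $\langle m_Nv,[N]\rangle$ to $+1$ without affecting $x_1x_2x_3$. Replacing $v$ by $-v$ would not help, since $m$ and $v$ both change sign.

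The main obstacle is the explicit evaluation $\langle m_Nv,[N]\rangle=\pm1$. This requires a careful choice of a fundamental bar cycle and consistent sign conventions for the Alexander--Whitney product. I would first try the mapping torus and Poincaré duality route, and fall back on the direct cocycle computation if that route becomes awkward.
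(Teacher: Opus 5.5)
Your outline follows the paper's proof. It takes $Y=T^3\# N$ with $N$ the Heisenberg nilmanifold, uses the cochains $\pm\binom{a}{2}$ and a shifted central coordinate as a defining system for $\langle u,u,v\rangle$, transports along the degree one pinch maps, and fixes the final sign by the orientation of the $N$ summand. Your remark that changing the signs of $u$ or $v$ cannot fix this sign, since the evaluation has degree four in them, is correct. The gap is that the only substantive step, $\langle m_Nv,[N]\rangle=\pm1$, is asserted rather than proved. The phrase ``computed by the monodromy's unipotent part'' does not say how the Massey class is evaluated, and the bar $3$-cycle route is not carried out.

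The paper completes exactly your Poincar\'e duality route. The missing idea is to use the fibration dual to $v$, not an unspecified torus bundle structure.
\begin{itemize}
\item Let $i\colon F\hookrightarrow N$ be a fiber of the map $N\to\R/\Z$ induced by $v$, so that $\pi_1F=\ker v=\langle x,z\rangle\cong\Z^2$. Orient $F$ so that $\langle i^*\eta,[F]\rangle=\langle\eta\smile v,[N]\rangle$ for every $\eta\in H^2(N,\Z)$.
\item It then suffices to restrict the cocycle $uV+Uv$ to $\ker v$. The term $Uv$ restricts to zero. Since $dV=u\smile v$ also restricts to zero, $V|_{\ker v}$ is a homomorphism, of the form $\pm c+ka$ for your $V$.
\item Because $u|_F\smile u|_F=0$ in $H^2(F,\Z)$, this gives $i^*[m_N]=\pm[u|_F\smile c|_F]$. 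Evaluating $u\smile c$ on the bar $2$-cycle $[x\mid z]-[z\mid x]$ gives $1$.
\item Hence $i^*[m_N]$ generates $H^2(F,\Z)$, and $\langle m_Nv,[N]\rangle=\pm1$.
\end{itemize}
This replaces the bar $3$-cycle of the Heisenberg group by a $2$-cycle of $\Z^2$. With this step supplied, your proof is complete.
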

\begin{proof}
Let $H=\Z^3$ with multiplication
\[
(x,y,z)(x',y',z')=(x+x',y+y',z+z'+xy'),
\]
and let $N=H\backslash H(\R)$ be the compact Heisenberg nilmanifold, where $H(\R)$ is $\R^3$ with
the same multiplication. Its universal cover $H(\R)$ is contractible, so $N$ is a $K(H,1)$. We
identify integral group cohomology with $H^*(N,\Z)$ using the group, simplicial, and singular
identifications in \eqref{eq:absolute-dga-zigzag}, with $G=H$, $P=N$, and coefficient ring $\Z$,
ignoring the continuous cochain complex. The same contracting homotopy on the integral chain mapping
cone proves that these identifications are quasi-isomorphisms over $\Z$. Their multiplicativity and
\cref{lem:massey-dga} identify cup products and complete triple Massey products. On normalized
integral group cochains, we define
\[
A(x,y,z)=x,\quad B(x,y,z)=y,\quad
C(x,y,z)=-z,\quad T(x,y,z)=-\binom{x}{2}.
\]
For $df(g,h)=f(h)-f(gh)+f(g)$, direct substitution then shows
\[
dA=dB=0,\qquad dC=A\smile B,\qquad dT=A\smile A.
\]
Hence we have
\begin{equation}
M=A\smile C+T\smile B,
\qquad M(g,h)=-x_gz_h-\binom{x_g}{2}y_h
\label{eq:explicit-M}
\end{equation}
where $g=(x_g,y_g,z_g)$ and $h=(x_h,y_h,z_h)$. So $M$ is an integral cocycle representing an element
of $\langle a,a,b\rangle$, where $a=[A]$ and $b=[B]$. Also $b^2=0$, by the same calculation with the
binomial cochain in $y$.

The map $N\to\R/\Z$ induced by $y$ is a torus bundle whose fiber group is $K=\{(x,0,z)\}\cong\Z^2$.
Let $i:F=K\backslash\R^2\hookrightarrow N$ be the fiber inclusion. Since these identifications are
compatible with the maps induced by $K\hookrightarrow H$ and by $i$, the class $i^*[M]$ is
represented by the restriction of $M$ to $K$, and on $K$ the cocycle \eqref{eq:explicit-M} is
$-A|_K\smile Z$, where $Z(x,0,z)=z$. The torus fiber $F$ has fundamental group $K$ and contractible
universal cover, so $H^*_b(K,\Z)\cong H^*(F,\Z)$, and under this identification restriction of group
cochains along $K\hookrightarrow H$ induces the same map on cohomology as pullback along
$i:F\hookrightarrow N$. The ordered generators $e_x=(1,0,0)$ and $e_z=(0,0,1)$ give an integral bar
cycle $[e_x\mid e_z]-[e_z\mid e_x]$ representing a fundamental class of $F$, and
\[
M(e_x,e_z)-M(e_z,e_x)=-1.
\]
Thus $i^*[M]$ is a primitive generator of $H^2(F,\Z)$. Given an orientation of $N$, orient $F$ so
that $i_*[F]\in H_2(N,\Z)$ is the Poincar\'e dual of $b$. Then
$\langle i^*\eta,[F]\rangle=\langle \eta\smile b,[N]\rangle$ for every $\eta\in H^2(N,\Z)$, and so
\[
\langle[M]\smile b,[N]\rangle
=\langle i^*[M],[F]\rangle\in\{1,-1\}.
\]
Finally, choose the orientation of $N$ to make this value $1$.

Now set $Y=T^3\#N$, where $T^3=(\R/\Z)^3$. Let $p_T:Y\to T^3$ and $p_N:Y\to N$ be obtained by
collapsing the separating sphere of the connected sum and then collapsing the other summand to a
point. With the orientations defining the oriented connected sum,
\[
(p_T)_*[Y]=[T^3],
\qquad
(p_N)_*[Y]=[N],
\]
so both maps have degree one. Pulling back the three standard classes of $T^3$ and the classes
$a,b,[M]$ along these two pinch maps yields \eqref{eq:test}, since products of classes of positive
degree coming from different summands vanish.
\end{proof}

\begin{theorem}
\label{thm:closed-integral}
Let $M_1,M_2$ be connected closed orientable hyperbolic $3$-manifolds. For every isomorphism of
their profinite completions, the unit $\mu$ in \eqref{eq:liu} can be chosen in $\{1,-1\}$.
Equivalently, $\Phi^*$ on $H^1(-,\hZ)$ is the completion of an integral isomorphism.
\end{theorem}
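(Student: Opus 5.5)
The idea is to compare a degree-three cup product with a degree-four Massey evaluation. Both pick up the same orientation unit $\delta$ of \cref{lem:comparison}, but different powers of $\mu$. Their ratio then expresses $\mu$ as a rational number, and the only rationals that are units in every $\Z_p$ are $\pm1$.

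\emph{Transport the test classes.} Take the test manifold $Y$ of \cref{lem:integral-test}. By the virtual domination theorem of Liu and Sun \cite{LiuSun2018}, some finite cover $P\to M_1$ admits a map $f:P\to Y$ of nonzero degree $d$. Let $Q\to M_2$ be the cover corresponding to the open subgroup $\Phi(\wh{\pi_1P})$, and let $\Psi$ be the restriction of $\Phi$. Apply Liu's regularity \eqref{eq:liu} to $\Psi$. A unit $\mu$ is unique up to sign, and it is compatible with restriction to finite index subgroups, since the integral lattices $H^1(-,\Z)$ restrict into each other. Hence, after a sign change, $\Psi^*=\mu(F_P\otimes\hZ)$ with $F_P$ integral and the same $\mu$.

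\emph{Cubic evaluation.} Pull back $x_1,x_2,x_3,u,v$ along $f$ to integral classes on $P$, and set $y_i=F_P^{-1}(f^*x_i)$, $u'=F_P^{-1}(f^*u)$, $v'=F_P^{-1}(f^*v)$ on $Q$. Reduce mod $n$ and use \cref{lem:comparison}. Then $\Psi_n^*(y_1y_2y_3)=\mu^3 f^*(x_1x_2x_3)$ and $\Psi_n^*\omega_Q=\delta_n\omega_P$. Evaluating on fundamental classes gives $\delta_n k_1\equiv\mu^3 d\pmod n$ with $k_1=\langle y_1y_2y_3,[Q]\rangle\in\Z$. Passing to the limit, $\delta k_1=\mu^3d$ in $\hZ$, and in particular $k_1\neq0$.

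\emph{Quartic evaluation.} The Massey product must also be transported. Mod $n$, $u'^2=u'v'=v'^2=0$ because $\Psi_n^*$ is a ring isomorphism. By \cref{lem:comparison} together with the homogeneity $\langle\lambda a,\lambda b,\lambda c\rangle=\lambda^3\langle a,b,c\rangle$, we get $\Psi_n^*\langle u',u',v'\rangle=\mu^3\langle f^*u,f^*u,f^*v\rangle\ni\mu^3f^*m$. Multiplying by $\Psi_n^*v'=\mu f^*v$ and evaluating gives $\delta_n k_2\equiv\mu^4 d\pmod n$. Here $k_2$ is the evaluation on $[Q]$ of an integral class in $\langle u',u',v'\rangle_\Z\smile v'$, using an integral defining system transported via $F_P$. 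The term $\langle mv,[Y]\rangle=1$ is what produces the factor $d$.

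\emph{Main obstacle: indeterminacy.} Multiplied by $v$, the indeterminacy $u\smile H^1+H^1\smile v$ becomes $u\smile w\smile v$, since the $H^1\smile v\smile v$ part vanishes. The terms $\langle u\,w\,v,[P]\rangle$ with $w\in H^1(P,R_n)$ need not vanish on $P$. My first approach is to run the argument modulo the integer $g=\gcd$ of all integral values $\langle f^*u\smile w\smile f^*v,[P]\rangle$. That gcd is matched on the $Q$ side by the cubic computation already done, since these terms are themselves cubic and scale by $\mu^3\delta^{-1}$. Alternatively, I could choose the domination so that $f^*u\smile H^1(P)\smile f^*v$ is controlled, for instance by pinching onto the $N$ summand. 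Granting this, we obtain $\delta k_2=\mu^4 d$ in $\hZ$, possibly after adjusting $k_2$ by an element of the indeterminacy.

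\emph{Conclusion.} Dividing the two identities gives $\mu k_1=k_2$ in $\hZ$ with $k_1\neq0$. So $\mu=k_2/k_1$ lies in $\Q\cap\prod_p\Z_p^\times$. A rational number that is a $p$-adic unit for every prime $p$ is $\pm1$, hence $\mu\in\{1,-1\}$. Consequently $\Phi^*$ on $H^1(-,\hZ)$ is the completion of the integral isomorphism $\pm F$.
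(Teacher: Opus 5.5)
Your architecture matches the paper's: a virtual domination onto the Heisenberg test manifold, then a cubic evaluation of weight $\mu^3$ and a Massey evaluation of weight $\mu^4$ against the same orientation unit $\delta$, then $\mu=k_2/k_1\in\Q\cap\hZ^\times=\{1,-1\}$. But there is a genuine gap exactly at the step you call the main obstacle, and your diagnosis there is wrong.

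The terms $\langle u\smile w\smile v,[P]\rangle$ vanish identically. For degree one classes, graded commutativity gives $u\smile w\smile v=-w\smile u\smile v=-w\smile(u\smile v)=0$. Here $u\smile v=0$ is part of the hypothesis that makes $\langle u,u,v\rangle$ defined. Together with $v\smile v=0$, this shows that cupping with $v$ annihilates the whole indeterminacy $u\smile H^1+H^1\smile v$, over $\Z$ and over every $\Z/n$.

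This is the observation the paper's proof turns on. It makes the evaluation of any Massey value cupped with $v$ independent of the chosen value. Hence $\Psi_n^*(m_Q\smile v')=\mu_n^4f^*(m\smile v)$ holds exactly, and $\delta k_2=\mu^4d$ follows. As written, your argument does not close. Running it modulo $g$ would only give $\mu k_1\equiv k_2$ modulo $g\hZ$. That says nothing about the rationality of $\mu$ unless you know $g=0$, which is precisely the missing fact. The pinching alternative is not carried out.

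There is a second, smaller gap in obtaining an integer $k_2$. An integral defining system on $Q$ cannot be transported via $F_P$: it is only an isomorphism of first cohomology groups and is not induced by any cochain map. You first need $u'\smile u'=u'\smile v'=0$ in $H^2(Q,\Z)$. You know these classes vanish modulo every $n$, because $\Psi_n^*$ is a ring isomorphism and $\mu_n$ is invertible. Then $\ker\bigl(H^2(Q,\Z)\to H^2(Q,\Z/n)\bigr)=nH^2(Q,\Z)$, and $\bigcap_n nH^2(Q,\Z)=0$ for a finitely generated group, so the vanishing holds integrally.

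Only then can you choose integral cochains on $Q$ itself with $dT'=U'\smile U'$ and $dC'=U'\smile V'$. The reduction of the resulting integral value $m_Q$ lies in each mod $n$ Massey set. The annihilation above then makes $k_2=\langle m_Q\smile v',[Q]\rangle\in\Z$ satisfy $\delta k_2=\mu^4d$. Integrality of $k_2$ is indispensable; otherwise you learn only that $\mu^4d\delta^{-1}\in\hZ$, and no rationality of $\mu$ follows.

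The remaining steps agree with the paper. This includes comparing $\mu$ with the scalar for the covers, which needs $H^1(M_2,\Z)\neq0$ and rational injectivity of pullback; the case $H^1=0$ is vacuous.
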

\begin{proof}
We first consider connected finite covers $P\to M_1$ and $Q\to M_2$ whose fundamental groups have
isomorphic profinite completions. Suppose that there is a continuous map
\[
q:Q\longrightarrow Y,\qquad d=\deg(q)\neq0,
\]
where $Y$ is the manifold constructed in \cref{lem:integral-test}. We will obtain such a pair of
covers later in the proof. Let $\Psi:\wh{\pi_1P}\xrightarrow{\cong}\wh{\pi_1Q}$. Liu's regularity
theorem \eqref{eq:liu}, applied to the closed hyperbolic manifolds $P,Q$ and the isomorphism $\Psi$,
gives an integral isomorphism
\[
F_0:H^1(Q,\Z)\xrightarrow{\cong}H^1(P,\Z)
\]
and a unit $\lambda\in\hZ^\times$ such that $\Psi^*=\lambda(F_0\otimes\hZ)$.

Pull back the five classes of degree one and the Massey value of \cref{lem:integral-test} along $q$,
and suppress $q^*$ from the notation; then
\[
A_Q=\langle x_1x_2x_3,[Q]\rangle=d,
\qquad
B_Q=\langle mv,[Q]\rangle=d.
\]
Let $x_i',u',v'$ denote the images under $F_0$. For every $n\geq1$, naturality of $\Psi_n^*$ and
invertibility of the reduction $\lambda_n\in(\Z/n)^\times$ imply that the reductions of
$u'^2,u'v',v'^2\in H^2(P,\Z)$ modulo $n$ vanish. The exact sequence
$0\to\Z\xrightarrow{\,n\,}\Z\to\Z/n\to0$ gives
\[
\ker\bigl(H^2(P,\Z)\to H^2(P,\Z/n)\bigr)=nH^2(P,\Z).
\]
As $H^2(P,\Z)$ is finitely generated, $\bigcap_{n\geq1}nH^2(P,\Z)=0$, and in $H^2(P,\Z)$ we have
\[
u'^2=u'v'=v'^2=0.
\]
Choose integral cocycles $U',V'\in C^1(P,\Z)$ representing $u',v'$. Since $u'^2=u'v'=0$, we can
choose integral cochains $T',C'\in C^1(P,\Z)$ satisfying
\[
dT'=U'\smile U',
\qquad
dC'=U'\smile V'.
\]
They determine an integral value $m_P\in\langle u',u',v'\rangle$.

Let $R$ be either $\Z$ or $R_n=\Z/n\Z$, and write $u'_R,v'_R$ for the corresponding classes in
$H^1(P,R)$. For $R=\Z$ these are $u',v'$, and for $R=R_n$ they are their coefficient reductions.
Different choices of cochains change the Massey value by exactly the elements of the subgroup
\[
I_R=u'_R\smile H^1(P,R)
+H^1(P,R)\smile v'_R,
\]
the indeterminacy of the Massey product, so for any value $m_0\in\langle u'_R,u'_R,v'_R\rangle$ we
have $\langle u'_R,u'_R,v'_R\rangle=m_0+I_R$. For arbitrary $h,h'\in H^1(P,R)$, graded commutativity
in cohomology gives
\[
(u'_Rh+h'v'_R)v'_R=-h(u'_Rv'_R)+h'(v'_R)^2=0.
\]
Since coefficient reduction commutes with coboundary and cup product, the reduction of $m_P$ modulo
$n$ lies in $\langle u'_{R_n}, u'_{R_n}, v'_{R_n}\rangle$. Every other value with finite
coefficients differs from it by an element of $I_R$, including values whose defining equations do
not lift over $\Z$. Therefore all values have the same evaluation after cupping with $v'_R$. Set
\[
A_P=\langle x_1'x_2'x_3',[P]\rangle,
\qquad
B_P=\langle m_Pv',[P]\rangle.
\]

By \cref{lem:comparison}, the orientation multiplier in degree three is the same for the cubic and
the quartic evaluation. The three factors of degree one in the first evaluation contribute
$\lambda_n^3$; in the second, the Massey value contributes $\lambda_n^3$ and the additional factor
$v$ one more $\lambda_n$. Therefore, for every $n$,
\[
A_Q\delta_n=\lambda_n^3A_P,
\qquad
B_Q\delta_n=\lambda_n^4B_P
\qquad\text{in }\Z/n.
\]
Compatibility in $n$ implies
\begin{equation}\label{eq:two-weights}
A_Q\delta=\lambda^3A_P,
\qquad
B_Q\delta=\lambda^4B_P
\qquad\text{in }\hZ.
\end{equation}
The left sides are nonzero because $A_Q=B_Q=d\neq0$ and $\delta$ is a unit, so $A_P,B_P\neq0$. For a
prime $p$, we write $\Z_p=\varprojlim_r\Z/p^r\Z$ and let $\Q_p$ be its field of fractions. We embed
$\hZ$ in $\prod_p\Q_p$. Then from \eqref{eq:two-weights} we get
\[
\delta\lambda^{-3}=A_P/A_Q,
\qquad
\delta\lambda^{-4}=B_P/B_Q.
\]
Each rational number on the right is a unit in every $\Z_p$, and therefore belongs to
$\Q^\times\cap\hZ^\times=\{1,-1\}$. Dividing the two equalities gives
\[
\lambda=\frac{A_PB_Q}{A_QB_P}\in\{1,-1\}.
\]

We now return to the original isomorphism $\Phi:\wh{\pi_1M_1}\to\wh{\pi_1M_2}$ and its scalar $\mu$.
By the virtual $1$-domination theorem \cite[Theorem~1.1]{LiuSun2018} there is a finite cover
$p_2:Q\to M_2$ and a map of degree one $Q\to Y$. Choose basepoints and identify $\pi_1Q$ with the
finite index subgroup $(p_2)_*\pi_1Q\leq\pi_1M_2$. Then its closure $\wh{\pi_1Q}$ in $\wh{\pi_1M_2}$
is open and is canonically its profinite completion. Let $\iota_1:\pi_1M_1\to\wh{\pi_1M_1}$ be the
canonical map and set
\[
L=\iota_1^{-1}\bigl(\Phi^{-1}(\wh{\pi_1Q})\bigr).
\]
Then the correspondence between finite index subgroups and open subgroups of the profinite
completion gives $[\pi_1M_1:L]<\infty$ and $\wh L=\Phi^{-1}(\wh{\pi_1Q})$. Let $p_1:P\to M_1$ be the
connected finite cover with $p_{1*}\pi_1 P=L$. As $\Phi(\wh{\pi_1P})=\wh{\pi_1Q}$, restricting
$\Phi$ to $\wh{\pi_1P}$ gives an isomorphism $\Psi:\wh{\pi_1P}\to\wh{\pi_1Q}$. Both $P$ and $Q$ are
closed orientable hyperbolic $3$-manifolds, and $Q$ admits a degree one map to $Y$, so the argument
above shows that the scalar $\lambda$ in $\Psi^*=\lambda(F_0\otimes\hZ)$ belongs to $\{1,-1\}$.

We treat the trivial and nontrivial cases separately. If $H^1(M_2,\Z)=0$, the claimed integral
description is vacuous and the scalar can be taken to be $1$. Otherwise take
$0\neq z\in H^1(M_2,\Z)$. For any finite covering $p:E\to M$ of degree $e$ we have
\[
\operatorname{tr}_p\circ p^*=e\,\id
\qquad\text{on }H^1(M,\Q),
\]
as in \cite{Hatcher2002}. Since $e$ is invertible in $\Q$, the map $e^{-1}\operatorname{tr}_p$ is a
left inverse of $p^*:H^1(M,\Q)\longrightarrow H^1(E,\Q)$. Thus pullback on rational cohomology is
injective. Moreover, $H^1(M_2,\Z)$ is torsion-free, so the nonzero class $z$ remains nonzero in
$H^1(M_2,\Q)$, hence $p_2^*z\neq0$. Now let $j_1=(p_1)_*$ and $j_2=(p_2)_*$ be the inclusions of the
covering groups. Since $\Psi$ is the restriction of $\Phi$, we have
$\Phi\circ\wh{j_1}=\wh{j_2}\circ\Psi$, and since in degree one pullback is precomposition with the
corresponding homomorphism,
\[
\Psi^*\circ p_2^*=p_1^*\circ\Phi^*
:H^1(M_2,\hZ)\longrightarrow H^1(P,\hZ).
\]
Substituting the scalar decompositions of $\Phi^*$ and $\Psi^*$ obtained above gives
\[
\lambda F_0(p_2^*z)=\mu p_1^*F(z)
\qquad\text{in }H^1(P,\hZ).
\]
Since the integral vector $F_0(p_2^*z)$ is nonzero, we can choose an integral basis of $H^1(P,\Z)$
and a coordinate $a\neq0$ of this vector, and then let $b$ be the same coordinate of $p_1^*F(z)$.
Then $\lambda a=\mu b$ in $\hZ$, so $b\neq0$ and
\[
\lambda\mu^{-1}=b/a\in\Q^\times\cap\hZ^\times=\{1,-1\},
\]
where division is taken in $\prod_p\Q_p$. Because $\lambda\in\{1,-1\}$, we obtain $\mu\in\{1,-1\}$,
as required.
\end{proof}


\section{Periodic orbits}
Recall that a homeomorphism $f$ of a closed surface is called \emph{pseudo-Anosov} if there are two
transverse measured singular foliations $(\mathcal F_1,\mu_1)$ and $(\mathcal F_2,\mu_2)$ and a
number $\lambda>1$ such that $f$ preserves each foliation and $f^*\mu_1=\lambda\mu_1$,
$f^*\mu_2=\lambda^{-1}\mu_2$. The foliations have a common finite singular set, with at least three
prongs at each singular point. The measures are transverse measures invariant under homotopies along
leaves, see \cite{FathiLaudenbachPoenaru2012}.

Let $f:S\to S$ be an orientation preserving pseudo-Anosov homeomorphism of a closed orientable
surface of genus at least two, let $M_f$ be its \emph{mapping torus} and $\varphi^s$ its
\emph{suspension flow}. The first return map to $S\times\{0\}$ is $f$. Denote by $G_f=\pi_1(M_f)$
and let $\chi:G_f\to\Z$ be induced by the projection $M_f\to\R/\Z$. We choose $t$ with suspension
coordinate $\chi(t)=1$. If $\Pi=\ker\chi$, then $a=\conj{t}|_\Pi$ gives
$G_f=\Pi\rtimes_a\langle t\rangle$, and $[a]=[f_*^{-1}]$. The fixed point indices and return maps
below concern the forward map $f$, whereas the calculations with the stable letter in
\cref{sec:realization} use $a$. Both mapping classes are pseudo-Anosov. Thurston's hyperbolization
theorem makes $M_f$ closed hyperbolic \cite{Otal1996}, so \cref{thm:normalizer-comparisons} applies.
Given an orbit $O$ of least return period $d\mid m$, we let $a_O$ be the element represented by one
positive traversal, up to conjugacy, and set
\[
\begin{aligned}
b_{O,m}&=a_O^{m/d},\\
w_m(O)&=\sum_{x\in O}\ind(f^m,x)
=d\,\ind(f^m,x_0)
\qquad(x_0\in O).
\end{aligned}
\]
Let $\mathcal O_m(f)$ be the finite collection of orbits of period dividing $m$. At an isolated
fixed point $x$, the local index $\ind(f^m,x)$ is the degree of $z\mapsto(z-f^m(z))/\|z-f^m(z)\|$ on
the boundary of a sufficiently small oriented coordinate disk about $x$, and the coefficient
$w_m(O)$ is the sum of the local indices over the whole orbit.

Two points of $\Fix(f^m)$ are in the same \emph{fixed point class} if there is a path between them
that is homotopic, relative to its endpoints, to its image under $f^m$. The map $f$ permutes these
classes. An orbit of this permutation is a \emph{periodic Nielsen class} at return time $m$. We use
the fixed point and periodic orbit definitions of \cite{Jiang1996}.

\begin{lemma}\label{lem:orbit-roots}
Given the suspension flow of $f:S\to S$ on its mapping torus $M_f$, we have $w_m(O)\neq0$ for every
$O\in\mathcal O_m(f)$. Distinct orbits give distinct conjugacy classes $[b_{O,m}]$ in $G_f$. Every
prime positive (forward traversal) suspension orbit determines, by one positive traversal, an
element that is not a proper power in $G_f$ and has no proper integer root in $\wh G_f$.
\end{lemma}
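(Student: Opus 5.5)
The plan is to prove the three assertions in order, the later ones resting on the Nielsen-theoretic picture behind the first.

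\emph{Nonvanishing of $w_m(O)$.} Since $w_m(O)=d\,\ind(f^m,x_0)$, it suffices to show that $\ind(f^m,x_0)\neq0$ at each point of $\Fix(f^m)$. I would use the standard local model for pseudo-Anosov maps \cite{FathiLaudenbachPoenaru2012,Jiang1996}. A fixed point $x$ of $f^m$ is either regular ($p=2$ prongs) or singular ($p\geq3$ prongs), and $f^m$ permutes the unstable prongs at $x$ cyclically. If every prong is fixed, the index is $1-p\leq-1$. If the prongs are rotated nontrivially, the index is $+1$. In either case it is nonzero.

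\emph{Distinct orbits give distinct conjugacy classes.} I would identify conjugacy classes of $G_f=\Pi\rtimes_a\langle t\rangle$ having $\chi$-value $m$ with twisted conjugacy classes of $\Pi$ at level $m$, taken modulo the action of $t$. These are exactly the periodic Nielsen classes at return time $m$ \cite{Jiang1996}. Under this identification, a positive traversal of $O$ at level $m$ corresponds to $[b_{O,m}]$. For pseudo-Anosov maps, distinct periodic points lie in distinct fixed point classes. Concretely, a lift $\widetilde{f^m}$ to $\widetilde S\cong\HH^2$ has at most one fixed point: it acts on $\partial\HH^2$ with the attracting and repelling structure of the lifted foliations, and a fixed point class in the sense above is a single point. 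Hence distinct orbits in $\mathcal O_m(f)$ give distinct periodic Nielsen classes, and therefore distinct conjugacy classes $[b_{O,m}]$.

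\emph{No proper power in $G_f$.} Suppose $a_O=g^r$ with $r\geq2$ and $g\in G_f$. Then $\chi(g)=d/r$ is a positive integer, so $r\mid d$. Under the lift correspondence, $a_O$ gives a lift $L$ of $f^d$ fixing a lift $\tilde x$ of $x_0$, and $g$ gives a lift $\tilde h$ of $f^{d/r}$ with $\tilde h^r=L$. Since $\tilde h$ commutes with $L$, it preserves $\Fix(L)$, which is the single point $\tilde x$. Thus $\tilde h(\tilde x)=\tilde x$, so $f^{d/r}(x_0)=x_0$, contradicting least period $d$. I expect this step to be the main obstacle: one must verify carefully that the group-theoretic root really produces such a commuting lift, compatibly with the chosen stable letter $t$ and with $[a]=[f_*^{-1}]$.

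\emph{No proper integer root in $\wh G_f$.} Since $M_f$ is closed hyperbolic, $a_O$ is loxodromic, and by the previous step it is not a proper power. Hence $C_{G_f}(a_O)=\langle a_O\rangle$. Suppose $a_O=\gamma^r$ with $\gamma\in\wh G_f$ and $r\geq2$. Then $\gamma$ commutes with $a_O$, so \cref{thm:normalizer-comparisons}(ii) gives
\[
\gamma\in C_{\wh G_f}(a_O)=\cl{\langle a_O\rangle}.
\]
By \cref{thm:normalizer-comparisons}(iv), this closure is isomorphic to $\hZ$, so $\gamma=a_O^{\lambda}$ for some $\lambda\in\hZ$. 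Then $r\lambda=1$ in $\hZ$, so $r$ is a unit in $\hZ$, which forces $r=\pm1$, a contradiction.
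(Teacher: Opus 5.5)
Your proof is correct. The first, second and fourth parts follow the paper: the local index is $1-p$ or $+1$; fixed point classes of pseudo-Anosov iterates are singletons, combined with the suspension correspondence; and the centralizer comparison gives $C_{\wh G_f}(a_O)=\cl{\langle a_O\rangle}\cong\hZ$.

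The genuine difference is in showing that $a_O$ is not a proper power in $G_f$.

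\emph{The paper's route.} It passes to the orbit plane and invokes the Brouwer plane translation theorem. Since $g^r=b$ has a fixed point there, so does $g$. Hence $g$ represents a positive closed trajectory whose $r$-fold return is $b$. The singleton periodic Nielsen classes then force this to be the same orbit with a shorter return time.

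\emph{Your route.} You observe that the orbit-plane homeomorphism $\tilde h$ of $g$ commutes with $L=\tilde h^r$, so it maps $\Fix(L)$ to itself. You do not even need $\Fix(L)$ to be a single point in $\widetilde S$. It suffices that it projects to the single point $x_0$. Indeed, $\tilde h(\tilde x)\in\Fix(L)$, and $p\circ\tilde h=f^{\mp d/r}\circ p$ for the covering projection $p:\widetilde S\to S$, so $f^{d/r}(x_0)=x_0$ with $0<d/r<d$.

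\emph{Comparison.} Your argument avoids Brouwer's theorem entirely and reuses exactly the singleton input already needed for the second assertion, so it is shorter and more elementary. The paper's argument additionally exhibits an explicit closed trajectory for the root $g$.

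\emph{Your flagged concern.} The compatibility issue is harmless. Deck transformations commute with the lifted flow, so $g\mapsto\bar g$ is a homomorphism from $G_f$ to homeomorphisms of the orbit plane, with $\bar g$ covering $f^{-\chi(g)}$; this is exactly the paper's setup. The sign convention coming from $t$ and $[a]=[f_*^{-1}]$ does not matter, because $\Fix(L)=\Fix(L^{-1})$ and least periods under $f$ and $f^{-1}$ coincide.
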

\begin{proof}
For a pseudo-Anosov iterate, every nonempty fixed point class is a singleton
\cite[Lemma~3.1]{Jiang1996}. Also at a fixed point with $k$ stable prongs, the index is $1-k$ when
the prongs are fixed and is $1$ otherwise. At a regular point we have $k=2$, and at a singular point
$k\geq3$. So every local index is nonzero. Periodic Nielsen classes are the unions of these
singleton point classes under $f$, so they are the individual geometric periodic orbits. The
suspension correspondence between periodic Nielsen classes and conjugacy classes \cite{Jiang1996}
then shows that the classes $[b_{O,m}]$ are distinct.

We next prove the absence of proper roots in the group. Identify the universal cover of the mapping
torus with $\widetilde S\times\R$ using the lifted suspension. Collapsing each lifted flow line
gives the orbit plane $\widetilde S\cong\R^2$. Let $\widetilde\theta:\widetilde M_f\to\R$ be the
lift of the fibration $M_f\to\R/\Z$; every deck transformation $g\in G_f$ satisfies
$\widetilde\theta(gz)=\widetilde\theta(z)+\chi(g)$. Since deck transformations commute with the
lifted suspension flow by uniqueness of lifting, in the coordinates
$\widetilde M_f\cong\widetilde S\times\R$ we have $g(x,s)=\bigl(\bar g(x),s+\chi(g)\bigr)$, where
$\bar g$ is the induced homeomorphism of the orbit plane. Because the return map preserves the
orientation of $S$, each element of $G_f$ induces an orientation preserving homeomorphism of the
orbit plane. Let $b$ be represented by one positive traversal of a prime trajectory, and suppose
that $b=g^r$ for some $r\geq 2$. The element $b$ fixes the point of the orbit plane represented by a
lift of its closed trajectory. Thus $g^r$ has a fixed point. By the Brouwer plane translation
theorem, an orientation preserving fixed point free homeomorphism of $\R^2$ has no periodic point
\cite{Franks1992}. Therefore $g$ has a fixed point in the orbit plane. It preserves a lifted flow
line and translates it in the positive direction, because $r\chi(g)=\chi(b)>0$, so $g$ represents a
positive closed trajectory. Its $r$-fold return element is $g^r=b$, so this trajectory and the one
represented by $b$ determine the same periodic Nielsen class. Each such class consists of one
geometric periodic orbit, so these trajectories are the same orbit. But $\chi(g)=\chi(b)/r>0$ would
give that orbit a shorter positive return time. This contradicts primeness, and $b$ is not a proper
power in $G_f$.

Now for an element $b$ that is not a proper power, the discrete centralizer is $\langle b\rangle$.
The profinite centralizer comparison in \cref{thm:normalizer-comparisons} implies
\[
C_{\wh G_f}(b)=\cl{\langle b\rangle}\cong\hZ.
\]
On the other hand, if $z^r=b$ in $\wh G_f$ with $r\geq2$, then $z$ centralizes $b$, so $z=b^\xi$ for
some $\xi\in\hZ$ and $r\xi=1$ in $\hZ$, which is impossible modulo any prime divisor of $r$.
\end{proof}

\begin{lemma}
\label{lem:unit-separation}
Let $G$ be conjugacy separable and let $\chi:G\to\Z$ be a homomorphism. A finite family of pairwise
nonconjugate elements $c_1,\ldots,c_s$ such that $\chi(c_i)=m\neq0$ can be separated in a finite
quotient into distinct classes under conjugacy and profinite unit powers. The quotient may refine
any given finite family of finite quotients.
\end{lemma}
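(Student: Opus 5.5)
The plan is to combine one finite quotient that separates the conjugacy classes with a cyclic quotient read off from $\chi$. The cyclic factor forces any unit exponent to be congruent to $1$ modulo the exponent of the first quotient, and such a power acts trivially there.

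\emph{Step 1.} Since $G$ is conjugacy separable and the family is finite, for each ordered pair $i\neq j$ there is a finite quotient in which the images of $c_i$ and $c_j$ are not conjugate. Let $q_0:G\to Q_0$ be the image of $G$ in the product of these finitely many quotients together with the given finite family of quotients to be refined. Nonconjugacy persists under passing to a larger quotient, because conjugacy in $Q_0$ projects to conjugacy in each factor. So the elements $q_0(c_i)$ are pairwise nonconjugate in $Q_0$. Let $e$ be the exponent of $Q_0$.

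\emph{Step 2.} Put $N=|m|e$ and let $\chi_N:G\to\Z/N\Z$ be $\chi$ followed by reduction modulo $N$. Let $q:G\to Q$ be the image of $(q_0,\chi_N)$ in $Q_0\times\Z/N\Z$; this quotient refines all the given ones. Interpret the statement as follows. For $i\neq j$ and any unit $\lambda\in\hZ^\times$, we need that $q(c_i)$ is not conjugate in $Q$ to $q(c_j)^\lambda=q(c_j)^k$. Here $k$ is any integer representing $\lambda$ modulo $|Q|$, so $k$ is coprime to $|Q|$. It suffices to rule out conjugacy in the ambient product, since conjugacy in the subgroup $Q$ implies conjugacy there.

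\emph{Step 3.} Suppose $q(c_i)=x\,q(c_j)^k x^{-1}$ in $Q_0\times\Z/N\Z$. The second coordinate is abelian, so $m\equiv km \pmod{|m|e}$, which gives $k\equiv1\pmod e$. Since the order of $q_0(c_j)$ divides $e$, we get $q_0(c_j)^k=q_0(c_j)$. The first coordinate then says that $q_0(c_i)$ is conjugate to $q_0(c_j)$ in $Q_0$, contradicting Step 1. The case $i=j$ needs nothing, so the classes are distinct under conjugacy and unit powers.

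The only delicate point is Step 3. There the hypothesis $\chi(c_i)=m\neq0$ with a common value $m$ is exactly what pins the exponent $k$ down to $1$ modulo $e$. One should check that a profinite unit acts on each finite image through an integer exponent coprime to $|Q|$. This holds because $\hZ\to\Z/|Q|\Z$ carries units to units, and the action of $\lambda$ on a cyclic subgroup of $Q$ factors through this map.
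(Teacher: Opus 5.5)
Your proof is correct, but it takes a different route from the paper.

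The paper argues by contradiction and compactness. Suppose some pair $c_i,c_j$ cannot be separated. Then the sets $E_U=\{(q,\nu)\in\wh G\times\hZ^\times: c_j\equiv q c_i^\nu q^{-1} \bmod U\}$, for $U$ open normal in $\wh G$, are nonempty, closed, and have the finite intersection property. Compactness therefore gives an actual relation $c_j=q c_i^\nu q^{-1}$ in $\wh G$. Applying $\wh\chi$ yields $m(\nu-1)=0$ in $\hZ$, so $\nu=1$. Conjugacy of $c_i$ and $c_j$ in $\wh G$ then contradicts conjugacy separability. The paper finally takes the product of the pairwise separating quotients with the prescribed ones.

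You instead build the quotient explicitly. You take a conjugacy-separating quotient $Q_0$ of exponent $e$ and adjoin $\chi$ reduced modulo $|m|e$. The cyclic factor forces $k\equiv 1 \pmod e$, so the power becomes invisible in $Q_0$.

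Both arguments rest on the same input: the common nonzero value $m$ of $\chi$ pins the exponent to $1$. The paper does this in $\hZ$, you do it modulo $|m|e$.

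What each buys:
\begin{itemize}
\item Your version is effective. It avoids the limit argument and bounds the size of the quotient by $|Q_0|\cdot|m|e$. It also excludes conjugacy of $q(c_i)$ with any integer power $q(c_j)^k$, not only powers by profinite units.
\item The paper's version works directly with profinite units. It fits the compactness pattern for conjugator sets that the paper uses repeatedly, for instance in the orbit correspondence.
\end{itemize}
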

\begin{proof}
If a pair could not be separated, then for every open normal $U\normalo\wh G$ the closed subset
\[
E_U=\{(q,\nu)\in\wh G\times\hZ^\times:
c_j\equiv q c_i^\nu q^{-1}\pmod U\}
\]
would be nonempty. If the image of $c_i$ in the finite quotient has order $N$ then every exponent
$a$ with $\gcd(a,N)=1$ lifts to an element $\nu\in\hZ^\times$. Indeed, under
$\hZ^\times=\prod_p\Z_p^\times$, choose $\nu_p=a$ for $p\mid N$ and $\nu_p=1$ otherwise. The Chinese
remainder theorem gives $\nu\equiv a\pmod N$, so the two exponents give the same power in that
quotient. For any finite collection $U_1,\ldots,U_r$ of open normal subgroups of $\wh G$, the
subgroup $V=U_1\cap\cdots\cap U_r$ is open and normal, so $E_V$ is nonempty by assumption; and since
a congruence modulo $V$ is equivalent to the corresponding congruences modulo all the $U_k$, we get
$E_V=\bigcap_{k=1}^r E_{U_k}\neq\varnothing$. Thus the family $(E_U)_U$ has the finite intersection
property. Since the sets $E_U$ are closed in the compact space $\wh G\times\hZ^\times$, there exists
$(q,\nu)\in\bigcap_{U\normalo\wh G}E_U$. For this pair, we have
\[
c_j^{-1}qc_i^\nu q^{-1}\in U
\qquad\text{for every }U\normalo\wh G.
\]
The intersection of all open normal subgroups of $\wh G$ is $\{1\}$, so $c_j=qc_i^\nu q^{-1}$ in
$\wh G$. Now applying $\wh\chi$ gives
\[
m=\wh\chi(c_j)
=\wh\chi(q)+\nu\wh\chi(c_i)-\wh\chi(q)
=\nu m.
\]
So $m(\nu-1)=0$ in $\hZ$, and for every prime $p$ we have $m(\nu_p-1)=0$ in $\Z_p$. As $m\neq0$ and
$\Z_p$ is an integral domain, $\nu_p=1$ for every $p$, that is, $\nu=1$. This implies the elements
$c_i,c_j$ are conjugate in $\wh G$, and hence their images are conjugate in every finite quotient of
$G$. This contradicts conjugacy separability.

Finally, for each pair $i<j$, choose a finite quotient $\theta_{ij}:G\twoheadrightarrow F_{ij}$ in
which the images of $c_i,c_j$ are not related by conjugacy and profinite unit powers. Let
$\rho_\ell:G\twoheadrightarrow Q_\ell$, $1\leq\ell\leq r$, be the given finite quotients. Set
\[
\Theta:G\longrightarrow
\prod_{i<j}F_{ij}\times\prod_{\ell=1}^r Q_\ell,
\qquad
\Theta(g)=
\bigl((\theta_{ij}(g))_{i<j},(\rho_\ell(g))_\ell\bigr),
\]
and set $F=\im\Theta$. Then $G\twoheadrightarrow F$ is a finite quotient through which every given
$\rho_\ell$ factors. Now suppose that, for some $i<j$ we have $\Theta(c_j)=h\Theta(c_i)^\nu h^{-1}$
for $h\in F,\ \nu\in\hZ^\times$. Then projection to $F_{ij}$ gives the same relation between
$\theta_{ij}(c_j)$ and $\theta_{ij}(c_i)$ which contradicts the choice of $\theta_{ij}$. Thus this
one finite quotient separates every pair as required.
\end{proof}

We return to $f$. Let $O\in\mathcal O_m(f)$ have least period $d\mid m$, let $x\in O$, let
$\gamma:G_f\twoheadrightarrow Q$ be a finite quotient, and let $\rho:Q\to\GL(V)$ be a rational
representation on a finite dimensional $\Q$-vector space $V$. The orbit $O$ contains $d$ distinct
points fixed by $f^m$. By \cref{lem:orbit-roots}, these points lie in $d$ singleton fixed point
classes of $f^m$. Their local indices are equal because $f^j$ conjugates the germ of $f^m$ at $x$ to
the germ of $f^m$ at $f^j(x)$. Their corresponding $m$-return elements are conjugate to $b_{O,m}$.
Since the Lefschetz fixed point sum with local coefficients is initially summed over the individual
points of $\Fix(f^m)$, the $d$ points belonging to $O$ contribute
\[
\begin{aligned}
&\sum_{x\in O}\ind(f^m,x)\,
\tr\rho\bigl(\gamma(a_{x,m})\bigr)\\
&\qquad =
d\,\ind(f^m,x_0)\,
\tr\rho\bigl(\gamma(b_{O,m})\bigr)\\
&\qquad =
w_m(O)\tr\rho\bigl(\gamma(b_{O,m})\bigr),
\qquad x_0\in O,
\end{aligned}
\]
where $a_{x,m}$ represents the positive $m$-return loop at $x$.

\begin{proposition}\label{prop:orbit-sums}
Suppose $\Phi:\wh G_{f_1}\to\wh G_{f_2}$ is an isomorphism with $\wh\chi_2\Phi=\wh\chi_1$. Let
$\gamma_i:G_{f_i}\twoheadrightarrow Q$ be epimorphisms with $\wh\gamma_1=\wh\gamma_2\Phi$. For every
rational conjugacy class $\omega$ in $Q$, equivalently every class of elements that generate
conjugate cyclic subgroups, and every $m\geq1$ we have
\begin{equation}\label{eq:orbit-sum}
\sum_{\gamma_1(b_{O,m})\in\omega} w_m(O)
=
\sum_{\gamma_2(b_{O,m})\in\omega} w_m(O).
\end{equation}
\end{proposition}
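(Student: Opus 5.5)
The plan is to express both sides of \eqref{eq:orbit-sum} as twisted Lefschetz numbers, which are determined by the profinite completion, and then to recover the class-by-class sums by character orthogonality. For each rational representation $\rho$ of $Q$ and each $m\geq1$, consider the twisted Lefschetz number
\[
L_m(f_i,\rho\gamma_i)=\sum_{k}(-1)^k\tr\bigl(f_{i*}^m\mid H_k(S_i;V_{\rho\gamma_i})\bigr).
\]
The computation preceding the statement, together with \cref{lem:orbit-roots}, shows that the Lefschetz fixed point formula with local coefficients \cite{Jiang1996} rewrites this number as $\sum_{O\in\mathcal O_m(f_i)}w_m(O)\tr\rho(\gamma_i(b_{O,m}))$. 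Each orbit contributes exactly once, since fixed point classes of the pseudo-Anosov iterate are singletons.

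The first step is to check that the left side depends only on the data $(\wh G_{f_i},\wh\chi_i,\wh\gamma_i)$. Write $\Pi_i=\ker\chi_i$. Twisted homology of the fiber with coefficients in a finite-image representation is the homology of the finite cover $\ker(\gamma_i|_{\Pi_i})$, taken isotypically. The monodromy $f_*^m$ is conjugation by $t^m$ with $\chi(t)=1$, up to the inversion convention fixed earlier; this convention is the same on both sides because $\wh\chi_2\Phi=\wh\chi_1$. Each twisted homology group $H_k(\Pi_i;V\otimes\Q_p)$ is computed from $\wh\Pi_i$ by goodness of surface groups. By \cref{lem:char-functor}-type arguments, $\Phi$ maps $\wh\Pi_1=\ker\wh\chi_1$ onto $\wh\Pi_2$, and it sends a lift $t_1$ to an element with $\wh\chi_2=1$, which differs from $t_2$ by an element of $\wh\Pi_2$. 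That element acts on homology by a transformation that leaves traces unchanged, since inner action by a fiber element commutes with the identification. It follows that $\Phi$ intertwines the two monodromy actions on $H_k(-;V\otimes\Q_p)$. Traces are rational, so $L_m(f_1,\rho\gamma_1)=L_m(f_2,\rho\gamma_2)$.

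This is the main obstacle. The compatibility $\wh\gamma_1=\wh\gamma_2\Phi$ is essential so that the local systems correspond. One must also confirm that $\Phi$ composed with a profinite conjugation acts trivially on the twisted homology of the fiber up to the monodromy itself. An alternative is to cite Liu's twisted torsion and trace comparison \cite{Liu2023} directly. Failing that, the fallback is to express $L_m$ through the zeta function, that is, the twisted Reidemeister torsion of $M_{f_i}$, which Liu shows is a profinite invariant up to the unit ambiguity, fixed here by $\wh\chi$.

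The second step recovers the class sums. The two sides of the trace identity are $\sum_{\omega}c_i(\omega)\,\chi_\rho(\omega)$, where $c_i(\omega)$ is the left or right side of \eqref{eq:orbit-sum}. The characters of rational representations are constant on rational classes, and they span the space of functions on $Q$ that are constant on rational conjugacy classes, by Artin's induction theorem or by the $\Q$-rational version of orthogonality. Equality for every rational $\rho$ therefore gives $c_1(\omega)=c_2(\omega)$ for each $\omega$. Only finitely many orbits are involved at each $m$, so these are finite sums.
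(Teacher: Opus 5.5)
Your proposal is correct, and it departs from the paper at the one substantive step.

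\textbf{How the paper gets the trace identity.} The paper never compares the fiber homology directly. It packages the monodromies into
$Z_i^\rho(t)=\prod_{j}\det(I-tT_{i,j})^{(-1)^{j+1}}$. Liu's mapping torus determinant formula identifies this with twisted Reidemeister torsion. After normalizing via \cref{thm:closed-integral}, Liu's torsion comparison \cite[Theorem~7.2]{Liu2023} gives $Z_1^\rho=Z_2^\rho$ up to a factor $ct^k$. The paper removes that factor by evaluating at $t=0$, and then takes logarithms to reach exactly your equality of twisted Lefschetz numbers for all $m$. From there the trace formula and the rational character step coincide with yours (Liu's Lemma~8.4 plays the role of your Artin induction).

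\textbf{What your route buys.} You replace that citation with a self-contained argument that uses only $\wh\chi_2\Phi=\wh\chi_1$ and $\wh\gamma_1=\wh\gamma_2\Phi$. It also proves more: the monodromies on each $H_k(\Pi_i;V\otimes\Q_p)$ are conjugate, not just their alternating zeta functions equal. The paper's route is shorter but rests on heavy external input.

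\textbf{State the key step precisely.} Write $\Phi(t_1)=zt_2$ with $z\in\wh\Pi_2$. You need $z$ to act \emph{trivially} on the twisted fiber homology. ``Leaves traces unchanged'' is not enough, since it does not control $\tr\bigl((A_zA_{t_2})^m\bigr)$ for $m\geq2$.

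Triviality does hold. Let $L_i=\ker(\gamma_i|_{\Pi_i})$ and $Q_0=\gamma_1(\Pi_1)=\gamma_2(\Pi_2)$. Rational Hochschild--Serre gives
$H_k(\Pi_i;V)\cong\bigl(H_k(L_i;\Q)\otimes V\bigr)_{Q_0}$. On this space $\wh\Pi_i$ acts through $Q_0$, hence trivially. The map $\Phi_*\otimes\id_V$ is equivariant for the diagonal actions precisely because $\wh\gamma_1=\wh\gamma_2\Phi$.

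In this form you do not even need goodness. For $k=1$ you only need $H_1(L_i;\Z_p)$ as the maximal pro-$p$ abelian quotient of $\wh L_i$. For $k=0,2$ the groups reduce to $V_{Q_0}$ with $t_i$ acting by $\rho\gamma_i(t_i)$, because the monodromy preserves orientation. These actions agree on both sides since $\gamma_1(t_1)\in Q_0\,\gamma_2(t_2)$.

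\textbf{Minor citation fix.} $\Phi(\wh\Pi_1)=\wh\Pi_2$ follows from $\wh\chi_2\Phi=\wh\chi_1$ together with \cref{lem:extensions}(i), not from \cref{lem:char-functor}.
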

\begin{proof}
By \cref{thm:closed-integral}, the cohomology correspondence may be normalized so that its scalar is
one and it sends $\chi_2$ to $\chi_1$. Let $\rho:Q\to\GL(V)$ be a finite dimensional rational
representation, and let $\mathcal V_i$ be the local coefficient system of $\Q$-vector spaces on
$M_{f_i}$ associated to $\rho\circ\gamma_i:\pi_1(M_{f_i})\to\GL(V)$. After identifying the fiber at
a chosen basepoint with $V$, the monodromy along a loop representing $g\in G_{f_i}$ is
$\rho(\gamma_i(g))$. For $x\in S_i$, let
\[
\eta_{i,x}:[0,1]\longrightarrow M_{f_i},
\qquad \eta_{i,x}(s)=[x,s],
\]
be the positive suspension segment. Its endpoints are $x$ and $f_i(x)$ in the surface. The local
coefficient system assigns to this path an isomorphism
$\theta_{i,x}:(\mathcal V_i)_x\longrightarrow(\mathcal V_i)_{f_i(x)}$. The map $f_i$, together with
these coefficient isomorphisms, induces
$T_{i,j}:H_j(S_i,\mathcal V_i|_{S_i}) \longrightarrow H_j(S_i,\mathcal V_i|_{S_i})$. The $T_{i,j}$
are the homology maps induced by the positive first return with local coefficients. Define the
normalized torsion function \cite{Liu2023} by
\[
Z_i^\rho(t)
=\prod_{j=0}^{2}
\det(I-tT_{i,j})^{(-1)^{j+1}}.
\]
The mapping torus determinant formula \cite[Lemma~7.7]{Liu2023}, together with the alternating
product formula for twisted Reidemeister torsion \cite[equation~(2.2)]{Liu2023}, identifies
$Z_i^\rho(t)$ with the twisted Reidemeister torsion for $(\rho\circ\gamma_i,\chi_i)$ up to a factor
$ct^k$, where $c\in\Q^\times$ and $k\in\Z$. The determinant formula uses the forward return maps
with the coefficient isomorphisms defined above. Liu's torsion comparison
\cite[Theorem~7.2]{Liu2023} therefore identifies $Z_1^\rho$ and $Z_2^\rho$ up to such a factor. Both
functions have order zero and value one at $t=0$, so $k=0$, $c=1$ and $Z_1^\rho(t)=Z_2^\rho(t)$.
Interpret $\log Z_i^\rho(t)$ as the formal logarithm in $\Q[[t]]$. The determinant identity and the
twisted Lefschetz trace formula \cite[Section~1.6 and Theorem~1.2]{Jiang1996} give
\begin{equation}\label{eq:log-torsion-fixed-point}
\log Z_i^\rho(t)
=\sum_{m\geq1}\frac{t^m}{m}
\sum_{x\in\Fix(f_i^m)}
\ind(f_i^m,x)\,
\tr\rho\bigl(\gamma_i(a_{x,m})\bigr),
\end{equation}
where $a_{x,m}$ is the positive $m$-return element of the singleton fixed point class of $x$. By the
computation preceding the proposition, an orbit $O\in\mathcal O_m(f_i)$ contributes
$w_m(O)\tr\rho\bigl(\gamma_i(b_{O,m})\bigr)$ to the inner sum in \eqref{eq:log-torsion-fixed-point}.
Hence by equality of the logarithmic coefficients we get
\begin{equation}\label{eq:character-orbit-trace}
\sum_{O\in\mathcal O_m(f_1)}w_m(O)
\tr\rho\bigl(\gamma_1(b_{O,m})\bigr)
=
\sum_{O\in\mathcal O_m(f_2)}w_m(O)
\tr\rho\bigl(\gamma_2(b_{O,m})\bigr).
\end{equation}
By \cite[Lemma~8.4]{Liu2023}, every function $Q\to\Q$ that is constant on rational conjugacy classes
is a rational linear combination of characters of finite dimensional rational representations of
$Q$; in particular this holds for the indicator function of $\omega$. Taking that linear combination
in \eqref{eq:character-orbit-trace} proves \eqref{eq:orbit-sum}.
\end{proof}
For $i=1,2$, let $O_i$ be a periodic orbit of $f_i$ of least period $m$. An element $b_i\in G_{f_i}$
representing one positive traversal of its closed suspension trajectory is defined after choosing a
path from the basepoint to that trajectory. Changing this path conjugates $b_i$ in $G_{f_i}$, and in
every case
\[
\chi_i(b_i)=m.
\]

\begin{theorem}
\label{thm:orbit-correspondence}
Suppose
\[
\Phi:\wh G_{f_1}\xrightarrow{\cong}\wh G_{f_2}
\]
is a continuous isomorphism satisfying
\[
\wh\chi_2\circ\Phi=\epsilon\wh\chi_1,
\qquad \epsilon\in\{1,-1\}.
\]
For every prime periodic orbit $O_1$ of $f_1$ of least period $m$, there is a unique prime periodic
orbit $O_2$ of $f_2$ of least period $m$ with the following property: for any choices of elements
$b_i\in G_{f_i}$ representing one positive traversal of the corresponding suspension trajectories,
there exists $q\in\wh G_{f_2}$ such that
\begin{equation}\label{eq:orbit-exact}
\Phi(b_1)=q b_2^\epsilon q^{-1}.
\end{equation}
The resulting correspondences on prime periodic orbits associated to $\Phi$ and $\Phi^{-1}$ are
mutually inverse and preserve least periods.
\end{theorem}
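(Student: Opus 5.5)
We reduce first to the case $\epsilon=1$. When $\epsilon=-1$, the chosen positive generator $t$ does not suffice; we compose $\Phi$ with the completion of an isomorphism identifying $G_{f_2}$ with the mapping torus group of $f_2^{-1}$. The backward suspension orbits of $f_2$ are then the forward orbits of $f_2^{-1}$, traversal elements $b_2$ are replaced by $b_2^{-1}$, and $\chi_2$ is replaced by $-\chi_2$. Since $f_2^{-1}$ is again pseudo-Anosov, \cref{lem:orbit-roots} and \cref{prop:orbit-sums} apply to it. So we may assume $\wh\chi_2\Phi=\wh\chi_1$. Fix $m$. The sets $\mathcal O_m(f_i)$ are finite, and the elements $b_{O,m}$ for distinct orbits are pairwise nonconjugate in $G_{f_i}$ by \cref{lem:orbit-roots}. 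Each has $\chi_i(b_{O,m})=m$.

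\textbf{Matching step.} By \cref{lem:unit-separation} we choose a finite quotient $\gamma_2:G_{f_2}\twoheadrightarrow Q$ in which the images of all $b_{O,m}$, $O\in\mathcal O_m(f_2)$, lie in pairwise distinct rational conjugacy classes. The distinct-class property must also hold for the pulled back family on the $f_1$ side. To arrange this, we separate the $f_1$-elements in a quotient of $G_{f_1}$, transport that quotient to $G_{f_2}$ through $\Phi$ (a finite quotient of $\wh G_{f_1}$ corresponds to one of $\wh G_{f_2}$), and refine; this is allowed by the final sentence of \cref{lem:unit-separation}. Put $\gamma_1=\gamma_2\circ\Phi|_{G_{f_1}}$. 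Every rational class $\omega$ then contains at most one image on each side. By \cref{prop:orbit-sums}, $w_m(O_1)$ equals the corresponding sum on the $f_2$ side, and $w_m(O_1)\neq0$ by \cref{lem:orbit-roots}. Hence there is exactly one $O_2\in\mathcal O_m(f_2)$ with $\gamma_2(b_{O_2,m})\in\omega$, and $w_m(O_2)=w_m(O_1)$.

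\textbf{Compactness step (main obstacle).} The quotient-wise matching must be made coherent across quotients, and the unit exponent must be shown to equal $1$. Over a cofinal directed family of refinements $Q_k$, the matching at a finer level descends to the coarser level by uniqueness. Since $\mathcal O_m(f_2)$ is finite, a single $O_2$ works for all levels. For each level the set
\[
E_k=\{(q,\nu)\in\wh G_{f_2}\times\hZ^\times:\Phi(b_1)\equiv q\,b_2^\nu q^{-1}\bmod U_k\}
\]
is nonempty and closed, and the sets are nested. So there is $(q,\nu)$ with $\Phi(b_1)=qb_2^\nu q^{-1}$. Applying $\wh\chi_2$ gives $m=\nu m$, hence $\nu=1$ exactly as in \cref{lem:unit-separation}. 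This yields \eqref{eq:orbit-exact} for $b_{O,m}$ with $m$ the least period, so $b_{O,m}=a_O$. Uniqueness of $O_2$ follows because distinct orbits of $f_2$ give nonconjugate elements in $\wh G_{f_2}$, by conjugacy separability (\cref{thm:normalizer-comparisons}(i)). Independence from the choices of $b_i$ holds because they are determined only up to discrete conjugacy.

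\textbf{Period and primeness, and the inverse.} The least period of $O_2$ divides $m$ a priori. Suppose it were $d<m$. Then $b_2=a_{O_2}^{m/d}$ would be a proper power, so $\Phi(b_1)$ would have a proper integer root in $\wh G_{f_2}$. Pulling back by $\Phi^{-1}$ gives $b_1$ such a root, contradicting \cref{lem:orbit-roots}. So $O_2$ is prime of least period $m$. Running the same argument with $\Phi^{-1}$ produces an orbit $O_1'$ with $\Phi^{-1}(b_2)$ conjugate to $b_{O_1'}$. Combined with \eqref{eq:orbit-exact}, this makes $b_1$ and $b_{O_1'}$ conjugate in $\wh G_{f_1}$, and hence $O_1'=O_1$ by uniqueness. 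Thus the two correspondences are mutually inverse and preserve least periods.
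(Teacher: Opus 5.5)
Your proposal is correct and takes essentially the same route as the paper. It builds a common finite quotient that separates both orbit families, by applying \cref{lem:unit-separation} and transporting kernels through $\Phi$; it matches a unique orbit by the nonzero weight in \cref{prop:orbit-sums}; it uses compactness to get an exact conjugacy up to a unit exponent, the suspension coordinate to force $\nu=1$, and the no-root part of \cref{lem:orbit-roots} to force least period $m$; and it handles $\epsilon=-1$ by flow reversal.

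The differences are cosmetic. You fix the matched orbit by descent through refinements before applying compactness to $(q,\nu)$, whereas the paper carries the orbit as a coordinate of its compact set of triples. Also, $\gamma_1$ should be $\wh\gamma_2\circ\Phi|_{G_{f_1}}$, since $\Phi(G_{f_1})$ need not lie in $G_{f_2}$.
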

\begin{proof}
First suppose $\epsilon=1$. Fix $m$ and write $\mathscr B_i(m)=\{b_{O,m}:O\in\mathcal O_m(f_i)\}$.
By \cref{lem:orbit-roots}, the elements in each finite family are pairwise nonconjugate, have
suspension coordinate $m$, and have nonzero weights $w_m(O)=d\,\ind(f_i^m,x)$, $x\in O$.

Let $U\normalo\wh G_{f_2}$ be arbitrary. Apply \cref{lem:unit-separation} to obtain finite quotients
$\theta_i:G_{f_i}\twoheadrightarrow F_i$ that separate the elements of $\mathscr B_i(m)$ under
conjugacy and profinite unit powers. Set $K_i=\ker\wh\theta_i$, open normal subgroups, and choose
the second quotient so that $K_2\subseteq U$; the reason is that the common quotient constructed
below then maps onto $\wh G_{f_2}/U$, so any orbit matching obtained in it also holds modulo $U$,
which is what the compactness argument at the end needs for every $U$. Define
\[
N_2=K_2\cap\Phi(K_1),
\qquad
N_1=\Phi^{-1}(N_2).
\]
Then $N_i\subseteq K_i$, $N_2\subseteq U$, and $\Phi(N_1)=N_2$, so the isomorphism $\Phi$ induces
\[
\cl\Phi:
\wh G_{f_1}/N_1
\xrightarrow{\cong}
\wh G_{f_2}/N_2,
\qquad
gN_1\longmapsto\Phi(g)N_2.
\]
We identify these finite groups using $\cl\Phi$ and write $Q=\wh G_{f_2}/N_2$. Each $G_{f_i}$ is
dense in its completion and therefore surjects onto the corresponding finite quotient, so $Q$ is a
common finite quotient $\gamma_i:G_{f_i}\twoheadrightarrow Q$ of both discrete groups, and the
continuous extensions satisfy $\wh\gamma_1=\wh\gamma_2\Phi$.

The quotient $\gamma_i$ refines $\theta_i$. So if two distinct elements of $\mathscr B_i(m)$ had
images in the same rational conjugacy class of $Q$, then their images in $F_i$ would also be related
by conjugacy and a profinite unit power, which contradicts the choice of $\theta_i$. It follows that
each rational conjugacy class of $Q$ contains the image of at most one element of $\mathscr B_i(m)$
for each $i$.

Let $O_1$ be the orbit represented by $b_1$. In the rational conjugacy class containing the image of
$b_1$, the left side of \eqref{eq:orbit-sum} is the nonzero integer $w_m(O_1)$ alone. Hence the
right side contains exactly one orbit $O_U\in\mathcal O_m(f_2)$. Since a generator of the same
cyclic subgroup differs by an exponent prime to its finite order, and such an exponent lifts to a
unit of $\hZ$, there are $\nu_U\in\hZ^\times$ and $q_U\in\wh G_{f_2}$ such that
\[
\Phi(b_1)\equiv q_U b_{O_U,m}^{\nu_U}q_U^{-1}\pmod U.
\]

Now for every open normal $U$, denote by $E_U$ the collection of all triples
$(O,\nu,q)\in \mathcal O_m(f_2)\times\hZ^\times\times\wh G_{f_2}$ satisfying the congruence above.
It is a nonempty closed subset of a compact space. Because for finitely many open normal subgroups,
the construction at their intersection produces a triple belonging to all the sets $E_U$ in
question, the family $(E_U)$ has the finite intersection property. By compactness there is a triple
$(O,\nu,q)$ satisfying
\begin{equation}\label{eq:orbit-unit-equality}
\Phi(b_1)=q b_{O,m}^{\nu}q^{-1}
\end{equation}
in the completion.

Applying the suspension coordinate to \eqref{eq:orbit-unit-equality}, we get $m=\nu m$ in $\hZ$, and
since multiplication by the nonzero integer $m$ is injective on $\hZ$, $\nu=1$. Let $d\mid m$ be the
least period of $O$, so that $b_{O,m}=a_O^{m/d}$ and $\Phi(b_1)=q a_O^{m/d}q^{-1}$. Setting
$z=\Phi^{-1}(q a_Oq^{-1})$ we get $z^{m/d}=\Phi^{-1}(q a_O^{m/d}q^{-1})=b_1$, so if $d<m$ this is a
proper integer root of $b_1$ in $\wh G_{f_1}$, contradicting \cref{lem:orbit-roots}. Hence $d=m$,
and $b_{O,m}=a_O$ is a prime orbit element of least period $m$.

To prove uniqueness, suppose that two orbits $O,O'$ of least period $m$ satisfy the required
conjugacy relation for the same $b_1$. Using the representatives in $\mathscr B_2(m)$, we then have
\[
\Phi(b_1)
=q b_{O,m}q^{-1}
=q' b_{O',m}(q')^{-1}
\]
for some $q,q'\in\wh G_{f_2}$. Consequently,
\[
b_{O',m}
=\bigl((q')^{-1}q\bigr)b_{O,m}
\bigl((q')^{-1}q\bigr)^{-1}.
\]
By \cref{lem:unit-separation}, there is a finite quotient in which the elements of $\mathscr B_2(m)$
associated to distinct orbits have nonconjugate images. Applying this quotient to this equality, we
get $O'=O$. Thus $O$ is unique.

If $\epsilon=-1$, reverse the second suspension flow and replace $\chi_2$ by $-\chi_2$. The return
map is then $f_2^{-1}$, whose periodic orbits are the same sets as those of $f_2$, with the same
least periods. A positive traversal for the reversed flow represents $b_2^{-1}$, where $b_2$
represents a positive traversal for the original flow. The case already proved therefore yields
\[
\Phi(b_1)=q b_2^{-1}q^{-1},
\]
with the same existence and uniqueness conclusion for the orbit $O_2$.

We verify that the conclusion is independent of the chosen based representatives. Suppose
\[
b_i'=g_i b_i g_i^{-1},
\qquad g_i\in G_{f_i},
\]
and that \eqref{eq:orbit-exact} holds. Set
\[
q'=\Phi(g_1)qg_2^{-1}.
\]
Then
\[
\begin{aligned}
\Phi(b_1')
&=\Phi(g_1)\Phi(b_1)\Phi(g_1)^{-1}\\
&=q'(b_2')^\epsilon(q')^{-1}.
\end{aligned}
\]
Therefore the defining condition depends only on the two periodic orbits.

Finally, \eqref{eq:orbit-exact} implies
\[
\Phi^{-1}(b_2)
=\Phi^{-1}(q)^{-1}
b_1^\epsilon
\Phi^{-1}(q).
\]
The inverse isomorphism satisfies
\[
\wh\chi_1\circ\Phi^{-1}
=\epsilon\wh\chi_2.
\]
Applying the uniqueness conclusion to $\Phi^{-1}$ therefore sends $O_2$ back to $O_1$. Interchanging
the two sides proves the other composition is also the identity, so the two orbit correspondences
are mutually inverse and preserve least periods.
\end{proof}


\section{Fibered classes}
Let $\mathcal G$ be a finite directed graph with directed edge set $A(\mathcal G)$ and endpoint maps
$s,t$. We define its two sided edge shift by
\[
\Sigma=\{(e_j)_{j\in\Z}\in A(\mathcal G)^\Z:t(e_j)=s(e_{j+1})\text{ for all }j\},
\qquad \sigma((e_j)_j)=(e_{j+1})_j,
\]
with the subspace topology from the product of finite discrete sets. The graph is \emph{strongly
connected} if every ordered pair of vertices is joined by a directed path. An \emph{admissible word}
is a finite nonempty directed edge path. It is closed if its last terminal vertex equals its first
initial vertex. Repeating a closed word $W$ of length $k$ defines a point $\omega_W\in\Sigma$ with
$\sigma^k\omega_W=\omega_W$. For a continuous map $\pi:\Sigma\to S$ with $\pi\sigma=f\pi$, a
\emph{periodic realization of $W$} means the orbit of $x_W=\pi(\omega_W)$, traversed for $k$ return
times. We write $c_W^{(k)}$ for this oriented closed suspension curve and
$[c_W^{(k)}]\in H_1(M_f,\Z)$ for its homology class. See \cite{ParryPollicott1990}.

\begin{lemma}\label{lem:markov-coding}
Let $S$ be a connected closed orientable surface of genus at least two, and let $f:S\to S$ be an
orientation preserving pseudo-Anosov homeomorphism. Then there are a finite strongly connected
directed graph $\mathcal G$, its two sided edge shift $(\Sigma,\sigma)$, and a continuous surjective
map with finite fibers
\[
\pi:\Sigma\longrightarrow S
\]
such that $\pi\sigma=f\pi$. In addition, every closed admissible word has a periodic realization.
Moreover, if $f^m(x)=x$, then there are $\omega\in\pi^{-1}(x)$ and $r\geq1$ with
$\sigma^{mr}(\omega)=\omega$.
\end{lemma}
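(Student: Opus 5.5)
The plan is to build the Markov partition of $f$ and let its incidence graph be $\mathcal G$; the coding $\pi$ is then the standard one, and the only issues are connectivity and the periodic statements. First, by the theory of pseudo-Anosov maps \cite{FathiLaudenbachPoenaru2012}, $f$ has a Markov partition $\{R_1,\ldots,R_N\}$: birectangles bounded by arcs of the stable and unstable leaves, with disjoint interiors covering $S$, satisfying the Markov property. Take $\mathcal G$ to have vertex set $\{1,\ldots,N\}$ and one directed edge $i\to j$ exactly when $f(\Int R_i)\cap\Int R_j\neq\varnothing$. (Using edges rather than vertices as symbols changes nothing; the edge shift is the standard recoding of the vertex shift with transition matrix $T$.) Define $\pi((e_j)_j)$ to be the unique point of $\bigcap_{n}\bigcap_{|j|\le n}f^{-j}(R_{s(e_j)})$. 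Uniform contraction of $f$ along unstable arcs in backward time and along stable arcs in forward time makes these nested rectangles shrink to a point. This gives continuity, and $\pi\sigma=f\pi$ is immediate from the definition. Surjectivity holds because every point has at least one itinerary: choose rectangles containing the successive iterates, and use the Markov property together with compactness of $\Sigma$ to realize a point on the boundary as a limit of interior codes. Finiteness of the fibres follows because a point lies in at most boundedly many rectangles, and the Markov structure limits the possible codes to at most $N^2$ (Bowen's bound, cf.\ \cite{ParryPollicott1990}).

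Strong connectivity is the step I expect to be the main obstacle, since an arbitrary Markov partition need not give an irreducible $T$. Here I would use that $f$ is topologically transitive; this holds because the measure $\mu_1\times\mu_2$ is ergodic. By transitivity, for each $i,j$ some $n$ satisfies $f^n(\Int R_i)\cap\Int R_j\neq\varnothing$. The Markov property then produces an admissible path of length $n$ from $i$ to $j$, namely the itinerary of a point in $\Int R_i\cap f^{-n}(\Int R_j)$ whose orbit avoids the boundary. Such points exist because the union of boundaries is a finite union of leaf arcs, whose forward and backward saturations form a meagre set. In fact one obtains mixing, so $T$ is primitive, but irreducibility suffices here.

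The two periodic assertions are formal. If $W$ is a closed admissible word of length $k$, then $\omega_W\in\Sigma$ satisfies $\sigma^k\omega_W=\omega_W$, so $x_W=\pi(\omega_W)$ satisfies $f^kx_W=\pi\sigma^k\omega_W=x_W$. This gives the periodic realization. For the last statement, suppose $f^m(x)=x$. Since $\pi\sigma^m=f^m\pi$, the map $\sigma^m$ sends the finite set $\pi^{-1}(x)$ into itself. A self-map of a finite nonempty set has a periodic point, so some $\omega\in\pi^{-1}(x)$ satisfies $\sigma^{mr}\omega=\omega$ for some $r\geq1$.
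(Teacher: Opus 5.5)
\textbf{The definition of $\pi$ fails as written.} You put a single edge $i\to j$ whenever $f(\Int R_i)\cap\Int R_j\neq\varnothing$. You then assert that the sets $\bigcap_{|j|\le n}f^{-j}(R_{i_j})$ are nested rectangles shrinking to a point. That needs the one-crossing condition: $\Int R_i\cap f^{-1}(\Int R_j)$ must be connected whenever it is nonempty. The Markov property alone does not give this, since $f(R_i)$ may cross $R_j$ in several disjoint unstable strips. In that case a vertex itinerary does not record which strip is crossed at each step. Those sets are then disjoint unions of several rectangles, and their intersection is in general not a point.

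Choosing $\pi$ more cleverly on your shift cannot repair this. Let $a_{ij}$ count the components of $\Int R_i\cap f^{-1}(\Int R_j)$. The unstable heights of the rectangles form a positive eigenvector of $a$ with eigenvalue the dilatation $\lambda$, so $\rho(a)=\lambda$, and $h(f)=\log\lambda$. If some $a_{ij}\geq 2$, Perron--Frobenius for the irreducible matrix $a$ gives $\rho(T)<\rho(a)$ for the entrywise truncation $T=\min(a,1)$. So your shift has entropy $\log\rho(T)<h(f)$, and no continuous surjection onto $S$ intertwining $\sigma$ and $f$ can exist. Your remark that edges versus vertices ``changes nothing'' is exactly where this is lost.

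The paper avoids the problem by taking good rectangles that satisfy the Fathi--Shub one-crossing condition. Two other fixes also work:
\begin{itemize}
\item refine to the components of $R_i\cap f^{-1}(R_j)$, which is again Markov and one-crossing, because an unstable strip and a stable strip of the same rectangle meet in one rectangle;
\item keep one edge for each component of $\Int R_i\cap f^{-1}(\Int R_j)$.
\end{itemize}
The paper also defines $\pi$ through the closures $C_N(\omega)$ of the interior cylinders rather than through closed rectangles. This avoids spurious boundary contacts between closed rectangles that are not limits of interior transitions.

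\textbf{The finiteness of fibres is only gestured at.} Knowing that $x$ lies in boundedly many rectangles bounds the choice at each coordinate, but that alone still allows infinitely many bi-infinite sequences. What is needed is that choices at different times are coupled. The paper shows that every name of $x$ is determined by one of the finitely many open sector germs at $x$ bounded by local stable and unstable prongs: four at a regular point, $2k$ at a $k$-prong. The interior of each $C_N(\omega)$ contains such a germ, and a germ has at most one itinerary. Bowen's $N^2$ bound is proved for Axiom A diffeomorphisms with small rectangles and a genuine local product structure, so invoking it here needs exactly this adaptation at singular points and for large rectangles.

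The rest of your plan is sound. Deriving strong connectivity from topological transitivity, plus a Baire choice of an orbit segment staying in the interiors, is a valid and self-contained alternative to citing the primitivity of the transition matrix from Fathi--Shub. The two periodic assertions go through as in the paper once $\pi$ is well defined with finite nonempty fibres.
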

\begin{proof}
Choose a finite Markov partition $R_1,\ldots,R_q$ by good rectangles, that is, closed rectangles
which are embedded and homeomorphic to closed disks, with sides in stable and unstable leaves, and
which satisfy the one-crossing condition of \cite{FathiShub1979}. For each pair $(i,j)$, put
$U_{ij}=\Int R_i \cap f^{-1}(\Int R_j)$, the set of points in $\Int R_i$ whose image lies in
$\Int R_j$. If $U_{ij}$ is nonempty, the one-crossing condition makes it connected. The Markov
rectangle structure then shows that $Q_e=\cl{U_{ij}}$ is a closed topological rectangle, hence a
closed disk. We introduce one directed edge $e:i\to j$ for this transition and associate the disk
$Q_e$ to it. If $U_{ij}$ is empty, we introduce no such edge, even if $R_i\cap f^{-1}(R_j)$ is
nonempty. Boundary-only intersections give no edge, and the rectangle transition matrix is binary.
Its positive power in \cite[Lemma~1]{FathiShub1979} makes $\mathcal G$ strongly connected. For an
admissible edge sequence $\omega=(e_j)_{j\in\Z}$, set $i_j=s(e_j)$. The nonempty cylinder rectangles
\[
C_N(\omega)=\cl{\bigcap_{j=-N}^{N}
f^{-j}(\Int R_{i_j})}
\]
are nested and shrink to a point. Their intersection defines a continuous surjection $\pi$ with
$\pi\sigma=f\pi$ by \cite{FathiShub1979}.

For the finiteness of the fibers $\pi^{-1}(x)$ we follow \cite[Lemmas~9--10 and
Corollary~1]{CruzDiaz2025}. At a point $x$, the local stable and unstable prongs bound finitely many
open sector germs, four at a regular point and $2k$ at a $k$-prong singularity. Their images under
every iterate are again sector germs. Each such image also lies in the interior of a unique
partition rectangle, since its boundary consists of foliation arcs. So a sector determines at most
one complete rectangle itinerary and so by the one-crossing condition, at most one edge sequence.
Representatives of the germ are taken small enough to avoid the finitely many partition boundary
arcs that do not pass through the point. Conversely, suppose $\pi(\omega)=x$. Then each
$C_N(\omega)$ contains $x$ and has nonempty interior bounded by stable and unstable arcs. So its
interior contains an open sector germ at $x$. Note that the sets of such germs are finite, nonempty,
and nested as $N$ increases. Hence a germ in their intersection has itinerary $(i_j)_{j\in\Z}$, so
it determines $\omega$. Therefore every name of $x$ comes from a sector, and $\pi^{-1}(x)$ is
finite. This also proves finiteness at boundaries and singularities.

Repeating a closed admissible word produces a periodic sequence and hence a periodic realization. If
$f^m(x)=x$, then both $\sigma^m$ and $\sigma^{-m}$ preserve the finite nonempty set $\pi^{-1}(x)$.
Thus $\sigma^m$ permutes that set. Some positive power of the permutation fixes a name, which
implies the final claim.
\end{proof}

\begin{lemma}\label{lem:markov-homology}
Let $M_f$ be the suspension of $f$. Fix the Markov rectangles $R_1,\ldots,R_q$, the closed
rectangles $Q_e$, the directed graph $\mathcal G$, and the mapping $\pi$ constructed in the proof of
\cref{lem:markov-coding}. There are classes $h(e)\in H_1(M_f,\Z)$, one for each directed edge $e$ of
$\mathcal G$, such that, for every closed admissible word $W=e_1\cdots e_k$ and every periodic
realization traversed for exactly $k$ return times, we have
\begin{equation}\label{eq:integral-symbols}
[c_W^{(k)}]=\sum_{j=1}^{k}h(e_j)
\quad\text{in }H_1(M_f,\Z).
\end{equation}
The formula remains valid when the realization meets a boundary of a Markov rectangle or a
singularity of the invariant foliations.
\end{lemma}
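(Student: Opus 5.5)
We will construct the classes $h(e)$ by a cocycle of suspension segments. For each rectangle $R_i$ fix a base point $p_i\in\Int R_i$, and for each directed edge $e:i\to j$ fix a base point $y_e\in\Int Q_e$. Since $Q_e\subseteq R_i$ and $f(Q_e)\subseteq R_j$, and all these sets are closed disks, hence contractible, we get a canonical path in $M_f$ up to homotopy. It runs from $p_i$ to $y_e$ inside $R_i\times\{0\}$, then along the suspension segment $\eta_{y_e}$ from $y_e$ to $f(y_e)$, and then from $f(y_e)$ to $p_j$ inside $R_j\times\{0\}$. Let $\gamma_e$ be this path. The homotopy class of $\gamma_e$ rel endpoints does not depend on $y_e$, because any two choices are joined inside the disk $Q_e$ and the suspension of that joining path gives a homotopy. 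Concatenating $\gamma_{e_1}\cdots\gamma_{e_k}$ along a closed admissible word gives a closed loop. So we first define a $1$-chain class and then set $h(e)$ to be the class of $\gamma_e$ in $H_1(M_f,\Z)$ relative to the finite set $\{p_i\}$. Concretely, we would choose paths $\delta_i$ from a global base point to $p_i$ in $S\times\{0\}$ and put $h(e)=[\delta_i\gamma_e\delta_j^{-1}]\in H_1(M_f,\Z)$. The $\delta$ terms telescope in any closed word, so $\sum_j h(e_j)$ is the class of the concatenated loop.

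The main step is to compare the concatenated loop with $c_W^{(k)}$. Let $\omega=\omega_W$ and $x=\pi(\omega)$. Then $f^{j-1}(x)\in C_N(\sigma^{j-1}\omega)\subseteq Q_{e_j}$ for every $j$. Indeed, the cylinder of $\sigma^{j-1}\omega$ is the closure of a subset of $U_{s(e_j)t(e_j)}$, so it lies in $Q_{e_j}$; this holds even on boundaries and at singularities, because $Q_e$ is closed. We may therefore use $y_{e_j}=f^{j-1}(x)$ for the $j$th segment. The invariance under the choice of $y_e$ proved above is exactly what is needed here. With this choice, $\gamma_{e_j}$ is the segment $\eta_{f^{j-1}x}$, preceded by a path in $R_{s(e_j)}$ from $p_{s(e_j)}$ to $f^{j-1}(x)$ and followed by a path in $R_{t(e_j)}$ from $f^{j}(x)$ to $p_{t(e_j)}$.

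In the concatenation, the path appended at the end of step $j$ and the one prepended at step $j+1$ both lie in the same disk $R_{t(e_j)}=R_{s(e_{j+1})}$. They have the common endpoint $f^j(x)$ and the common base point, so they cancel up to homotopy inside that disk. What remains is the sequence of suspension segments through $x,f(x),\dots,f^{k-1}(x)$, which is exactly $c_W^{(k)}$. Boundary and singular points need no special treatment, since only membership $f^{j-1}(x)\in Q_{e_j}$ was used.

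We expect the main obstacle to be justifying that $Q_e$ is a closed disk containing the relevant cylinder sets, and that $f(Q_e)\subseteq R_j$. Both are asserted in the construction in the proof of \cref{lem:markov-coding}, and we would invoke them there. We would also check that the homotopy independence of $\gamma_e$ uses only the connectedness and simple connectivity of $Q_e$. This holds because $Q_e$ is a closed disk, and the suspension of a path in $Q_e$ gives the required homotopy rel endpoints modulo paths in $R_i$ and $R_j$, which are disks.
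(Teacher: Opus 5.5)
Your proposal is correct and is essentially the paper's proof. The paper also defines $h(e)$ from a base path to a point of $R_i$, a path in $R_i$ to a chosen $z_e\in Q_e$, the unit suspension segment, and a path in $R_j$. It then uses the flow homotopy $\varphi^t(a(u))$ along a path $a$ in $Q_e$ to slide onto the realization point, and cancels the connecting paths inside the contractible rectangles. Your cylinder-set argument showing $f^{j-1}(x)\in Q_{e_j}$ supplies a step the paper only asserts.
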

\begin{proof}
We use the rectangles and transition disks of \cref{lem:markov-coding}. Fix a basepoint $o\in S$, a
point $r_i\in R_i$ for each $i$, and a path $\gamma_i$ from $o$ to $r_i$. For $e:i\to j$, choose
$z_e\in Q_e$ and paths $\alpha_e$ from $r_i$ to $z_e$ in $R_i$ and $\beta_e$ from $f(z_e)$ to $r_j$
in $R_j$. Write $\tau_z$ for the positively oriented unit suspension segment from $z$ to $f(z)$. We
define
\[
h(e)=[\gamma_i\alpha_e\tau_{z_e}\beta_e\gamma_j^{-1}]
\in H_1(M_f,\Z).
\]
Let $W=e_1\cdots e_k$ be closed and let $x_1,\ldots,x_k$ be its successive realization points, with
$x_{k+1}=x_1$ and $f(x_\ell)=x_{\ell+1}$. Since $x_\ell\in Q_{e_\ell}$ and $Q_{e_\ell}$ is a disk,
we can choose a path $a_\ell$ in $Q_{e_\ell}$ from $z_{e_\ell}$ to $x_\ell$. The map
\[
H_\ell(u,t)=\varphi^t(a_\ell(u))
\qquad((u,t)\in[0,1]^2)
\]
deforms the chosen suspension segment into the realized one. Its endpoint paths remain in
$R_{s(e_\ell)}$ and $R_{t(e_\ell)}$. Next choose paths $\delta_\ell$ from $r_{s(e_\ell)}$ to
$x_\ell$ in $R_{s(e_\ell)}$, with $\delta_{k+1}=\delta_1$. Contractibility of both endpoint
rectangles shows that the loop defining $h(e_\ell)$ is based homotopic to
$\gamma_{s(e_\ell)}\delta_\ell\tau_{x_\ell} \delta_{\ell+1}^{-1}\gamma_{t(e_\ell)}^{-1}$. Let
$L_\ell$ denote the based loop defining $h(e_\ell)$, and set
\[
A_\ell=\gamma_{s(e_\ell)}\delta_\ell,
\qquad A_{k+1}=A_1.
\]
Then the based homotopies above give
\[
L_1\cdots L_k
\simeq
A_1(\tau_{x_1}\cdots\tau_{x_k})A_1^{-1},
\]
because each intervening path $A_{\ell+1}^{-1}A_{\ell+1}$ contracts relative to its endpoints. The
middle concatenation is the realized closed suspension curve $c_W^{(k)}$. Passing to the
abelianization $\pi_1(M_f,o)^{\mathrm{ab}}\cong H_1(M_f,\Z)$ proves \eqref{eq:integral-symbols} in
the full integral homology group, including torsion.

The paths $a_\ell$ lie in the closed transition disks, and the comparisons of endpoint paths take
place in the closed contractible rectangles. Since the suspension homotopies are continuous at every
point, the argument also applies to realization points on partition boundaries or at singularities
of the invariant foliations.
\end{proof}

\begin{lemma}\label{lem:zero-weight-word}
Let $\mathcal G$ be a finite strongly connected directed graph with an integer weight $w(e)$ on each
directed edge. For $W=e_1\cdots e_k$, we set $w(W)=\sum_{j=1}^k w(e_j)$. If $\mathcal G$ has closed
admissible words of positive and negative total weight, then it has a closed admissible word of
total weight zero.
\end{lemma}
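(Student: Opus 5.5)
The plan is to splice closed words together at a common vertex and choose repetition counts so that the weights cancel.

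Let $P$ be a closed admissible word with $w(P)=p>0$ and $N$ a closed admissible word with $w(N)=-n<0$. Let $u$ be the initial vertex of $P$ and $v$ the initial vertex of $N$. By strong connectivity, fix directed paths $\gamma$ from $u$ to $v$ and $\delta$ from $v$ back to $u$; if $u=v$, take them empty. The word $\gamma\delta$ is closed at $u$ (or empty), and we write $c=w(\gamma)+w(\delta)\in\Z$.

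For positive integers $a,b$, consider the concatenation
\[
W_{a,b}=P^{a}\,\gamma\,N^{b}\,\delta.
\]
This is admissible and closed at $u$: $P$ is closed at $u$, $\gamma$ runs from $u$ to $v$, $N$ is closed at $v$, and $\delta$ runs from $v$ to $u$. Its weight is
\[
w(W_{a,b})=ap-bn+c.
\]

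The only real step is to choose $a,b\ge1$ with $ap-bn=-c$, and $c$ need not be divisible by $\gcd(p,n)$. To get around this, first replace $\gamma\delta$ by a suitable number of copies of itself before solving. Set $g=\gcd(p,n)$ and use the closed word $(\gamma\delta)^{g}$, whose weight is $gc$. Concretely, take
\[
W=\bigl(P^{a_1}\gamma N^{b_1}\delta\bigr)\bigl(P^{a_2}\gamma N^{b_2}\delta\bigr)\cdots\bigl(P^{a_g}\gamma N^{b_g}\delta\bigr),
\]
which is admissible since each block is closed at $u$. Its weight is
\[
w(W)=\Bigl(\sum a_i\Bigr)p-\Bigl(\sum b_i\Bigr)n+gc.
\]
Since $g$ divides $gc$, Bézout gives integers $A,B$ with $Ap-Bn=-gc$. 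Adding the pair $(tn/g,\,tp/g)$ to $(A,B)$ for large $t$ keeps the equation and makes both $A\ge g$ and $B\ge g$. We can then split $A$ and $B$ into $g$ positive parts $a_i,b_i$, which gives $w(W)=0$. (Alternatively, the simpler form $W=P^{A}(\gamma\delta)^{g}N^{B}$ does not work when $u\ne v$, so the block form above is the one to use.)

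Everything here is routine. The only point needing care is the divisibility issue handled by taking $g$ copies of the connecting loop, together with keeping all exponents positive so that the resulting word is nonempty and admissible.
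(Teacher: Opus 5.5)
Your argument is correct, but it takes a more roundabout route than the paper. You keep the connecting weight $c=w(\gamma)+w(\delta)$ as a separate additive constant and then solve $Ap-Bn=-gc$. That forces the $g=\gcd(p,n)$ copies of the connecting loop, a B\'ezout step, and a shift by $(tn/g,tp/g)$ to make all exponents positive.

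The paper avoids this by absorbing the connecting paths into the loops before cancelling. It joins a base vertex $v$ to the base points of $W_+$ and $W_-$, and repeats each $W_\pm$ often enough that the resulting closed words $P_+,P_-$ at $v$ still have weights $p>0$ and $-q<0$. Then $P_+^{q}P_-^{p}$ has weight $qp-pq=0$.

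In your notation, this amounts to replacing $N$ by the loop $\gamma N^{b}\delta$ at $u$, whose weight $c-bn$ is negative for $b$ large, and pairing it with $P$ by cross-multiplying exponents. No divisibility issue arises, because $c$ is now part of the weight being multiplied.

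Your version does bound the number of passages through the connecting paths by $g$. Nothing downstream uses that, since the application in \cref{lem:zero-word} only needs some closed word of weight zero. So the paper's cross-multiplication is simply the more economical argument.
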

\begin{proof}
Choose a vertex $v$. Let $W_+$ and $W_-$ be closed words of positive and negative weight, based at
vertices $v_+$ and $v_-$. By strong connectivity there are directed paths from $v$ to $v_+$ and
back, and from $v$ to $v_-$ and back. The connecting paths have fixed total weights. Since
$w(W_+)>0$ and $w(W_-)<0$, repeating $W_+$ sufficiently many times makes the resulting closed word
based at $v$ have positive weight, and repeating $W_-$ sufficiently many times makes the resulting
closed word based at $v$ have negative weight. We obtain closed words $P_+$ and $P_-$ based at $v$
with weights
\[
w(P_+)=p>0,
\qquad
w(P_-)=-q<0.
\]
Then the closed word $P_+^qP_-^p$ has weight $qp-pq=0$.
\end{proof}

\begin{lemma}\label{lem:zero-word}
Let $S$ be a connected closed orientable surface of genus at least two, and let $f:S\to S$ be an
orientation preserving pseudo-Anosov homeomorphism. Equip its mapping torus $M=M_f$ with the
suspension flow, with positive direction given by increasing time. Suppose that $\eta\in H^1(M,\Z)$
has opposite signs on two positive periodic orbits $c_+,c_-$. Then the flow has a positive prime
orbit with representative $b\in\pi_1(M)$ such that $\eta(b)=0$, and $b$ is not a proper power in
$\pi_1(M)$.
\end{lemma}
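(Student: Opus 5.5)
The plan is to pass to the symbolic coding of \cref{lem:markov-coding}, turn $\eta$ into integer edge weights via \cref{lem:markov-homology}, produce a closed word of total weight zero with \cref{lem:zero-weight-word}, and then extract a prime orbit of weight zero.

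\emph{Setting up the weights.} Fix the Markov coding $\pi:\Sigma\to S$ of \cref{lem:markov-coding} and the classes $h(e)\in H_1(M,\Z)$ of \cref{lem:markov-homology}. Put $w(e)=\eta(h(e))\in\Z$. Then, by \eqref{eq:integral-symbols}, every periodic realization of a closed word $W$ traversed $k$ times satisfies
\[
\eta\bigl([c_W^{(k)}]\bigr)=w(W).
\]

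\emph{Words of both signs.} Let $c_+$ be the periodic orbit of a point $x_+$ with $f^{m_+}(x_+)=x_+$. By the last clause of \cref{lem:markov-coding}, there are $\omega_+\in\pi^{-1}(x_+)$ and $r_+\ge1$ with $\sigma^{m_+r_+}\omega_+=\omega_+$. The corresponding closed word $W_+$ of length $m_+r_+$ is realized by $c_+$ traversed $r_+$ times, so $w(W_+)=r_+\eta(c_+)$, which has the sign of $\eta(c_+)$. The same construction gives $W_-$ with weight of the opposite sign. Since $\mathcal G$ is strongly connected, \cref{lem:zero-weight-word} yields a closed word $W_0$ with $w(W_0)=0$. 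Its periodic realization is a closed positive suspension curve $c=c_{W_0}^{(k)}$ with $\eta([c])=0$.

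\emph{Passing to a prime orbit.} The curve $c$ is the $s$-fold traversal of some prime positive orbit $O$, with $s\geq1$, because $x_{W_0}$ is periodic. Let $b$ represent one positive traversal of $O$. Then $s\,\eta(b)=\eta([c])=0$ in $\Z$, so $\eta(b)=0$. That $b$ is not a proper power in $\pi_1(M)$ is exactly \cref{lem:orbit-roots}.

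\emph{Main obstacle.} The delicate point is the realization bookkeeping. The word $W_0$ has length $k$ while the geometric orbit of $x_{W_0}$ may have smaller least period, since $\pi$ is only finite-to-one. We must check that $c_{W_0}^{(k)}$ is genuinely a multiple traversal of the prime orbit. It is: the realization is $k$ return times along the orbit of a point with $f^k(x)=x$, so $k$ is a multiple of the least period $d$, and the curve is the $(k/d)$-fold traversal. For the same reason \cref{lem:markov-homology} must apply to realizations on rectangle boundaries or singular points, which that lemma explicitly allows. The final step also needs the sign argument carefully: $\eta$ takes values in torsion-free $\Z$, so division by $s$ is legitimate. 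Torsion in $H_1$ does not interfere, because $\eta$ kills it.
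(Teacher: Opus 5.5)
Your proposal is correct and follows the paper's proof essentially step for step: you code periodic points on $c_\pm$ via \cref{lem:markov-coding}, weight the edges by $w(e)=\langle\eta,h(e)\rangle$ from \cref{lem:markov-homology}, apply \cref{lem:zero-weight-word}, then pass to the underlying prime trajectory by dividing out the traversal multiplicity in torsion-free $\Z$, and finish with \cref{lem:orbit-roots}. Your added bookkeeping, that the word length $k$ is a multiple of the least period because $\pi\sigma=f\pi$, is a detail the paper leaves implicit.
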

\begin{proof}
Apply \cref{lem:markov-coding} to periodic points on $c_+$ and $c_-$. After passing to positive
multiples of their return periods, we obtain periodic admissible words whose realizations traverse
$c_+$ and $c_-$, respectively, $r_+$ and $r_-$ times, where $r_+,r_-\geq1$. Their homology classes
are therefore $r_+[c_+]$ and $r_-[c_-]$. By additivity of $\eta$,
\[
\eta(r_+[c_+])=r_+\eta([c_+])>0,
\qquad
\eta(r_-[c_-])=r_-\eta([c_-])<0.
\]
We regard $\eta\in H^1(M,\Z)$ as the homomorphism $H_1(M,\Z)\longrightarrow\Z$,
$a\longmapsto\langle\eta,a\rangle$. For each directed edge $e$, we define its integer weight by
$w(e)=\langle\eta,h(e)\rangle$ where $h(e)$ is from \cref{lem:markov-homology}. By
\cref{lem:zero-weight-word}, there is a closed admissible word of total weight zero, and its
periodic realization has zero evaluation under $\eta$ by \eqref{eq:integral-symbols}.

Let $b$ be the element represented by one positive traversal of the resulting prime trajectory. The
original realization traverses this prime trajectory a positive integer number of times. As $\Z$ has
no torsion, the zero evaluation under $\eta$ forces $\eta(b)=0$. Finally by \cref{lem:orbit-roots},
this element $b$ is not a proper power in the suspension group.
\end{proof}

\begin{theorem}\label{thm:two-fibers}
Let $M$ be a closed orientable hyperbolic $3$-manifold and set $G=\pi_1(M)$. There is a standard
characteristic subgroup $K=K_m(G)$ such that the corresponding cover $p_K:M_K\to M$ admits linearly
independent primitive fibered classes
\[
\chi_{0,K},\chi_{1,K}\in H^1(M_K,\Z)
\]
together with a positive prime $\chi_{0,K}$-orbit element $b_K\in K$ such that $\chi_{1,K}(b_K)=0$,
and $b_K$ is not a proper power in $K$.
\end{theorem}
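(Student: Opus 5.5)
Following the introduction, we first produce a fibered finite cover with first Betti number at least two, then perturb a fibered class into a second one that vanishes on a prime orbit of the first.

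\emph{Step 1: a fibered cover with a nondegenerate fibered cone.} By Agol's virtual fibering theorem \cite{Agol2008,Agol2013}, $M$ has a finite cover that fibers over the circle. By virtual infinite first Betti number, the cover can also be taken with $b_1\geq2$. Every finite index subgroup of $G$ contains some $K_m(G)$ (\cref{def:characteristic-cover}). Pulling back a fibration to a further finite cover again gives a fibration, with a finite cover of the fiber as the new fiber. So we may take $K=K_m(G)$ characteristic with $M_K$ fibered and $b_1(M_K)\geq2$. Since pullback on $H^1(-,\Q)$ is injective, we can enlarge $m$ if needed to keep $b_1\geq2$.

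By Thurston \cite{Thurston1986}, the fibered classes form the integral points in the interiors of finitely many top dimensional open cones over faces of the Thurston norm ball. Fix a primitive fibered class $\chi_{0,K}$ in such a cone $\mathcal C$, with monodromy the pseudo-Anosov $f$ and suspension flow $\varphi$ (\cref{def:fibered-class}).

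\emph{Step 2: a class with opposite signs on two periodic orbits.} By Fried \cite{Fried1982}, a class lies in $\mathcal C$ exactly when it is positive on the homology directions of $\varphi$, that is, on all closed orbits up to closure. The plan is to pick an integral class $\eta$ just outside $\cl{\mathcal C}$ near a boundary point. Such a class is negative on some closed orbit $c_-$, since periodic orbits are dense in the homology directions \cite{FathiShub1979}, and positive on another orbit $c_+$. Then \cref{lem:zero-word} gives a positive prime orbit with representative $b_K$ satisfying $\eta(b_K)=0$, and $b_K$ is not a proper power in $\pi_1(M_K)=K$.

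We still need the second class to be fibered. Set
\[
\chi_{1,K}=N\chi_{0,K}+\eta-\eta'
\]
for a suitable correction $\eta'$. The alternative I would try first is to exploit that $b_K$ is fixed once chosen. The classes vanishing on $b_K$ form a hyperplane $\{\xi:\xi(b_K)=0\}$. This hyperplane meets the open cone $\mathcal C$ only if $b_K$'s homology direction fails to be positive for some class of $\mathcal C$. That is false, since every class of $\mathcal C$ is positive on every orbit of $\varphi$.

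So instead the zero must come from a \emph{different} fibered cone, or from the reversed flow. The revised plan is to take $\chi_{1,K}$ in a fibered cone $\mathcal C'$ whose boundary is crossed by the $\eta$ above. Concretely, we perturb a class on a common face of $\cl{\mathcal C}$ and an adjacent fibered cone, or of $\mathcal C$ and $-\mathcal C'$. When $b_1\geq2$, after a further characteristic cover one can arrange at least two non-antipodal fibered cones. A positive $\varphi$-orbit may then have vanishing $\chi_{1,K}$-value, because $\chi_{1,K}$ is not in $\mathcal C$.

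The condition needed is therefore $\mathcal C'\not\subseteq\pm\mathcal C$ with $\mathcal C'\cap\{\xi:\xi(c_+)>0,\ \xi(c_-)<0\}$ nonempty. Given that, we apply \cref{lem:zero-word} to $\eta=\chi_{1,K}$ itself, which is now fibered, primitive after division, and independent of $\chi_{0,K}$ since it lies in a different cone.

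\emph{Main obstacle.} The hardest step is producing a fibered class $\chi_{1,K}$ that takes opposite signs on two closed orbits of the $\chi_{0,K}$-flow. This requires a second fibered cone not equal to $\pm\mathcal C$, after passing to a characteristic cover. I would try to obtain it by pulling back to a cover in which the preimage of $\mathcal C$ is a proper subcone of a face. Agol's RFRS machinery \cite{Agol2008} shows that any class becomes quasi-fibered after a cover, and this should force additional fibered faces there. Primitivity, independence, positivity, and non-properness of $b_K$ are then routine from \cref{lem:zero-word,lem:orbit-roots}.
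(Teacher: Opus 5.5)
\textbf{Gap: the second fibered class is never produced.} You flag this step as the main obstacle, and it is the heart of the theorem; the proposal does not supply it. The mechanisms you offer do not work as stated.
\begin{enumerate}
\item Perturbing a class on ``a common face of $\cl{\mathcal C}$ and an adjacent fibered cone'' presupposes that a top-dimensional face of the Thurston norm ball adjacent to a fibered face is again fibered. That need not hold, and nothing in your Step~1 produces any fibered cone other than $\pm\mathcal C$.
\item Passing to a cover in which ``the preimage of $\mathcal C$ is a proper subcone of a face'', so that this ``should force additional fibered faces'', is not an argument.
\item Agol's quasi-fibering theorem \cite[Theorem~5.1]{Agol2008} needs an RFRS fundamental group, which $K_m(G)$ need not have. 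You must first pass to a fibered cover lying in a finite index RFRS subgroup, as the paper does: use virtual specialness, an embedding into a right-angled Artin group, and a finite index RFRS subgroup of that group.
\end{enumerate}

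\textbf{The missing idea.} Apply Agol's theorem to the very class $\eta$ you built in Step~2.
\begin{enumerate}
\item Work in the RFRS fibered cover $M_R$, with $\chi_0$, its flow and $\eta$ pulled back there. Agol gives a finite cover $p_1:M_1\to M_R$ with $p_1^*\eta$ in the closure of some fibered cone $\mathcal C'$; take $p_1=\id$ if $\eta$ is already fibered.
\item Closed elevations $c'_\pm$ of positive powers of $c_\pm$ still satisfy $p_1^*\eta(c'_+)>0>p_1^*\eta(c'_-)$. These are open conditions, so a rational class in the open cone $\mathcal C'$ near $p_1^*\eta$, scaled to a primitive integral class $\chi_1$, is fibered and keeps both strict signs.
\item You must check that the primitive normalization $\chi_0'$ of $p_1^*\chi_0$ is fibered with connected fiber. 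You must also check that its unit-time suspension flow is a positive time rescaling of the lifted flow.
\item Then \cref{lem:zero-word} applies in $M_1$. It gives a prime orbit element $b$ with $\chi_1(b)=0<\chi_0'(b)$, which also yields linear independence.
\item $M_1$ is no longer characteristic, so you must descend again. Choose $K_m(G)$ inside the normal core of $\pi_1M_1$, renormalize both classes, and set $b_K=b^k$ with $k$ least such that $b^k\in K$.
\item The centralizer of $b^k$ in $\pi_1M_1$ is $\langle b\rangle$, and $K\cap\langle b\rangle=\langle b^k\rangle$. Hence $b_K$ is not a proper power in $K$, and its trajectory is prime.
\end{enumerate}

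Your Step~2 choice of $\eta$, just outside $\cl{\mathcal C}$ near a nonzero boundary point, is a legitimate substitute for the paper's choice. The paper instead takes $\eta$ vanishing on a rational interior point of Fried's cone of homology directions. You still need to justify the orbit on which $\eta$ is positive, using that nonzero boundary point and $b_1\geq2$. The gap lies only in converting $\eta$ into a fibered class.
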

\begin{proof}
We first construct a fibered compact special cover with first Betti number at least two. By virtual
fibering, we can choose a finite cover $p_f:M_f\to M$ with group $G_f=\pi_1(M_f)\leq G$ and a
primitive fibered class $\chi_f\in H^1(M_f,\Z)$ \cite{Agol2008,Agol2013,Wise2021}. By virtual
specialness, we can choose a finite index compact special subgroup $G_v\leq G$
\cite{Agol2013,Wise2021}. Replace both by their intersection $G_s=G_f\cap G_v$. Let $p_s:M_s\to M$
be the corresponding cover. Since $G_s$ has finite index in $G_f$, the image $\chi_f(G_s)$ is
$d_s\Z$ for some $d_s>0$, and
\[
\chi_s=d_s^{-1}\chi_f|_{G_s}:G_s\twoheadrightarrow\Z
\]
is primitive and fibered. Thus $M_s$ is again a surface bundle whose return map is a lift of a
positive power of the original pseudo-Anosov return map. Having finite index in the compact special
group $G_v$, the group $G_s$ is compact special.

Now choose two noncommensurable loxodromic elements $u,v\in G_s$. Their cyclic subgroups are
quasiconvex in the word-hyperbolic group $G_s$ and have trivial intersection. The subgroups
generated by high powers are also quasiconvex, since they have finite index in these cyclic groups.
In the word metric, all nonidentity elements of $\langle u^N\rangle$ and $\langle v^N\rangle$ have
arbitrarily large length as $N$ increases. We may therefore apply the short-element condition in
\cite[Theorem~1]{Gitik1999} to these two cyclic subgroups. For sufficiently large $N$ it yields
\[
F=\langle u^N,v^N\rangle
=\langle u^N\rangle*\langle v^N\rangle\cong F_2,
\]
and also $F$ is quasiconvex. Then the quasiconvex virtual retraction theorem provides a finite index
subgroup $G_0\leq G_s$ containing $F$ and a retraction $r:G_0\twoheadrightarrow F$
\cite{HaglundWise2008,Wise2021}. Let $p_0:M_0\to M_s$ be the corresponding cover. The retraction
induces a surjection $H_1(G_0,\Q)\to H_1(F,\Q)\cong\Q^2$, so $b_1(M_0)\geq2$. If $d_0>0$ generates
$\chi_s(G_0)$, then set $\chi_0=d_0^{-1}\chi_s|_{G_0}$. This is a primitive fibered class on $M_0$.

The compact special group $G_0$ embeds in a right-angled Artin group $A$ \cite{HaglundWise2008}. By
\cite[Corollary~2.3]{Agol2008}, this $A$ has a finite index RFRS subgroup $A_R$. Set
$G_R=G_0\cap A_R$. Then $G_R$ has finite index in $G_0$ and is RFRS because the RFRS condition is
inherited by subgroups \cite{Agol2008}. Let $p_R:M_R\longrightarrow M_0$ be the corresponding cover.
If $d_R>0$ generates $\chi_0(G_R)$, define
\[
\chi_R=d_R^{-1}\chi_0|_{G_R}:G_R\twoheadrightarrow\Z.
\]
This is the primitive class of the lift of the first fibration. Pullback in rational cohomology is
injective for a finite cover, so
\[
b_1(M_R)\geq b_1(M_0)\geq2.
\]

Next we find an integral class with positive and negative orbit evaluations. Choose a connected
fiber $S_R$ for the fibration represented by $\chi_R$, and write $f:S_R\to S_R$ for its
pseudo-Anosov return map. We use the mapping torus description
\[
M_R=(S_R\times\R)/
\bigl((x,s+1)\sim(f(x),s)\bigr),
\]
in which the projection $\theta_R([x,s])=s\pmod{\Z}$ represents $\chi_R$. Define the suspension flow
by
\[
\varphi_R^t([x,s])=[x,s+t].
\]
The first return map on $S_R\times\{0\}$ is then $f$, and the first return time is one. We call this
the $\chi_R$-suspension flow.

If $f^m(x)=x$, then following this flow from $[x,0]$ for time $m$ traces a closed trajectory $c_x$.
We orient it by increasing flow time. Its projection to the base circle has degree $m$, so
$\chi_R([c_x])=m>0$. Let $D\subset H_1(M_R,\R)$ be the closed cone generated by the homology classes
of positive closed trajectories. Let $C$ be the closure of the Thurston fibered cone containing
$\chi_R$. Fried's theorem \cite{Fried1982,FathiLaudenbachPoenaru2012} says that
\[
D=C^*
=\{v\in H_1(M_R,\R):\langle\xi,v\rangle\geq0
\text{ for every }\xi\in C\}.
\]
In fact, the Thurston norm $\|\cdot\|_T$ on $H^1(M_R,\R)$ is a norm \cite{Thurston1986}. Since a
supporting linear functional $\ell$ for the chosen fibered face satisfies $\ell(\xi)=\|\xi\|_T$ for
every $\xi\in C$, $\ell$ is strictly positive on $C\setminus\{0\}$, and $C\cap(-C)=\{0\}$. Choose
any Euclidean norm on $H^1(M_R,\R)$. The intersection of $C$ with its unit sphere is compact, so
$\ell$ has a positive minimum there, and every sufficiently small perturbation of $\ell$ is still
nonnegative on $C$. Identifying linear functionals on $H^1(M_R,\R)$ with $H_1(M_R,\R)$, this shows
that $\Int D$ is nonempty. Choose a nonzero rational point $x\in\Int D$. Because $b_1(M_R)\geq2$,
the rational annihilator of $x$ contains a nonzero integral class, and dividing this class by the
greatest common divisor of its coordinates we get a primitive class $\eta\in H^1(M_R,\Z)$ with
$\eta(x)=0$. Choose $y\in H_1(M_R,\R)$ with $\eta(y)>0$; then for all sufficiently small $t>0$, both
$x+ty$ and $x-ty$ lie in $D$, and their evaluations under $\eta$ have opposite signs. If every
positive closed trajectory had nonnegative evaluation, then every finite nonnegative sum of their
homology classes would have nonnegative evaluation, as would every point of its closure $D$. This
contradicts $\eta(x-ty)<0$. The same argument with the signs reversed rules out nonpositive
evaluation on every positive closed trajectory. There are therefore positive closed trajectories of
both strict signs. Replacing each by its underlying prime trajectory divides the evaluation by a
positive integer and preserves its sign, so we obtain positive prime trajectories $c_+,c_-$ with
\[
\eta(c_+)>0,
\qquad
\eta(c_-)<0.
\]

We now pass to an RFRS cover and find a second fibered class. There are two cases. If $\eta$ is
fibered, then set $p_1=\id_{M_R}$ and $M_1=M_R$. Otherwise, Agol's RFRS theorem provides a finite
cover
\[
p_1:M_1\longrightarrow M_R
\]
such that $p_1^*\eta$ lies in the closure of a fibered cone \cite[Theorem~5.1]{Agol2008}. In both
cases we set
\[
G_1=\pi_1(M_1),\qquad \eta_1=p_1^*\eta.
\]
In the first case the associated fibered cone is the one containing $\eta_1$. In either case, we
choose closed elevations $c'_+$ and $c'_-$ of positive powers of $c_+$ and $c_-$. Then
\[
\eta_1(c'_+)>0,
\qquad
\eta_1(c'_-)<0.
\]
Now choose a rational class in the interior of the indicated fibered cone sufficiently close to
$\eta_1$ that these two strict inequalities remain true. Take
\[
\chi_{1,1}\in H^1(M_1,\Z)
\]
to be the primitive integral class on its positive ray; it is fibered. If $d_{0,1}>0$ generates the
image of $p_1^*\chi_R$ on $G_1$, then define
\[
\chi_{0,1}=d_{0,1}^{-1}p_1^*\chi_R.
\]
Lifting the local product trivializations of the projection $\theta_R$ through $p_1$ shows that
$\theta_R\circ p_1:M_1\to\R/\Z$ is a surface bundle, possibly with disconnected fiber. Its induced
homomorphism has image $d_{0,1}\Z$, so by the covering space lifting criterion there is a mapping
\[
\theta_1:M_1\longrightarrow\R/d_{0,1}\Z
\]
whose composition with the natural covering $\R/d_{0,1}\Z\to\R/\Z$ is $\theta_R\circ p_1$. Bundle
trivializations lift through this circle covering, so $\theta_1$ is also a surface bundle.
Identifying $\pi_1(\R/d_{0,1}\Z)$ with $\Z$, the map $(\theta_1)_*=\chi_{0,1}$ is surjective. As
$M_1$ is connected, the homotopy exact sequence of the bundle implies that its fiber is connected.
So composing $\theta_1$ with
\[
\R/d_{0,1}\Z\longrightarrow\R/\Z,
\qquad [s]\longmapsto[s/d_{0,1}],
\]
gives a surface bundle representing the primitive class $\chi_{0,1}$.

Let $\widetilde\varphi_R^t$ be the lifted flow on $M_1$. It satisfies
\[
\theta_1(\widetilde\varphi_R^t(x))
=\theta_1(x)+t\pmod{d_{0,1}\Z},
\]
and its first positive return time to a fiber of $\theta_1$ is consequently $d_{0,1}$. Hence
$\varphi_1^t=\widetilde\varphi_R^{d_{0,1}t}$ is the unit-time suspension for $\chi_{0,1}$. Since
positive time rescaling preserves the oriented trajectories, including $c'_+$ and $c'_-$, the return
map on the connected fiber is a lift of $f^{d_{0,1}}$ and is hence pseudo-Anosov.

By \cref{lem:zero-word} for $\eta=\chi_{1,1}$, we obtain a positive prime orbit element $b\in G_1$
such that $\chi_{1,1}(b)=0$. By \cref{lem:orbit-roots}, $b$ is not a proper power in the mapping
torus group $G_1$. As $\chi_{0,1}(b)>0$, the two classes $\chi_{0,1}$ and $\chi_{1,1}$ are linearly
independent.

Now we pass to a standard characteristic cover. The subgroup $G_1$ has finite index in $G$, so we
can choose $m$ such that
\[
K=K_m(G)\leq\normalcore{G}{G_1}\leq G_1.
\]
Let $p_{K,1}:M_K\to M_1$ be the cover corresponding to $K\leq G_1$, and let $p_K:M_K\to M$ be its
composition with the previously chosen finite covers. Thus $p_K$ is the standard characteristic
cover corresponding to $K=K_m(G)$. For $j=0,1$, let $d_{j,K}>0$ be the positive generator of
$\chi_{j,1}(K)$ and define
\[
\chi_{j,K}=d_{j,K}^{-1}\chi_{j,1}|_K:K\twoheadrightarrow\Z.
\]
A finite cover of a surface bundle is a surface bundle after primitive normalization, so both
classes are fibered.

Let $k>0$ be least with $b^k\in K$ and set $b_K=b^k$. Let $c$ be the positive closed trajectory
represented by $b$. With compatible choices of basepoints and base paths, its lift closes after $r$
traversals when $b^r\in K$. By the minimality of $k$, the element $b_K=b^k$ therefore represents one
traversal of a connected component of $p_{K,1}^{-1}(c)$. This component covers $c$ with degree $k$
and inherits the positive flow orientation.

For $j=0,1$, by definition of $\chi_{j,K}$,
\[
\chi_{j,K}(b_K)
=\frac{1}{d_{j,K}}\chi_{j,1}(b^k)
=\frac{k}{d_{j,K}}\chi_{j,1}(b).
\]
Since $k,d_{0,K},d_{1,K}>0$, $\chi_{1,1}(b)=0$, and $\chi_{0,1}(b)>0$, we obtain
\[
\chi_{1,K}(b_K)=0,
\qquad
\chi_{0,K}(b_K)>0.
\]
So the two normalized classes remain linearly independent.

It remains to prove that $b_K$ is not a proper power in $K$. As $b$ is not a proper power in $G_1$,
the centralizer of $b^k$ in $G_1$ is $\langle b\rangle$. Also by the minimality of $k$ we have
\[
K\cap\langle b\rangle=\langle b^k\rangle.
\]
Now if $z^r=b_K$ with $z\in K$ and $r\geq2$, then $z$ centralizes $b^k$, so $z=b^s$ for some
$s\in\Z$. Since $z\in K$, the equality $K\cap\langle b\rangle=\langle b^k\rangle$ forces $s=kq$, and
then $b^{sr}=b^k$ forces $rq=1$, a contradiction. Thus $b_K$ is not a proper power in $K$. Finally,
suppose that the closed trajectory represented by $b_K$ were an $r$-fold traversal of a shorter
closed orbit, with $r\geq2$. Taking $z\in\pi_1(M_K)=K$ representing one traversal of that shorter
orbit, with the same base path, we would have $b_K=z^r$, contradicting the fact that $b_K$ is not a
proper power in $K$. Thus $b_K$ represents a positive prime closed trajectory. This completes the
proof.
\end{proof}

Let $N$ be a connected closed $3$-manifold and let $\chi\in H^1(N,\Z)$ be a primitive fibered class.
Choose a surface bundle map $p:N\to S^1$ representing $\chi$, with connected fiber $S$ and basepoint
$x\in S$. The \emph{$\chi$-fiber group} is
\[
\Pi_\chi
:=\ker\bigl(\chi:\pi_1(N,x)\to\Z\bigr).
\]
The inclusion $S\hookrightarrow N$ induces an injective homomorphism whose image is $\Pi_\chi$, so
we identify $\Pi_\chi$ with $\pi_1(S,x)$. For a primitive fibered class $\chi_{1,i}:K_i\to\Z$, we
write
\[
\Pi_i:=\ker\chi_{1,i}
\]
for the $\chi_{1,i}$-fiber group.

\begin{corollary}
\label{cor:configuration}
Let $M_1,M_2$ be closed orientable hyperbolic $3$-manifolds, set $G_i=\pi_1(M_i)$, and let
$\Phi:\wh G_1\xrightarrow{\cong}\wh G_2$ be a continuous isomorphism. Then there is an integer $m$
such that, for $K_i=K_m(G_i)$ we have $\Phi(\wh K_1)=\wh K_2$. Set $\Phi_K:=\Phi|_{\wh K_1}$. Then
the corresponding covers admit linearly independent primitive fibered classes
$\chi_{0,i},\chi_{1,i}\in H^1(K_i,\Z)$ and positive prime $\chi_{0,i}$-orbit elements $b_i\in K_i$
of the same least return period. These elements are not proper powers in $K_i$. There are
$\epsilon\in\{1,-1\}$ and $q\in\wh K_2$ such that
\[
\begin{gathered}
\wh\chi_{j,2}\circ\Phi_K
=\epsilon\wh\chi_{j,1}\qquad(j=0,1),\\
\Phi_K(b_1)=q b_2^\epsilon q^{-1},
\qquad b_i\in\Pi_i\qquad(i=1,2).
\end{gathered}
\]
In addition, $\Phi_K$ maps $\wh{\Pi_1}$ onto $\wh{\Pi_2}$ and restricts to an isomorphism between
these completed fiber groups. The induced map
\[
\wh K_1/\wh{\Pi_1}\longrightarrow\wh K_2/\wh{\Pi_2}
\]
is multiplication by $\epsilon$ under the identifications of both quotients with $\hZ$ given by
$\wh\chi_{1,i}$.
\end{corollary}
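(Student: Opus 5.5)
Apply \cref{thm:two-fibers} to $M_1$ and transport the configuration to $M_2$ through $\Phi$, rather than constructing it independently on each side. Take $m$ and the classes $\chi_{0,1},\chi_{1,1}$ and the element $b_1\in K_1=K_m(G_1)$ from \cref{thm:two-fibers}. By \cref{lem:char-functor}, $\Phi(\wh K_1)=\wh K_2$ with $K_2=K_m(G_2)$, so $\Phi_K$ is an isomorphism of profinite completions of two closed orientable hyperbolic $3$-manifold groups. Apply \cref{thm:closed-integral} to $\Phi_K$: the induced map on $H^1(-,\hZ)$ is $\epsilon_0(F\otimes\hZ)$, where $\epsilon_0\in\{\pm1\}$ and $F:H^1(K_2,\Z)\to H^1(K_1,\Z)$ is an integral isomorphism preserving the Thurston norm and fibered classes, by Liu's regularity \eqref{eq:liu}. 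Set $\chi_{j,2}=F^{-1}(\chi_{j,1})$ for $j=0,1$. These are primitive, linearly independent and fibered, and they satisfy $\wh\chi_{j,2}\circ\Phi_K=\epsilon\wh\chi_{j,1}$ with the single sign $\epsilon=\epsilon_0$. The same sign works for both $j$ because the scalar is global.

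Next apply \cref{thm:orbit-correspondence} to $\Phi_K$ and the pair of mapping tori for $\chi_{0,i}$. The hypothesis $\wh\chi_{0,2}\Phi_K=\epsilon\wh\chi_{0,1}$ holds, and the return maps are pseudo-Anosov since $K_i$ is hyperbolic. This gives a unique prime orbit $O_2$ of the same least period as $O_1$, together with $b_2$ representing it, such that $\Phi_K(b_1)=qb_2^\epsilon q^{-1}$ for some $q\in\wh K_2$. By \cref{lem:orbit-roots}, $b_2$ is not a proper power in $K_2$. Evaluating $\wh\chi_{1,2}$ on this relation gives $\epsilon\chi_{1,2}(b_2)=\wh\chi_{1,2}(\Phi_K(b_1))=\epsilon\chi_{1,1}(b_1)=0$, since the conjugation drops out in the abelian quotient. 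Hence $b_2\in\Pi_2$, and $b_1\in\Pi_1$ by the choice in \cref{thm:two-fibers}.

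For the fiber groups, first check that $\ker\wh\chi_{1,i}=\wh{\Pi_i}$. Since $\Pi_i$ is finitely generated normal with quotient $\Z$, the extension $1\to\Pi_i\to K_i\to\Z\to1$ splits, and completion preserves exactness. The fiber subgroup inherits its full profinite topology, because every finite index subgroup of $\Pi_i$ contains a characteristic one, which is normal in $K_i$ and gives a finite index subgroup of $K_i$. So the closure of $\Pi_i$ in $\wh K_i$ is $\wh{\Pi_i}=\ker\wh\chi_{1,i}$. The relation $\wh\chi_{1,2}\circ\Phi_K=\epsilon\wh\chi_{1,1}$ then shows $\Phi_K(\ker\wh\chi_{1,1})=\ker\wh\chi_{1,2}$, and on the quotients identified with $\hZ$ by $\wh\chi_{1,i}$ the induced map is multiplication by $\epsilon$.

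The main obstacle is the claim that $F^{-1}$ sends the fibered class $\chi_{j,1}$ to a fibered class and that one sign governs both classes. This depends on reading Liu's theorem as giving a global integral map $F$ that preserves the fibered cones, combined with \cref{thm:closed-integral} forcing the scalar to be $\pm1$. If $F$ is only known to preserve fiberedness up to sign, I would replace $\chi_{j,2}$ by $-\chi_{j,2}$. Fibered cones are symmetric under negation, so this is harmless, and the common sign is then inherited from the single scalar $\mu$.
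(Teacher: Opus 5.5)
Your proposal is correct and follows the paper's proof step for step. You use \cref{thm:two-fibers} on $M_1$ and \cref{lem:char-functor}, then \cref{thm:closed-integral} to set $\chi_{j,2}=F^{-1}(\chi_{j,1})$; fiberedness of these classes is part of Liu's regularity theorem, so your sign fallback is not needed. You then use \cref{thm:orbit-correspondence} and \cref{lem:orbit-roots} for $b_2$, and evaluate $\wh\chi_{1,2}$ to place $b_2$ in $\Pi_2$. The only difference is that you reprove $\ker\wh\chi_{1,i}=\wh{\Pi_i}$ inline instead of citing \cref{lem:extensions}(i). There, a characteristic finite index $C\le\Pi_i$ is not itself of finite index in $K_i$. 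The subgroup you want is $C\langle t^r\rangle$, where $t$ is a stable letter and $r$ is the order of the automorphism it induces on $\Pi_i/C$; this subgroup is normal of finite index in $K_i$ and meets $\Pi_i$ in $C$.
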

\begin{proof}
By \cref{thm:two-fibers}, there are an integer $m$, the subgroup $K_1=K_m(G_1)$, linearly
independent primitive fibered classes $\chi_{0,1},\chi_{1,1}\in H^1(K_1,\Z)$, and a positive prime
$\chi_{0,1}$-orbit element $b_1\in K_1$ with $\chi_{1,1}(b_1)=0$ which is not a proper power in
$K_1$. By \cref{lem:char-functor} we have
\[
\Phi(\wh K_1)=\wh K_2,
\qquad K_2=K_m(G_2).
\]
Thus the restriction $\Phi_K$ is an isomorphism between the completions of the corresponding
standard characteristic subgroups.

By \cref{thm:closed-integral}, there are $\epsilon\in\{1,-1\}$ and an integral isomorphism
\[
F:H^1(K_2,\Z)\xrightarrow{\cong}H^1(K_1,\Z)
\]
such that $\Phi_K^*=\epsilon(F\otimes\hZ)$. For $j=0,1$ define
\[
\chi_{j,2}=F^{-1}(\chi_{j,1}).
\]
Then these classes are primitive and linearly independent because $F$ is an integral isomorphism and
they are fibered by \cite[Theorem~1.3]{Liu2023}. The cohomology identity then reads
\[
\wh\chi_{j,2}\circ\Phi_K
=\epsilon\wh\chi_{j,1}\qquad(j=0,1).
\]
For $j=0,1$ and $i=1,2$, let $\Pi_{j,i}=\ker(\chi_{j,i}:K_i\to\Z)$ be the surface fiber groups, so
that $\Pi_{1,i}=\Pi_i$. By \cref{lem:extensions}(i) we have $\wh{\Pi_{j,i}}=\ker\wh\chi_{j,i}$.
Hence $\Phi_K$ restricts to an isomorphism
\[
\Phi_K|_{\wh{\Pi_{j,1}}}:
\wh{\Pi_{j,1}}\xrightarrow{\cong}\wh{\Pi_{j,2}}
\qquad(j=0,1).
\]
For $j=1$, the same cohomology identity shows that the induced map on the quotients, identified with
$\hZ$ by $\wh\chi_{1,i}$, is multiplication by $\epsilon$.

Finally, by \cref{thm:orbit-correspondence} and the suspensions defined by $\chi_{0,1}$ and
$\chi_{0,2}$, we have a positive prime $\chi_{0,2}$-orbit element $b_2\in K_2$ with the same least
return period as $b_1$, and an element $q\in\wh K_2$, such that $\Phi_K(b_1)=q b_2^\epsilon q^{-1}$.
By \cref{lem:orbit-roots}, $b_2$ is not a proper power in $K_2$. Therefore,
\[
\epsilon\chi_{1,2}(b_2)
=\wh\chi_{1,2}(\Phi_K(b_1))
=\epsilon\chi_{1,1}(b_1)=0,
\]
hence $b_2\in\ker\chi_{1,2}$, completing the proof.
\end{proof}


\section{Cellular realization}
\label{sec:realization}

We use a single marked element in a surface fiber to pass from the completed fiber isomorphism to
the full mapping torus isomorphism. The first lemma determines the completion of a semidirect
product and proves the finite extension statement used at the end of the argument.

\begin{lemma}
\label{lem:extensions}
\begin{enumerate}[label=\textup{(\roman*)}]
\item Let $\Pi$ be finitely generated and residually finite, let $a\in\Aut(\Pi)$, and set
$G=\Pi\rtimes_a\langle t\rangle$. Then there is a canonical isomorphism
\begin{equation}\label{eq:normal-form}
\wh G\cong\wh\Pi\rtimes_{\wh a}\hZ.
\end{equation}
Every $q\in\wh G$ can be written uniquely in the form
\[
q=c t^\lambda,
\qquad c\in\wh\Pi,
\quad \lambda\in\hZ.
\]
The continuous $\hZ$-action on $\wh\Pi$ is the inverse limit of the cyclic actions
on the finite $a$-invariant quotients of $\Pi$. We define $t^\lambda$ by the continuous
extension of $n\mapsto t^n$ from $\Z$ to $\hZ$.

\item Let $H_i\normalf G_i$ have finite index in finitely generated residually finite
groups. Let $\Phi:\wh G_1\to\wh G_2$ be a continuous isomorphism such that
$\Phi(\wh H_1)=\wh H_2$ and
\[
\Phi|_{\wh H_1}=\Inn(c)\circ\wh h
\]
for $c\in\wh H_2$ and an isomorphism $h:H_1\to H_2$. If $Z(\wh H_i)=1$ and
$\Out(H_i)\to\Out(\wh H_i)$ is injective, then
\[
\Phi=\Inn(u)\circ\wh f
\]
for some $u\in\wh G_2$ and an isomorphism $f:G_1\to G_2$.
\end{enumerate}
\end{lemma}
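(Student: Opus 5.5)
For part (i), I will identify $\wh G$ through a cofinal family of finite index normal subgroups adapted to the semidirect product. Because $\Pi$ is finitely generated, the subgroups $K_m(\Pi)$ of \cref{def:characteristic-cover} are characteristic, hence $a$-invariant. For each $m$ and $n\geq1$, I set $N_{m,n}=K_m(\Pi)\rtimes\langle t^{nr_m}\rangle$, where $r_m$ is the order of the automorphism induced by $a$ on the finite group $\Pi/K_m(\Pi)$. Each $N_{m,n}$ is normal of finite index in $G$, with quotient $(\Pi/K_m(\Pi))\rtimes\Z/nr_m\Z$.

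The first step is cofinality of the $N_{m,n}$. Let $U\normalf G$. Then $U\cap\Pi$ has finite index in $\Pi$, so it contains some $K_m(\Pi)$. Also $U$ contains some power $t^k$. Taking $n$ with $k\mid nr_m$ gives $N_{m,n}\leq U$.

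Passing to the inverse limit over $m$ and $n$ then gives $\varprojlim_m\Pi/K_m(\Pi)\rtimes\varprojlim_n\Z/nr_m\Z$. This is $\wh\Pi\rtimes\hZ$, with the $\hZ$-action equal to the limit of the finite cyclic actions. That action is well defined because on $\Pi/K_m(\Pi)$ the exponent only matters modulo $r_m$, which divides the relevant modulus. The unique decomposition $q=ct^\lambda$ follows coordinatewise from the finite semidirect products, where it holds. Canonicity follows because both sides receive compatible maps from $G$ with dense image. The one point needing care is showing that the closure of $\Pi$ in $\wh G$ is $\wh\Pi$, i.e.\ that $\Pi$ gets its full profinite topology. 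This is exactly the cofinality just proved: every $K_m(\Pi)$ is realized as $N_{m,n}\cap\Pi$.

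For part (ii), I will reduce to \cref{prop:centerless-extension}. I replace each $H_i$ by $H_i$ itself, with $Q_i=G_i/H_i$ finite. The induced topology on $H_i$ is the full profinite topology, since finite index subgroups inherit it: the normal core in $G_i$ of any finite index subgroup of $H_i$ is again of finite index. The hypotheses of the proposition require $Z(H_i)=1$ as well as $Z(\wh H_i)=1$. The discrete statement follows from the profinite one because $H_i$ embeds in $\wh H_i$ densely, so a central element of $H_i$ commutes with a dense subgroup and is therefore central in $\wh H_i$. With $\Phi(\wh H_1)=\wh H_2$ and $\Phi|_{\wh H_1}=\Inn(c)\circ\wh h$, \cref{prop:centerless-extension} gives $f$ and $u$ directly.

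I expect the main obstacle to be part (i). The issue is writing the cofinal family cleanly and verifying that the exponent $t^\lambda$ acts continuously and compatibly at every level. Part (ii) is essentially a restatement of the earlier proposition.
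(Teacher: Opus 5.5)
Your proposal is correct. Part (ii) is the paper's argument. You reduce to \cref{prop:centerless-extension}, note that finite index subgroups inherit their full profinite topology, and get $Z(H_i)=1$ from $Z(\wh H_i)=1$. Your density argument for that last point is a little more careful than the paper's one-line remark.

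Part (i) reaches the same conclusion by a differently organized route.

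The paper works only inside $\Pi$:
\begin{enumerate}[label=\textup{(\arabic*)}]
\item For arbitrary $N\normalf\Pi$ it forms the $a$-invariant subgroup $K=\bigcap_j a^j(N)$ and the quotient $(\Pi/K)\rtimes\Z/r_K\Z$, which shows that $\Pi$ carries its full profinite topology.
\item It observes that $\Pi=\ker\chi$ is closed and deduces the exact sequence $1\to\wh\Pi\to\wh G\to\hZ\to1$.
\item It extends the section $n\mapsto t^n$ to $\hZ$.
\item It proves the multiplication map $\wh\Pi\rtimes_{\wh a}\hZ\to\wh G$ surjective by compactness and density, and injective by applying $\wh\chi$.
\end{enumerate}

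You instead write down an explicit cofinal family $N_{m,n}=K_m(\Pi)\rtimes\langle t^{nr_m}\rangle$ of normal subgroups of $G$ itself. The free parameter $n$ is exactly what cofinality in $G$ requires: with $n=1$ the $\hZ$ factor would be lost, for example when $a=\id$. You then compute $\wh G$ levelwise as an inverse limit of finite semidirect products. This gives the isomorphism, the normal form $q=ct^\lambda$, the description of the $\hZ$-action and the identification $\cl\Pi=\wh\Pi$ all at once.

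Your version is more concrete and avoids the separate exactness and injectivity steps. The paper's version needs cofinality only inside $\Pi$, and it obtains the splitting from the continuous section rather than from an explicit model of $\wh G$.
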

\begin{proof}
We prove \textup{(i)}. Let $N\normalf\Pi$. There are only finitely many subgroups of index
$[\Pi:N]$, because $\Pi$ is finitely generated, so the orbit of $N$ under the powers of $a$ is
finite, and
\[
K=\bigcap_{j\in\Z}a^j(N)
\]
is a finite index normal $a$-invariant subgroup with $K\leq N$. We denote the automorphism induced
by $a$ on the finite group $\Pi/K$ by $\bar a$, and its finite order by $r_K$. Therefore
\[
G\longrightarrow (\Pi/K)\rtimes_{\bar a}\Z/r_K\Z
\]
is a finite quotient whose restriction to $\Pi$ has kernel $K\leq N$. We give $G$ its profinite
topology, in which the finite index normal subgroups $V\normalf G$ form a basis of identity
neighborhoods, so the induced topology on $\Pi$ has the subgroups $\Pi\cap V$ as a basis of identity
neighborhoods, and each of these is a finite index normal subgroup of $\Pi$.

Conversely, for every finite index normal subgroup $N\normalf\Pi$, the finite quotient constructed
above has a kernel $V\normalf G$ satisfying
\[
\Pi\cap V=K\leq N.
\]
Hence the topology induced on $\Pi$ by the profinite topology of $G$ equals the full profinite
topology of $\Pi$. Moreover $\Pi$ is closed in $G$, because it is the kernel of the homomorphism
$G\to\Z$ and $\Z$ is residually finite. Completion therefore yields an exact sequence
\[
1\longrightarrow\wh\Pi\longrightarrow\wh G
\xrightarrow{\wh\chi}\hZ\longrightarrow1.
\]

The section $n\mapsto t^n$ of $\chi:G\to\Z$ is continuous for the profinite topologies and extends
to a continuous section $\hZ\to\wh G$. For each finite index normal $a$-invariant subgroup
$K\normalf\Pi$, with $\bar a$ and its order $r_K$ as above, $\bar a^n$ depends only on $n\bmod r_K$,
and hence the action of $\Z$ factors as
\[
\Z\longrightarrow\Z/r_K\Z
\longrightarrow\Aut(\Pi/K),
\qquad
n\longmapsto n\bmod r_K\longmapsto\bar a^n.
\]
Composing the reduction map $\hZ\to\Z/r_K\Z$ with this finite action, we get a continuous action of
$\hZ$ on $\Pi/K$, and in the inverse limit the continuous action $\lambda\mapsto\wh a^{\lambda}$ of
$\hZ$ on $\wh\Pi$. Multiplication now defines a continuous homomorphism
\[
\wh\Pi\rtimes_{\wh a}\hZ\longrightarrow\wh G.
\]
One sees that its image is compact, hence closed, and contains the dense subgroup generated by $\Pi$
and $t$. Thus it is surjective. Denote this multiplication homomorphism by $\Theta$, and suppose
that $(c,\lambda)\in\ker\Theta$, so that $ct^\lambda=1$. Since $\wh\chi$ vanishes on $\wh\Pi$ and
$\wh\chi(t^\lambda)=\lambda$, we obtain
\[
0=\wh\chi(ct^\lambda)=\lambda.
\]
It follows that $c=1$ in $\wh G$, hence in $\wh\Pi$ by the embedding $\wh\Pi\hookrightarrow\wh G$
established above. Thus $\Theta$ is injective, and being a continuous bijection between profinite
groups it is a topological isomorphism by compactness. This proves \eqref{eq:normal-form}, including
the uniqueness of the normal form.

For \textup{(ii)}, recall that a finite index subgroup of a residually finite group inherits its
full profinite topology. Also $Z(\wh H_i)=1$ implies $Z(H_i)=1$ because $H_i$ embeds in $\wh H_i$.
So all conditions of \cref{prop:centerless-extension} hold, and that proposition yields the required
$f$ and $u$.
\end{proof}

Recall that for a covering $p:S'\to S$ and a loop $\alpha:S^1\to S$, an \emph{elevation} of $\alpha$
is a loop $\alpha':S^1\to S'$ such that $p\circ\alpha'=\alpha\circ c_k$, where $c_k:S^1\to S^1$ is
the covering of degree $k>0$ and $k$ is the least positive degree for which the chosen lift closes,
see \cite{Hatcher2002}.

\begin{lemma}
\label{lem:elevations}
Let $\Pi=\pi_1(S)$ for a closed orientable surface $S$ of genus at least two, and let
$b\in\Pi\setminus\{1\}$ be an element that is not a proper power in $\Pi$. For every sufficiently
large $m$, if $k_m>0$ is least with $b^{k_m}\in K_m(\Pi)$, then $b^{k_m}$ is represented by a
nonseparating simple closed curve in the cover corresponding to $K_m(\Pi)$, and it is not a proper
power in $K_m(\Pi)$.
\end{lemma}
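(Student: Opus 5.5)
The plan is to combine Scott's embedding theorem for lifts of closed curves with the virtual retraction of $\langle b\rangle$. Together these give a single finite index subgroup $L\leq\Pi$ such that every $K_m(\Pi)\leq L$ has the required properties. The root-freeness in $K_m(\Pi)$ is then the same centralizer computation already carried out at the end of the proof of \cref{thm:two-fibers}.

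\emph{Step 1: one good subgroup.} Represent $b$ by a closed geodesic $c$ on $S$ with a hyperbolic metric.
\begin{enumerate}[label=\textup{(\alph*)}]
\item By Scott \cite{Scott1978}, there is a finite index subgroup $L_1\leq\Pi$ containing $b$ such that the lift of $c$ to $S_{L_1}$ determined by $\langle b\rangle\leq L_1$ is a simple closed curve. This uses LERF applied to the finitely generated subgroup $\langle b\rangle$.
\item By \cref{thm:normalizer-comparisons}(iv), in the surface case, $\langle b\rangle$ is a virtual retract. So there are a finite index subgroup $L_2\ni b$ and a retraction $r:L_2\to\langle b\rangle\cong\Z$.
\end{enumerate}
Set $L=L_1\cap L_2$, which still contains $b$. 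Choose $m_0$ with $K_{m_0}(\Pi)\leq\normalcore{\Pi}{L}$; this is possible as in \cref{def:characteristic-cover}. Every $m\geq m_0$ then satisfies $K=K_m(\Pi)\leq L$, because the family $K_m(\Pi)$ decreases as $m$ increases.

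\emph{Step 2: simplicity and nonseparation.} Let $k=k_m$.
\begin{itemize}
\item The cover $S_K\to S_{L_1}$ is a covering map, so the full preimage of the embedded curve from Step 1 is a disjoint union of embedded closed curves.
\item With compatible base points, the elevation of $c$ represented by $b^{k}$ is one of these components: the chosen lift closes after exactly $k$ traversals because $K\cap\langle b\rangle=\langle b^{k}\rangle$. Hence $b^{k}$ is represented by a simple closed curve in $S_K$.
\item For nonseparation, restrict $r$ to $K$. This gives a homomorphism $K\to\Z$ with $r(b^{k})=b^{k}\neq1$. It factors through $H_1(S_K,\Z)$, so the homology class of the elevation is nonzero.
\item A separating simple closed curve bounds a subsurface and is therefore null-homologous. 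Hence the elevation is nonseparating.
\end{itemize}

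\emph{Step 3: not a proper power.} Since $b$ is not a proper power in the torsion-free hyperbolic surface group $\Pi$, its centralizer satisfies $C_\Pi(b^{k})=\langle b\rangle$. Suppose $z^r=b^{k}$ with $z\in K$ and $r\geq2$.
\begin{itemize}
\item Then $z$ commutes with $b^{k}$, so $z=b^s$ for some integer $s$.
\item Since $z\in K\cap\langle b\rangle=\langle b^{k}\rangle$, we get $s=kq$.
\item Then $b^{kqr}=b^{k}$, and torsion-freeness forces $qr=1$, contradicting $r\geq2$.
\end{itemize}

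I expect the main obstacle to be the bookkeeping in Step 2. One must check that the component of the preimage of the Scott lift that contains the based lift of $b^{k}$ is precisely the elevation of degree $k_m$, which requires fixing base points and base paths consistently through $S_K\to S_{L_1}\to S$. The topological inputs themselves, Scott's theorem and the virtual retraction, are quoted.
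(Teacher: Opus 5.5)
Your proof is correct. It follows the paper's outline: a Scott cover, a deep characteristic level $K_m(\Pi)$ below the normal core, a homological test for nonseparation, and the centralizer argument for root-freeness, which is identical to the paper's. The one genuine difference is how nonseparation is obtained.

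The paper's route is as follows.
\begin{itemize}
\item It takes Scott's simple elevation $\alpha'$ in a finite cover $S'$.
\item If $\alpha'$ separates, it passes to the $\Z/2$-cover defined by characters that are nonzero on both complementary pieces and vanish on $\alpha'$. In that cover both lifts of $\alpha'$ are nonseparating.
\item At every level $K_m(\Pi)$ below the core, it shows that each elevation $\widetilde\alpha$ is simple. It then gets nonseparation from $p_*[\widetilde\alpha]=r[\alpha]\neq0$ in the torsion-free group $H_1(S',\Z)$.
\item Deck-group transitivity transfers these properties to the elevation represented by $b^{k_m}$.
\end{itemize}

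Your route instead imports the virtual retraction $r\colon L_2\to\langle b\rangle$ from \cref{thm:normalizer-comparisons}(iv). Its restriction to any $K\leq L_2$ is a character $K\to\Z$ that is nonzero on $b^{k}$. This makes the homology class nonzero directly, with no separating/nonseparating case split and no double cover. You also handle simplicity more directly, by noting that the elevation through the basepoint is a component of the preimage of the Scott lift.

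The trade-off is in the black boxes. Your argument is shorter but relies on the quasiconvex virtual retraction theorem of Haglund--Wise. The paper's argument needs nothing beyond Scott's theorem. In fact its double-cover step is essentially an elementary proof of the retraction you quote: once $\alpha$ is a nonseparating simple curve in $S'$, algebraic intersection with a dual curve meeting $\alpha$ once gives a homomorphism $\pi_1(S')\to\Z$ sending $\alpha$ to $1$.

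The bookkeeping you flag as the main obstacle is routine. The component of the preimage of the Scott lift through the chosen basepoint covers it with degree equal to the first return exponent $k$. So the closed lift representing $b^{k}$ traverses that embedded component exactly once.
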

\begin{proof}
By Scott's theorem on subgroups of surface groups \cite{Scott1978}, with its correction, there is a
finite cover $S'\to S$ in which the conjugacy class of $b$ has a simple closed elevation $\alpha'$.
This elevation is essential, because $b$ is nontrivial and not a proper power in the surface group.

If $\alpha'$ is separating, write
\[
S'\setminus\alpha'=S'_+\sqcup S'_-.
\]
Each side has positive genus, because an essential separating curve in a closed surface cannot bound
a disk. Choose homomorphisms $\varphi_\pm:\pi_1(S'_\pm)\to\Z/2$ that are nonzero and kill the
boundary class. The two maps agree on the amalgamated boundary subgroup and define a surjection
\[
\varphi:\pi_1(S')\to\Z/2.
\]
In the corresponding connected double cover, the preimage of each side is connected and $\alpha'$
has two lifts, so deleting either one of these lifts leaves the two lifted sides joined along the
other. Thus both lifts are nonseparating. Replacing $S'$ by this double cover if necessary, we
obtain a finite cover containing a nonseparating simple elevation $\alpha$ of $b$.

Let $J\leq\Pi$ be the subgroup defining this cover, and let $L=\bigcap_{g\in\Pi}gJg^{-1}$ be its
normal core. Choose $m_0$ with $K_{m_0}(\Pi)\leq L$. Then for every $m\geq m_0$, each connected
elevation $\widetilde\alpha$ to the cover corresponding to $K_m(\Pi)$ is simple. Denote the covering
from this surface to $S'$ by $p$. If the degree of $\widetilde\alpha$ over $\alpha$ is $r>0$, then
\[
p_*[\widetilde\alpha]=r[\alpha]\neq0
\qquad\text{in }H_1(S',\Z).
\]
Hence $[\widetilde\alpha]\neq0$. Since on an orientable closed surface a simple closed curve is
nonseparating when its homology class is nonzero, every such $\widetilde\alpha$ is nonseparating.

By construction, the cover corresponding to $K_m(\Pi)$ is normal over $S$, and its deck group is
transitive on the fiber over a basepoint of the loop representing $b$ and therefore on the connected
elevations of that loop. A lift of $\alpha$ is one such elevation, and every other elevation is a
deck translate of it, so it is also simple and nonseparating. In particular, the elevation beginning
at the chosen sheet has first-return degree $k_m$ and represents the free homotopy class of
$b^{k_m}$. This proves our first claim.

Now let $K=K_m(\Pi)$ and take $k=k_m$ to be least with $b^k\in K$. That $b^k$ is not a proper power
in $K$ follows exactly as at the end of the proof of \cref{thm:two-fibers}, using that the
centralizer of $b^k$ in the surface group is $\langle b\rangle$.
\end{proof}

\begin{theorem}
\label{thm:one-orbit}
Let $\Pi_i=\pi_1(S_i)$ for closed orientable surfaces of genus at least two, and let $a_i$ represent
orientation preserving pseudo-Anosov homeomorphisms. We set
$G_i=\Pi_i\rtimes_{a_i}\langle t_i\rangle$. Suppose $\Phi:\wh G_1\to\wh G_2$ is a continuous
isomorphism such that $\Phi(\wh\Pi_1)=\wh\Pi_2$, induces multiplication by $\epsilon\in\{\pm1\}$ on
the quotients $\hZ$, and satisfies
\[
\Phi(b_1)=q b_2 q^{-1}
\]
for some $q\in\wh G_2$ and nontrivial elements $b_i\in\Pi_i$ which are not proper powers in $\Pi_i$.
Then $\Phi=\Inn(w)\circ\wh f$ for a discrete isomorphism $f:G_1\to G_2$.
\end{theorem}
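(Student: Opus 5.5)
The plan is to reduce \cref{thm:one-orbit} to the two curve criterion \cref{cor:two-curve}, applied on a characteristic cover of the fiber, and then to lift the resulting discrete isomorphism of fiber groups to the mapping tori.

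\emph{Step 1: normalize.} Replace $\Phi$ by $\Inn(q^{-1})\circ\Phi$, so that $\Phi(b_1)=b_2$. This still preserves the completed fibers, since $\wh\Pi_2$ is normal. By \cref{lem:extensions}(i), and because the induced map on the quotients $\hZ$ is multiplication by $\epsilon$, we can write $\Phi(t_1)=c\,t_2^{\epsilon}$ with $c\in\wh\Pi_2$. For every $N\geq1$ it then follows that
\[
\Phi\bigl(a_1^N(b_1)\bigr)=\Phi(t_1)^Nb_2\Phi(t_1)^{-N}=c_N\,a_2^{\epsilon N}(b_2)\,c_N^{-1},\qquad c_N\in\wh\Pi_2,
\]
because $(c\,t_2^\epsilon)^N=c_N t_2^{\epsilon N}$ with $c_N\in\wh\Pi_2$. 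So $\Phi|_{\wh\Pi_1}$ sends the two discrete elements $b_1$ and $a_1^N(b_1)$ to $\wh\Pi_2$-conjugates of the discrete elements $b_2$ and $a_2^{\epsilon N}(b_2)$. The conjugator in this second relation is $c_N\in\wh\Pi_2$, as \cref{cor:two-curve} requires, and not merely an element of $\wh G_2$.

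\emph{Step 2: pass to a cover where the curves are simple and filling.} By \cref{lem:elevations}, fix $m$ large enough that, with $k$ least with $b_1^{k}\in L_1=K_m(\Pi_1)$, the element $b_1^{k}$ is represented by a nonseparating simple closed curve $\alpha_1$ in $S_{L_1}$ and is not a proper power in $L_1$. By \cref{lem:char-functor} we have $\Phi(\wh L_1)=\wh L_2$ with $L_2=K_m(\Pi_2)$, and the same $k$ works for $b_2$, since the two finite quotients are identified; I would also arrange that $m$ is large enough for $b_2$. Because $L_i$ is characteristic in $\Pi_i$, it is $a_i$-invariant, so some power $a_i^{r}$ acts on $S_{L_i}$ as a lift of a pseudo-Anosov homeomorphism. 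A lift of a pseudo-Anosov is again pseudo-Anosov. Set $\beta_i=a_i^{\pm rN}(\alpha_i)$, using the sign $\epsilon$ for $i=2$. These are again nonseparating simple curves, and by Masur--Minsky \cite{MasurMinsky1999} the pair $(\alpha_i,\beta_i)$ fills for $N$ large. Since $b_i^k$ and $a_i^{\pm rN}(b_i)^k$ lie in $L_i$, Step 1 gives $\Phi(\cl{\langle b_1^k\rangle})$ and $\Phi(\cl{\langle a_1^{rN}(b_1)^k\rangle})$ as $\wh\Pi_2$-conjugates of the corresponding closures in $\wh L_2$.

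The main obstacle is here: the conjugators $c_N$ lie in $\wh\Pi_2$, whereas \cref{cor:two-curve} needs conjugators in $\wh L_2$. I would absorb them using coset representatives of $\wh\Pi_2/\wh L_2\cong\Pi_2/L_2$ in $\Pi_2$: write $c_N=\ell\,g$ with $\ell\in\wh L_2$ and $g\in\Pi_2$ discrete. Then replace the marked curve on the second surface by its deck translate under $g$, which is again a simple nonseparating elevation. I would choose these translates for $\alpha_2$ and for $\beta_2$ independently, and then check that filling is preserved, possibly by choosing $N$ after the finitely many possible translates, since there are only finitely many cosets. Once this is done, \cref{cor:two-curve} gives $\Phi|_{\wh L_1}=\Inn(u)\circ\wh h_L$.

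\emph{Step 3: lift back to $\Pi_i$ and then to $G_i$.} \Cref{lem:extensions}(ii), together with \cref{lem:normalizer-rigidity} for $L_i$, gives $\Phi|_{\wh\Pi_1}=\Inn(c')\circ\wh h$ for an isomorphism $h:\Pi_1\to\Pi_2$. Normalize so that $c'=1$, and write $\Phi(t_1)=d\,t_2^\epsilon$ with $d\in\wh\Pi_2$. Conjugating by $\Phi(t_1)$ shows that $\wh h\,\wh a_1\,\wh h^{-1}$ and $\wh a_2^{\,\epsilon}$ differ by an inner automorphism of $\wh\Pi_2$. Injectivity of $\Out(\Pi_2)\to\Out(\wh\Pi_2)$ (\cref{lem:normalizer-rigidity}) then gives $h a_1h^{-1}=\Inn(g)\,a_2^\epsilon$ for some $g\in\Pi_2$. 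Define $f:G_1\to G_2$ by $f|_{\Pi_1}=h$ and $f(t_1)=g\,t_2^\epsilon$; this is an isomorphism. The automorphism $\Psi=\wh f^{-1}\circ\Phi$ fixes $\wh\Pi_1$ pointwise and sends $t_1$ to $x\,t_1$ with $x\in\wh\Pi_1$. Comparing the two conjugation actions on $\wh\Pi_1$ shows that $x$ is central in $\wh\Pi_1$, so $x=1$ by $Z(\wh\Pi_1)=1$. Hence $\Psi=\id$, and undoing the normalizations yields $\Phi=\Inn(w)\circ\wh f$.
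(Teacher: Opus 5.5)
Your proof is correct and follows the paper's argument step for step. You normalize so that $\Phi(b_1)=b_2$, pass to $K_m(\Pi_i)$ via \cref{lem:elevations}, use pseudo-Anosov iterates and Masur--Minsky to get nonseparating filling pairs, apply \cref{cor:two-curve}, lift with \cref{lem:extensions}(ii) and \cref{lem:normalizer-rigidity}, and extend to $G_i$ using injectivity of $\Out(\Pi_2)\to\Out(\wh\Pi_2)$ and $Z(\wh\Pi_1)=1$. The only difference is at the step you flag as the main obstacle. The paper splits $z=z'h_0$ once, before iterating, with $z'\in\wh K_2$ and $h_0\in\Pi_2$, and absorbs $h_0$ into the monodromy $g_2=(\Inn(h_0)\circ a_2^\epsilon)|_{K_2}$; since this is another lift of the same pseudo-Anosov, the iterated conjugators automatically lie in $\wh K_2$ and Masur--Minsky applies directly to $g_2^n(x_2)$. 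Your $N$-dependent deck translates also work, and are needed only for the second curve since $\Phi(b_1)=b_2$ exactly. You should, however, state the uniform bound rather than leave it as ``possibly'': $d_{\mathcal C}(\alpha_2,g\tilde\phi^{N}\alpha_2)\geq d_{\mathcal C}(\alpha_2,\tilde\phi^{N}\alpha_2)-\max_{g}d_{\mathcal C}(\alpha_2,g^{-1}\alpha_2)$, with $g$ ranging over the finite deck group and $\tilde\phi$ the lifted pseudo-Anosov.
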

\begin{proof}
Let $\chi_i:G_i\to\Z$ be the quotient maps with $\chi_i(t_i)=1$. By \cref{lem:extensions}(i), every
element of $\wh G_2$ can be written uniquely as
\[
q=c t_2^\lambda,
\qquad c\in\wh\Pi_2,
\quad\lambda\in\hZ.
\]
We replace $\Phi$ by $\Inn(q^{-1})\Phi$. This does not change the sign on the quotient, and the
restriction $\psi=\Phi|_{\wh\Pi_1}:\wh\Pi_1\to\wh\Pi_2$ now satisfies $\psi(b_1)=b_2$. Since
$\wh\chi_2\Phi=\epsilon\,\wh\chi_1$ and $\chi_1(t_1)=1$, we have $\wh\chi_2(\Phi(t_1))=\epsilon$, so
by the normal form of \cref{lem:extensions}(i), $\Phi(t_1)=z t_2^\epsilon$ for some $z\in\wh\Pi_2$.
Hence for every $x\in\wh\Pi_1$, we obtain
\[
\begin{aligned}
\psi(\wh a_1(x))
&=\Phi(t_1xt_1^{-1})\\
&=\Phi(t_1)\psi(x)\Phi(t_1)^{-1}\\
&=z t_2^\epsilon\psi(x)t_2^{-\epsilon}z^{-1}\\
&=z\,\wh a_2^\epsilon(\psi(x))\,z^{-1}.
\end{aligned}
\]
This implies
\begin{equation}\label{eq:aligned}
\psi\wh a_1\psi^{-1}
=\Inn(z)\wh a_2^\epsilon.
\end{equation}
Now choose $m$ sufficiently large for \cref{lem:elevations} on both sides and set $K_i=K_m(\Pi_i)$.
The subgroups $K_i$ are characteristic, so they are invariant under $a_i$, and
$\psi(\wh K_1)=\wh K_2$ by \cref{lem:char-functor}. The least positive exponent $k$ with
$b_i^k\in K_i$ is the order of the image of $b_i$ in $\Pi_i/K_i$. It is the same for $i=1,2$ because
$\psi(b_1)=b_2$ and $\psi$ identifies the two finite quotients. Set $x_i=b_i^k$; then $x_i$
represents a nonseparating simple closed curve in the surface with fundamental group $K_i$.

Choose $h_0\in\Pi_2$ with $z\wh K_2=h_0\wh K_2$, and write $z=z'h_0$ with $z'\in\wh K_2$. We define
\[
f_1=a_1|_{K_1},
\qquad
g_2=(\Inn(h_0)\circ a_2^\epsilon)|_{K_2}.
\]
Both automorphisms are induced by pseudo-Anosov homeomorphisms of the corresponding finite covers:
$a_1$ and $a_2^\epsilon$ lift because the $K_i$ are characteristic, and composition with the deck
transformation represented by $h_0$ preserves the lifted stable and unstable measured foliations and
their scaling factors. We set $\psi_K=\psi|_{\wh K_1}$. Restricting \eqref{eq:aligned} we have
$\psi_K\wh f_1\psi_K^{-1}=\Inn(z')\wh g_2$, and by induction
\begin{equation}\label{eq:iterated-alignment}
\psi_K(f_1^n(x_1))=z_n g_2^n(x_2)z_n^{-1},
\end{equation}
where
\[
z_n=z'\,\wh g_2(z')\cdots \wh g_2^{n-1}(z')\in\wh K_2
\qquad(n\geq1).
\]

For a closed surface $R$ of genus at least two, the curve graph $\mathcal C(R)$
\cite{MasurMinsky1999} has isotopy classes of essential simple closed curves as vertices. Two
distinct vertices are joined by an edge when they have disjoint representatives. We write
$d_{\mathcal C}$ for its path metric with every edge of length one, and use $x_i$ also for the curve
class represented by $x_i$ in the surface with fundamental group $K_i$. For pseudo-Anosov mapping
classes, these distances have a positive linear lower bound in the number of iterates
\cite{MasurMinsky1999}. Therefore
\[
d_{\mathcal C}(x_1,f_1^n(x_1))\to\infty,
\qquad
d_{\mathcal C}(x_2,g_2^n(x_2))\to\infty.
\]
Choose $n$ so large that both distances are at least three. Then each pair fills its surface, since
a curve disjoint from both would give a path of length at most two in the curve graph, and all four
curves are nonseparating. The two chosen curves determine the closed cyclic subgroups
\[
\cl{\langle x_1\rangle},
\qquad
\cl{\langle f_1^n(x_1)\rangle}
\quad\text{in }\wh K_1,
\]
and
\[
\cl{\langle x_2\rangle},
\qquad
\cl{\langle g_2^n(x_2)\rangle}
\quad\text{in }\wh K_2.
\]
Since $\psi_K(x_1)=x_2$, by continuity
$\psi_K\bigl(\cl{\langle x_1\rangle}\bigr)=\cl{\langle x_2\rangle}$, and
\eqref{eq:iterated-alignment} yields
\[
\psi_K\bigl(\cl{\langle f_1^n(x_1)\rangle}\bigr)
=z_n\,\cl{\langle g_2^n(x_2)\rangle}\,z_n^{-1}.
\]
Hence the conditions of \cref{cor:two-curve} are satisfied for the restricted map
$\psi_K:\wh K_1\to\wh K_2$, so we have
\[
\psi_K=\Inn(v_K)\circ\wh h_K
\]
for $v_K\in\wh K_2$ and an isomorphism $h_K:K_1\to K_2$.

Applying \cref{lem:extensions}(ii) to the finite index normal subgroups $K_i\normalf\Pi_i$, using
\cref{lem:normalizer-rigidity} for the triviality of $Z(\wh K_i)$ and the injectivity of
$\Out(K_i)\to\Out(\wh K_i)$, we obtain
\[
\psi=\Inn(s)\circ\wh h,
\qquad s\in\wh\Pi_2,
\]
for an isomorphism $h:\Pi_1\to\Pi_2$. We now replace $\Phi$ by $\Inn(s^{-1})\circ\Phi$, replace
$\psi$ by $\Inn(s^{-1})\circ\psi$, and then replace $z$ by
\[
s^{-1}z\wh a_2^\epsilon(s).
\]
Indeed, conjugating \eqref{eq:aligned} on the left by $\Inn(s^{-1})$ and on the right by $\Inn(s)$
changes the conjugator in this way. With these choices we have $\psi=\wh h$, and \eqref{eq:aligned}
keeps the same form. It now says that the completions of $ha_1h^{-1}$ and $a_2^\epsilon$ differ by
an inner automorphism. Then by injectivity of $\Out(\Pi_2)\to\Out(\wh\Pi_2)$ we have $d\in\Pi_2$
such that
\[
ha_1h^{-1}=\Inn(d)\circ a_2^\epsilon.
\]
Hence
\[
f(t_1)=dt_2^\epsilon, \qquad f|_{\Pi_1}=h
\]
defines an isomorphism $f:G_1\to G_2$.

Finally, for the same choices, write $\Phi(t_1)=vt_2^\epsilon$ with $v\in\wh\Pi_2$. Comparing
conjugation by this element with the conjugation given by $f(t_1)=dt_2^\epsilon$ shows that
$d^{-1}v$ centralizes $\wh\Pi_2$. By \cref{lem:normalizer-rigidity}, $Z(\wh\Pi_2)=1$, so $v=d$. Let
$\Phi^{(0)}$ denote the original isomorphism, and let
\[
\Phi^{(1)}
=\Inn(q^{-1})\circ\Phi^{(0)},
\qquad
\Phi^{(2)}
=\Inn(s^{-1})\circ\Phi^{(1)}
\]
be its two successive conjugates. The argument above shows that $\Phi^{(2)}$ and $\wh f$ agree on
$\wh\Pi_1$ and on $t_1$. These elements topologically generate $\wh G_1$, so continuity forces
$\Phi^{(2)}=\wh f$. Since $\Phi^{(2)}=\Inn((qs)^{-1})\circ\Phi^{(0)}$, the original isomorphism has
the required form $\Phi^{(0)}=\Inn(qs)\circ\wh f$ with $w=qs\in\wh G_2$.
\end{proof}

\begin{corollary}
\label{cor:closed-rigidity}
Every isomorphism of profinite completions of closed orientable hyperbolic $3$-manifold groups, and
every such isomorphism between cocompact lattices in $\PSL_2(\C)$, is discretely induced up to
profinite inner automorphism.
\end{corollary}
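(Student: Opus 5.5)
We first reduce to finite index subgroups. Let $\Phi:\wh G_1\to\wh G_2$ be an isomorphism of the completions of closed orientable hyperbolic $3$-manifold groups. \Cref{cor:configuration} gives several things:
\begin{enumerate}[label=\textup{(\alph*)}]
\item an integer $m$ with $\Phi(\wh K_1)=\wh K_2$ for $K_i=K_m(G_i)$;
\item primitive fibered classes $\chi_{0,i},\chi_{1,i}$ on $K_i$;
\item a sign $\epsilon$ with $\wh\chi_{1,2}\circ\Phi_K=\epsilon\wh\chi_{1,1}$;
\item elements $b_i\in\Pi_i=\ker\chi_{1,i}$, not proper powers in $K_i$, with $\Phi_K(b_1)=qb_2^\epsilon q^{-1}$.
\end{enumerate}
Moreover $\Phi_K$ maps $\wh\Pi_1$ onto $\wh\Pi_2$ and induces multiplication by $\epsilon$ on the quotients $\hZ$. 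Write $K_i=\Pi_i\rtimes_{a_i}\langle t_i\rangle$ via a stable letter with $\chi_{1,i}(t_i)=1$. The monodromy $a_i$ is pseudo-Anosov by Thurston's hyperbolization, since $K_i$ is a closed hyperbolic $3$-manifold group. We aim to apply \cref{thm:one-orbit} to $\Phi_K$, with the marked elements $b_1$ and $b_2^\epsilon$.

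Two hypotheses of \cref{thm:one-orbit} need checking. First, $b_2^\epsilon$ is nontrivial and lies in $\Pi_2$. Second, each $b_i$ must not be a proper power in the fiber group $\Pi_i$. This follows because $\Pi_i\leq K_i$ and $b_i$ is not a proper power in $K_i$: a root in $\Pi_i$ would be a root in $K_i$. Inverting preserves this property. So \cref{thm:one-orbit} yields $\Phi_K=\Inn(w)\circ\wh f_K$ with $w\in\wh K_2$ and $f_K:K_1\to K_2$ a discrete isomorphism.

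Next we pass back to $G_i$. The $K_i$ are finite index normal subgroups of $G_i$, being characteristic. By \cref{lem:normalizer-rigidity} applied to the finite index subgroups $K_i$, we have $Z(\wh K_i)=1$ and $\Out(K_i)\to\Out(\wh K_i)$ is injective. \Cref{lem:extensions}(ii), equivalently \cref{prop:centerless-extension}, then gives $\Phi=\Inn(u)\circ\wh f$ for an isomorphism $f:G_1\to G_2$. This settles the manifold case.

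For cocompact lattices $\Gamma,\Delta<\PSL_2(\C)$ with torsion, choose by Selberg's lemma a torsion-free finite index normal subgroup $\Gamma_0\normalf\Gamma$, passing to an orientation-compatible subgroup if needed. Its closure is open, and $\Phi(\cl{\Gamma_0})$ is an open subgroup of $\wh\Delta$. By the subgroup correspondence this is $\cl{\Delta_0}$ for a finite index $\Delta_0\leq\Delta$. It may not be torsion-free, so we intersect $\Gamma_0$ with a characteristic-level subgroup $K_m(\Gamma)$ and use \cref{lem:char-functor}. This gives torsion-free normal subgroups on both sides with $\Phi(\wh\Gamma_0)=\wh\Delta_0$. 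Torsion-freeness of $\Delta_0$ follows from \cref{thm:torsion-completion-sources}: $\wh\Delta_0\cong\wh\Gamma_0$ is torsion-free, hence so is $\Delta_0\subset\wh\Delta_0$.

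These $\Gamma_0,\Delta_0$ are fundamental groups of closed orientable hyperbolic $3$-manifolds, so the first case applies to $\Phi|_{\wh\Gamma_0}$. \Cref{lem:normalizer-rigidity} and \cref{lem:extensions}(ii) then lift the conclusion to $\Gamma,\Delta$.

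The main obstacle is this final passage from $\Gamma_0,\Delta_0$ back to $\Gamma,\Delta$ in the lattice case. We must ensure the chosen subgroups are simultaneously torsion-free and matched by $\Phi$, and that \cref{lem:extensions}(ii) is applicable with normal subgroups. We handle it by intersecting with the standard characteristic family on both sides, which \cref{lem:char-functor} keeps matched. By contrast, the manifold case is essentially bookkeeping of the hypotheses of \cref{thm:one-orbit}.
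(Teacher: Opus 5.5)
Your proof is correct and is essentially the paper's. The manifold case is exactly \cref{cor:configuration}, then \cref{thm:one-orbit} applied with $b_2^\epsilon$, then \cref{lem:extensions}(ii) via \cref{lem:normalizer-rigidity}.

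The only variation is in the lattice reduction. The paper applies Selberg's lemma on both sides and takes one characteristic level $K_m$ lying in both torsion-free subgroups, so torsion-freeness is inherited and the matching is \cref{lem:char-functor}. You instead transport $\Gamma_0$ across $\Phi$ and recover torsion-freeness of $\Delta_0$ from \cref{thm:torsion-completion-sources}. This works, but it uses a much heavier input. Your extra intersection with $K_m(\Gamma)$ is also unnecessary, since $\Delta_0=\Phi(\cl{\Gamma_0})\cap\Delta$ is already normal of finite index.
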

\begin{proof}
For manifolds, \cref{cor:configuration} provides corresponding characteristic covers with positive
prime suspension orbit elements $b_i\in\ker\chi_{1,i}$. By \cref{lem:orbit-roots}, these elements
are not proper powers in the covering groups and hence are not proper powers in their fiber
subgroups $\ker\chi_{1,i}$. When applying \cref{thm:one-orbit} we replace $b_2$ by $b_2^\epsilon$;
inversion preserves this root property. Then the theorem realizes the isomorphism between the
completions of the covering groups. These characteristic covering groups are normal of finite index,
and the conditions of \cref{lem:extensions}(ii) hold by \cref{lem:normalizer-rigidity}, so that
lemma yields the realization on the original groups.

For cocompact lattices, choose a torsion-free finite index normal subgroup in each lattice by
Selberg's lemma. A common sufficiently deep standard characteristic level is contained in the chosen
subgroups and is matched by the given profinite isomorphism, and the corresponding quotients of
$\HH^3$ are closed manifolds. We apply the manifold case to these matched characteristic subgroups,
which are normal of finite index, and conclude with \cref{lem:extensions}(ii), whose conditions hold
by \cref{lem:normalizer-rigidity}.
\end{proof}


\section{Global lattice}
\label{sec:cusps}
This section uses a standard cusp filling argument. Recall that for a cusp torus $T$, the peripheral
subgroup is the image of $\pi_1(T)$ in the manifold group, defined using a chosen base path, and
that a slope is an unoriented isotopy class of essential simple closed curves on $T$. After choosing
an orientation, we represent a slope by an element $d\in\pi_1(T)\cong\Z^2$ belonging to an integral
basis; changing the orientation replaces $d$ by $d^{-1}$.

For the filling argument we only need each peripheral map to be a profinite unit times an integral
map: after passing to the closed normal subgroup generated by the filling relations, the unit
disappears.

\begin{lemma}
\label{lem:peripheral-scalar}
Let $\Phi:\wh\Gamma\to\wh\Delta$ be an isomorphism between completions of orientable cusped finite
volume hyperbolic $3$-manifold groups. Then $\Phi$ induces a bijection between the conjugacy classes
of cusp closures. Let $P,Q\cong\Z^2$ be matched cusp groups and let $u\in\wh\Delta$ satisfy
$\Phi(\wh P)=u\wh Q u^{-1}$. Then there exists an integral isomorphism $A:P\to Q$ and a unit
$\mu\in\hZ^\times$ such that
\begin{equation}\label{eq:peripheral-scalar}
\Inn(u^{-1})\Phi|_{\wh P}=\mu\wh A.
\end{equation}
The unit and the normalizing conjugator may depend on the cusp.
\end{lemma}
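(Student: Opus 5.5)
The plan has two stages. First we match the cusp closures. Then we obtain the scalar form on a matched pair of cusp groups, which we deduce from Liu's regularity theorem applied to the peripheral tori.

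\emph{Step 1: matching cusp closures.} By Wilton--Zalesskii \cite{WiltonZalesskii2017}, the closures $\wh P$ of peripheral subgroups in $\wh\Gamma$ are exactly the maximal closed subgroups isomorphic to $\hZ^2$, up to conjugacy. Concretely:
\begin{enumerate}[label=\textup{(\alph*)}]
\item every closed subgroup $\cong\hZ^2$ is conjugate into some $\wh P$;
\item distinct conjugacy classes of cusps have closures that are not conjugate.
\end{enumerate}
Both facts use malnormality of the peripheral family together with \cref{thm:profinite-malnormality}, in its relatively hyperbolic form via virtual specialness (\cref{thm:torsion-completion-sources}). Since $\Phi$ preserves this intrinsic characterization, it induces a bijection on conjugacy classes of cusp closures. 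Given $u$ with $\Phi(\wh P)=u\wh Qu^{-1}$, we set $\phi=\Inn(u^{-1})\Phi|_{\wh P}:\wh P\to\wh Q$. This is a continuous isomorphism $\hZ^2\to\hZ^2$, that is, an element of $\GL_2(\hZ)$ once bases of $P$ and $Q$ are chosen. The task is to show it equals $\mu A$ with $A\in\GL_2(\Z)$.

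\emph{Step 2: the scalar form via cohomology.} We use Liu's regularity theorem \eqref{eq:liu} for the cusped manifolds: $\Phi^*=\mu(F\otimes\hZ)$ on $H^1(-,\hZ)$. Liu's theorem also gives peripheral regularity \cite[Theorems~1.2--1.3]{Liu2023}: the Thurston norm, and hence the restriction structure to boundary tori, is respected. We intend to pass to finite covers in which many classes restrict nontrivially to the chosen cusp. By virtual specialness there is a finite cover whose restriction $H^1(M',\Z)\to H^1(T',\Z)$ is surjective for an elevation $T'$ of the cusp torus. For example, we can use virtual retractions onto the peripheral $\Z^2$; this is a virtual retract by the Kleinian case of \cref{thm:normalizer-comparisons}(iv) applied to abelian quasiconvex-relative subgroups.

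On that cover, the peripheral map $\phi'^*$ on $H^1(T',\hZ)$ is the restriction of $\mu'(F'\otimes\hZ)$. It therefore equals $\mu'$ times an integral map, provided $F'$ carries the kernel of restriction for $T'$ to the kernel for the matched torus. That kernel compatibility comes from Liu's identification of peripheral data, or equivalently from the fact that $\Phi$ matches the closures. Since the integral lattice $H^1(T',\Z)$ sits in $H^1(T',\hZ)$ via rational points, $\mu'^{-1}\phi'^*$ maps a $\Z$-lattice into $\hZ^2\cap\Q^2=\Z^2$, and it is invertible by the same argument applied to $\Phi^{-1}$. Thus it is in $\GL_2(\Z)$.

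\emph{Step 3: descent to $P$.} The finite index subgroup $P'\le P$ has $\wh{P'}$ open in $\wh P$. If $\phi|_{\wh{P'}}=\mu'\wh{A'}$ with $A'$ integral, then $\phi$ itself is a $\hZ$-linear map agreeing with $\mu'A'$ on a finite index lattice. Hence $\mu'^{-1}\phi$ maps $P\otimes\Q$ rationally, and it preserves the lattices by the sandwiching $\hZ\cap\Q=\Z$. This gives $A$ integral and $\mu=\mu'$.

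The main obstacle is Step 2: getting the peripheral restriction compatible with the global Liu decomposition. The scalar $\mu$ is global, while the peripheral matrix is a priori an arbitrary element of $\GL_2(\hZ)$. I would try first to deduce it directly from Liu's statement that $\Phi$ is ``peripheral regular'', which should already assert exactly \eqref{eq:peripheral-scalar} cusp by cusp. Failing that, I would use the surjective-restriction cover as outlined above.
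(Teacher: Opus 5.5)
Your fallback in Step~2 is the paper's proof in dual form. The paper normalizes to $\Phi(\wh P)=\wh Q$ and matches the two virtual-retract subgroups through $U=\Phi(\cl{\Gamma^P})\cap\cl{\Delta^Q}$, obtaining $P\le\Gamma_0$ and $Q\le\Delta_0$ with $\Phi(\wh{\Gamma_0})=\wh{\Delta_0}$. It then applies Liu's regularity to $H_1(-,\Z)/\tors$, where $P$ and $Q$ are split direct summands. This is exactly your split-surjective restriction argument in $H^1$: the kernel compatibility you list as a proviso is automatic from the commuting square, and Step~3 is unnecessary because $P$ itself lies in the retract subgroup.

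Two justifications need correcting. First, conjugacy of an arbitrary closed $\hZ^2$ into a cusp closure does not follow from malnormality. The paper gets it from \cite[Theorem~9.3]{WiltonZalesskii2017}, since such a subgroup has no nonabelian free pro-$p$ subgroup and is not projective; malnormality \cite[Lemma~4.5]{WiltonZalesskii2017} supplies only maximality and the separation of distinct cusp classes. Second, the virtual retraction onto $P$ comes from \cite{AschenbrennerFriedlWilton2015}, not from \cref{thm:normalizer-comparisons}(iv), which concerns only cyclic subgroups.
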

\begin{proof}
By \cite{AschenbrennerFriedlWilton2015}, a cusp group is geometrically finite and is a virtual
retract. So \cref{lem:virtual-retract-topology} identifies its closure with its full completion
$\hZ^2$. By \cite[Lemma~4.5]{WiltonZalesskii2017} peripheral closures form a malnormal family, so
each cusp closure equals its normalizer and is a maximal abelian subgroup.

The subgroup $\Phi(\wh P)$ is a closed copy of $\hZ^2$. By \cite[Theorem~9.3]{WiltonZalesskii2017},
every such subgroup is contained in a conjugate of a cusp closure: it has no nonabelian free pro-$p$
subgroup and is not projective, since its $p$-cohomological dimension is two. Maximality of the
abelian subgroup makes the containment an equality. Repeating the argument for $\Phi^{-1}$, we get a
bijection of peripheral conjugacy classes. Conjugating $\Phi$ by $u^{-1}$, we may assume for the
remainder of the proof that $\Phi(\wh P)=\wh Q$.

Now choose finite index subgroups $\Gamma^P\leq\Gamma$ and $\Delta^Q\leq\Delta$ that contain $P$ and
$Q$ and admit retractions
\[
r_P:\Gamma^P\to P,
\qquad
r_Q:\Delta^Q\to Q.
\]
The open subgroup
\[
U=\Phi(\cl{\Gamma^P})\cap\cl{\Delta^Q}\leq\wh\Delta
\]
contains $\wh Q$. Put
\[
\Delta_0=U\cap\Delta,
\qquad
\Gamma_0=\Phi^{-1}(U)\cap\Gamma.
\]
Then $\cl{\Delta_0}=U$, $\cl{\Gamma_0}=\Phi^{-1}(U)$, and $\Phi$ restricts to
$\wh{\Gamma_0}\cong\wh{\Delta_0}$. Moreover we have
\[
P\leq\Gamma_0\leq\Gamma^P,
\qquad
Q\leq\Delta_0\leq\Delta^Q.
\]
The smaller subgroups still contain the images of the retractions, so $r_P$ and $r_Q$ restrict to
retractions of $\Gamma_0$ and $\Delta_0$.

Put
\[
L_\Gamma=H_1(\Gamma_0,\Z)/\tors,
\qquad
L_\Delta=H_1(\Delta_0,\Z)/\tors,
\]
where $\tors$ denotes the torsion subgroup. The two retractions show that the homomorphisms induced
by inclusion,
\[
i_P:P\hookrightarrow L_\Gamma,
\qquad
i_Q:Q\hookrightarrow L_\Delta,
\]
are split injections. In particular, both first Betti numbers are at least two. The finite covers
corresponding to $\Gamma_0$ and $\Delta_0$ have orientable finite volume hyperbolic interiors. Hence
the homological form of Liu's regularity theorem applies to the restricted profinite isomorphism
\cite[Theorem~6.1 and Proposition~3.2(2)]{Liu2023}. It provides an integral isomorphism
$F:L_\Gamma\to L_\Delta$ and $\mu\in\hZ^\times$ such that the induced map on completed free
abelianizations is $\mu\wh F$, and then by naturality we get
\begin{equation}\label{eq:peripheral-abelianization-square}
\mu\wh F\,\wh i_P
=\wh i_Q\,\Phi|_{\wh P}.
\end{equation}
Since the right side has image $\wh i_Q(\wh Q)$, and multiplication by the unit $\mu$ is an
automorphism preserving $\wh i_Q(\wh Q)$, we get
\[
\wh F\bigl(\wh i_P(\wh P)\bigr)=\wh i_Q(\wh Q).
\]

Now for every $p\in P$, this equality of completed subgroups shows that
\[
F(i_P(p))
\in L_\Delta\cap\wh i_Q(\wh Q)
=i_Q(Q).
\]
The same argument applied to $F^{-1}$ shows
\[
F(i_P(P))\subseteq i_Q(Q),
\qquad
F^{-1}(i_Q(Q))\subseteq i_P(P),
\]
hence $F(i_P(P))=i_Q(Q)$. Therefore we obtain an integral isomorphism
\[
A=i_Q^{-1}\circ F\circ i_P:P\xrightarrow{\cong}Q,
\]
where $i_Q^{-1}$ is defined on $i_Q(Q)$. Completing the equality $Fi_P=i_QA$, substituting into
\eqref{eq:peripheral-abelianization-square}, and using the $\hZ$-linearity of $\wh i_Q$, we get
$\wh i_Q\circ(\mu\wh A)=\wh i_Q\circ\Phi|_{\wh P}$, and since $\wh i_Q$ is injective,
$\Phi|_{\wh P}=\mu\wh A$. The map used in this calculation is $\Inn(u^{-1})\circ\Phi$, so for the
original isomorphism this is \eqref{eq:peripheral-scalar}.
\end{proof}

\begin{lemma}
\label{lem:unit-fillings}
Let $P_1,\ldots,P_s$ and $Q_1,\ldots,Q_s$ represent the corresponding cusp conjugacy classes for the
groups $\Gamma$ and $\Delta$ of \cref{lem:peripheral-scalar}, and let $A_i:P_i\to Q_i$ be the
integral isomorphisms given there. We choose primitive slopes $d_i\in P_i$, set
$e_i=A_i(d_i)\in Q_i$, and choose pairwise distinct positive integers $r_i$. We set
$\ell_n=\lcm(1,\ldots,n)$ and
\begin{equation}
\begin{split}
k_i(n)   & =r_i\ell_n,                                            \\
\Gamma_n & =\Gamma/\Ncl{d_1^{k_1(n)},\ldots,d_s^{k_s(n)}}{\Gamma}, \\
\Delta_n & =\Delta/\Ncl{e_1^{k_1(n)},\ldots,e_s^{k_s(n)}}{\Delta}.
\end{split}
\label{eq:fillings}
\end{equation}
Then $\Phi$ induces $\Phi_n:\wh\Gamma_n\xrightarrow{\cong}\wh\Delta_n$, and every fixed finite
quotient of $\Gamma$ or $\Delta$ factors through the corresponding fillings for all sufficiently
large $n$.
\end{lemma}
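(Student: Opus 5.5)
The plan has two parts. First we show that the closed normal subgroups defining the fillings correspond under $\Phi$, which yields $\Phi_n$ via \cref{lem:completion-quotient}. Then we give a pigeonhole argument on orders in a fixed finite quotient.

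For the first part, set $R_n=\Ncl{d_1^{k_1(n)},\ldots,d_s^{k_s(n)}}{\Gamma}$ and $S_n$ similarly for $\Delta$. By \cref{lem:completion-quotient}, $\wh\Gamma_n\cong\wh\Gamma/\cl{R_n}$, where $\cl{R_n}$ is the closed normal subgroup of $\wh\Gamma$ topologically generated by the elements $d_i^{k_i(n)}$; the same holds for $\Delta$. It therefore suffices to prove $\Phi(\cl{R_n})=\cl{S_n}$. Since the left side is closed and normal, it is the closed normal closure of the elements $\Phi(d_i^{k_i(n)})$. By \cref{lem:peripheral-scalar}, with conjugator $u_i$ and unit $\mu_i$ for the $i$th cusp,
\[
\Phi(d_i^{k_i(n)})=u_i\,\wh A_i(d_i)^{\mu_ik_i(n)}u_i^{-1}=u_i\,e_i^{\mu_ik_i(n)}u_i^{-1},
\]
where $\wh A_i(d_i)=A_i(d_i)=e_i$ because $d_i$ is discrete. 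Write $\mu=\mu_i$ and $k=k_i(n)$. Since $\mu\in\hZ^\times$, we have $\mu k\hZ=k\hZ$, so the procyclic subgroups satisfy $\cl{\langle e_i^{\mu k}\rangle}=\cl{\langle e_i^{k}\rangle}$ inside $\cl{\langle e_i\rangle}\cong\hZ$. Hence each $e_i^{k}$ lies in the closed normal closure of $\Phi(\cl{R_n})$, and conversely each $\Phi(d_i^{k})$ lies in $\cl{S_n}$. The conjugators $u_i$ disappear in normal closures. The symmetric argument with $\Phi^{-1}$ gives equality, and $\Phi_n$ is the induced quotient isomorphism. The one point to check is that $\cl{\langle e_i\rangle}\cong\hZ$, i.e.\ that the closure of $Q_i$ is $\hZ^2$. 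This was already noted in the proof of \cref{lem:peripheral-scalar}.

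For the second part, let $q:\Gamma\to F$ be a finite quotient. Let $o_i$ be the order of $q(d_i)$ and put $D=\max_i o_i$. For $n\geq D$ each $o_i$ divides $\ell_n$, hence divides $r_i\ell_n=k_i(n)$. So $q$ kills every $d_i^{k_i(n)}$, and therefore kills $R_n$, which means $q$ factors through $\Gamma_n$. The same argument applies to $\Delta$ with the $e_i$. No compatibility issue arises, because the statement is level-wise.

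I do not expect a genuine obstacle. The subtle step is the first one: we must ensure that it is the unit in \eqref{eq:peripheral-scalar}, and not an arbitrary automorphism of $\hZ^2$, that acts on the slope. This is exactly why \cref{lem:peripheral-scalar} is stated in the form $\mu\wh A$, with $A$ integral. Because $A_i$ is integral, $e_i$ is again a discrete primitive slope, and the distinct $r_i$ play no role in this lemma.
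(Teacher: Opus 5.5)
Your proposal is correct and follows the paper's proof essentially step for step: the identity $\cl{\langle e_i^{\mu k}\rangle}=\cl{\langle e_i^{k}\rangle}$ for a unit $\mu$, combined with \cref{lem:peripheral-scalar} and \cref{lem:completion-quotient}, identifies the closed normal filling subgroups, and the divisibility $o_i\mid\ell_n$ for large $n$ gives the factorization of each fixed finite quotient. Your extra check that $\cl{\langle e_i\rangle}\cong\hZ$ is harmless but not needed, since $e^{k\mu\hZ}=e^{k\hZ}$ holds for a topological generator of any procyclic group.
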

\begin{proof}
For a topological generator $e$ of a procyclic group, a positive integer $k$, and a unit
$\mu\in\hZ^\times$ we have
\[
\cl{\langle e^{k\mu}\rangle}
=e^{k\mu\hZ}
=e^{k\hZ}
=\cl{\langle e^k\rangle}.
\]
By \cref{lem:peripheral-scalar}, $\Phi$ therefore maps the closed subgroup generated by
$d_i^{k_i(n)}$ onto a conjugate of the closed subgroup generated by $e_i^{k_i(n)}$, for each cusp.
The individual cusp conjugators have no effect once we pass to the closed normal subgroup generated
by all the relations. Hence $\Phi$ identifies the two closed normal filling subgroups. By
\cref{lem:completion-quotient}, quotienting yields
$\Phi_n:\wh\Gamma_n\xrightarrow{\cong}\wh\Delta_n$. Let $\rho:\Gamma\to F$ be a fixed finite
quotient. If $o_i$ is the order of $\rho(d_i)$, then $o_i\mid\ell_n$ for every sufficiently large
$n$. Hence $\rho(d_i^{r_i\ell_n})=1$ for every cusp, so $\rho$ kills the defining normal subgroup of
$\Gamma_n$ and factors through $\Gamma_n$. The same argument applies to $\Delta$, and in particular
to the characteristic quotients used below.
\end{proof}

\begin{proposition}
\label{prop:geometric-fillings}
Let $\cl M_\Gamma$ and $\cl M_\Delta$ be compact horospherical truncations of the cusped hyperbolic
manifolds with groups $\Gamma$ and $\Delta$. Use the primitive slopes $d_i,e_i$ and the pairwise
distinct orders $k_i(n)=r_i\ell_n$ from \eqref{eq:fillings}. For every sufficiently large $n$, the
following statements hold.
\begin{enumerate}[label=\textup{(\roman*)}]
\item Attaching an orbifold solid torus of generalized coefficient $(k_i(n),0)$ to the $i$th
cusp of $\cl M_\Gamma$ produces a compact orientable hyperbolic orbifold
$\mathcal O_{\Gamma,n}$ with
\[
\pi_1^{\mathrm{orb}}(\mathcal O_{\Gamma,n})\cong\Gamma_n.
\]
The corresponding filling of $\cl M_\Delta$ produces $\mathcal O_{\Delta,n}$ with
orbifold group $\Delta_n$.

\item The singular locus consists exactly of the filling singular closed geodesic $\kappa_{i,n}$. The local
group along $\kappa_{i,n}$ is cyclic of order $k_i(n)$, and the image of $d_i$,
respectively $e_i$, has exact order $k_i(n)$ in the orbifold group.

\item Every nontrivial finite subgroup of $\Gamma_n$, respectively $\Delta_n$, is conjugate
into the local cyclic group of some filling singular closed geodesic.

\item Every orbifold isometry
\[
F_n:\mathcal O_{\Gamma,n}\longrightarrow\mathcal O_{\Delta,n}
\]
maps the singular geodesic of order $k_i(n)$ to the singular geodesic of the same order. After deleting
invariant open tubes around all singular closed geodesic and adjoining product collars, its restriction
is a homeomorphism
\[
\cl F_n:\cl M_\Gamma\longrightarrow\cl M_\Delta
\]
that takes the slope $d_i$ to $e_i$ up to sign.

\item Let
\[
q_n:\Gamma\twoheadrightarrow\Gamma_n,
\qquad
q'_n:\Delta\twoheadrightarrow\Delta_n
\]
be the filling maps. Choose basepoints outside the singular closed geodesic tubes and compatible base
paths. If $f_n=(F_n)_*$ and $h_n=(\cl F_n)_*$ then, after replacing $f_n$ by a
discrete inner conjugate,
\begin{equation}\label{eq:based-drilling-square}
f_nq_n=q'_nh_n.
\end{equation}
\end{enumerate}
\end{proposition}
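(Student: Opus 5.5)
The plan is to derive everything from hyperbolic Dehn filling for orbifolds together with Mostow--Prasad rigidity and the drilling theorem; no profinite input is used here.

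For \textup{(i)} and \textup{(ii)}, I would invoke Thurston's hyperbolic Dehn filling theorem in its orbifold form, as given by Boileau--Porti \cite{BoileauPorti2001}. All the slopes $d_i$ and $e_i$ are fixed. The generalized coefficients $(k_i(n),0)$ leave every compact subset of the Dehn surgery space as $n\to\infty$, since $k_i(n)\ge\ell_n\to\infty$. Hence for large $n$ the filled orbifolds $\mathcal O_{\Gamma,n}$ and $\mathcal O_{\Delta,n}$ carry complete hyperbolic structures. Van Kampen for orbifolds then gives $\pi_1^{\mathrm{orb}}(\mathcal O_{\Gamma,n})=\Gamma_n$, because filling a cusp with cone angle $2\pi/k$ kills exactly $d_i^{k}$. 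The cores of the filling tori become closed geodesics $\kappa_{i,n}$; these are short, and their length tends to $0$ by the convergence of the filled structures. The singular locus consists of these cores and nothing else, since the manifold part $\cl M_\Gamma$ stays nonsingular. The local group is $\Z/k_i(n)$. The image of $d_i$ is the rotation generator of that local group, so in the orbifold group it has exact order $k_i(n)$, because the local group injects into $\pi_1^{\mathrm{orb}}$ of a good orbifold.

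For \textup{(iii)}, I would note that a finite subgroup of the lattice $\Gamma_n<\PSL_2(\C)$ fixes a point of $\HH^3$, whose image lies in the singular locus. Since the singular locus consists only of the disjoint geodesics $\kappa_{i,n}$, each with cyclic isotropy, the finite subgroup is conjugate into one of their local cyclic groups. This step needs no vertex points, where dihedral groups could occur; none exist here because the cores are disjoint simple closed geodesics.

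For \textup{(iv)}, an isometry preserves the singular locus together with its local orders. Since the $r_i$ are pairwise distinct, so are the orders $k_i(n)$, and $\kappa_{i,n}$ must map to the geodesic of the same order. For large $n$ these geodesics are shorter than the Margulis constant, so their tubes can be taken invariant, for instance as maximal embedded tubes of a fixed radius. Removing them yields a homeomorphism between the complements. The drilling theorem \cite{BoileauPorti2001} together with the thick--thin structure identifies each complement with $\cl M_\Gamma$ (respectively $\cl M_\Delta$) up to product collars. The meridian of tube $i$ is the slope $d_i$, respectively $e_i$, and an isometry sends meridian to meridian up to orientation. I expect this step to be the main obstacle: one has to make the identification of the tube complements with the truncated manifolds canonical enough that the homeomorphism $\cl F_n$ is well defined and takes the meridians correctly. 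I would first try choosing the tubes as fixed-radius metric neighbourhoods, which are canonical and therefore invariant under every isometry.

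For \textup{(v)}, I would compute on based loops. A loop in $\cl M_\Gamma$ based outside the tubes maps under $F_n$ to a loop in the complement. The inclusion of the complement into the orbifold induces $q'_n$ on $\Delta$ and $q_n$ on $\Gamma$. Naturality of $\pi_1$ under inclusion therefore gives $f_nq_n=q'_nh_n$, with the inner conjugation accounting for the choice of base paths.
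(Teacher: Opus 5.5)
Your proposal is correct and follows essentially the same route as the paper. Both use Boileau--Porti orbifold filling, orbifold van Kampen and injectivity of local groups for (i)--(ii), the Cartan fixed point argument for (iii), preservation of labels via the distinct orders $k_i(n)$ together with matched small tubes and meridian kernels for (iv), and homotopy of the two composites for (v). The one difference is that the paper needs no drilling theorem in (iv). It takes the tubes, and their images under $F_n$, small enough to lie inside the attached solid tori, so the intervening regions are product collars. It then chooses those collar identifications to be homotopic to the filling inclusions, which is exactly the compatibility your based-loop computation in (v) uses implicitly.
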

\begin{proof}
In the basis consisting of the primitive slope and a longitude on the $i$th cusp, the generalized
coefficient $(k_i(n),0)$ makes $d_i$ the orbifold meridian with cone angle $2\pi/k_i(n)$. Because
all $k_i(n)$ tend to infinity, the simultaneous orbifold Dehn filling theorem applies for all
sufficiently large $n$ \cite{BoileauPorti2001} and produces the hyperbolic orbifolds in
part~\textup{(i)}. Since the local filling model is
\[
(D^2\times S^1)/C_{k_i(n)},
\]
where the cyclic group $C_{k_i(n)}$ of order $k_i(n)$ acts by rotations on the disk and trivially on
the circle, the singular locus in the attached solid torus is exactly its singular closed geodesic and has effective
local order $k_i(n)$. The complement of the attached solid tori is the original manifold truncation
and has no singular points. This proves the statement about the singular locus in
part~\textup{(ii)}. Since developability makes each local group inject into the orbifold fundamental
group, the orbifold van Kampen theorem yields
\[
\pi_1^{\mathrm{orb}}(\mathcal O_{\Gamma,n})
=\Gamma/\Ncl{d_1^{k_1(n)},\ldots,d_s^{k_s(n)}}{\Gamma}
=\Gamma_n,
\]
and similarly for $\Delta_n$, which also shows that the slope image has exact order $k_i(n)$.

Let $H$ be a nontrivial finite subgroup of either orbifold group. By the Cartan fixed point theorem,
its action on $\HH^3$ has a common fixed point, and the point projects to the singular locus, since
regular points have trivial stabilizer. By part~\textup{(ii)}, its local stabilizer is the cyclic
group of a singular geodesic, and $H$ is conjugate into that group. This proves part~\textup{(iii)}.

Since the numbers $k_i(n)=r_i\ell_n$ are pairwise distinct, $F_n$ maps each singular geodesic to the
singular geodesic with the same label. Choose pairwise disjoint sufficiently small closed tubes
around the singular geodesics in the orbifold, and use their images under $F_n$ as the
corresponding tubes in the second orbifold. Denote the underlying solid tori by $V_i$ and $V'_i$,
respectively. Choose the tubes small enough to lie inside the corresponding attached solid tori on
both sides. The regions between their boundaries and the original filling boundaries are product
collars. Thus the restriction of $F_n$ to the complements of the tube interiors, followed by the
collar identifications, is a homeomorphism
\[
\cl F_n:\cl M_\Gamma\longrightarrow\cl M_\Delta.
\]
The restriction to the tubes yields a homeomorphism of the underlying solid tori $V_i\to V'_i$,
whose boundary restriction maps
\[
\ker\bigl(\pi_1(\partial V_i)\to\pi_1(V_i)\bigr)
\quad\text{onto}\quad
\ker\bigl(\pi_1(\partial V'_i)\to\pi_1(V'_i)\bigr).
\]
These infinite cyclic kernels are generated by the primitive disk-bounding meridians. Under the
collar identifications their generators represent $d_i$ and $e_i$, respectively, so $\cl F_n$ takes
$d_i$ to $e_i$ up to sign. This proves part~\textup{(iv)}.

We obtain the filling of $\cl M_\Gamma$ by attaching the orbifold solid tori along their boundary
tori, and we use the same construction on the $\Delta$ side. Choose the collar identifications so
that, after inclusion into the filled orbifolds, they are homotopic to the original filling
inclusions. Under these identifications, $\cl F_n$ is induced by the restriction of $F_n$ to the
tube complements. Hence the two composites from $\cl M_\Gamma$ to $\mathcal O_{\Delta,n}$ are
homotopic. Their induced homomorphisms satisfy $f_nq_n=\Inn(b_n)q'_nh_n$ for some $b_n\in\Delta_n$.
Replacing $f_n$ by $\Inn(b_n^{-1})f_n$ we obtain \eqref{eq:based-drilling-square}.
\end{proof}

\begin{corollary}
\label{cor:cusped-rigidity}
Every isomorphism $\Phi:\wh\Gamma\to\wh\Delta$ between completions of orientable cusped finite
volume hyperbolic $3$-manifold groups has the form
\begin{equation}
\Phi=\Inn(u)\circ\wh f,
\qquad u\in\wh\Delta,\quad f:\Gamma\xrightarrow{\cong}\Delta.
\label{eq:cusped-rigidity}
\end{equation}
\end{corollary}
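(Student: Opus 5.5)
The plan follows the outline in the introduction. Fill the cusps divisibly, apply closed rigidity to the fillings, drill back out to obtain a discrete isomorphism of the unfilled groups at every level, and then use compactness together with finiteness of outer classes to get a single isomorphism realizing $\Phi$ up to profinite conjugation.

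\textbf{Fillings and their realizations.} First apply \cref{lem:peripheral-scalar} to match the cusp classes and obtain integral maps $A_i$ and units $\mu_i$. Then form the divisible fillings $\Gamma_n,\Delta_n$ of \cref{lem:unit-fillings}, with induced isomorphisms $\Phi_n:\wh\Gamma_n\to\wh\Delta_n$. By \cref{prop:geometric-fillings}, for large $n$ these are orbifold groups of closed hyperbolic orbifolds. Choose torsion-free finite index normal subgroups of $\Gamma_n$ and $\Delta_n$, and pass to a common characteristic level matched by $\Phi_n$ (\cref{lem:char-functor}). \cref{cor:closed-rigidity} together with \cref{lem:extensions}(ii) then gives $\Phi_n=\Inn(u_n)\circ\wh{f_n}$ with $f_n:\Gamma_n\to\Delta_n$ discrete. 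By Mostow--Prasad rigidity, $f_n$ is induced by an orbifold isometry $F_n$, up to a discrete inner automorphism. Since finite subgroups are controlled by \cref{prop:geometric-fillings}(iii) and the orders $k_i(n)$ are distinct, $F_n$ preserves the singular geodesics with their labels. Part~(iv) of that proposition then yields a homeomorphism $\cl F_n$ of the truncations. Put $h_n=(\cl F_n)_*:\Gamma\to\Delta$. By \eqref{eq:based-drilling-square}, $f_nq_n=q'_nh_n$ after an inner adjustment.

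\textbf{Finiteness and selection.} The $h_n$ come from homeomorphisms $\cl M_\Gamma\to\cl M_\Delta$. By Mostow--Prasad these represent only finitely many outer classes in $\Out$, namely finitely many isometry classes of the cusped manifolds. Pass to a subsequence on which $[h_n]$ is a constant outer class $[h]$, and fix a representative $h$. For a fixed characteristic level $m$, \cref{lem:unit-fillings} says that $K_m(\Gamma)$ and $K_m(\Delta)$ contain the filling kernels for all large $n$. So $\Gamma/K_m(\Gamma)$ is a quotient of $\Gamma_n$, and by \cref{lem:char-functor} applied to $\Phi_n$, the level $K_m$ images correspond under $\Phi_n$. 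The identity $\Phi_n q_n=\Inn(u_n)\,\wh{q'_n}\,\wh{h_n}$ descends to level $m$. Hence $\Phi$ and $\wh h$ induce maps on $\Gamma/K_m(\Gamma)\to\Delta/K_m(\Delta)$ that differ by an inner automorphism. This holds for every $m$ with the single $h$, so \cref{lem:compact-conjugators} gives $u\in\wh\Delta$ with $\Phi=\Inn(u)\circ\wh h$, which is \eqref{eq:cusped-rigidity} with $f=h$.

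\textbf{Main obstacle.} The delicate step is the descent. One must check that the characteristic subgroup $K_m(\Gamma)$ maps to a characteristic (or at least $\Phi_n$-matched) subgroup of $\Gamma_n$, so that the inner-automorphism discrepancy really lives in $\Delta/K_m(\Delta)$. Concretely, the image of $\cl{K_m(\Gamma)}$ in $\wh\Gamma_n$ is the intersection of index $\le m$ open subgroups containing the filling kernel. This is canonical in $\wh\Gamma_n$ and is preserved by $\Phi_n$, since $\Phi_n$ is induced from $\Phi$ and the filling kernels correspond. I would verify this first; it is the point where the unit invariance $\cl{\langle e^{k\mu}\rangle}=\cl{\langle e^k\rangle}$ is used. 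A secondary check is that the hyperbolic Dehn filling threshold holds uniformly for all large $n$, which \cref{prop:geometric-fillings} supplies.
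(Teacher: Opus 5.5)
Your proposal is correct and follows essentially the same route as the paper: divisible fillings, closed rigidity plus Mostow--Prasad for $\Phi_n$, drilling to get $h_n$ with $f_nq_n=q'_nh_n$, a subsequence on which $h_n=\Inn(a_n)\circ f$ by finiteness of $\Out(\Delta)$, and then \cref{lem:compact-conjugators}. The step you flag as the main obstacle is not actually needed, since the paper only uses the factorization $p_m=\bar p_{m,n}\wh q'_n$ on the $\Delta$ side together with $\wh q'_n\Phi=\Phi_n\wh q_n$, after absorbing the inner adjustments of $f_n$ into the conjugator of $\Phi_n$; no matching of characteristic levels on the $\Gamma$ side is required, and the unit invariance enters only in defining $\Phi_n$.
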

\begin{proof}
We form the fillings \eqref{eq:fillings} using \cref{lem:peripheral-scalar,lem:unit-fillings}. By
\cref{cor:closed-rigidity}, for all sufficiently large $n$ the filled map is
\[
\Phi_n=\Inn(v_n)\wh f_n,\qquad f_n:\Gamma_n\xrightarrow{\cong}\Delta_n.
\]
Mostow--Prasad rigidity realizes $f_n$, up to a discrete inner automorphism, by an orbifold isometry
\cite{Mostow1968,Prasad1973}. We absorb that inner automorphism in $v_n$. Applying
\cref{prop:geometric-fillings} to the orbifold isometry inducing $f_n$, we get a discrete
isomorphism $h_n:\Gamma\to\Delta$. If obtaining the commuting identity $f_nq_n=q'_nh_n$ requires
replacing $f_n$ by $\Inn(c_n)\circ f_n$ with $c_n\in\Delta_n$, we replace $v_n$ by $v_nc_n^{-1}$ at
the same time, which leaves $\Phi_n=\Inn(v_n)\wh f_n$ unchanged. With these choices,
\begin{equation}
f_nq_n=q'_nh_n,
\qquad
\wh q'_n\Phi=\Phi_n\wh q_n.
\label{eq:drilling-square}
\end{equation}
The first equality is \eqref{eq:based-drilling-square}, and the second is the quotient identity from
\cref{lem:unit-fillings}.

Only finitely many isomorphisms $\Gamma\to\Delta$ exist up to conjugation in $\Delta$: after fixing
one such isomorphism, composition is a bijection from $\Out(\Delta)$ to these classes, and
$\Out(\Delta)$ is the isometry group of the finite volume hyperbolic manifold
\cite{Mostow1968,Prasad1973}, which is finite. So the isomorphisms $h_n$ contain an infinite
subsequence belonging to one conjugacy class. Choose a representative $f:\Gamma\to\Delta$ of this
class and write the indices of the subsequence as $n_j$. Then $n_j\to\infty$, and for each $j$ there
exists $a_{n_j}\in\Delta$ such that $h_{n_j}=\Inn(a_{n_j})\circ f$, with $f$ independent of $j$. In
what follows, we restrict to this unbounded subsequence and keep writing $n$ for its filling
indices. Thus
\[
h_n=\Inn(a_n)\circ f
\]
for every index under consideration. For the characteristic quotient
$p_m:\wh\Delta\to\Delta/K_m(\Delta)$, choose $n$ in this subsequence large enough that $p_m$ factors
through the filling, say $p_m=\bar p_{m,n}\wh q'_n$. Using the two commutation identities in
\eqref{eq:drilling-square}, together with $\Phi_n=\Inn(v_n)\circ\wh f_n$ and $h_n=\Inn(a_n)\circ f$,
we obtain the following equality of homomorphisms from $\wh\Gamma$ to $\Delta/K_m(\Delta)$:
\begin{equation}
p_m\Phi=
\Inn\bigl(\bar p_{m,n}(v_n)\,p_m(a_n)\bigr)\,p_m\wh f.
\label{eq:whole-quotient}
\end{equation}
By \eqref{eq:whole-quotient}, $\Phi$ and $\wh f$ induce maps on every standard characteristic
quotient of $\Delta$ which differ by an inner automorphism, and \cref{lem:compact-conjugators}
gives $u\in\wh\Delta$ with $\Phi=\Inn(u)\circ\wh f$, which is \eqref{eq:cusped-rigidity}.
\end{proof}

\begin{corollary}
\label{cor:all-integrality}
Let $\Gamma$ and $\Delta$ be fundamental groups of orientable finite volume hyperbolic
$3$-manifolds, and let $\Phi:\wh\Gamma\xrightarrow{\cong}\wh\Delta$. Under the canonical
identifications $H^1_{\mathrm{cts}}(\wh G,\hZ) \cong H^1(G,\Z)\otimes_{\Z}\hZ$, we have
\[
\Phi^*=B\otimes_{\Z}\id_{\hZ}
\qquad\text{for some}\qquad
B:H^1(\Delta,\Z)\xrightarrow{\cong}H^1(\Gamma,\Z).
\]

In the cusped case, let $P\leq\Gamma$ and $Q\leq\Delta$ be cusp subgroups. Then for every
$u\in\wh\Delta$,
\[
\Phi(\wh P)=u\wh Q u^{-1}
\quad\Longrightarrow\quad
\Inn(u^{-1})\circ\Phi|_{\wh P}=\wh A
\quad\text{for some }A:P\xrightarrow{\cong}Q.
\]
\end{corollary}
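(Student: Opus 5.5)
The plan is to deduce \cref{cor:all-integrality} from the realization results already proved, \cref{cor:closed-rigidity} in the closed case and \cref{cor:cusped-rigidity} in the cusped case. Both give
\[
\Phi=\Inn(u_0)\circ\wh f,
\qquad u_0\in\wh\Delta,\quad f:\Gamma\xrightarrow{\cong}\Delta .
\]
For the first claim I will use that inner automorphisms act trivially on continuous cohomology with trivial coefficients. In degree one, $H^1_{\mathrm{cts}}(\wh G,\hZ)$ is the group of continuous homomorphisms $\wh G\to\hZ$, so precomposition with $\Inn(u_0)$ is the identity. Hence $\Phi^*=\wh f^{\,*}$. Restricting to the dense subgroup identifies $\wh f^{\,*}$ with $f^*\otimes\id_{\hZ}$ under the canonical identification: a continuous homomorphism $\wh\Delta\to\hZ$ is determined by its restriction $\Delta\to\hZ$, and since $H_1(\Delta,\Z)$ is finitely generated, $\Hom(\Delta,\hZ)=H^1(\Delta,\Z)\otimes\hZ$. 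So $B=f^*$ works, and it is an integral isomorphism because $f$ is.

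For the cusped statement, suppose $\Phi(\wh P)=u\wh Qu^{-1}$.
\begin{enumerate}
\item By \cref{lem:peripheral-scalar}, $\Inn(u^{-1})\circ\Phi|_{\wh P}=\mu\wh A$ for some integral isomorphism $A$ and some $\mu\in\hZ^\times$. The task is to show that $\mu$ can be taken to be $\pm1$ and absorbed into $A$.
\item Write $\Inn(u^{-1})\circ\Phi=\Inn(u^{-1}u_0)\circ\wh f$. Then $\Inn(u^{-1}u_0)\wh f(\wh P)=\wh Q$, so $\wh f(\wh P)=\wh{f(P)}$ is conjugate in $\wh\Delta$ to $\wh Q$.
\item Since $f(P)$ and $Q$ are cusp subgroups, hence infinite quasiconvex relative to peripheral structure, I would argue as follows. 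Distinct cusp conjugacy classes have closures that are not conjugate, by the malnormality and the bijection on cusp classes used in \cref{lem:peripheral-scalar}; equivalently, conjugacy separability of abelian subgroups by \cref{thm:normalizer-comparisons}(iii). This gives $f(P)=dQd^{-1}$ for some $d\in\Delta$.
\item Then $w:=u^{-1}u_0d$ satisfies $w\wh Qw^{-1}=\wh Q$. Since the cusp closure is self-normalizing, $w\in\wh Q$, and $\wh Q$ is abelian, so $\Inn(w)$ acts trivially on $\wh Q$.
\item Therefore $\Inn(u^{-1})\circ\Phi|_{\wh P}=\Inn(d^{-1})\circ\wh f|_{\wh P}$, which is the completion of the integral isomorphism $A'=\Inn(d^{-1})\circ f|_P:P\to Q$.
\end{enumerate}

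I expect step 3 to be the main obstacle: showing that $\wh{f(P)}$ conjugate to $\wh Q$ in $\wh\Delta$ forces discrete conjugacy. I would first try \cref{thm:normalizer-comparisons}(iii) with double coset separability, via the Minasyan-type argument used in \cref{lem:normalizer-rigidity}. If that fails, I would use the Wilton--Zalesskii peripheral malnormality, which shows that distinct discrete cusp classes have distinct profinite classes.
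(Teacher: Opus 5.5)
Your argument follows the paper's proof closely. The cusped peripheral part is the paper's argument with $v,a,c$ renamed $u_0,d,w^{-1}$:
\begin{itemize}
\item write $\Phi=\Inn(u_0)\circ\wh f$;
\item upgrade the profinite conjugacy of $\cl{f(P)}$ and $\wh Q$ to $f(P)=dQd^{-1}$ using malnormality of the peripheral closures;
\item use $N_{\wh\Delta}(\wh Q)=\wh Q$ and commutativity of $\wh Q$ to discard the leftover conjugator.
\end{itemize}
Your uniform treatment of $H^1$, via triviality of inner automorphisms on continuous homomorphisms to $\hZ$, is also fine.

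There is, however, a missing step at the start. You invoke \cref{cor:closed-rigidity} ``in the closed case'' and \cref{cor:cusped-rigidity} ``in the cusped case''. But the statement only assumes finite volume, so a priori $\Gamma$ could be closed and $\Delta$ cusped, and then neither corollary applies. You must exclude this mixed case first. The paper does it via goodness: $\Phi$ induces $H^3(\Gamma;\F_p)\cong H^3(\Delta;\F_p)$. This group is $\F_p$ for a closed orientable hyperbolic $3$-manifold. It is $0$ for a cusped one, since a compact horospherical truncation is aspherical with nonempty boundary (Poincar\'e--Lefschetz duality).

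There are also two smaller points in your step 3.
\begin{itemize}
\item You assert without justification that $f(P)$ is a cusp subgroup. The paper obtains this from the Mostow--Prasad isometry realizing $f$, which carries cusp ends to cusp ends. Alternatively, cusp subgroups are exactly the maximal $\Z^2$ subgroups of a torsion-free finite volume Kleinian group, and an abstract isomorphism preserves these.
\item \cref{thm:normalizer-comparisons}(iii) is not ``equivalent'' to what you need. Double coset separability and $\cl P\cap\cl Q=\cl{P\cap Q}$ do not by themselves give conjugacy separability of subgroups, so your proposed first attempt does not go through as stated. Use the Wilton--Zalesskii malnormality of peripheral closures directly, exactly as the paper does. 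If $f(P)=a'Q'a'^{-1}$ with $Q'$ a cusp representative not conjugate to $Q$, then $\wh{Q'}\cap g\wh Qg^{-1}=\{1\}$ for all $g\in\wh\Delta$. This contradicts $\wh{Q'}=g\wh Qg^{-1}$, which is what your step 2 gives.
\end{itemize}
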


\begin{proof}
First, the two manifolds are either both closed or both cusped. To see this, note that goodness
identifies their group cohomology with the continuous cohomology of their profinite completions
\cite{AschenbrennerFriedlWilton2015}, so $\Phi$ induces an isomorphism
\[
H^3(\Gamma;\F_p)\cong H^3(\Delta;\F_p)
\]
for every prime $p$. For a closed orientable hyperbolic $3$-manifold this group is $\F_p$, by
asphericity and Poincar\'e duality, while for a cusped manifold a compact horospherical truncation
is aspherical with nonempty boundary, so the group vanishes by Poincar\'e--Lefschetz duality. Thus
the two cases cannot be mixed. The closed case now follows from \cref{thm:closed-integral}. In the
cusped case, \eqref{eq:cusped-rigidity} implies
\[
\Phi=\Inn(v)\circ\wh f,
\qquad
v\in\wh\Delta,
\qquad
f:\Gamma\xrightarrow{\cong}\Delta.
\]
Hence we can take $B=f^*:H^1(\Delta,\Z)\xrightarrow{\cong}H^1(\Gamma,\Z)$. For the peripheral
conclusion, fix cusp subgroups $P\leq\Gamma$ and $Q\leq\Delta$, and let $u\in\wh\Delta$ satisfy
$\Phi(\wh P)=u\wh Q u^{-1}$. By Mostow--Prasad rigidity \cite{Mostow1968,Prasad1973}, there exists
an isometry $F:\HH^3/\Gamma\longrightarrow\HH^3/\Delta$ whose induced homomorphism on fundamental
groups is $f$, after choosing a suitable path between basepoints. Since $F$ sends cusp ends to cusp
ends, there exist a cusp subgroup $Q'\leq\Delta$ and $a'\in\Delta$ such that $f(P)=a'Q'(a')^{-1}$.
Using $\Phi=\Inn(v)\circ\wh f$ we obtain
\[
(va')\wh{Q'}(va')^{-1}=u\wh Q u^{-1}.
\]
By \cite[Lemma~4.5]{WiltonZalesskii2017}, as used in the proof of \cref{lem:peripheral-scalar},
closures of representatives of distinct cusp conjugacy classes form a malnormal family. Thus, if
$Q'$ and $Q$ were not conjugate in $\Delta$, we would have
\[
\wh{Q'}\cap g\wh Qg^{-1}=\{1\}
\qquad\text{for every }g\in\wh\Delta.
\]
But we have just seen that $\wh{Q'}=g\wh Qg^{-1}$ for $g=(va')^{-1}u$, a contradiction. So
$Q'=bQb^{-1}$ for some $b\in\Delta$. Taking $a=a'b$, we obtain
\[
f(P)=a'Q'(a')^{-1}=aQa^{-1}.
\]
Consequently,
\[
u\wh Q u^{-1}
=\Phi(\wh P)
=(va)\wh Q(va)^{-1}.
\]
Set $c=(va)^{-1}u$. Then
\[
c\wh Qc^{-1}=\wh Q,
\qquad
c\in N_{\wh\Delta}(\wh Q)=\wh Q.
\]
Define the integral isomorphism
\[
A=\Inn(a^{-1})\circ f|_P:
P\xrightarrow{\cong}Q.
\]
Since $u=vac$ and $\wh Q$ is abelian, for every $x\in\wh P$ we obtain
\[
\begin{aligned}
u^{-1}\Phi(x)u
&=c^{-1}a^{-1}\wh f(x)ac\\
&=c^{-1}\wh A(x)c\\
&=\wh A(x).
\end{aligned}
\]
Therefore $\Inn(u^{-1})\circ\Phi|_{\wh P}=\wh A$, as required.
\end{proof}

\begin{proof}[Proof of \Cref{thm:lattice-rigidity}]
By Selberg's lemma, choose torsion-free finite index normal subgroups $N_1\normalf\Gamma$ and
$N_2\normalf\Delta$, and choose an integer $m$ large enough that
\[
H_1=K_m(\Gamma)\leq N_1,
\qquad
H_2=K_m(\Delta)\leq N_2.
\]
Then the $H_i$ are torsion-free characteristic finite index subgroups and by \cref{lem:char-functor}
we have $\Phi(\wh H_1)=\wh H_2$. Finite index subgroups inherit their full profinite topologies, so
the restriction is an isomorphism $\wh H_1\to\wh H_2$.

The finite volume hyperbolic manifolds $M_i=\HH^3/H_i$ are either both closed or both cusped, by the
cohomological argument in the proof of \cref{cor:all-integrality}. Applying
\cref{cor:closed-rigidity} in the closed case and \cref{cor:cusped-rigidity} in the cusped case, we
obtain an isomorphism $h:H_1\to H_2$ inducing the restricted map up to a profinite inner
automorphism. Applying \cref{lem:extensions}(ii) to the finite extensions $H_1\normalf\Gamma$ and
$H_2\normalf\Delta$, which is possible by \cref{lem:normalizer-rigidity}, we obtain
\[
\Phi=\Inn(u)\circ\wh f
\]
for an isomorphism $f:\Gamma\to\Delta$.

Since Mostow--Prasad rigidity for finite volume hyperbolic orbifolds realizes $f$, up to a discrete
inner automorphism, by an isometry, possibly reversing orientation \cite{Mostow1968,Prasad1973}, we
obtain the realization of $\Phi$ described in the theorem and the isomorphism of the discrete
groups, as well as the surjectivity of $\Out(\Gamma)\to\Out(\wh\Gamma)$.

Finally, for injectivity, let $\alpha\in\Aut(\Gamma)$ and suppose $\wh\alpha=\Inn(u)$ for
$u\in\wh\Gamma$. Choose a torsion-free characteristic finite index subgroup $H\normalf\Gamma$. Since
$H$ has finite index in $\Gamma$, the natural map induces an isomorphism
$\Gamma/H\cong\wh\Gamma/\wh H$. Choose $\gamma\in\Gamma$ representing the coset $u\wh H$, and set
$v=\gamma^{-1}u\in\wh H$. The equality $\wh\alpha=\Inn(u)$ means $uhu^{-1}=\alpha(h)$ for $h\in H$.
Since $H$ is characteristic in $\Gamma$, we have $\alpha(H)=H$, and therefore $vHv^{-1}=H$. Thus $v$
normalizes the discrete subgroup $H$ of $\wh H$, and by \cref{lem:normalizer-rigidity},
$v\in N_{\wh H}(H)=H$. Consequently
\[
u=\gamma v\in\Gamma,
\qquad
\alpha(x)=uxu^{-1}\quad(x\in\Gamma).
\]
Hence $\alpha\in\Inn(\Gamma)$, proving injectivity of $\Out(\Gamma)\longrightarrow\Out(\wh\Gamma)$.
\end{proof}


\section{Compact core realization}\label{sec:nonlattice}
In this section we assume that equivariant isomorphisms of residual complexes are given at a
cofinal family of characteristic quotients. For the conclusion about the groups it is enough to have
a graph whose inclusion is surjective on fundamental groups. If we keep the full simplex face
residual complex, we also obtain a PL homeomorphism of the chosen compact cores. These conclusions
do not use \cref{thm:lattice-rigidity}.
\begin{corollary}\label{cor:graph-kleinian}
For $i=1,2$, let $\Gamma_i<\PSL_2(\C)$ be finitely generated torsion-free Kleinian groups, and set
$M_i=\HH^3/\Gamma_i$. We choose compact cores $C_i\subset M_i$, finite triangulations of $C_i$, and
connected subgraphs $Y_i\subseteq C_i^{(1)}$ whose inclusions induce surjections on fundamental
groups. For the universal covers $p_i:\widetilde C_i\to C_i$, we set $X_i=p_i^{-1}(Y_i)$, which are
connected. Suppose $\Phi:\wh\Gamma_1\xrightarrow{\cong}\wh\Gamma_2$ and, for an increasing unbounded
sequence $(m_j)$, there are $\Phi_{m_j}$-equivariant isomorphisms
\[
\Dgraph\bigl(K_{m_j}(\Gamma_1)\backslash X_1\bigr)
\xrightarrow{\cong}
\Dgraph\bigl(K_{m_j}(\Gamma_2)\backslash X_2\bigr).
\]
Then there are $f:\Gamma_1\xrightarrow{\cong}\Gamma_2$ and $u\in\wh\Gamma_2$ such that
\begin{equation}\label{eq:graph-kleinian-realization}
\Phi=\Inn(u)\circ\wh f.
\end{equation}
The $\Gamma_i$ may have infinite covolume and need be neither convex cocompact nor geometrically
finite.
\end{corollary}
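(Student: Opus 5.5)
The plan is to reduce \cref{cor:graph-kleinian} to \cref{thm:graph-core-realization}, which already contains the compatible-selection and realization steps. The only work is to check that the hypotheses of that theorem, and behind it \cref{thm:graph-realization} and \cref{cor:graph-free}, hold in the Kleinian setting.

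First, the groups. Each $\Gamma_i$ is finitely generated by assumption. It is residually finite because it is a finitely generated linear group, via Mal'cev's theorem applied to $\Gamma_i<\PSL_2(\C)$. By the Scott core theorem, the compact core $C_i$ is a compact submanifold whose inclusion into $M_i$ is a homotopy equivalence. Hence $\pi_1(C_i)=\pi_1(M_i)=\Gamma_i$, and we may identify $\Gamma_i$ with the deck group of $p_i:\widetilde C_i\to C_i$. A finite triangulation makes $C_i$ a connected finite simplicial complex. The subgraph $Y_i$ is $\pi_1$-surjective by hypothesis.

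Second, the actions. By \cref{lem:graph-full-lift}, $X_i=p_i^{-1}(Y_i)$ is connected and locally finite, and $\Gamma_i$ acts on it freely and cocompactly with quotient $Y_i$. We give $X_i$ both orientations of each geometric edge, together with its reversal, as in \cref{def:graph-pi1-surjective}. Deck transformations act freely on directed edges as well as on vertices, since a deck transformation fixing an edge fixes its endpoints. Therefore \cref{cor:graph-free} applies, and all vertex and directed edge stabilizers are trivial. The hypotheses of \cref{thm:graph-realization}(a) then hold with empty distinguished family, using the given $\Phi_{m_j}$-equivariant isomorphisms for the increasing unbounded sequence $(m_j)$. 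The maps at different levels need not be compatible. Compatible maps at the selected levels are extracted by \cref{thm:graph-compatible-selection}, and their inverse limit is a $\Phi$-equivariant isomorphism $F$.

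Third, the conclusion. \cref{thm:graph-realization}, equivalently \cref{thm:graph-core-realization}, then yields $u\in\wh\Gamma_2$ and $h:\Gamma_1\xrightarrow{\cong}\Gamma_2$ with $\Phi=\Inn(u)\circ\wh h$. Setting $f=h$ gives \eqref{eq:graph-kleinian-realization}.

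The only delicate point is the input to the argument. Nothing used here requires convex cocompactness, geometric finiteness or finite covolume: the argument only needs a compact $K(\Gamma_i,1)$-type core containing a $\pi_1$-surjective graph, together with residual finiteness. The place to be careful is therefore the existence of a compact core with $\pi_1(C_i)\cong\Gamma_i$ for arbitrary finitely generated $\Gamma_i$, which Scott's core theorem provides. Once that holds, the corollary follows directly from the earlier results.
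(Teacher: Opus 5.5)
Your proposal is correct and follows the paper's proof: Mal'cev's theorem for residual finiteness, Scott's core theorem for $\pi_1(C_i)\cong\Gamma_i$, and then \cref{thm:graph-core-realization}, via \cref{lem:graph-full-lift}, \cref{thm:graph-compatible-selection} and \cref{cor:graph-free}. You verify the free cocompact action in more detail than the paper, and the argument only needs the $\pi_1$-isomorphism of the core, not a homotopy equivalence.
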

\begin{proof}
Finitely generated Kleinian groups are linear in characteristic zero and residually finite by
Mal'cev's theorem \cite{Nica2013}. Torsion-freeness makes $M_i$ an orientable hyperbolic
$3$-manifold. Scott's compact core theorem provides a compact core $C_i\subset M_i$ whose inclusion
induces an isomorphism $\pi_1(C_i)\xrightarrow{\cong}\pi_1(M_i)=\Gamma_i$ \cite{ScottCore1973}. The
compact $3$-manifold $C_i$ admits a finite triangulation by Moise's theorem \cite{Moise1952}. We
then use \cref{thm:graph-core-realization}.
\end{proof}

\begin{corollary}\label{cor:graph-cayley}
Let $\Gamma_i<\PSL_2(\C)$ be finitely generated Kleinian groups, with torsion allowed. We choose
finite symmetric generating sets $T_i\subseteq\Gamma_i\setminus\{1\}$. Their directed Cayley graphs
have
\[
\begin{gathered}
V(X_i)=\Gamma_i,\qquad A(X_i)=\Gamma_i\times T_i,\\
s(g,a)=g,\qquad t(g,a)=ga,\qquad\cl{(g,a)}=(ga,a^{-1}).
\end{gathered}
\]
in the notation of \cref{sec:graph-occurrence}: $V(X_i)$ and $A(X_i)$ are the vertex and directed
edge sets, $s$ and $t$ the endpoint maps, and the bar denotes edge reversal. Let the groups act by
left multiplication, and let $\Phi:\wh\Gamma_1\xrightarrow{\cong}\wh\Gamma_2$. If the residual
complexes of the finite quotients admit a cofinal family as in \cref{def:graph-cofinal-matches},
then \eqref{eq:graph-kleinian-realization} holds for a discrete isomorphism $f$ and some
$u\in\wh\Gamma_2$.
\end{corollary}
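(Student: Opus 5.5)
The plan is to deduce the statement directly from \cref{thm:graph-realization} applied to the Cayley graphs, after checking its hypotheses. First I verify the action hypotheses. Each $\Gamma_i$ is finitely generated and residually finite, being a finitely generated linear group in characteristic zero (Mal'cev, as in \cref{cor:graph-kleinian}). The directed Cayley graph $X_i$ is connected because $T_i$ generates $\Gamma_i$. It is locally finite because $T_i$ is finite: vertex $g$ is the initial point of $|T_i|$ directed edges and the terminal point of $|T_i|$ directed edges. Left multiplication commutes with $s$, $t$ and the reversal $\overline{(g,a)}=(ga,a^{-1})$, and this reversal is a fixed-point-free involution since $a\neq1$. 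The action is free on vertices and on directed edges, with one vertex orbit and $|T_i|$ directed edge orbits, so it is cocompact with finite (trivial) stabilizers. Torsion in $\Gamma_i$ causes no difficulty, because the vertex set is the group itself and not a quotient of hyperbolic space.

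Next I match the cofinal-family hypothesis with condition \textup{(a)} of \cref{thm:graph-realization}, taking the distinguished family to be empty. \cref{def:graph-cofinal-matches} gives exactly an increasing unbounded sequence $(m_j)$ and $\Phi_{m_j}$-equivariant isomorphisms of the quotient directed edge residual complexes, reversal included. \cref{thm:graph-compatible-selection} then chooses these maps compatibly, and their inverse limit is a $\Phi$-equivariant isomorphism $F$ of completed residual complexes. Since the actions are free, \cref{cor:graph-free} applies, and every completed vertex orbit is a free $\wh\Gamma_i$-orbit.

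Finally, the realization part of \cref{thm:graph-realization} gives $u\in\wh\Gamma_2$ and $h:\Gamma_1\xrightarrow{\cong}\Gamma_2$ with $\Phi=\Inn(u)\circ\wh h$. Setting $f=h$ yields \eqref{eq:graph-kleinian-realization}.

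The only point requiring care is that the completion in \cref{def:graph-completion} is taken over the characteristic family, so the quotient levels must be matched under $\Phi$. \cref{lem:char-functor} gives exactly this matching, so I expect no real obstacle beyond confirming that the reversal map is carried through the inverse limit, which \cref{lem:graph-descent} already does.
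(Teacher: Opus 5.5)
Your proposal is correct and follows the paper's own argument. The paper proves the statement the same way: residual finiteness via Mal'cev, then connectedness, local finiteness, and a free cocompact left action, then the observation that reversal has no fixed directed edge because $a\neq1$, and finally an application of \cref{cor:graph-free}, that is, of \cref{thm:graph-realization} with the empty distinguished family.
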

\begin{proof}
Residual finiteness follows from Mal'cev's theorem. The directed graphs are connected and locally
finite. The left actions are cocompact and free on vertices and directed edges. We apply
\cref{cor:graph-free}. Both orientations upstairs remain distinct even for generators of order two,
so reversal has no fixed directed edge in $X_i$, while the induced reversal in a finite quotient may
have fixed points, as allowed in \cref{def:graph-diagram}.
\end{proof}

\begin{corollary}\label{cor:graph-compact-core-pl}
For the groups and triangulated compact cores of \cref{cor:graph-kleinian}, suppose the universal
covers of the full core triangulations satisfy the conditions of \cref{thm:graph-pl-realization},
including \eqref{eq:graph-simplex-finite}. Then \eqref{eq:graph-kleinian-realization} holds with $f$
induced, up to basepoint conjugacy, by a PL homeomorphism $C_1\to C_2$. If finite labelled families
of boundary components or other subcomplexes are distinguished, assume also that the finite maps
preserve the quotient images of their full inverse images by permutations in a fixed finite
permutation group. The compatible maps and the descended homeomorphism then preserve these families
according to a common permutation.
\end{corollary}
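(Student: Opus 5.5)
This follows by reducing to \cref{thm:graph-pl-realization}, after putting the compact cores in its setting exactly as in \cref{cor:graph-kleinian}. First I would check the hypotheses. Mal'cev's theorem makes each $\Gamma_i$ finitely generated and residually finite. Scott's core theorem identifies $\pi_1(C_i)$ with $\Gamma_i$. Moise's theorem gives the finite triangulation of $C_i$. With these, $p_i:\widetilde C_i\to C_i$ is the simplicial universal cover, and its deck group is $\Gamma_i$. Fix basepoints and paths so that the deck action of $\pi_1(C_i)$ agrees with the given identification of $\Gamma_i$. The assumed level isomorphisms \eqref{eq:graph-simplex-finite} at the levels $K_{m_j}(\Gamma_i)$ are then precisely the input of \cref{thm:graph-pl-realization}, with $G_i=\Gamma_i$.

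Next I would apply that theorem. It selects compatible maps at all chosen levels and takes their inverse limit $F$. It then produces $u\in\wh\Gamma_2$ and $h:\Gamma_1\to\Gamma_2$ with $\Phi=\Inn(u)\circ\wh h$, such that $u^{-1}F$ is an $h$-equivariant simplicial isomorphism $\widetilde C_1\to\widetilde C_2$. This isomorphism descends to a simplicial, hence PL, homeomorphism $C_1\to C_2$ inducing $h$ up to basepoint conjugacy. Setting $f=h$ gives \eqref{eq:graph-kleinian-realization} with the stated geometric realization.

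For the distinguished families, the inverse image in $\widetilde C_i$ of a boundary component or other subcomplex of $C_i$ is a $\Gamma_i$-invariant subcomplex. Its quotient images at the characteristic levels are the images referred to in the hypothesis. The final paragraph of \cref{thm:graph-pl-realization} then makes the selected maps use one common permutation in the fixed permitted group. The normalized limit preserves the lifted families according to that permutation, and so the descended homeomorphism preserves the families in $C_i$ as well.

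The only point needing care is to make sure the distinguished subcomplexes are specified on every cell and codimension one occurrence space, as \cref{def:graph-simplex-occurrence} requires. Taking the full inverse images, with all their simplices and face occurrences, does this, and it is exactly why the statement speaks of full inverse images. I do not expect a genuine obstacle beyond this bookkeeping.
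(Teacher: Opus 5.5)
Your proposal is correct and matches the paper's proof, which simply applies \cref{thm:graph-pl-realization} to the chosen compact core triangulations, with the hypotheses verified as in \cref{cor:graph-kleinian}. One small correction: finite generation of $\Gamma_i$ is a hypothesis, not a consequence of Mal'cev's theorem, which supplies only residual finiteness.
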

\begin{proof}
We apply \cref{thm:graph-pl-realization} to the chosen compact core triangulations.
\end{proof}

\subsection*{Boundary subgroups and compact core pairs}

\begin{definition}\label{def:graph-boundary}
Let $C$ be a compact connected triangulated $3$-manifold with boundary subcomplex
$\partial C=S_1\sqcup\cdots\sqcup S_r$, where $S_j$ are its boundary components. We choose a
connected graph $Y\subseteq C^{(1)}$ containing $(\partial C)^{(1)}$ such that its inclusion induces
a surjection on fundamental groups. For the universal cover $p:\widetilde C\to C$, we set
\[
X=p^{-1}(Y),\qquad Z_j=p^{-1}\bigl(S_j^{(1)}\bigr)\subseteq X,
\]
and take $(Z_1,\ldots,Z_r)$ as the distinguished family. Such a graph exists: start with
$(\partial C)^{(1)}$, join its components by finitely many paths, and adjoin edge loops whose images
generate $\pi_1(C)$. If $\partial C$ is empty, use any graph from \cref{lem:graph-full-lift} and the
empty distinguished family. The subgraphs $Z_j$ are full inverse images, with all of their
components, and are in general disconnected.
\end{definition}

\begin{theorem}
\label{thm:graph-boundary-realization}
For $i=1,2$, let $\Gamma_i<\PSL_2(\C)$ be finitely generated torsion-free Kleinian groups with
triangulated compact cores $C_i$ and distinguished boundary subgraphs as in
\cref{def:graph-boundary}. We write $\partial C_i=S_{i,1}\sqcup\cdots\sqcup S_{i,r}$, and let
$P_{i,j}\leq\Gamma_i$ be the image of $\pi_1(S_{i,j})$, using fixed base paths. Let
$\Phi:\wh\Gamma_1\xrightarrow{\cong}\wh\Gamma_2$. Suppose that at an increasing unbounded sequence
$(m_k)$ there are $\Phi_{m_k}$-equivariant directed edge residual complex isomorphisms that map
the quotient images of the boundary subgraphs to one another according to permutations
$\sigma_k\in\mathfrak S_r$, unrelated for different $k$.

Then compatible maps can be selected, all with the same permutation $\sigma\in\mathfrak S_r$, and
there are $f:\Gamma_1\xrightarrow{\cong}\Gamma_2$ and $u\in\wh\Gamma_2$ such that
\begin{equation}\label{eq:graph-boundary-realization}
\Phi=\Inn(u)\circ\wh f,\qquad
f(P_{1,j})=d_jP_{2,\sigma(j)}d_j^{-1}\quad(1\leq j\leq r)
\end{equation}
for suitable $d_j\in\Gamma_2$, whether or not the boundary components are incompressible.
\end{theorem}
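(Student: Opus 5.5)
The plan is to reduce to \cref{thm:graph-realization} with the distinguished family $\mathcal S_i=(Z_{i,j})_{1\leq j\leq r}$ and permitted group $\mathfrak P=\mathfrak S_r$. The hypotheses of that theorem hold by \cref{def:graph-boundary,lem:graph-full-lift}. The group $\Gamma_i$ is finitely generated and residually finite by Mal'cev's theorem, as in \cref{cor:graph-kleinian}. The graph $X_i=p_i^{-1}(Y_i)$ is connected, locally finite, and carries a free cocompact action. Each $Z_{i,j}=p_i^{-1}(S_{i,j}^{(1)})$ is an invariant subgraph.

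Condition (a) of \cref{thm:graph-realization} is exactly the hypothesis. Therefore \cref{thm:graph-compatible-selection} selects compatible maps with a single permutation $\sigma$, and the theorem gives $u\in\wh\Gamma_2$, an isomorphism $f:\Gamma_1\to\Gamma_2$ with $\Phi=\Inn(u)\circ\wh f$, and an $f$-equivariant graph isomorphism $F':X_1\to X_2$ with $F'(Z_{1,j})=Z_{2,\sigma(j)}$.

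It remains to prove $f(P_{1,j})=d_jP_{2,\sigma(j)}d_j^{-1}$. I will use the following description of stabilizers. Fix a component $\widetilde Z$ of $Z_{1,j}$, namely the one containing the lift $\tilde x$ of the basepoint of $S_{1,j}$ determined by the fixed base path. Its setwise stabilizer in $\Gamma_1$ is exactly the image $P_{1,j}$ of $\pi_1(S_{1,j}^{(1)})\to\pi_1(S_{1,j})\to\Gamma_1$. This uses that the $1$-skeleton surjects on $\pi_1$ of the surface component. Indeed, $\gamma\widetilde Z=\widetilde Z$ exactly when there is an edge path in $\widetilde Z$ from $\tilde x$ to $\gamma\tilde x$. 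Such a path projects to a loop in $S_{1,j}^{(1)}$, and conversely every such loop lifts to such a path. This argument works whether or not $S_{1,j}$ is incompressible, since only the image in $\Gamma_1$ matters.

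Now apply $F'$. Since $F'$ is a graph isomorphism, it carries components of $Z_{1,j}$ onto components of $Z_{2,\sigma(j)}$. By $f$-equivariance, $\Stab(F'(\widetilde Z))=f(\Stab(\widetilde Z))=f(P_{1,j})$. The component $F'(\widetilde Z)$ is $d_j\widetilde Z'$ for some $d_j\in\Gamma_2$, where $\widetilde Z'$ is the base component whose stabilizer is $P_{2,\sigma(j)}$. Here I use that $\Gamma_2$ permutes the components of $Z_{2,\sigma(j)}$ transitively, because $S_{2,\sigma(j)}$ is connected. Hence $f(P_{1,j})=d_jP_{2,\sigma(j)}d_j^{-1}$.

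The main point needing care is the identification of component stabilizers with $P_{i,j}$, including the base path bookkeeping and the role of the fixed base paths. I would handle it directly with lifting of edge paths in the covering $p_i$.
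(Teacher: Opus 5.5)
Your proposal is correct and follows essentially the same route as the paper: you reduce to \cref{thm:graph-realization}, via \cref{thm:graph-compatible-selection}, with the boundary subgraphs $Z_{i,j}$ as the distinguished family. You then identify the stabilizer of a component of $Z_{1,j}$ with $P_{1,j}$ using $\pi_1$-surjectivity of the $1$-skeleton, and transport it by $f$-equivariance of $F'$. Your explicit choice of base components, together with the transitivity of $\Gamma_2$ on the components of $Z_{2,\sigma(j)}$, supplies the bookkeeping that the paper compresses into ``returning to the fixed subgroup representatives.''
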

\begin{proof}
We include the boundary permutation as part of the definition of the finite isomorphisms. By
\cref{thm:graph-compatible-selection,thm:graph-realization}, a compatible limit involves a single
permutation and, after normalization, restricts to an $f$-equivariant original graph isomorphism
$F':X_1\to X_2$ with $F'(Z_{1,j})=Z_{2,\sigma(j)}$.

A component $\widetilde Z_{1,j}$ of $Z_{1,j}$ has stabilizer conjugate to the image of
$\pi_1(S_{1,j}^{(1)})$ in $\Gamma_1$ by covering spaces. The inclusion
$S_{1,j}^{(1)}\hookrightarrow S_{1,j}$ is surjective on fundamental groups, so this image is
$P_{1,j}$. Its image under $F'$ is a component of $Z_{2,\sigma(j)}$, and equivariance gives
\[
f\bigl(\Stab_{\Gamma_1}(\widetilde Z_{1,j})\bigr)
=\Stab_{\Gamma_2}\bigl(F'(\widetilde Z_{1,j})\bigr).
\]
The right side is a conjugate of $P_{2,\sigma(j)}$. Returning to the fixed subgroup representatives
gives the elements $d_j$ and \eqref{eq:graph-boundary-realization}.
\end{proof}

\begin{corollary}\label{cor:graph-core-pairs}
Under the group and compact core conditions of \cref{thm:graph-boundary-realization}, replace the
condition on directed edges by the condition on all simplex faces of
\cref{thm:graph-pl-realization}, with the boundary subcomplexes distinguished. The normalized
realization then descends to a PL homeomorphism
\[
(C_1,\partial C_1)\xrightarrow{\cong}(C_2,\partial C_2)
\]
mapping $S_{1,j}$ onto $S_{2,\sigma(j)}$ and inducing $f$ up to basepoint conjugacy.
\end{corollary}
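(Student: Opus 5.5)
The plan is to reduce \cref{cor:graph-core-pairs} to \cref{thm:graph-pl-realization}, run on the full simplex face residual complexes of the universal covers $X_i=\widetilde C_i$, and then read off the boundary conclusions from the distinguished subcomplexes.

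For each $i$ and $j$, let $Z_{i,j}=p_i^{-1}(S_{i,j})$ be the full inverse image of the boundary component $S_{i,j}$. This is a $\Gamma_i$-invariant subcomplex of $\widetilde C_i$, in general disconnected. The labelled family $(Z_{i,1},\ldots,Z_{i,r})$ serves as the distinguished family, and $\mathfrak S_r$ is the permitted permutation group.

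\textbf{Step 1: verifying the hypotheses.} The groups $\Gamma_i$ are residually finite by Mal'cev's theorem, as in \cref{cor:graph-kleinian}. The triangulated cores $C_i$ are connected finite simplicial complexes. The hypothesis of the corollary supplies, at the levels $m_k$, $\Phi_{m_k}$-equivariant isomorphisms as in \eqref{eq:graph-simplex-finite}, each preserving the quotient boundary families via some permutation $\sigma_k$.

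\textbf{Step 2: applying \cref{thm:graph-pl-realization}.} Its compactness selection, which refines \cref{thm:graph-compatible-selection} to every cell and occurrence space, yields three things:
\begin{enumerate}
\item compatible maps at all selected levels, all carrying one permutation $\sigma$;
\item an element $u\in\wh\Gamma_2$ and an isomorphism $f:\Gamma_1\to\Gamma_2$ with $\Phi=\Inn(u)\circ\wh f$;
\item an $f$-equivariant simplicial isomorphism $X_1\to X_2$ sending $Z_{1,j}$ onto $Z_{2,\sigma(j)}$.
\end{enumerate}
That theorem also says the map descends to a simplicial homeomorphism $\bar F:C_1\to C_2$ inducing $f$ up to basepoint conjugacy, and that the descended map preserves the distinguished families with the same permutation. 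Hence $\bar F(S_{1,j})=p_2(Z_{2,\sigma(j)})=S_{2,\sigma(j)}$.

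\textbf{Step 3: the pair statement.} Since $\partial C_i$ is the disjoint union of the $S_{i,j}$ and $\bar F$ is a bijection, $\bar F$ maps $\partial C_1$ onto $\partial C_2$. It is therefore a PL homeomorphism of pairs. One can double-check this intrinsically as well: a simplicial isomorphism of triangulated $3$-manifolds preserves the set of $2$-simplices lying in exactly one $3$-simplex. That set is the boundary and is detected by the occurrence spaces $I_{3,2}$.

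\textbf{Main obstacle.} The expected difficulty is making sure the "same permutation" and subcomplex clauses of \cref{thm:graph-pl-realization} apply to disconnected full inverse images. The point to check is that each $Z_{i,j}$ is a union of cell and occurrence orbits. Once that holds, its completion meets the original complex exactly in $Z_{i,j}$, by the free-orbit description of completed orbits used in \cref{thm:all-dimensional-occurrence}. Nothing further about incompressibility is required.
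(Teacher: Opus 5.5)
Your proposal is correct and follows the paper's proof, which consists of a single step: apply the distinguished-subcomplex clause of \cref{thm:graph-pl-realization}, with the full inverse images of the boundary components distinguished, and read off $S_{1,j}\mapsto S_{2,\sigma(j)}$ (hence $\partial C_1\to\partial C_2$) from the descended simplicial homeomorphism. Your extra checks are sound supporting detail that the paper leaves implicit: residual finiteness via Mal'cev, the invariant subcomplexes being unions of orbits, and the $I_{3,2}$ characterization of boundary $2$-simplices.
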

\begin{proof}
Use the conclusion concerning distinguished subcomplexes of \cref{thm:graph-pl-realization}.
\end{proof}

\begin{corollary}\label{cor:graph-boundary-filling}
Assume \cref{thm:graph-boundary-realization}. Suppose the selected boundary components $S_{i,j}$ are
closed orientable surfaces of genus at least two that are incompressible, in the sense that the
inclusion induces an injection $\pi_1(S_{i,j})\to\pi_1(C_i)$, and that each $P_{i,j}$ inherits its
full profinite topology from $\Gamma_i$. We write $p_i:\widetilde C_i\to C_i$ for the universal
covering map and choose ordered nonseparating filling pairs $(\alpha_{i,j},\beta_{i,j})$, with
cyclic subgroups $A_{i,j},B_{i,j}\leq P_{i,j}$. After a common subdivision, represent the two curves
by embedded circle subgraphs $Q^\alpha_{i,j},Q^\beta_{i,j}\subseteq S_{i,j}^{(1)}$. In addition to
the boundary subgraphs, distinguish their full lifts
\[
p_i^{-1}(Q^\alpha_{i,j}),\qquad p_i^{-1}(Q^\beta_{i,j})
\]
and require the finite maps to preserve the two curve labels in order over the matched boundary
components.

We choose $f,u,d_j$ in \eqref{eq:graph-boundary-realization} using matched lifted boundary
components, and set
\begin{equation}\label{eq:graph-boundary-normalization}
h_j=\Inn(d_j^{-1})\circ f|_{P_{1,j}},\qquad c_j=ud_j.
\end{equation}
Then $h_j:P_{1,j}\xrightarrow{\cong}P_{2,\sigma(j)}$ sends the conjugacy classes of
$A_{1,j},B_{1,j}$ to the given classes, and
\begin{equation}\label{eq:graph-surface-restriction}
\varphi_j=\Inn(c_j^{-1})\circ\Phi|_{\wh{P_{1,j}}}
=\wh h_j:\wh{P_{1,j}}\xrightarrow{\cong}\wh{P_{2,\sigma(j)}}.
\end{equation}
\Cref{thm:finite-cone-main,thm:core-main}, together with \cref{cor:two-curve}, determine the
completed vertex groups of the splittings along the curves and both full profinite Bass--Serre
trees. They also give an equivariant isomorphism between the completed cubical residual complexes
of the two dual square complexes. After a single left translation in $\wh{P_{2,\sigma(j)}}$, this
isomorphism identifies the original dual square complexes and the dense surface groups.
\end{corollary}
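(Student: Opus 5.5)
The plan is to transfer the global realization of \cref{thm:graph-boundary-realization} to each boundary surface group, and then apply the surface machinery of \cref{sec:cut-tree,sec:core} to the restricted isomorphism. There are four steps, in this order.

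First, we use the distinguished boundary subgraphs and the distinguished curve lifts. By \cref{thm:graph-compatible-selection}, the compatible limit carries a single permutation $\sigma$, and since the curve labels are preserved in order, it maps the lifts of $Q^\alpha_{1,j}$ and $Q^\beta_{1,j}$ onto those of $Q^\alpha_{2,\sigma(j)}$ and $Q^\beta_{2,\sigma(j)}$. By \cref{thm:graph-realization}, after the normalization $L_{u^{-1}}$ the map $F'$ is an $f$-equivariant isomorphism of the original graphs $X_1\to X_2$. We choose a component $\widetilde Z_{1,j}$ with stabilizer exactly $P_{1,j}$ (using the fixed base path) and take $d_j$ so that $F'(\widetilde Z_{1,j})$ has stabilizer $d_jP_{2,\sigma(j)}d_j^{-1}$; this is the choice "using matched lifted boundary components". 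Then $h_j=\Inn(d_j^{-1})\circ f|_{P_{1,j}}$ is an isomorphism $P_{1,j}\to P_{2,\sigma(j)}$. Inside $\widetilde Z_{1,j}$, a lifted circle of $Q^\alpha_{1,j}$ has stabilizer conjugate in $P_{1,j}$ to $A_{1,j}$, by covering space theory and incompressibility. Equivariance of $F'$ sends this circle to a lifted circle of $Q^\alpha_{2,\sigma(j)}$ inside $F'(\widetilde Z_{1,j})$, so $h_j(A_{1,j})$ is conjugate in $P_{2,\sigma(j)}$ to $A_{2,\sigma(j)}$. The same argument applies to $B$.

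Second, we verify \eqref{eq:graph-surface-restriction}. We have $\Phi=\Inn(u)\circ\wh f$. Since $P_{i,j}$ carries its full profinite topology, $\wh{P_{i,j}}$ is identified with the closure of $P_{i,j}$, and $\wh f$ restricts on this closure to $\wh{f|_{P_{1,j}}}$. Then
\[
\Inn(c_j^{-1})\circ\Phi=\Inn(d_j^{-1})\circ\wh f,
\]
and on $\wh{P_{1,j}}$ this is the continuous extension of $h_j$. It agrees with $\wh h_j$ on the dense subgroup $P_{1,j}$, hence everywhere.

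Third, we apply the surface results. Because $\varphi_j=\wh h_j$ and $h_j$ preserves the two marked conjugacy classes, the closures satisfy $\varphi_j(\cl A_{1,j})=x\cl A_{2,\sigma(j)}x^{-1}$ with $x$ discrete, and similarly for $B$. So the hypotheses of \cref{thm:finite-cone-main} hold for each curve, giving the completed vertex groups $\cl H$ and the full profinite cut trees. The hypotheses of \cref{thm:core-main} then hold as well, giving the equivariant isomorphism of completed cubical residual complexes. Finally \cref{thm:discrete-star} (as used in \cref{cor:two-curve}) produces a single left translation identifying the original dual square complexes and the dense surface groups.

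The main obstacle is the bookkeeping in the first step: choosing $d_j$ from the specific matched component, rather than only up to $\Gamma_2$-conjugacy, so that the stabilizers of the curve lifts inside that component really become $P_{2,\sigma(j)}$-conjugates of $A_{2,\sigma(j)}$ and $B_{2,\sigma(j)}$, and do not pick up a conjugation by an element of $\Gamma_2$ outside $P_{2,\sigma(j)}$. To handle this, we work entirely inside the single component $F'(\widetilde Z_{1,j})$ and use that its stabilizer acts on its own lifted curves with the expected stabilizers.
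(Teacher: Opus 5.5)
Your proposal is correct and follows the paper's proof step for step. You restrict the normalized graph isomorphism to a matched boundary component to obtain $h_j$ and the curve-class statement, then complete $f|_{P_{1,j}}=\Inn(d_j)\circ h_j$ to get \eqref{eq:graph-surface-restriction}, and finally invoke \cref{thm:finite-cone-main,thm:core-main} and \cref{cor:two-curve}.

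The bookkeeping point you flag is settled as in the paper by choosing $d_j$ with $F'(\widetilde Z_{1,j})=d_j\widetilde Z_{2,\sigma(j)}$, so that $d_j^{-1}F'$ is an $h_j$-equivariant map $\widetilde Z_{1,j}\to\widetilde Z_{2,\sigma(j)}$. Merely matching stabilizers, as in your first step, determines $d_j$ only up to right multiplication by $N_{\Gamma_2}(P_{2,\sigma(j)})$. That normalizer can be strictly larger than $P_{2,\sigma(j)}$, for example for a twisted $I$-bundle core.
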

\begin{proof}
We choose components $\widetilde Z_{1,j}$ and $\widetilde Z_{2,\sigma(j)}$ whose stabilizers are
$P_{1,j}$ and $P_{2,\sigma(j)}$, respectively. Let $F'$ be the normalized $f$-equivariant graph
isomorphism, and let $d_j\in\Gamma_2$ be as in the proof of \cref{thm:graph-boundary-realization},
so that $F'(\widetilde Z_{1,j})=d_j\widetilde Z_{2,\sigma(j)}$ and
$f(P_{1,j})=d_jP_{2,\sigma(j)}d_j^{-1}$. For $h_j$ defined by
\eqref{eq:graph-boundary-normalization}, the map
$d_j^{-1}F'|_{\widetilde Z_{1,j}}: \widetilde Z_{1,j}\longrightarrow\widetilde Z_{2,\sigma(j)}$ is
$h_j$-equivariant, since for $x\in P_{1,j}$ and $z\in\widetilde Z_{1,j}$ we have
$d_j^{-1}F'(xz)=h_j(x)\bigl(d_j^{-1}F'(z)\bigr)$. By incompressibility and covering space theory,
the components of the distinguished curve lifts inside each chosen boundary component have
stabilizers conjugate, within its surface group, to the corresponding subgroups $A_{i,j}$ and
$B_{i,j}$. Choose a component of the $\alpha_{1,j}$-lifts whose stabilizer is $A_{1,j}$. Its image
under $d_j^{-1}F'$ is a component of the $\alpha_{2,\sigma(j)}$-lifts, because the curve labels are
preserved in order. Equivariance identifies their stabilizers. The same argument applies to the
$\beta$-lifts. Consequently, there exist $s_j,t_j\in P_{2,\sigma(j)}$ such that
$h_j(A_{1,j})=s_jA_{2,\sigma(j)}s_j^{-1}$, and $h_j(B_{1,j})=t_jB_{2,\sigma(j)}t_j^{-1}$.

Since $P_{i,j}$ inherits its full profinite topology, $\wh{P_{i,j}}$ is identified with the closure
of $P_{i,j}$ in $\wh\Gamma_i$, and completing $f|_{P_{1,j}}=\Inn(d_j)\circ h_j$ gives
$\wh f|_{\wh{P_{1,j}}}=\Inn(d_j)\circ\wh h_j$. Thus \eqref{eq:graph-boundary-realization} and
\eqref{eq:graph-boundary-normalization} imply $\Phi|_{\wh{P_{1,j}}}=\Inn(ud_j)\circ\wh h_j$. As
$c_j=ud_j$, this says $\Inn(c_j^{-1})\circ \Phi|_{\wh{P_{1,j}}}=\wh h_j$, which is
\eqref{eq:graph-surface-restriction}.

The remaining assertions follow from \cref{thm:finite-cone-main}, applied to the completed cyclic
subgroups of the two curves, from \cref{thm:core-main}, applied to the ordered filling pairs, and
from \cref{thm:discrete-star} as used in \cref{cor:two-curve}.
\end{proof}

\subsection*{Schottky obstruction}
Finally, we give an elementary fact which is an easy consequence of \cite{RibesZalesskii2010}.

\begin{proposition}\label{prop:graph-schottky}
For every $r\geq2$ there is a classical Schottky group $\Gamma<\PSL_2(\C)$ of rank $r$ and an
automorphism $\Psi\in\Aut(\wh\Gamma)$ for which there are no $w\in\wh\Gamma$ and $f\in\Aut(\Gamma)$
such that $\Psi=\Inn(w)\circ\wh f$.
\end{proposition}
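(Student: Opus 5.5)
Take $\Gamma$ a classical Schottky group of rank $r$, so $\Gamma\cong F_r$ is free on $x_1,\dots,x_r$ and $\wh\Gamma\cong\wh F_r$ is free profinite on the same basis. By the universal property of free profinite groups \cite{RibesZalesskii2010}, every map $x_i\mapsto y_i$ into $\wh F_r$ extends uniquely to a continuous endomorphism $\Psi$. This endomorphism is an automorphism exactly when $\{y_i\}$ topologically generates $\wh F_r$, because a surjective endomorphism of a finitely generated profinite group is an automorphism. The plan is to pick the $y_i$ so that $\Psi$ is an automorphism but is not of the form $\Inn(w)\circ\wh f$.

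The choice I would use is
\[
y_1=x_1^{\lambda},\qquad y_i=x_i\ (i\geq2),\qquad \lambda\in\hZ^\times\setminus\{\pm1\},
\]
for instance $\lambda$ with $\lambda_p=-1$ and $\lambda_q=1$ for all primes $q\neq p$, with $p$ odd. These elements topologically generate $\wh F_r$, since $x_1$ lies in $\cl{\langle x_1^\lambda\rangle}=\cl{\langle x_1\rangle}$. Hence $\Psi$ is an automorphism. The argument for nonrealizability goes through abelianization. Any $\Inn(w)\circ\wh f$ induces on $\wh\Gamma^{\mathrm{ab}}=\hZ^r$ the map $\wh{f_{\mathrm{ab}}}=f_{\mathrm{ab}}\otimes\hZ$, where $f_{\mathrm{ab}}\in\GL_r(\Z)$, because inner automorphisms act trivially on the abelianization. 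The map $\Psi$ instead induces $\operatorname{diag}(\lambda,1,\dots,1)$. If the two were equal, matching entries would force $f_{\mathrm{ab}}=\operatorname{diag}(\lambda,1,\dots,1)$ with integer entries. Then $\lambda\in\Z\cap\hZ^\times=\{\pm1\}$, which contradicts the choice of $\lambda$.

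The step I expect to need the most care is justifying that the abelianization of $\wh F_r$ is $\hZ^r$ with basis the images of the $x_i$, and that $\wh f$ induces $f_{\mathrm{ab}}\otimes\hZ$. Both facts follow from right exactness of completion (\cref{lem:completion-quotient} applied to $[F_r,F_r]$) together with naturality of completion. Existence of a Schottky group of every rank $r\geq2$ is classical: take $r$ pairs of disjoint circles and the loxodromic pairings between them.
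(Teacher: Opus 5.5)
Your proposal is correct and follows the paper's proof: the same automorphism $x_1\mapsto x_1^\lambda$, $x_j\mapsto x_j$ with $\lambda\in\hZ^\times\setminus\{\pm1\}$, detected on the continuous abelianization $\hZ^r$. The differences are only cosmetic. The paper shows bijectivity by exhibiting $\Psi_{\lambda^{-1}}$ as the inverse rather than by the Hopfian property, and it concludes with $\lambda=\det(f_{\mathrm{ab}})\in\{1,-1\}$ rather than by comparing matrix entries.
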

\begin{proof}
Let $\Gamma$ be any classical Schottky group of rank $r\geq2$, and choose a free basis
$x_1,\ldots,x_r$. Then $\wh\Gamma$ is the free profinite group on this basis
\cite{RibesZalesskii2010}.
Choose $\lambda\in\hZ^\times\setminus\{1,-1\}.$
By the universal property, the assignments
\[
\Psi_\lambda(x_1)=x_1^\lambda,
\qquad
\Psi_\lambda(x_j)=x_j
\quad(2\leq j\leq r)
\]
define a continuous endomorphism of $\wh\Gamma$. Continuous homomorphisms preserve profinite powers,
so the map $\Psi_{\lambda^{-1}}$ is its inverse and $\Psi_\lambda$ is an automorphism.

On continuous abelianization $\wh\Gamma^{\mathrm{ab}}\cong\hZ^r$, this automorphism induces
\[
(\Psi_\lambda)_{\mathrm{ab}}
=\operatorname{diag}(\lambda,1,\ldots,1).
\]
Suppose that $\Psi_\lambda=\Inn(w)\circ\wh f$ for some $w\in\wh\Gamma$ and $f\in\Aut(\Gamma)$. Inner
automorphisms act trivially on abelianization, whereas $f$ induces a matrix
$f_{\mathrm{ab}}\in\GL_r(\Z)$. Taking determinants therefore gives
\[
\lambda=\det(f_{\mathrm{ab}})\in\{1,-1\},
\]
a contradiction.
\end{proof}

\begin{corollary}
\label{cor:graph-schottky-outer}
Let $\Gamma$ be as in \cref{prop:graph-schottky}, acting cocompactly on a connected locally finite
directed multigraph $X$ with finite vertex stabilizers, and fix a finite labelled distinguished
family $\mathcal S$. Then
\[
\Out(\Gamma\acts X,\mathcal S)\xrightarrow{\cong}
\Out\bigl(\wh\Gamma\acts\wh{\Dgraph}_\Gamma(X),\wh{\mathcal S}\bigr),
\]
although $\Out(\Gamma)\to\Out(\wh\Gamma)$ is not surjective.
\end{corollary}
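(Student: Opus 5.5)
We will derive the statement from \cref{thm:graph-action-outer}, and then prove the non-surjectivity claim separately using \cref{prop:graph-schottky}.

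For the isomorphism, first check the hypotheses of \cref{thm:graph-realization} for the pair $\Gamma\acts X$. A classical Schottky group is a finitely generated free Kleinian group. It is residually finite by Mal'cev's theorem, or directly because free groups are residually finite. The action on $X$ is cocompact and locally finite with finite vertex stabilizers, by assumption. Under these conditions \cref{thm:graph-action-outer} applies verbatim with $G=\Gamma$ and the distinguished family $\mathcal S$. It says that
\eqref{eq:graph-outer-completion} is an isomorphism $\Out(\Gamma\acts X,\mathcal S)\cong\Out(\wh\Gamma\acts\wh{\Dgraph}_\Gamma(X),\wh{\mathcal S})$. Nothing beyond quoting the theorem is needed here: its surjectivity argument uses the realization theorem, which holds for any finitely generated residually finite group.

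For the second clause, recall that $\Out(\Gamma)\to\Out(\wh\Gamma)$ is the homomorphism induced by completing automorphisms. \Cref{prop:graph-schottky} provides $\Psi\in\Aut(\wh\Gamma)$ that is not of the form $\Inn(w)\circ\wh f$. Its outer class $[\Psi]$ is therefore not in the image, so the map is not surjective. We should also note that $\Gamma$ is the group chosen in that proposition, which explains the hypothesis ``as in \cref{prop:graph-schottky}''.

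The only point needing a remark is consistency: why the action version is surjective while the group version is not. The reason is that an element of the completed action outer group is a pair $(\Phi,F)$ in which $F$ is an equivariant isomorphism of completed directed edge complexes. \Cref{thm:graph-realization} shows that such an $F$ forces $\Phi=\Inn(u)\circ\wh h$. Consequently $\Psi$ admits no compatible $F$ preserving $\wh{\mathcal S}$, so it does not lie in the completed action group at all. I expect no real obstacle beyond stating this remark clearly.
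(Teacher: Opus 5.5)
Your proposal is correct and takes the same route as the paper. The isomorphism is quoted from \cref{thm:graph-action-outer}, and you correctly check its hypotheses for a finitely generated, residually finite free group. Non-surjectivity of $\Out(\Gamma)\to\Out(\wh\Gamma)$ follows from the automorphism of \cref{prop:graph-schottky}. Your added remark is also correct: by condition (b) of \cref{thm:graph-realization}, such a $\Psi$ cannot be the first coordinate of any pair in the completed action group, which explains why the two statements are consistent.
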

\begin{proof}
The action statement is \cref{thm:graph-action-outer}. The automorphism of
\cref{prop:graph-schottky} shows that $\Out(\Gamma)\to\Out(\wh\Gamma)$ is not surjective.
\end{proof}

\begin{samepage}
\begin{center}
\textbf{Acknowledgement}
\end{center}
I thank Benson Farb and Dave Gabai for their unwavering support over the years and deeply insightful mathematical discussions, and Peter Shalen
for his constant encouragement and for his contributions to the special issue that we are co-editing. I am
grateful to Ying Zhou and Yaowu Zhou  for their help throughout this work.
\par\medskip

\noindent\texttt{henry.yhou@gmail.com}
\end{samepage}


\end{document}